\newcommand{\R}{{\mathbb R}}
\DeclareMathOperator*{\argmin}{arg\,min}
\newcommand{\STAB}[1]{\begin{tabular}{@{}c@{}}#1\end{tabular}}
\newtheorem{lemma}{Lemma}
\newtheorem{corollary}{Corollary}
\newtheorem{proposition}{Proposition}
\newtheorem{definition}{Definition}
\newtheorem{remark}{Remark}
\numberwithin{equation}{section}
\date{}
\title{On and beyond Total Variation regularisation in imaging:\\ the role of space variance}
\newcommand{\TV}{\mathrm{TV}}
\newcommand{\WTV}{\mathrm{WTV}}
\newcommand{\DTV}{\mathrm{DTV}}
\newcommand{\WDTV}{\mathrm{WDTV}}
\newcommand{\M}{\mathrm{M}}
\newcommand{\D}{\mathrm{D}}
\newcommand{\A}{\mathrm{A}}
\newcommand{\I}{\mathrm{I}}
\newcommand{\E}{\mathrm{E}}
\newcommand{\V}{\mathrm{V}}
\newcommand{\argmax}{\arg\max}
\renewcommand{\argmin}[1]{\underset{#1}{\operatorname{arg}\,\operatorname{min}}\;}
  \definecolor{cosmiclatte}{rgb}{1.0, 0.97, 0.91}
\definecolor{azure}{rgb}{0.94, 1.0, 1.0}
\definecolor{beaublue}{rgb}{0.74, 0.83, 0.9}
\definecolor{Gray}{rgb}{0.75,0.75,0.75}
\definecolor{apricot}{rgb}{0.98, 0.81, 0.69}
\definecolor{almond}{rgb}{0.94, 0.87, 0.8}
\definecolor{LightCyan}{rgb}{0.88,1,1}
\DeclareMathOperator{\diag}{diag}
\author[1]{Monica Pragliola\thanks{monica.pragliola2@unibo.it}}
\author[2]{Luca Calatroni\thanks{calatroni@i3s.unice.fr}}
\author[1]{Alessandro Lanza\thanks{alessandro.lanza2@unibo.it}}
\author[1]{Fiorella Sgallari\thanks{fiorella.sgallari@unibo.it}}
\affil[1]{Department of Mathematics, University of Bologna, Italy}
\affil[2]{CNRS, UCA, INRIA, Morpheme, I3S, Sophia-Antipolis, France}
\begin{document}

\maketitle

\begin{abstract}
	Over the last 30 years a plethora of  variational regularisation models for image reconstruction has been proposed and thoroughly inspected by the applied mathematics community. Among them, the pioneering prototype often taught and learned in basic courses in mathematical image processing is the celebrated Rudin-Osher-Fatemi (ROF) model \cite{ROF} which relies on the minimisation of the edge-preserving Total Variation (TV) semi-norm as regularisation term. 
	Despite its (often limiting) simplicity, this model is still very much employed in many applications and used as a benchmark for assessing the performance of modern learning-based image reconstruction approaches, thanks to its thorough analytical and numerical understanding. Among the many extensions to TV  proposed over the years, a large class is based on the concept of \emph{space variance}. Space-variant models can indeed overcome the intrinsic inability of TV to describe \emph{local} features (strength, sharpness, directionality) by means of an adaptive mathematical modelling which accommodates local regularisation weighting, variable smoothness and anisotropy.  Those ideas can further be cast in the flexible Bayesian framework of generalised Gaussian distributions and combined with maximum likelihood and hierarchical optimisation approaches for efficient hyper-parameter estimation. In this work, we review and connect the major contributions in the field of space-variant TV-type image reconstruction models, focusing, in particular, on their Bayesian interpretation which paves the way to new exciting and unexplored research directions.
\end{abstract}

\section{Introduction}
\label{sec:intro}


The technological developments which favoured the storage, the exploitation and the management of large and unstructured data over the last decades has been responsible of significant and fundamental advances in the field of applied mathematics. In particular, the field of mathematical image processing has undergone significant changes in its paradigm which has been shifted several times over the years. Historically, imaging problems have been formulated as specific instances of (linear) ill-posed inverse problems and studied by means of classical tools such as functional calculus, Partial Differential Equations (PDEs) and Fourier analysis. Later in the 90's, advances in the field of non-smooth variational calculus have drifted the attention towards the use of sparsity-promoting image regularisation models as well as the development of efficient optimisation algorithms tailored to compute the desired output as efficiently as possible. Over the last decade, a new class of models has attracted the attention of the applied mathematics community. Differently from the traditional formulation of imaging problems described above, whose ingredients are chosen \emph{a priori} following analytical, model-driven strategies, by capitalising on the improved technological advances, these new data-driven approaches exploit the large availability of imaging data and design \emph{a posteriori} image reconstruction models tailored to fit the specific application at hand. Understanding the (deep) reasons behind the outstanding performance of these models is nowadays among the (if not the) most prominent challenging tasks, with implications in fields such as artificial intelligence, human-to-robot interactions and bio-inspired computer designs. Alongside, the clever and efficient exploitation of training data has favoured the development of new and theoretically grounded branches of applied mathematics lying at the interface between analysis, variational calculus and statistics. Interestingly, in many imaging applications, the fusion of classical and modern approaches has been in fact capable of overcoming the intrinsic difficulties and rigidities of fully model-driven methods by incorporating appropriately data-driven information.

\medskip

In this spirit,  we present in the following a scientific travel across disciplines taking as example a standard problem in the context of mathematical image reconstruction in the attempt of highlighting for the popular and well-studied Total Variation (TV) regularisation model some of the many extensions proposed over the years featuring as least common denominator the description of local image features at a pixel scale. In the attempt of combining classical tools of variational calculus, optimisation, numerical analysis with more data-driven large-scale statistical approaches, we introduce a new flexible Bayesian interpretation of the imaging quantities into play and report on how such happy marriage can be efficiently exploited as a powerful tool for the exploration of new research directions in imaging.

With the intent of providing an as-exhaustive-as-possible review on the topic, we will start our discussion by recalling in the following introductory sections the main characters in our play, providing appropriate referencing and illustrations which, we hope, will help the inexpert reader to familiarise with the main notions introduced.

\subsection{Imaging inverse problems}
We start our discussion by setting up the scene. To do so, we consider the formulation of a general image reconstruction problem defined on an image domain $\bm{\Omega} := \left\{ (h,l):h=1,\ldots,n_1,~ l=1,\ldots, n_2 \right\}$ $\subset\R^2$ with $|\bm{\Omega}|=n_1n_2=:N$ given by
\begin{equation}
\text{find}\quad\bm{u}\quad\text{s.t.}\quad\bm{b} = \mathcal{N}(\bm{\A}\bm{u}) \,,
\label{eq:lin_model}
\end{equation}
where $\bm{u}\in\R^N$ and $\bm{b} \in \R^M$ are the vectorised unknown image and observed data, respectively, $\bm{\A}\in\R^{M\times N}$ is the (known) linear forward operator and $\mathcal{N}:\R^M\to\R^M$ stands for the degradation operator modelling the presence of noise in $\bm{b}$. Some classical examples for $\bm{\A}$ are, for instance, convolution (blurring), Radon transform and under-sampling operators. 
As it is known from standard books in inverse problems (see, e.g., \cite{engl2000regularization,stuartInversebook}), it is in general not possible to solve (\ref{eq:lin_model}) directly due to the lack of stability and/or uniqueness properties of the operators involved. As a remedy, problem \eqref{eq:lin_model} can be reformulated as the problem of finding an estimate $\bm{u}^*$ of $\bm{u}$ as accurate as possible by solving a new, well-posed problem where some \emph{a priori} information on $
\bm{u}^*$ is encoded in the form of a regularisation term. In this work, we will focus our attention on the the family of variational regularisation methods where the reconstructed image  $\bm{u}^*\in\mathbb{R}^N$ is computed as a minimiser of a suitable cost functional $\mathcal{J}: \mathbb{R}^N \to \mathbb{R}$ such that the problem can be formulated as 
\begin{equation}\label{eq:GVM}
\text{find }\;\bm{u}^* \:{\in}\; 
\argmin{\bm{u} \in \mathbb{R}^N}
\; \Big( \mathcal{J}(\bm{u};\mu) {:=} 
\mathcal{R}(\bm{u}) + \mu \mathcal{F}(\bm{\A}\bm{u};\bm{b}) \Big).
\end{equation}
The functionals $\mathcal{R}$ and $\mathcal{F}$ are commonly referred to as the \emph{regularisation} and the \emph{data fidelity} term, respectively. While $\mathcal{R}$ encodes prior information on the desired image $\bm{u}$ (such as, e.g, its regularity and/or its sparsity patterns), the term $\mathcal{F}$ measures the `distance' between the given image $\bm{b}$ and $\bm{u}$ after the action of the operator $\bm{\A}$ with respect to some functional describing noise statistics in the data. Finally, the regularisation parameter $\mu > 0$ controls the trade-off between the two terms.

\subsection{A leading actor: TV regularisation. Features, drawbacks and limitations} \label{sec:features_TV}
Probably (if not certainly) the most popular choice in the context of imaging for $\mathcal{R}$ in \eqref{eq:GVM} is the TV semi-norm which is suited for describing meaningful image contents such as image discontinuities (edges). In its simplest discrete form, it is defined as the following non-smooth and convex regulariser
%
%
%
\begin{equation}
\label{eq:TV_reg}
\tag{\text{TV}}
\mathcal{R}(\bm{u}) \,\;{=}\;\, 
\TV(\bm{u}) \,\;{:=}\;\, 
\sum_{i=1}^{N}  \| (\bm{\D u})_{i} \|_2 \, ,
\end{equation}
where, for each pixel  $\,i=1,\ldots,N,$  $\,(\bm{ \D u})_i = ( \bm{\D}_h \bm{u}, \bm{\D}_v \bm{u})_i \in \R^2$ stands for  the discrete gradient of image $\bm{u}$ at pixel $i$, with $\bm{\D}_h, \bm{\D}_v \in \R^{N \times N}$ suitable finite difference operators discretising the partial derivatives of image $\bm{u}$ along the horizontal and vertical directions, respectively. 

The use of TV regularisation in imaging was firstly proposed by Rudin, Osher and Fatemi in \cite{ROF} which is nowadays probably among the most cited papers in mathematical imaging\footnote{15749 citations according to Google Scholar.}. In the 90's, the use of TV in the context of imaging paved the way for the development of mathematical approaches based on the use of nonlinear, edge-preserving, sparse gradient-based regularisation models and for their application in a variety of image reconstruction problems. Analytically, the fine properties of TV  in the context of image reconstruction have been thoroughly studied and understood over years (see, e.g., \cite{Caselles2015,ChCaCrNoPo10,TVzoo} for a review) and efficient algorithmic approaches have been developed for the efficient numerical solution of TV-based problems (see, e.g., the recent review \cite{chambolle2016introduction}).

Despite the large interest and thorough understanding towards the analytical and regularisation properties of TV regularisation (we will list the main contributions in both directions in due course), 
such regulariser also presents significant limitations. A major one is the so-called \emph{staircasing effect}, which consists of a tendency to promote edges at the expense of smooth structures, see, e.g., \cite{CCN, Nikolova2000,Jalazai2016} for some analytical studies. Moreover, as observed, e.g., in \cite{Strong_2003,Meyer}, TV reconstructions also suffer from loss of contrast artefacts, even in the case of noise-free observed images.
Another major limitation of TV as is its \emph{global} or \emph{space-invariant} behaviour, that is the fact that the contribution at each pixel $i=1,\ldots,N$ in \eqref{eq:TV_reg} to the whole regularisation takes exactly the same functional form. Due to this `rigidity', the TV regulariser is not suited to describe possibly very heterogeneous local image structures encountered, for instance, in natural images. Furthermore, such form is not adapted to situations where clear  \emph{directionality} (either global \cite{BayramDTV2012,KonDonKnu17}  or local \cite{Zhang2013}) appears. 

\medskip

We will now review the main contributions proposed in the literature over the last decades to improve upon the regularisation capabilities of TV and, in particular, to reduce the aforementioned drawbacks  within the class of global and space-invariant regularisers. Next, we will discuss on the advantages that space-variant approaches bring along, pointing out how the notion of space-variance has been used under different names in different mathematical fields.





\subsection{A partial remedy: space-invariant TV generalisations}
\label{subsec:globgen}
In order to reduce some of the TV reconstruction drawbacks highlighted above, several space-invariant generalisations have been proposed over the years, see, e.g., \cite{CEP, CMM2000, PS2014, Setzer05variationalmethods, WuTai2010} and the references therein. Within this class, we mention in particular two nowadays very popular extensions: the Infimal Convolution Total Variation (ICTV) proposed by Chambolle and Lions in \cite{chambollelions1997} and the Total Generalised Variation (TGV) regulariser introduced by Bredies, Kunisch and Pock in \cite{TGV}. This latter regulariser shares some favourable properties with TV, such as rotational invariance, lower semi-continuity and convexity. However, differently from TV, TGV involves and balances higher-order derivatives of the desired image, which reduces staircasing, while preserving sharp edges at the same time. 

\smallskip

Generally speaking, while the use of higher-order extensions has shown to be very effective in practice and thoroughly analysed from both an analytical and numerical viewpoint, the question of how to overcome the intrinsic `rigidity' of first-order TV-type regularisation models is still very much open. 
%
%
With the intent of adapting the TV-type regularisation to structural image information, in \cite{BayramDTV2012} Bayram and Kamasak proposed a Directional TV (DTV) regulariser for image denoising, whose analytic form reads
\begin{equation}  \label{eq:DTV}
\DTV (\bm{u}):=  \sum_{i=1}^N \| \bm{\Lambda}_{a} \bm{\mathrm{R}}_{-\theta} (\bm{\D u})_{i} \|_{2},\qquad a\in(0,1],\qquad \theta\in[-\pi/2, \pi/2). 
\end{equation}
Here, $\theta\in[-\pi/2, \pi/2)$ denotes the dominant orientation in the  target image $\mathbf{u}$,  $\bm{\mathrm{R}}_{-\theta}$ denotes the rotation matrix of angle $-\theta$, while  $\bm{\Lambda}_a=\text{diag}(1,a)$ is a diagonal matrix which encodes the strength of the regularisation along the direction $(\cos\theta,\sin\theta)$ and its orthogonal depending on a parameter $a\in(0,1]$ whose value range from enforcing a full isotropic modelling (no directional preference) for $a=1$ to a strong anisotropy one for $a\approx 0$.
The very same modelling can be analogously used to define a Directional variant of the TGV regulariser, dubbed DTGV, as done in \cite{KonDonKnuDTGV19} by Kongskov, Dong and Knudsen. The use DTV and DTGV is indeed beneficial in applications related to fibres, such as the study of glass fibres in wind-turbine blades \cite{Sandoghchi:14} and the scanning of optical fibres from computed tomography (CT) scans \cite{Jespersen2016}.



To illustrate the benefits of incorporating directional information defined in terms of a dominant orientation $\theta$ in the regulariser, we consider the test image in Figure \ref{fig:im_teas1}, previously used in \cite{KonDonKnuDTGV19}, in which the existing piece-wise constant regions as well as the smooth straight lines align along the edge direction $\mathbf{v}:=(\cos\theta,\sin\theta)$. The 2D histogram of the gradients reported in Figure \ref{fig:hist_teas1} and the scatter plot in Figure \ref{fig:scat_teas1} show, as expected, global alignment along the perpendicular direction $\mathbf{v}^\perp=(-\sin\theta,\cos\theta)$.
It appears natural for this type of images to design a regularisation whose functional form could promote smoothing along the direction $\mathbf{v}$ only (i.e. anisotropically) and not indistinctly along all directions (i.e. isotropically) to  better adapt to the underlying geometrical image structures, thus avoiding staircasing.

%
%


\begin{figure}
	\centering
	\begin{subfigure}{0.32\textwidth}
		\centering
		\includegraphics[height=1.3in]{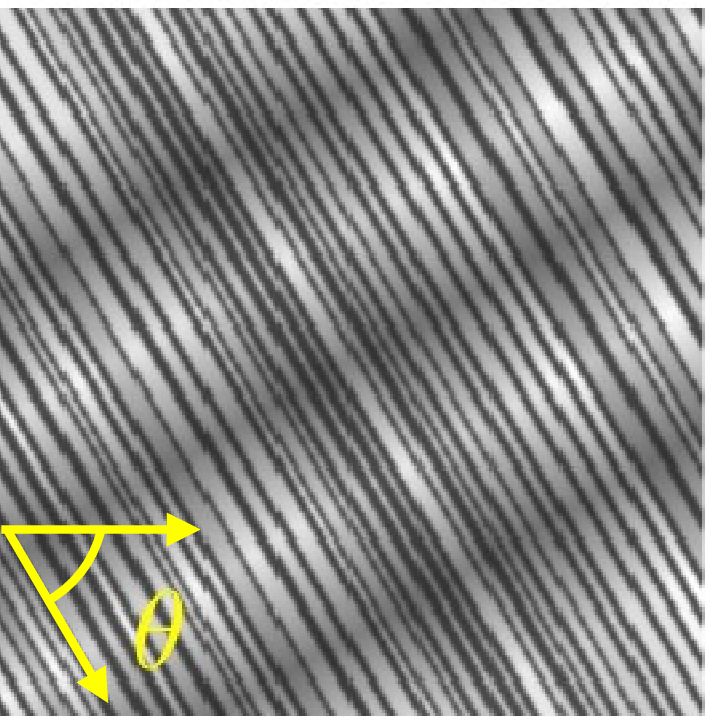}
		\caption{}
		\label{fig:im_teas1}
	\end{subfigure}
	\begin{subfigure}{0.32\textwidth}
		\centering
		\includegraphics[height=1.5in]{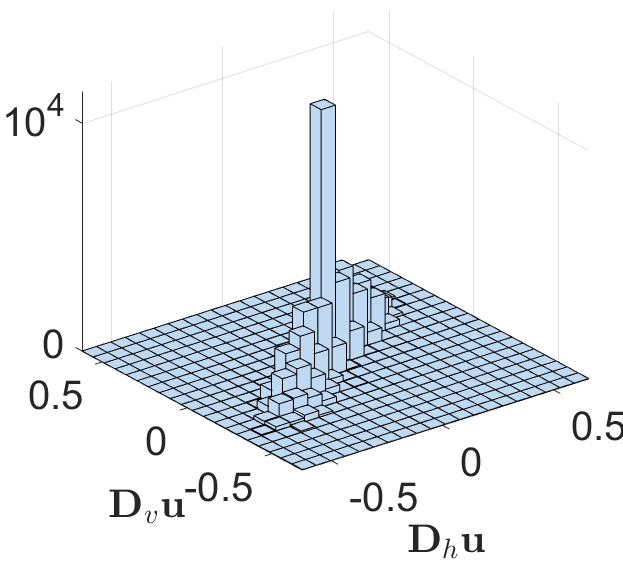}
		\caption{}
		\label{fig:hist_teas1}
	\end{subfigure}
	\begin{subfigure}{0.32\textwidth}
		\centering
		\includegraphics[height=1.5in]{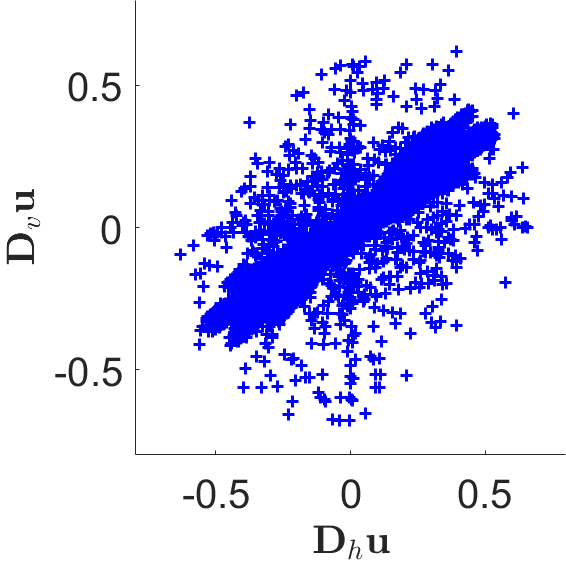}
		\caption{}
		\label{fig:scat_teas1}
	\end{subfigure}
	\caption{Test image from \cite{KonDonKnuDTGV19} with a clear dominant direction defined by angle $\,\theta \in [-\pi/2,\pi/2)$ (a), 2D histogram of image gradient components (b), scatter plot of image gradient components (c).}
	\label{fig:teas1}
\end{figure}

A different extension to TV regularisation has been explored in \cite{VB3,Krishan2009,tvpl2}.  By adopting a statistical viewpoint, in these works the authors show how the use of TV implicitly corresponds to assume a space-invariant one-parameter half-Laplacian distribution (hLd) for the gradient magnitudes of the target image $\bm{u}$, that appears to be in general too restrictive to model the distribution of gradient magnitudes in natural images for which a more general half-Generalised Gaussian distribution (hGGd) prior should be considered instead. This choice corresponds to employ the following TV$_p$ regularisation model
\begin{equation}\label{eq:TVp}
\mathrm{TV}_p(\bm{u}) := \sum_{i=1}^{N}  \|(\bm{\D u})_{i}  \|_2^p \, ,
\quad p \:{\in}\: (0,2],
\end{equation}
where the exponent $p\in(0,2]$ is a free parameter providing the TV$_p$ regulariser with higher flexibility than the TV regulariser. Its
setting is indeed related to the properties of the image of interest: it can be fixed either empirically to enhance sparsity ($p<1$, see \cite{Krishan2009}) or regarded as an information tailored to the image itself which should thus be estimated appropriately. The TV$_p$ regulariser has proved to be effective for the solution of several imaging problems ranging from  labelling and segmentation \cite{Li2020} to blind deblurring \cite{Krishan2009, Liu2017} and synthetic aperture radar (SAR) image despeckling  \cite{Guo2021} and many more. Its performance strongly depends on the selected/estimated value of $p$, whose setting may be hard in case of very heterogeneous images composed, for instance, by both smooth and piece-wise constant regions. 


Finally, we recall that a further albeit classical limitation of global TV-type regularisation consists in the choice of the optimal regularisation parameter $\mu$ in (\ref{eq:GVM}). As a matter of facts, this is a common challenge for all regularised inverse problems in the form \eqref{eq:GVM}, not limited to the TV context. Among classical methods for parameter selection, we recall here those based on discrepancy principle \cite{hanke.ch8,APE}, generalised cross validation \cite{Golub79, Fenu2016}, L-curve analysis \cite{CALVETTI2000423} and unbiased risk estimators (SURE) \cite{SURE,Sure_app}. Note that while being effective in practice, these approaches often require the prior knowledge of the noise level in the data, which in many practical applications is a difficult information to obtain. To avoid this issue, different techniques based, for instance, on bilevel learning \cite{KunischPock2013,CalatroniBilevel,Van_Chung_2017,Hintermuller2017a,Hintermueller2017b} or on statistical whiteness principles can be used \cite{Lanzaetna,Lanza_JMIV}.



\subsection{Incorporating space variance} 


While the aforementioned modelling appears very restrictive from a \emph{global} viewpoint, it is a natural question wondering whether by considering \emph{local} image information, that is looking at image patches of suitably small size, image information can be `glued' together so as to define a better, more suitable image regulariser.

We motivate this idea by showing in Figure \ref{fig:im_teas2} an enlightening example concerned with local description of directional features for the popular test image image \texttt{barbara}. 
We start selecting three sub-regions of interest: two of them are characterised by geometric textures - see Figures \ref{fig:im_teas2_z1},\ref{fig:im_teas2_z2} - while the other presents smooth and homogeneous details - see Figure \ref{fig:im_teas2_z3}. The global image histogram and plot in Figures \ref{fig:hist_teas2}, \ref{fig:scat_teas2} clearly show that image gradients are not oriented along a single dominant direction; rather, they appear to be non-uniformly spread over a box-shaped neighbourhood of the origin. Different scenarios arise when considering the two textured regions, as the gradients therein show a clear directional bias, as displayed in the histograms in Figures~\ref{fig:hist_teas2_z1},\ref{fig:hist_teas2_z2} and in the scatter plots in Figures~\ref{fig:scat_teas2_z1},\ref{fig:scat_teas2_z2}. Finally, for the gradients computed within the smooth region, a different configuration occurs, being them not aligned along any preferred direction - see Figure \ref{fig:hist_teas2_z3}, but rather concentrated in a neighbourhood of the origin, due to the large homogeneous image content. As observable from the scatter plot in Figure \ref{fig:scat_teas2_z3},  dispersion around the origin is however smaller with respect to the global scatter plot.

\begin{figure}[!th]
	\centering
	\begin{subfigure}{0.32\textwidth}
		\centering
		\includegraphics[height=1.3in]{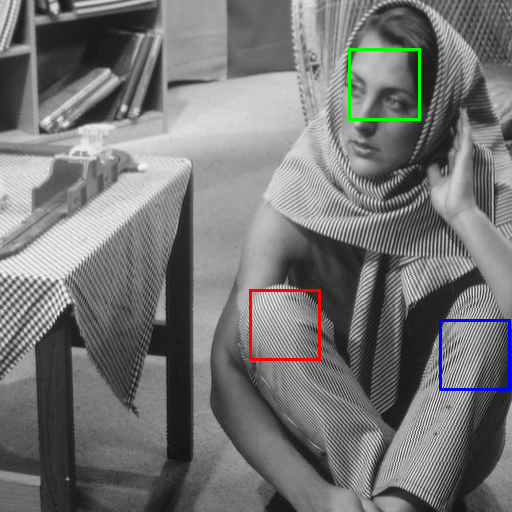}
		\caption{}
		\label{fig:im_teas2}
	\end{subfigure}
	\begin{subfigure}{0.32\textwidth}
		\centering
		\includegraphics[height=1.4in]{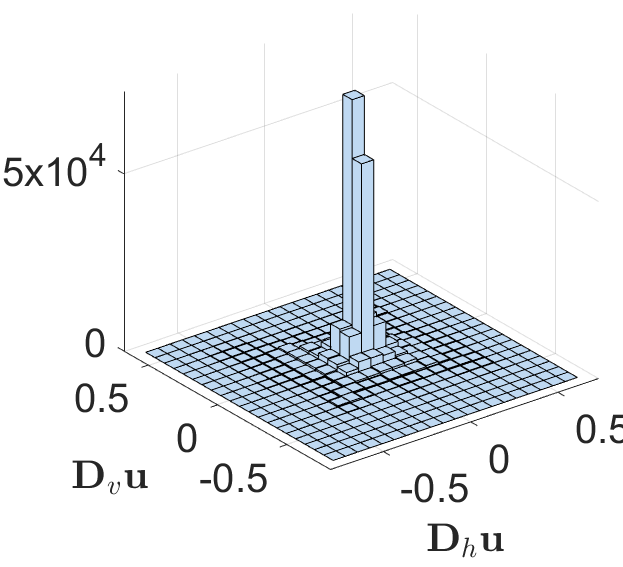}
		\caption{}
		\label{fig:hist_teas2}
	\end{subfigure}
	\begin{subfigure}{0.32\textwidth}
		\centering
		\includegraphics[height=1.4in]{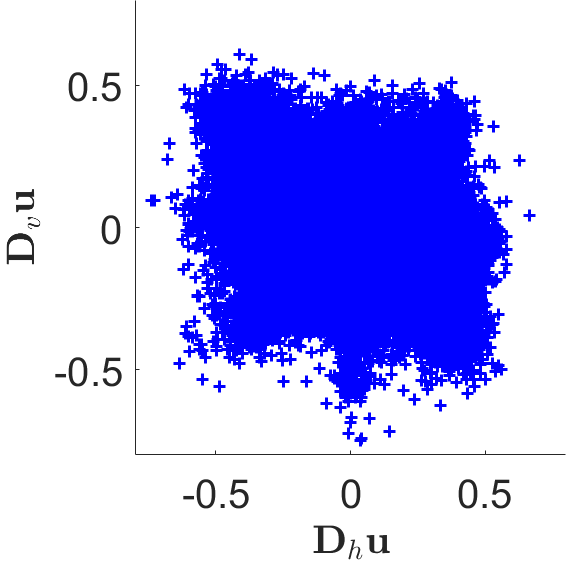}
		\caption{}
		\label{fig:scat_teas2}
	\end{subfigure}\\
	\begin{subfigure}{0.32\textwidth}
		\centering
		\includegraphics[height=1.3in]{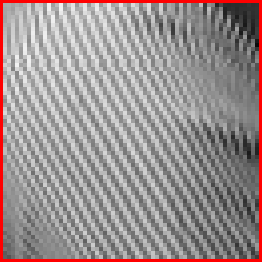}
		\caption{}
		\label{fig:im_teas2_z1}
	\end{subfigure}
	\begin{subfigure}{0.32\textwidth}
		\centering
		\includegraphics[height=1.4in]{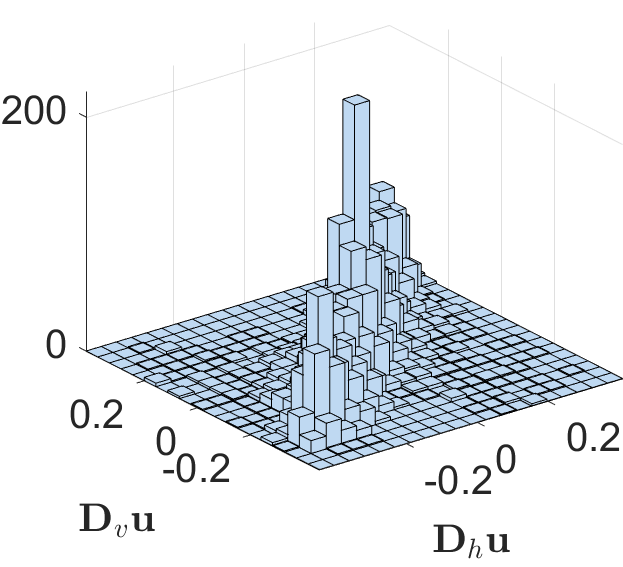}
		\caption{}
		\label{fig:hist_teas2_z1}
	\end{subfigure}
	\begin{subfigure}{0.32\textwidth}
		\centering
		\includegraphics[height=1.4in]{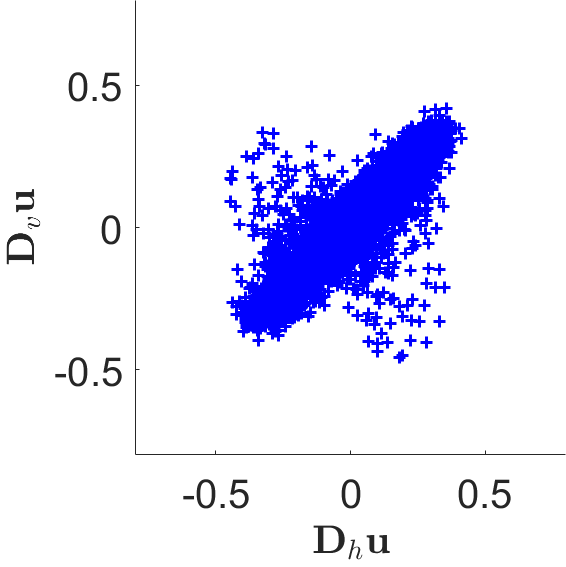}
		\caption{}
		\label{fig:scat_teas2_z1}
	\end{subfigure}\\
	\begin{subfigure}{0.32\textwidth}
		\centering
		\includegraphics[height=1.3in]{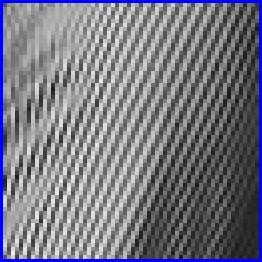}
		\caption{}
		\label{fig:im_teas2_z2}
	\end{subfigure}
	\begin{subfigure}{0.32\textwidth}
		\centering
		\includegraphics[height=1.4in]{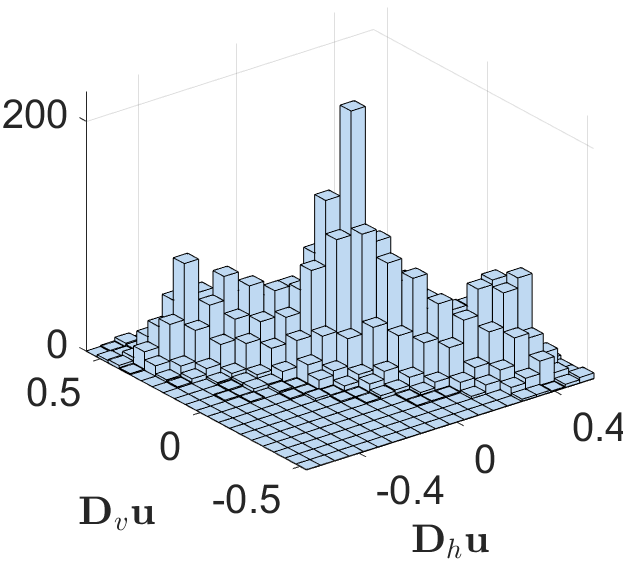}
		\caption{}
		\label{fig:hist_teas2_z2}
	\end{subfigure}
	\begin{subfigure}{0.32\textwidth}
		\centering
		\includegraphics[height=1.4in]{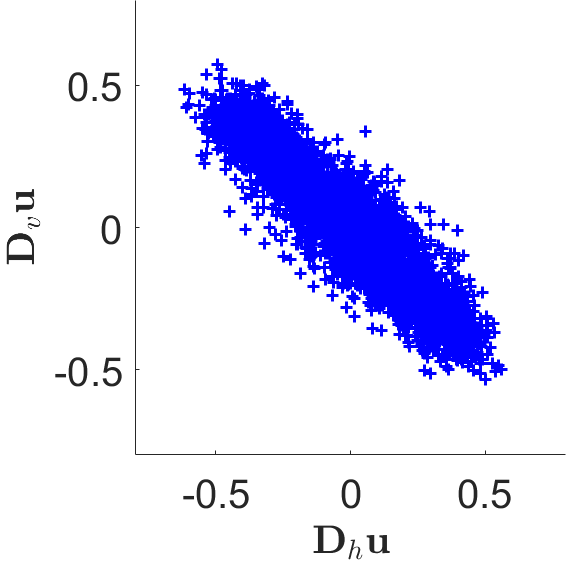}
		\caption{}
		\label{fig:scat_teas2_z2}
	\end{subfigure}\\
	\begin{subfigure}{0.32\textwidth}
		\centering
		\includegraphics[height=1.3in]{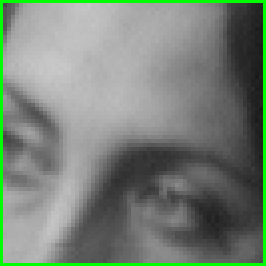}
		\caption{}
		\label{fig:im_teas2_z3}
	\end{subfigure}
	\begin{subfigure}{0.32\textwidth}
		\centering
		\includegraphics[height=1.4in]{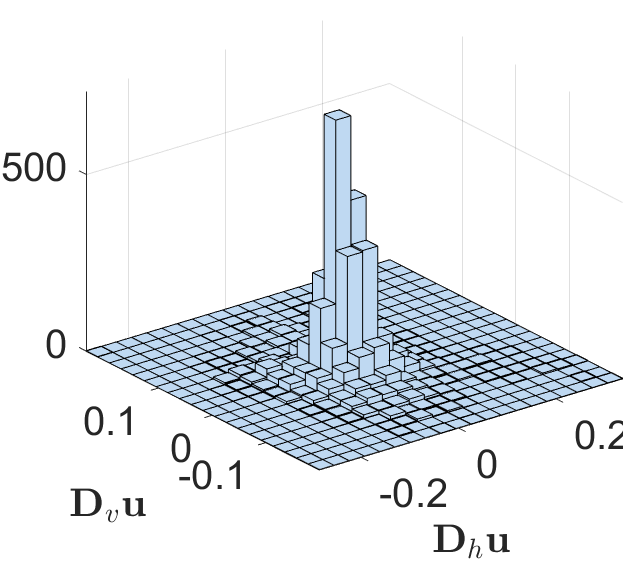}
		\caption{}
		\label{fig:hist_teas2_z3}
	\end{subfigure}
	\begin{subfigure}{0.32\textwidth}
		\centering
		\includegraphics[height=1.4in]{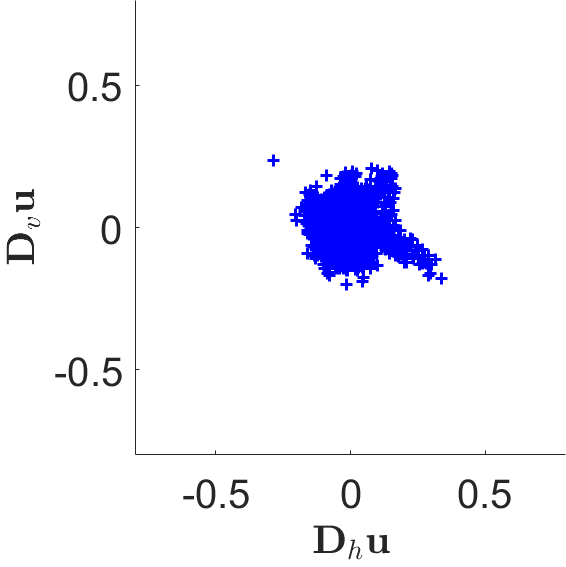}
		\caption{}
		\label{fig:scat_teas2_z3}
	\end{subfigure}
	\caption{Test image \texttt{barbara} and selected sub-regions (left column), 2D histograms of image gradient components (middle column), scatter plots of image gradient components (right column).}
	\label{fig:teas2}
\end{figure}

Similarly, examples illustrating the different local behaviour of the TV$_p$ regulariser \eqref{eq:TVp} depending on the particular (homogeneous VS. textured) image patch considered could be given, as we will extensively report in the following sections. Those examples motivate and justify the idea of formulating a new space-variant framework allowing for the description of image contents at a \emph{local} scale. 

The idea of incorporating space-variant information is in fact not new in the context of mathematical methods for image reconstruction.  Early approaches can already be traced back looking at the contributions in the field of diffusion-type PDEs for imaging \cite{PeronaMalik1990,weickert98,Roussos2010,Scharr2003, Bro02} and statistical approaches \cite{Descombes1999,Ramamurthy2009,Calvetti_2019,Calvetti_2020,Calvetti_2020spars,Pereyra2017FastUB,CS}. In the last couple of years and under a different perspective, few contributions have also been made in the context of (deep) learning approaches for imaging \cite{Goodfellow2014,Lucic2018,Lunz2018,miyato2018spectral,kurach2019the}. In the following, we summarise the ideas contained in these works by structuring our description by topical subsections, to favour readability. As the reader will notice, however, a detailed description of the (large) class of space-variant variational image regularisation methods extending those described in Section \ref{subsec:globgen} will be postponed to the later sections of this work, as those models will appear naturally as soon as our combined statistical/analytical modelling will be introduced. 


\subsubsection{PDE approaches}
In the context of PDE approaches for imaging, the idea of space variance has enriched standard linear and non-linear diffusion-type models starting from the 
work of Perona and Malik in the 90's \cite{PeronaMalik1990}. There, the authors proposed  a nonlinear space-adaptive diffusion method for avoiding the blurring and localisation problems of linear diffusion filtering. To do so, an inhomogeneous and space-variant process reducing diffusivity at locations with high probability of being edges was considered. As a natural choice, for any point $\bm{x}\in \bm{O}\subset\mathbb{R}^2$, where by $\bm{O}$ we denote a regular image domain, an edge-stopping diffusion function depending locally on the quantity $| \bm{\nabla} u(\bm{x})|$ was used as a likelihood measure. Note that the Perona-Malik filter is a particular instance of the diffusion model
\begin{equation} \label{eq:anis_diff}
\begin{cases}
$\:$u_t = \mathrm{\bm{div}}\big({W(| \bm{\nabla} u |)}\bm{\nabla} u \big) &\text{on } \bm{O}\times (0,\infty], \\
$\:$u(x,0)=b(x) & \text{on } \bm{O},\\
$\:$\langle \bm{\nabla} u, \bm{n}\rangle=0  & \text{on }   \partial\bm{O}\times(0,T],
\end{cases}
\end{equation}
where $\bm{n}$ stands for the outward normal vector on $ \partial\bm{O} $ and $b$ stands for the observed image. Problem \eqref{eq:anis_diff} is a standard reference model for anisotropic image restoration PDE approaches. In the case $W\equiv 1$, it acts as a convolution model of the given function $b(\cdot)$ with a Gaussian kernel with standard deviation parameter $\sigma=\sqrt{2 t}$. Such operation corresponds to the well-known low-pass spectral filtering and it is commonly used for smoothing pictures by averaging values within a certain neighbourhood. In the general case, model \eqref{eq:anis_diff} produces a family of images parametrised by $t>0$,  each resulting in a combination between the original image and a filter that depends on the local content of the given image $b(\cdot)$.  
For $\bm{x}\in\bm{O}$, the function  $W$ can be, for instance, chosen as 
\begin{equation}\label{eq:diffcoef}
W(|\nabla u(\bm{x})|)\,\;{=}\;\,\frac{1}{\displaystyle{1+\left(\frac{|\bm{\nabla} u(\bm{x})|}{K}\right)^{2}}}\,,
\end{equation}
where the parameter $K>0$ controls the sensitivity to edges.
On uniform regions, where the magnitude of the gradients is weaker, the diffusion coefficient $W$ is close to $1$, so that \eqref{eq:anis_diff} turns into a heat equation which smooths out the noise. Close to edges and boundaries, the gradient magnitudes get larger instead, thereby the diffusion coefficient in \eqref{eq:diffcoef} vanishes; as a result, in correspondence of these pixels diffusion is not performed and meaningful structures are preserved.

However, despite their edge-adaptive behaviour, scalar diffusivity functions $W(\cdot)$ are intrinsically uncapable of adjusting the diffusion along the orientation of salient image structures. To do so, a diffusion tensor leading to anisotropic diffusion filters has to be
introduced.
%
The most popular choice consists in replacing the scalar $W$ by the structure tensor $\bm{\mathrm{J}}(\bm{x})=\bm{\nabla} u(\bm{x}) \otimes \bm{\nabla} u(\bm{x}) \in \R^{2 \times 2}$. This matrix can be written in terms of its eigenvalues and eigenvectors, the latter encoding the dominant local orientation, the former representing the strength of the diffusion along the preferred direction and its orthogonal. As a result, the action of $\bm{\mathrm{J}}(\bm{x})$ can be somehow synthesised by the elongation of the associated elliptical level curves, see, e.g., \cite{weickert98,Roussos2010,Scharr2003, Bro02,Grasmair2010,Estellers2015}.




\subsubsection{Statistical approaches}  \label{sec:statistical}


Statistical approaches for image processing have become very popular in the last decades 
due to their ability to 
incorporate non-deterministic information in the forward model - see \cite{CSstat}. Here, the core idea is to model the unknown image $\bm{u}$ as a random variable $\bm{U}$ to highlight the intrinsic uncertainty about its value, which is also related to possible approximations of the model operator $\bm{\A}$, and of the noise degradation model $\mathcal{N}$. 
The information or the beliefs available \emph{a priori} on the random variable $\bm{U}$ are encoded in the \emph{prior} probability density function (pdf) $\mathbb{P}(\bm{u})$. Analogously, the observed image $\bm{b}$ is regarded as a realisation of a random variable $\bm{B}$, whose behaviour for a fixed $\bm{u}$ is encoded in the \emph{likelihood} pdf $\mathbb{P}(\bm{b}\mid \bm{u})$. In this framework, the goal is to recover the distribution of $\bm{U}$ according to the given observation $\bm{b}$ and the underlying degradation model $\bm{\A}$; in terms of distributions, this translates in seeking the \emph{posterior} pdf $\mathbb{P}(\bm{u}\mid \bm{b},\bm{\A})$ which is related to prior and likelihood pdfs via the Bayes' formula:
\begin{equation} \label{eq:bayes}
\mathbb{P}(\bm{u}\mid\bm{b},\bm{\A}) = \frac{\mathbb{P}(\bm{b}\mid\bm{\A}\bm{u})\mathbb{P}(\bm{u})}{\mathbb{P}(\bm{b})} \,,
\end{equation}
where $\mathbb{P}(\bm{b})$ is often referred to as the evidence term and plays the role of a normalisation constant.

The poorer is the information on the degradation model, the more relevant is the design of a suitable prior for the unknown image. The prior pdf $\mathbb{P}(\bm{u})$ can model different characteristics of the image, ranging from the presence of textures to boundary configurations. Popular priors for image restoration problems encode information on the distribution of the grey levels within an image and the transition of grey-scale intensities between different areas of the image \cite{Li2009}.

In \cite{Geman}, the authors interpreted the pixel-grey levels as states of atoms in a lattice-like physical system, so that the unknown image is modelled as a Markov Random Field (MRF). This translates into the requirement that a selected feature at the generic pixel $i$ of $\bm{u}$ only depends on the behaviour of $\bm{u}$ at pixels belonging to a set $\mathcal{C}_i$ of neighbours of $i$, called \emph{clique}. When the selected feature is the grey level, the 2D Markovian property at pixel $i$ reads
\begin{equation}
\label{eq:mrf}
\mathbb{P}\big(\,U_i = u_i\mid U_j = u_j\,,\;j\neq i\,\big) 
\:\;{=}\;\:
\mathbb{P}\big(\,U_i = u_i\mid U_j = u_j\,,\;j \in \mathcal{C}_i\,\big),
\end{equation}
where by $\mathbb{P}(\cdot)$ we denote the  probability density function (pdf).
The prior distribution for a MRF is the so-called \emph{Gibbs prior}
\begin{equation}
\label{eq:mrfpr}
\mathbb{P}(\bm{u}) = \frac{1}{Z} \exp \bigg(- \sum_{i=1}^{n}V_{\mathcal{C}_{i}}(\bm{u})\bigg)\,,
\end{equation}
where $Z>0$ is a normalisation constant and $V_{\mathcal{C}_{i}}$ is referred to as the \emph{Gibbs potential function} defined on a clique of pixels centred at pixel $i$ - see also \cite{MRF1} for a more extensive discussion. The potential functions typically depend on a number of parameters that can be considered fixed in the case our prior beliefs are informative enough to allow for their manual setting, but which, in general, may vary from pixel to pixel. 


%
%
As it will be explored in details in Section \ref{subsec:prior} there exists a strict relation between Gibbs' priors and many notable regularisers, such as Tikhonov and TV, 
which are designed based on the properties of the discrete image gradients $\bm{\D u}$. Note that non-stationary (i.e. space-variant) MRF approaches have been also considered over the years (see, e.g., \cite{Descombes1999,Ramamurthy2009,Li2009} for some applications) in a purely statistical framework, for which the design of efficient sampling strategies and Maximum Likelihood (ML) approaches is required.

By \eqref{eq:bayes}, the sought unknown image $\bm{u}$ can be recovered as a single-point representative of $\mathbb{P}(\bm{u}\mid\bm{b},\bm{\A})$ moving from a purely statistical to a more optimisation-based framework. A very popular strategy for that goes under the name of Maximum A Posteriori (MAP) estimation \cite{KSstat}: it consists in summarising the posterior pdf with its mode, i.e.
\begin{eqnarray}
\bm{u}_{\mathrm{MAP}}
&\,\;{\in}\;\,&
\argmax_{\bm{u}\in\R^N}\left\{\,\mathbb{P}(\bm{b}\mid\bm{u},\bm{\A})\,\mathbb{P}(\bm{u})\,\right\}
\nonumber\\
&\,\;{=}\;\,& 
\arg\min_{\bm{u}\in\R^N}\!\left\{\,-\ln\mathbb{P}(\bm{b}\mid\bm{u},\bm{\A})-\ln\mathbb{P}(\bm{u})\,\right\}, 
\label{eq:mapfirst}
\end{eqnarray}
where the evidence term $\mathbb{P}(\bm{b})$ has been neglected since it does not depend on $\bm{u}$. We remark that connections between the statistical interpretation of the inverse problem \eqref{eq:lin_model} via MAP formulation \eqref{eq:mapfirst} and its analytical counterpart \eqref{eq:GVM} have been nowadays drawn for a large variety of \emph{space-invariant} regularisation models, while they are generally hard to be understood in the context of \emph{space-variant} models. We will try to fill this gap in the following sections, but for the moment we only warn the reader that moving from a global or space-invariant to a more-informative local or space-variant framework requires significant modelling as well as computational differences have to be taken into account as the explicit dependence on the distribution hyperparameters in \eqref{eq:mrfpr} cannot be neglected any longer.

In space-variant settings, a very natural alternative to a completely supervised strategy, i.e. the parameters of the non-stationary MRFs are specified a priori, is to model the parameters themselves as random variables following a hierarchical Bayesian approach. In this sense, the MAP paradigm represents a very versatile tool in terms of imaging applications and algorithmic optimisation. Among the many contributions in this field, we mention \cite{Calvetti_2019,Calvetti_2020,Calvetti_2020spars}, where the authors propose an iterative alternating scheme for the solution of the hierarchical MAP formulation for sparse recovery problems. A parameter marginalisation, followed by a small variance analysis is employed instead in \cite{Pereyra2017FastUB} for image inpainting applications. 

We further mention that the classical literature on \emph{compressed sensing} algorithms has been revisited and interpreted in probabilistic terms; in this perspective, the popular $\ell_1$-regularisation terms can be thought of as deriving from a support-informed or spatially-adaptive prior, where the local weights are typically estimated starting from the observable data following an empirical Bayesian approach - see \cite{CS} and references therein.

\subsubsection{Generative and unfolded learning-based approaches}  \label{sec:learning}

Many shortcomings are classically associated to the use of model-driven approaches.  Among them, the dependence on the (supposedly known) forward model operator $\bm{\A}$, the high complexity encountered when solving in practice large dimensional PDE systems or computing high-dimensional integrals and, mostly, the conceptual and intrinsic ideas of representing the unknown solution in terms of \emph{a-priori} fixed models and distributions have been shown to represent major limitations which have been overcome over the recent years  by replacing knowledge-based by data-driven designs. It is outside the scopes of this review providing an extensive state-of-the-art description of these from-shallow-to-deep-learning approaches as well as their connection with the world of inverse problem. For that, we refer the reader to the recent review paper \cite{SchoenliebDataDriven2019} where these questions are addressed in a thorough way and where an extensive literature review is given.

For the following description, we will limit ourselves to consider two nowadays extremely popular classes of data-driven approaches which, in some sense, are in close connection with the space-variant modelling discussed in this work. The former class, introduced firstly in  \cite{Goodfellow2014} under the name of Generative Adversarial Networks (GAN), exploit training examples to estimate, rather than the desired solution itself, the distribution it is sampled from (ideally \eqref{eq:mrfpr}) by means of the interplay between two adversarial networks which force the joint machinery to discriminate between `true' data distribution and its `opponent' (adversarial) attacks. Differently from the fully model-driven statistical approaches summarised in Section \ref{sec:statistical}, the solution computed by GANs  potentially offers a more precise way to describe local image features in natural images. However, given their fully data-driven modelling, the interpretation of GANs within both a statistical and analytical framework remains somehow unclear. The outstanding performance of GANs in the field of imaging, however, has favoured their use in a large variety of imaging problems, see, e.g., \cite{Lucic2018,Lunz2018,miyato2018spectral,kurach2019the} and is still a growing research area in the field which could inspire and complement the classical analytical/statistical knowledge-based modelling in future work (see Section \ref{sec:challenges} for research outlooks).

We further recall a different class of learning-based methods which have become very popular due to their easy interpretation from an optimisation viewpoint. Heuristically, such approaches are indeed inspired and thus made understandable by the operative expression of the iterative solution of problem \eqref{eq:GVM} whose algorithmic solver can be \emph{unrolled}/\emph{unfolded} by means of deep-learning architectures (see \cite{Monga2021} for a review). As showed recently in some works, see, e.g., \cite{ChenPock2017,Bertocchi_2020,Hinz2018}, this approach provides indeed an interpretable framework for the data-driven estimation for several model hyperparameters, such as diffusion filters, algorithmic step-sizes and many more, thus combining well-known notions of optimisation (such as gradient/proximal-gradient updates, algorithmic parameters) with more learning-based concepts (activation functions, learning rates\ldots).


\subsubsection{Exploiting space variance in applications}

Space-variant models have been shown to be effective not only on synthetic and/or `didactic' examples, but also on a wide class of real-world applications, such as, for instance, medical imaging.

Regarding CT applications, for instance, several image  reconstruction approaches relying intrinsically on a space-variant estimation of models hyperparameters have been considered. In \cite{Tovey2019}, for instance, directional TV regularisation on the sinogram data is proposed to inpaint the missing range of angles and improve the inversion process, while in \cite{Dong_CS_2020} an automatic selection strategy for a weighted reconstruction model is considered and validations on both synthetic and real data are reported.


As far as (multicontrast) Magnetic Resonance Imaging (MRI) applications are concerned, we refer here to \cite{Arridge2007,Ehrhardt2016} where local regularisation weighting as well as directional (therein often called \emph{structural}) strategies extending those used in \cite{Kaipio_1999} for Electrical Impedence Tomography (EIT) are used.
Analogous approaches have further been considered in \cite{Ehrhardt_2014,Ehrhardt_2016} for improving the quality of Positron Emission Tomography (PET) imaging data by an appropriate fusion driven by structural MRI data. Similar approaches have further been considered in \cite{Bungert_2018} for blind hyperspectral imaging and in \cite{Bathke2017} for magnetic particle imaging.

Within the class of medical imaging applications, we further  mention Photo-Acoustic Tomography (PAT)  
imaging, for which in \cite{app_2018} a space-variant modelling well-adapted to the composite and heterogeneous nature of the target is proposed.

Among the many other real-world applications which significantly benefit from the use of space-variant approaches in terms, in particular, of  local directional dependence, we mention here the work carried out in \cite{Giakoumis,Lozes2015,Calatroni2018Heritage,Heritage} where non-invasive digital reconstruction models based on anisotropic diffusion and transport PDEs have been effectively used in the context of digital reconstruction of ancient frescoes, illuminated manuscripts, surface colorisation, and inpainting to unveil missing or occluded contents via the use of inpainting, image fusion and/or image enhancement techniques.

\subsection{Motivation and contribution of this work}
From the aforementioned sections we have seen that, though following different paths, different communities focused on the mathematical modelling of local image features. Depending on the scientific community considered and when looking at all these works, though, it is not very clear how to connect and compare these different findings, since very similar properties interpreted in different fields may be called with very different names (e.g. \emph{non-stationary} models in a Bayesian framework, \emph{structural} or \emph{adaptive} approaches in an analytical context\ldots). The objective of this work is to provide a unified view of many of these many different models in terms of a new, generalised Bayesian modelling which allows also for some original extensions which have not explicitly studied before. The Bayesian framework described in this work paves indeed the way for the design of new, unexplored strategies helpful to design flexible and adaptive image regularisation functionals whose hyperparameters can be estimated by taking advantage of the form of the underlying gradient distributions through statistical approaches. In order to present the framework in its full generality and in view of its application to a larger class of image reconstruction models, we will not omit to provide details on the use of a (generalised) discrepancy principle strategy needed to compute the hyperparameters associated to the likelihood functionals. We further stress that TV regularisation \eqref{eq:TV_reg} is here taken as a reference regularisation model in this work due to the incredible amount of contributions developed over the last thirty years, as we have discussed and will discuss thoroughly in the following. However, the reader should be reassured that analogous considerations could (and should!) still be exploited for different type of regularisation functionals, as we will shortly comment in the final Section \ref{sec:challenges} of this work.

\subsection{Structure of the paper} 
The paper is organised as follows. In Section \ref{sec:prel}, we set the notations and recall the main notions and definitions which will be useful in the rest of the article. 
Then, in Section \ref{sec:Bayes}, we set the Bayesian probabilistic scene by introducing the main actors, namely the space-variant (non-stationary) priors together with the likelihood pdf corresponding to the class of noise models considered in this review. The properties of the regularisers induced by the space-variant priors will be analysed from a modelling and optimisation viewpoint in Section \ref{sec:map}, while in Section \ref{sec:geom} we provide some useful insights on their geometric interpretation. Next, in Section \ref{sec:joint}, we formulate the final joint image and hyperparameter estimation models, where the different space-variant regularisers proposed  are combined with  general data fidelity terms and the suitable prior distributions on the model hypeparameters (i.e. \emph{hyperpriors}). 
In Section \ref{sec:parameter_estimation}, we address the hyperparameter estimation problem by designing robust maximum likelihood-type strategies that will be tested on synthetic and natural examples. Then, in Section \ref{sec:admm} the numerical solution of the general variational model in the form \eqref{eq:GVM} upon the selected choices of regularisers and fidelity terms is addressed by means of an Alternating Direction Method of Multipliers (ADMM). In Section \ref{sec:restoration}, the effectiveness of the space-variant approach is finally assessed by applying the designed framework to the restoration of different synthetic and natural images. To conclude, in Section \ref{sec:challenges} we discuss some open questions and challenges representing natural extensions of this work. Finally, we report in Section~\ref{sec:conc} some final considerations and remarks.

\section{Notations and preliminaries}
\label{sec:prel}

We will use the notation $\R_+$ and $\R_{++} = \R_{+}\setminus \{0\}$ for the set of non-negative and positive real numbers, respectively, and denote by $\bm{0}_d$, $\bm{1}_d$, $\bm{\I}_d$ the $d$-dimensional vectors of all zeros and ones and the identity matrix of size $d\times d$, respectively. In the case of a matrix $\mathbf{M}\in\mathbb{R}^{d\times d},d>1$, we will denote by $|\mathbf{M}|$ the determinant of $\mathbf{M}$.

To indicate multi-variate random variables and their realisations we will use bold capital/lower-case letters, e.g. $\bm{X}$ and $\bm{x}$, and we denote by $\mathrm{P}_{\bm{X}}$, $\mathbb{P}_{\bm{X}}$, $\bm{\eta}_{\bm{X}}=\mathbb{E}(\bm{X})$, $\bm{\Sigma}_{\bm{X}}$ the probability mass function, pdf, mean and covariance matrix of the random variable $\bm{X}$, respectively. We will omit the subscript $\bm{X}$ if not necessary.
The characteristic and the indicator function of a set $\bm{S}$ are defined as
\begin{equation}
\chi_{\bm{S}}(\bm{x}) \;{:=}\; \left\{\begin{array}{ll}
1&\text{if  }\bm{x}\in \bm{S} \\
0&\text{otherwise} 
\end{array} \right.  \,,\quad \iota_{\bm{S}}(\bm{x})\;{:=}\;-\ln\chi_{\bm{S}}(\bm{x}) = \left\{\begin{array}{ll}
0&\text{ if  }\bm{x}\in \bm{S} \\
+\infty&\text{ otherwise} 
\end{array} \right.\,,
\end{equation}
respectively.
Moreover, we denote by $\Gamma$ the Gamma function, which is defined as follows:
\begin{definition}[Gamma and incomplete Gamma functions]
	The lower and upper incomplete Gamma functions, $\underline{\Gamma}$ and $\overline{\Gamma}$ respectively, are defined by
	\begin{equation}
	\underline{\Gamma}(x,y)
	\,\;{=}\: 
	\int_0^y t^{x-1} e^{-t} d t \, , 
	\quad\;\:
	\overline{\Gamma}(x,y)
	\,\;{=}\: 
	\int_y^{+\infty} t^{x-1} e^{-t} d t \, ,
	\quad\;\:
	(x,y) \;{\in}\;\, \R_{++} \,{\times}\; \R_+ \, .
	\label{eq:Ginc}
	\end{equation}
	The (complete) Gamma function $\Gamma$ is
	\begin{equation}
	\Gamma(x) 
	\,\;{=}\;\,
	\lim_{y \to +\infty} \, \underline{\Gamma}(x,y)
	\,\;{=}\;\,
	\overline{\Gamma}(x,0)
	\,\;{=}\: 
	\int_0^{+\infty} t^{x-1} e^{-t} d t \, ,
	\quad\;\:
	x \;{\in}\; \R_{++} \, .
	\label{eq:Gamma}
	\end{equation}
\end{definition}

Now, we recall the definitions of few well-known distributions to which we are often referring throughout the discussion and that will be mainly employed in the modelling of the regularisation terms reviewed here.

\begin{definition}[Univariate Laplacian distribution]
	\label{def:Ld}
	A scalar random variable $X$ is Laplacian-distributed with mean $\eta\in\R$ and scale parameter $\gamma\in\R_{++}$, denoted by $X\sim \mathrm{L}(\eta,\gamma)$, if its pdf has the form
	\begin{equation}
	\label{eq:Lpdf}
	\tag{Ld}
	\mathbb{P}(x|\eta,\gamma) 
	\;{=}\;
	\frac{\gamma}{2}
	\exp\left(-\gamma \, |x-\eta|\,\right) \, ,
	\quad x \in \R \, .
	\end{equation}
\end{definition}

\begin{definition}[Univariate Generalised Gaussian distribution]
	\label{def:GGd}
	A scalar random variable $X$ is generalised Gaussian-distributed with mean $\eta \in \R$, scale parameter $\gamma\in\R_{++}$ and shape parameter $s \in \R_{++}$, denoted by $X \sim \mathrm{GG}(\eta,\gamma,s)$, if its pdf has the form
	\begin{equation}
	\label{eq:GGpdf}
	\tag{GGd}
	\mathbb{P}\left(x|\eta,\gamma,s\right) 
	\,\;{=}\;\, 
	\frac{\gamma}{2} \frac{s}{\Gamma(1/s) }
	\,
	\exp\left( - \gamma^s\left|x-\eta \right|^s \right) \!, 
	\quad x \;{\in}\; \R ,
	\end{equation}
	%
	with $\Gamma(\cdot)$ denoting the Gamma function defined in \eqref{eq:Gamma}.
	In particular, for any fixed $\eta \in \R$, $\gamma \in \R_{++}$, the pdf in \eqref{eq:GGpdf} converges pointwise to a uniform distribution as $s \to +\infty$, namely
	\begin{equation}
	\lim_{s \to +\infty}
	\mathbb{P}(x|\eta,\gamma,s) \,\;{=}\;\, 
	\frac{\gamma}{2}\,
	\chi_{\,[0, 1/\gamma]}
	\left( \left| x - \eta \right| \right).
	\label{eq:GGpdflim}
	\end{equation}
	Finally, the standard deviation $\sigma\in\R_+$ of the $\mathrm{GG}$ pdf in \eqref{eq:GGpdf} can be written in terms of the scale parameter $\gamma$ as follows
	\begin{equation}
	\label{eq:GNstd}
	\sigma = (1/\gamma) \sqrt{\,\Gamma(3/s)\,/\,\Gamma(1/s)}\,.
	\end{equation}
\end{definition}



The following definition extends the GG distribution to the bivariate case.
\begin{definition}[Bivariate Generalised Gaussian distribution]
	\label{def:BGGD}
	A bivariate random variable $\bm{X}$ is generalised Gaussian-distributed with mean $\bm{\eta}\in\R^2$, symmetric positive definite covariance matrix $\bm{\Sigma}\in\R^{2\times 2}$ and shape parameter $s \in \R_{++}$, denoted by $\bm{X}\sim \mathrm{BGG}(\bm{\eta},\bm{\Sigma},s)$, if its pdf has the form
	\begin{equation}
	\label{eq:BGGpdf}
	\tag{BGGd}
	\mathbb{P}(\bm{x}|\bm{\eta},\bm{\Sigma},s) 
	\;{=}\;
	\frac{1}{2 \pi |\bm{\Sigma}|^{1/2}} \, 
	\frac{s}
	{\Gamma(2/s) 2^{2/s}}
	\: 
	\exp\left(-\frac{1}{2}
	\left(
	(\bm{x}-\bm{\mu})^{T}
	\bm{\Sigma}^{-1}
	(\bm{x}-\bm{\mu})\right)^{\textstyle{\frac{s}{2}}}\right), 
	\end{equation}
\end{definition}
We now provide definitions for the Laplace distribution \eqref{eq:Lpdf} and the Generalised Gaussian one \eqref{eq:GGpdf} when the scalar random variable $X$ is known to be non-negative.
\begin{definition}[Univariate Half Laplacian distribution]
	\label{def:hLd}
	A scalar random variable $X$ is Half Laplacian-distributed with scale parameter $\gamma \in \R_{++}$, denoted by $X \sim \mathrm{hL}(\gamma)$, if $X = |Y|$, with $Y \sim \mathrm{L}(0,\gamma)$. The pdf of $X$ takes the form
	\begin{equation}
	\label{eq:hLpdf}
	\tag{hLd}
	\mathbb{P}(x|\gamma) 
	\;{=}\;
	\left\{ 
	\begin{array}{lc}
	\displaystyle{
		\gamma \,
		\exp\left(-\gamma \, |x|\,\right)}&  
	\text{if} \;\; x \in \R_+ \, ,\\
	0&\text{otherwise}. 
	\end{array}
	\right.
	\end{equation}
\end{definition}
\begin{definition}[Univariate Half Generalised Gaussian distribution]
	\label{def:hGGd}
	A scalar random variable $X$ is Half Generalised Gaussian-distributed with scale parameter \mbox{$\gamma \in \R_{++}$} and shape parameter $s \in \R_{++}$, denoted by $X \sim \mathrm{hGG}(\gamma,s)$, if $X = |Y|$, with $Y \sim \mathrm{GG}(0,\gamma,s)$. The pdf of $X$ takes the form
	\begin{equation}
	\label{eq:hGGpdf}
	\tag{hGGd}
	\mathbb{P}(x|\gamma,s) 
	\;{=}\;
	\left\{ 
	\begin{array}{ll}
	\displaystyle{
		\gamma \, \frac{s}{\Gamma(1/s) }
		\,
		\exp\left( - \gamma^s\left|x\right|^s \right)}&  
	\text{if} \;\; x \in \R_+ \, ,\\
	0&\text{otherwise}. 
	\end{array}
	\right.
	\end{equation}
\end{definition}
We also give the definition of Gamma distribution which will be used in the next sub-section and in the Appendix.
\begin{definition}[Univariate Gamma distribution] 
	A scalar random variable $X$ is Gamma-distributed with scale parameter $\nu\in\R_{++}$ and shape parameter $z\in\R_{++}$, denoted by $X \sim \mathrm{Gamma}(\nu,z)$,  if its pdf has the form
	\begin{equation}
	\mathbb{P}(x|\nu,z) \,\;{=}\;\, 
	\left\{
	\begin{array}{ll}
	\frac{1}{\nu^z \Gamma(z)}
	\, x^{z-1} 
	\exp\left( -\frac{x}{\nu}\right) 
	& \mathrm{for}\;\;
	x \;{\in}\; \R_{++}
	\\
	0 & \mathrm{otherwise}
	\end{array}
	\right.
	\,.
	\label{eq:Gamma_pdf}
	\end{equation}
\end{definition}

Finally, we recall the definition of proximal opearator to which we will extensively refer in Sections \ref{sec:map}-\ref{sec:admm}:

\begin{definition}[proximal operator]
	\label{def:proxx}
	Let $f: \R^N \to \R$ be a proper, lower semi-continuous and possibly non-convex function and let $\beta \in \R_{++}$. The proximal operator of $f$ with proximity parameter $\beta$ is the set-valued function $\mathrm{prox}_f^{\beta}: \R^N \rightrightarrows \R^N$ defined for any $\bm{w}\in\R^N$ by
	\begin{equation}  
	\label{eq:proxgen}
	\mathrm{prox}_f^{\beta}(\bm{w}) \;{:=}\; \argmin{\bm{x} \in \R^N} 
	\left\{
	f(\bm{x}) + \frac{\beta}{2} \, \left\| \bm{x} - \bm{w} \right\|_2^2 
	\right\} \subset \mathbb{R}^N.
	\end{equation}
\end{definition}
Note that if $f$ in the Definition above is convex, then the minimisation problem in \eqref{eq:proxgen} is strongly convex hence it admits a unique minimiser. In this case, $\mathrm{prox}_f^{\beta}(\cdot)$ is a well-defined function from $\mathbb{R}^N$ to itself which coincides with the well-studied proximal operator frequently encountered in convex optimisation contexts (see, e.g., \cite{CombPes11}).

\subsection{Generalised Discrepancy Principle}\label{sec:gdp}


Assuming that the noise degradation operator $\mathcal{N}$ in \eqref{eq:lin_model} models the action of an additive, zero-mean, independent and identically distributed (i.i.d.) generalised Gaussian (in short, AIGG) noise, we have that \eqref{eq:lin_model} can be rewritten as
\begin{equation}
\label{eq:linmod_2}
\bm{b} = \bm{\A u} +\bm{ e}\,,\;\, \text{with  } e_j \text{ realisation of }E_j\sim \mathrm{GG}(0,\omega,q)\,,\;\: j = 1,\ldots,M,
\end{equation}
where, based on Definition \ref{def:GGd}, $\,\bm{e} \in \R^M$ is the vector of realisations of the $M$-variate random variable $\bm{E}$ whose components are i.i.d. GG random variables with shape parameter $q \in \R_{++}$
and scale parameter $\omega \in \R_{++}$, the latter encoding information on the noise standard deviation according to \eqref{eq:GNstd}.

%

Before detailing a Generalised Discrepancy Principle (in short, GDP) useful to define an automatic selection strategy for the regularisation parameter $\mu$ in \eqref{eq:GVM} under the modelling assumption \eqref{eq:linmod_2}, we report the following result, whose proof is based on classical probability arguments and given for completeness in the Appendix.
\begin{proposition}
	\label{prop:Lq_pdf_pap}
	If $\,X_i \sim \mathrm{GG}(0,\omega,q)$, $i = 1,\ldots,M$, with $\omega,q\in\R_{++}$, are independent random variables, then we have
	\begin{equation}
	Y 
	\,\;{=}\;\, 
	\left\| \left(X_1,\ldots,X_M\right) \right\|_q^q
	\,\;{=}\;\, 
	\sum_{i=1}^M \left|X_i\right|^q
	\,\;{\sim}\;\, 
	\mathrm{Gamma}(\nu,z), \quad \nu = \frac{1}{\omega^q}\,,\;z = \frac{M}{q}\,.
	\label{eq:Lq_pdf}
	\end{equation}
	The random variable $Y$ has mean $\eta_Y$ and variance $\sigma_Y^2$ whose expressions are given by
	\begin{equation}
	\eta_Y \,\;{=}\;\, \frac{M}{q}\, \frac{1}{\omega^q} \, , \quad\quad
	\sigma_Y^2 \,\;{=}\;\, \frac{M}{q} \, \frac{1}{\omega^{2q}} \, .
	\label{eq:Lq_moms}
	\end{equation}
\end{proposition}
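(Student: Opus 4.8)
The plan is to reduce the $M$-dimensional problem to a one-dimensional density computation followed by an additivity argument for the Gamma family. First I would determine the law of a single summand $Z_i := |X_i|^q$ and recognise it as a Gamma distribution; then, exploiting independence and the stability of the Gamma family under summation at a fixed scale, I would conclude that $Y = \sum_{i=1}^M Z_i$ is again Gamma-distributed with the shape parameters added; finally, the moments in \eqref{eq:Lq_moms} follow from the standard expressions for the mean and variance of a Gamma law.

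For the first step, since $X_i \sim \mathrm{GG}(0,\omega,q)$ is symmetric about $0$, the variable $|X_i|$ follows the half generalised Gaussian law $\mathrm{hGG}(\omega,q)$ of Definition \ref{def:hGGd}, with density $\omega\,\frac{q}{\Gamma(1/q)}\exp(-\omega^q x^q)$ on $\R_+$. I would then perform the change of variables $Z_i = |X_i|^q$, i.e. $|X_i| = Z_i^{1/q}$, whose Jacobian contributes a factor $\frac{1}{q}\,z^{1/q-1}$. Collecting terms yields the density $\frac{\omega}{\Gamma(1/q)}\,z^{1/q-1}\exp(-\omega^q z)$ on $\R_+$, which, upon comparison with \eqref{eq:Gamma_pdf}, is exactly the density of $\mathrm{Gamma}(\nu,1/q)$ with $\nu = \omega^{-q}$; in particular the normalising constant $\frac{1}{\nu^{1/q}\Gamma(1/q)}$ reduces to $\frac{\omega}{\Gamma(1/q)}$. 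This bookkeeping of the multiplicative constants and of the Jacobian is the only place requiring genuine care, and I expect it to be the main (if modest) obstacle.

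For the second step, I would invoke the additivity of the Gamma family: if $Z_1,\ldots,Z_M$ are independent, share the common scale $\nu$, and have shapes $1/q$, then $Y = \sum_i Z_i \sim \mathrm{Gamma}(\nu, M/q)$. The cleanest justification uses moment generating functions, since the MGF of $\mathrm{Gamma}(\nu,z_0)$ equals $(1-\nu t)^{-z_0}$ for $t < 1/\nu$; by independence the MGF of $Y$ is the product $\prod_{i=1}^M (1-\nu t)^{-1/q} = (1-\nu t)^{-M/q}$, which identifies the law of $Y$ as $\mathrm{Gamma}(\nu,M/q)$ with $\nu = \omega^{-q}$ and $z = M/q$, thus establishing \eqref{eq:Lq_pdf}.

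Finally, the moments \eqref{eq:Lq_moms} follow immediately from the standard Gamma formulas $\eta_Y = \nu z$ and $\sigma_Y^2 = \nu^2 z$: substituting $\nu = \omega^{-q}$ and $z = M/q$ gives $\eta_Y = \frac{M}{q}\,\omega^{-q}$ and $\sigma_Y^2 = \frac{M}{q}\,\omega^{-2q}$, as claimed. Alternatively, one may derive these by differentiating the above MGF at $t=0$, which avoids any appeal to tabulated Gamma moments and keeps the argument self-contained.
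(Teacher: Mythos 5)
Your proof is correct, and it follows the same overall decomposition as the paper: first establish that each summand $|X_i|^q$ is $\mathrm{Gamma}(1/\omega^q,\,1/q)$, then invoke additivity of independent Gamma variables at a fixed scale, then read off the moments. The differences lie in how you execute the two sub-steps. For the single-term law, the paper (Lemma \ref{lem:12}) works through the cumulative distribution function: it writes $F_Y(y) = 1 - 2F_X(-y^{1/q})$ using the explicit incomplete-Gamma form of the GG cdf in \eqref{eq:GN_cdf}, and then differentiates via the chain rule; you instead transform the half-GG density of Definition \ref{def:hGGd} directly with the Jacobian factor $\frac{1}{q}z^{1/q-1}$, which is a bit more elementary and avoids introducing the incomplete Gamma function at all — your bookkeeping of constants matches \eqref{eq:Gamma_pdf} exactly. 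For the additivity step, the paper simply cites Lemma \ref{prop:Gamma_sum} without proof, whereas your moment-generating-function argument $\prod_{i=1}^M(1-\nu t)^{-1/q} = (1-\nu t)^{-M/q}$ actually proves it, making your write-up self-contained where the paper's is not; the same MGF also hands you the moments \eqref{eq:Lq_moms} for free, as you note. In short: identical architecture, but your implementation is both more elementary (density transformation instead of cdf differentiation) and more complete (additivity justified rather than assumed).
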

Thanks to Proposition~\ref{prop:Lq_pdf_pap}, we can now provide the following GDP.



\begin{definition}[Generalised Discrepancy Principle]\label{def:gdp}
	Let $\bm{u}^*(\mu)$ be the parameter-dependent solution of \eqref{eq:GVM} for model \eqref{eq:linmod_2}; denoting by $\bm{r}^*(\mu)=\bm{\A u}^*(\mu)-\bm{b}$ the associated residual image, we have that the Generalised Discrepancy Principle can be formulated as:
	%
	\begin{equation}\label{eq:GDP1}
	\text{Select  }\:\mu\,\;{=}\;\,\mu^*\text{  such that  }\,\,\|\bm{r}^*(\mu^*)\|_q \,\;{=}\;\, \delta_q  \, ,
	\end{equation}
	where
	\begin{equation}\label{eq:GDP2}
	\delta_q\,\;{:=}\;\,\tau \, \mathbb{E}(\|\bm{E}\|_q)\,\;{=}\;\left\{\begin{array}{rl}
	\displaystyle{  \tau\left(M/q\right)^{1/q}(1/\omega)}  &\text{  if  }\quad q<+\infty  \\
	\displaystyle{  \tau(1/\omega)}&\text{  if  }\quad q=+\infty   
	\end{array}  \right.\,,
	\end{equation}
	with $\tau\approx 1$.
\end{definition}

Note that in the case $q=2$, the strategy reduces to the classical discrepancy principle strategy detailed, e.g., in \cite{Morozov,engl2000regularization} for i.i.d. Gaussian noise.

\section{A flexible Bayesian framework}
\label{sec:Bayes}

In this section, we recall the general Bayesian framework outlined in Section \ref{sec:statistical} and adapt it to our purposes and considerations. We start specifying the different noise degradation models $\mathcal{N}$ considered in \eqref{eq:lin_model} and define suitable likelihood pdfs accordingly. Next, we specify the flexible space-variant priors focus of this work by defining a class of increasingly general distributions. 
Likelihoods and priors are then combined by means of a suitable MAP estimate suited to describe the case where the prior hyperparameters are unknown.


\subsection{Likelihoods}
\label{subsek:like}

$\,$ In the following, $\,$the likelihood pdf will be indicated by  \mbox{$\mathbb{P}(\bm{b}\mid \bm{\A u}, \bm{\Phi})$,} where, in addition to $\bm{\A u}\in\mathbb{R}^M$ the dependence on the (generally unknown) likelihood hyperparameter vector $\bm{\Phi}\in\R^{k}$ involved in the analytic expression of the pdf is here explicitly taken into account.

In order to benefit from the automatic parameter selection strategy provided by the GDP detailed in Definition \ref{def:gdp}, we will focus our attention to the class of AIGGN corresponding to model \eqref{eq:linmod_2}.
%

%
Note that, although not exhaustive, this  class is very general as it contains some commonly-used noise models, such as, e.g., the additive i.i.d. Laplacian (AIL) noise ($q=1$), the additive i.i.d. Gaussian (AIG) noise ($q=2$) and the additive i.i.d. uniform (AIU) noise ($q=+\infty$).

Due to the independence assumption for the univariate random variables $E_j$ in \eqref{eq:linmod_2}, the $M$-variate likelihood pdf can be written as the product of $M$ identical univariate GG pdfs (see Definition \ref{def:GGd}). When $q<+\infty$, it thus takes the form:
\begin{align}
\nonumber   \mathbb{P}(\bm{b}\mid \bm{\A u},\bm{\Phi}) =& \prod_{j=1}^{M} \left(\frac{\omega}{2}\frac{q}{\Gamma(1/q)}\exp\left(-\omega^q|(\bm{\A u})_j-b_j|^q\right)\right)\\
\label{eq:likeli}
=&\left(\frac{\omega}{2}\frac{q}{\Gamma(1/q)}\right)^M \exp\left(-\omega^q\left\|\bm{\A u}-\bm{b}\right\|_q^q\right),\,\text{with }\;\bm{\Phi} {=}(\omega,q)\in\R_{++}^2\,,
\end{align}
while for $q=+\infty$ it reads
\begin{align}
\nonumber   \mathbb{P}(\bm{b}\mid \bm{\A u},\bm{\Phi}) =& \prod_{j=1}^{M} \left(\frac{\omega}{2}\right)\chi_{[0,1/\omega]}\left(|(\bm{\A u})_j-\bm{b}_j|\right)\\
\label{eq:likeli_inf}
=&\left(\frac{\omega}{2}\right)^M\chi_{[0,1/\omega]}\left(\|\bm{\A u}-\bm{b}\|_{\infty}\right),\,\text{with }\;\bm{\Phi} {=}(\omega,q)\in\R_{++}\times \{+\infty\}\,.
\end{align}
In our settings, we will assume that both parameters $\bm{\Phi} {=}(\omega,q)$ are known. As a consequence, from now on, the dependence on $\bm{\Phi}$ in the expression of the likelihood pdf will be omitted. 

\subsection{Priors}
\label{subsec:prior}

Recalling the statistical modelling introduced in Section \ref{sec:statistical} and in particular the MRF structure in \eqref{eq:mrfpr}, we proceed similarly as in the previous section and make explicit the dependence of the prior pdf on the vector of prior hyperparameters $\bm{\Theta}$ involved in its expression, which here are assumed to be unknown. The corresponding Gibbs' prior reads
\begin{equation}
\label{eq:mrfpr2}
\mathbb{P}(\bm{u}\mid \bm{\Theta}) = \frac{1}{Z(\bm{\Theta})} \exp \bigg(- \sum_{i=1}^{n}V_{\mathcal{C}_{i}}(\bm{u};\bm{\Theta})\bigg)\,,
\end{equation}
where $Z(\bm{\Theta}) \in \R_{++}$ is a normalisation constant depending on the unknown parameters $\bm{\Theta}$ while $V_{\mathcal{C}_{i}}$ is the Gibbs' potential on the $i$-th clique $\mathcal{C}_i$.

Recalling the general Markovian property \eqref{eq:mrf} and thinking of the description of the image in terms of its local gradients $(\bm{\D u})_i$  discretised by standard first-order forward finite differences, we have  that \eqref{eq:mrf} turns into
\begin{equation}
\label{eq:gradmrf}
\mathbb{P}(\,U_i \,{=}\: u_i\mid U_j \,{=}\: u_j\,,\;j\neq i\,) \;{=}\;
\mathbb{P}(\,U_i \,{=}\: u_i\mid U_{i,\mathrm{right}} \,{=}\: u_{i,\mathrm{right}}\,,\, U_{i,\mathrm{down}} \,{=}\: u_{i,\mathrm{down}}).
\end{equation}
For better illustration, we show the corresponding configuration of the generic clique in Figure \ref{fig:clique}.  Condition \eqref{eq:gradmrf} states that the potential function $V_{\mathcal{C}_i}$ is defined over a discrete set of cardinality $3$, namely $\{u_i,u_{i,\mathrm{right}},u_{i,\mathrm{down}}\}$, which are indeed the values involved in the computation of the discrete gradient at pixel $i$. 
\begin{SCfigure*}[][!t]  
	\centering
	\includegraphics[width=2in]{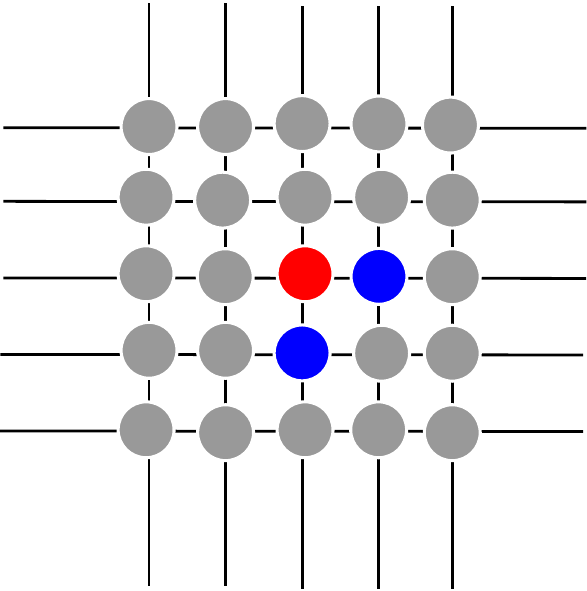}
	\caption{Pixels represented as atoms in a lattice. The coloured ones belong to the clique related to red atom. In particular, the blue atoms are involved in the computation of the finite difference  gradient evaluated at the red atom.}
	\label{fig:clique}
\end{SCfigure*}

Introducing the function $\bm{z}:\R^N\to \R_{+}^N$ defined by
\begin{equation}\label{eq:zfun}
\bm{z}(\bm{u}) = \left[z_1(\bm{u}),\ldots,z_N(\bm{u})\right]^T , \quad 
z_i(\bm{u}) = \|(\bm{\D u})_i\|_2\,,
\end{equation}
and assuming that each image gradient magnitude $x_i:=z_i(\bm{u}) = \| (\bm{\D u})_i \|_2 \in\R_{+}$ is the realisation of the same univariate half-Laplacian (or exponential) distribution \eqref{eq:hLpdf} with scale parameter $\alpha\in\R_{++}$ 
%
%
and that the magnitudes at different pixels are independent, we have that $\mathbb{P}(\bm{u}\mid \bm{\Theta})$ takes the form of the Gibbs' TV prior
\begin{align}
\mathbb{P}(\bm{u}\mid \bm{\Theta})
\,\;{=}\;\,& c(\bm{\Theta}) \,\mathbb{P}(\bm{z}(\bm{u})\mid \bm{\Theta})
\,\;{=}\;\,
c(\bm{\Theta}) \prod_{i=1}^N 
\mathbb{P}(z_i(\bm{u}) \mid \bm{\Theta})\\
\,\;{=}\;\,&c(\bm{\Theta})\,
\prod_{i=1}^{N}\left(
\alpha\exp\left(-\alpha\left\|(\bm{\D u})_i\right\|_2\right)\right) 
\\
\,\;{=}\;\,&c(\bm{\Theta})\,
\alpha^N \exp\left(-\alpha\sum_{i=1}^{N}\left\|(\bm{\D}\bm{u})_i\right\|_2\right),
\; \text{with }\;\, \bm{\Theta} = \alpha \in \R_{++} \,,
\label{eq:TV_prior}
\end{align}
where the scalar normalisation function $c:\R_{++}\to\R_{++}$, depending only on $\bm{\Theta}$, reads
\begin{equation}
c(\bm{\Theta}) = \frac{1}{\displaystyle{ \int_{\bm{u}\in\R^N} \alpha^N \exp\left(-\alpha\sum_{i=1}^{N}\left\|(\bm{\D}\bm{u})_i\right\|_2\right)d\bm{u}   }}\,.
\end{equation}
Notice that the presence of $c(\bm{\Theta})$ guarantees that the prior pdf in \eqref{eq:TV_prior} sums up to one when considering the space of all possible configurations.


A way to improve upon the intrinsic rigidity of \eqref{eq:TV_prior}, due to the dependence on the single scale parameter $\alpha\in\mathbb{R}_{++}$, consists in letting it vary at any pixel, so as to maintain the same prior hypothesis on the image gradient magnitudes $x_i$, while enriching it with further flexibility depending on the local scale $\alpha_i$. The corresponding space-variant hLd probability density thus reads in this case
\begin{equation}  \label{eq:hLD-sv}
\mathbb{P}(x_i\mid\alpha_i) = \begin{cases}  
\alpha_i \exp (-\alpha_i x_i),\quad & x_i\geq 0\\
0, \quad & x_i<0
\end{cases},
\quad i = 1,\ldots,N, 
\quad\, 
\alpha_i \in \R_{++} \,,
\tag{hLd-sv}
\end{equation}
and yields the following non-stationary prior pdf on $\bm{u}$
\begin{align}
\mathbb{P}(\bm{u}\mid \bm{\Theta})
\,\;{=}\;\,&c(\bm{\Theta})\,\mathbb{P}(\bm{z}(\bm{u})\mid\bm{\Theta})\\
\;{=}\;&c(\bm{\Theta})\,\prod_{i=1}^{N}\left(
\alpha_i\exp\left(-\alpha_i\left\|(\bm{\D u})_i\right\|_2\right)\right) 
\nonumber \\
\,\;{=}\;\,&c(\bm{\Theta})\,
\left(\prod_{i=1}^{N}\alpha_i\right)
\exp\left(-\sum_{i=1}^{N} \alpha_i \left\|(\bm{\D}\bm{u})_i\right\|_2\right),
\quad \text{with }\;\, 
\bm{\Theta} = \bm{\alpha} \in\R_{++}^{N}\,,
\label{eq:WTV_prior}
\end{align}
with $z$ defined as in \eqref{eq:zfun} and the normalisation function $c(\bm{\Theta})$ defined by
\begin{equation}
c(\bm{\Theta}) = \frac{1}{\displaystyle{ \int_{\bm{u}\in\R^N} \left(\prod_{i=1}^{N}\alpha_i\right) \exp\left(-\sum_{i=1}^{N}\alpha_i\left\|(\bm{\D}\bm{u})_i\right\|_2\right)d\bm{u}   }}\,.
\end{equation}

\begin{figure}[!t]
	\centering
	\begin{subfigure}{0.45\textwidth}
		\centering
		\includegraphics[scale=0.2]{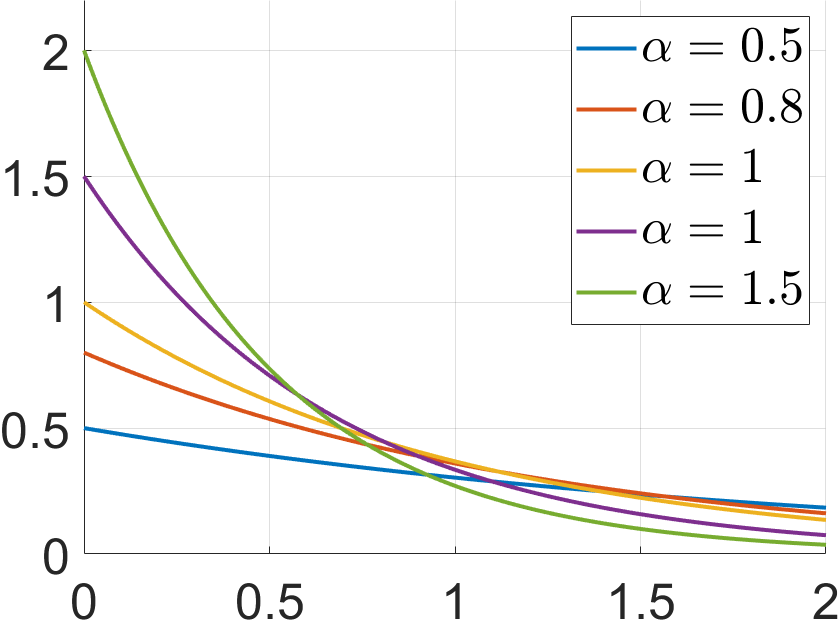}
		\caption{hLD}
		\label{fig:hld}
	\end{subfigure}
	\begin{subfigure}{0.45\textwidth}
		\centering
		\includegraphics[scale=0.2]{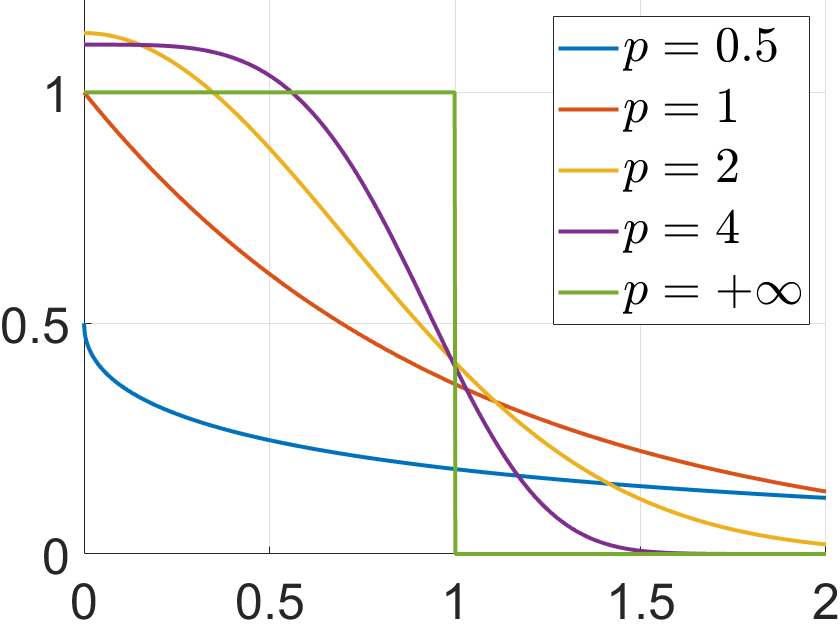}
		\caption{hGGD}
		\label{fig:hggd}
	\end{subfigure}\\
	\begin{subfigure}{0.32\textwidth}
		\centering
		\includegraphics[width=4.1cm]{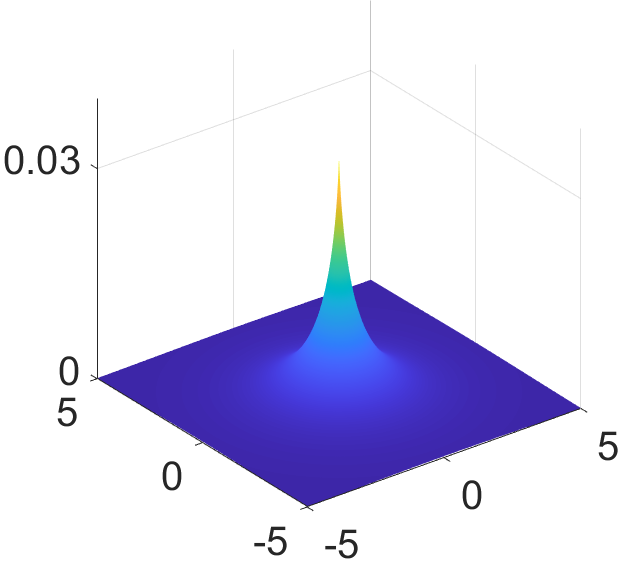}
		\caption{$(\alpha,p,\theta,a){=}(2,0.7,0,1)$}
		\label{fig:bggd1}
	\end{subfigure}
	\begin{subfigure}{0.32\textwidth}
		\centering
		\includegraphics[width=4.1cm]{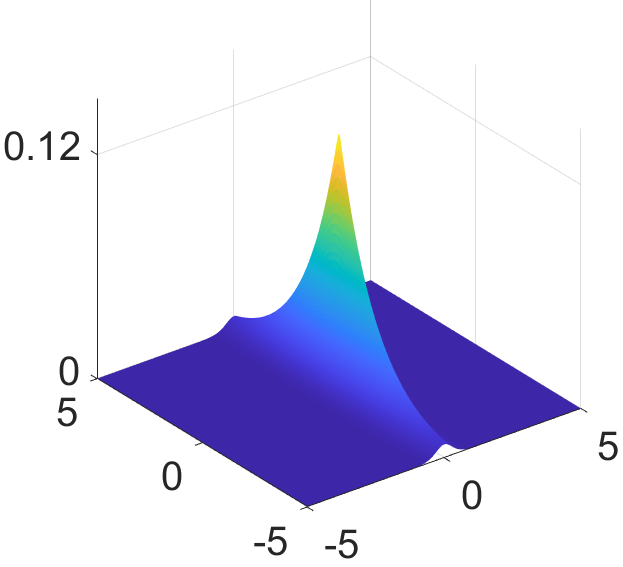}
		\caption{ $(\alpha,p,\theta,a){=}(6,1,0,0.1)$}
		\label{fig:bggd2}
	\end{subfigure}
	\begin{subfigure}{0.32\textwidth}
		\centering
		\includegraphics[width=4.1cm]{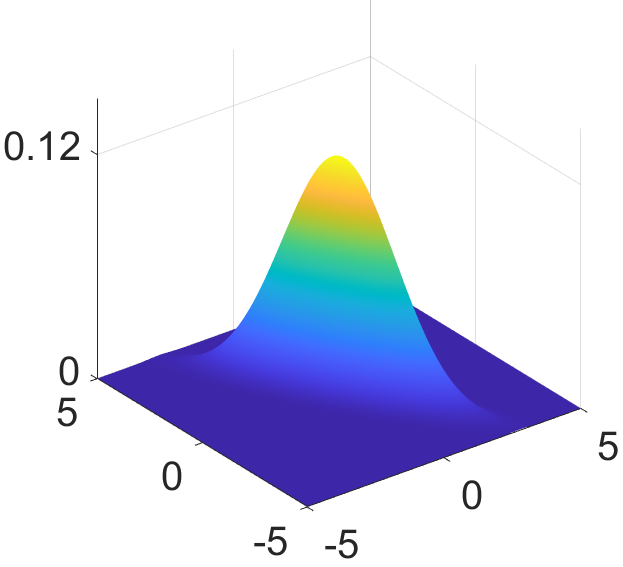}
		\caption{$(\alpha,p,\theta,a){=}(2,2,\pi/6,0.2)$}
		\label{fig:bggd3}
	\end{subfigure}\\
	\begin{subfigure}{0.32\textwidth}
		\centering
		\includegraphics[width=4cm]{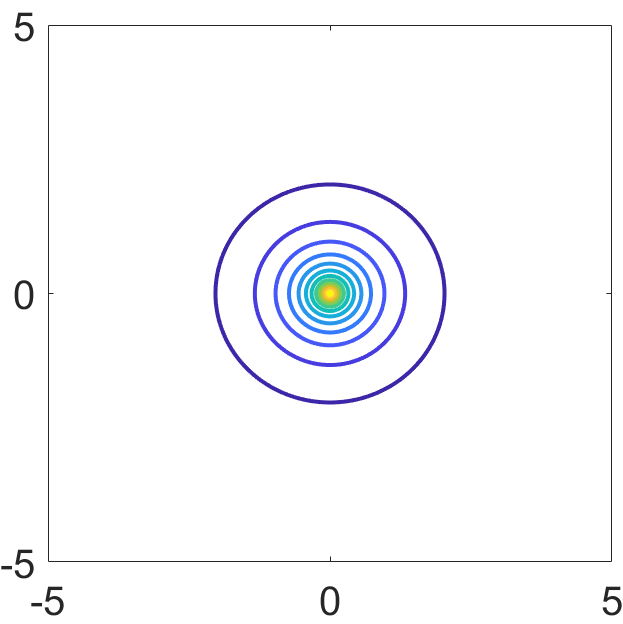}
		\caption{}
		\label{fig:bggd_cont1}
	\end{subfigure}
	\begin{subfigure}{0.32\textwidth}
		\centering
		\includegraphics[width=4cm]{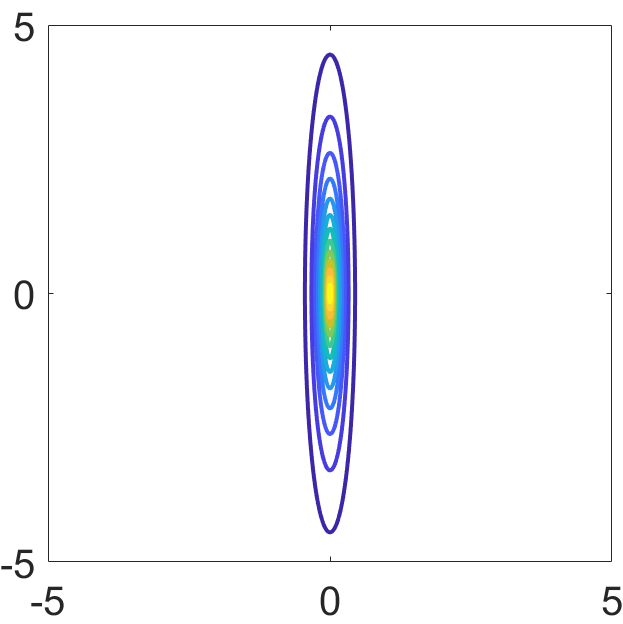}
		\caption{}
		\label{fig:bggd_cont2}
	\end{subfigure}
	\begin{subfigure}{0.32\textwidth}
		\centering
		\includegraphics[width=4cm]{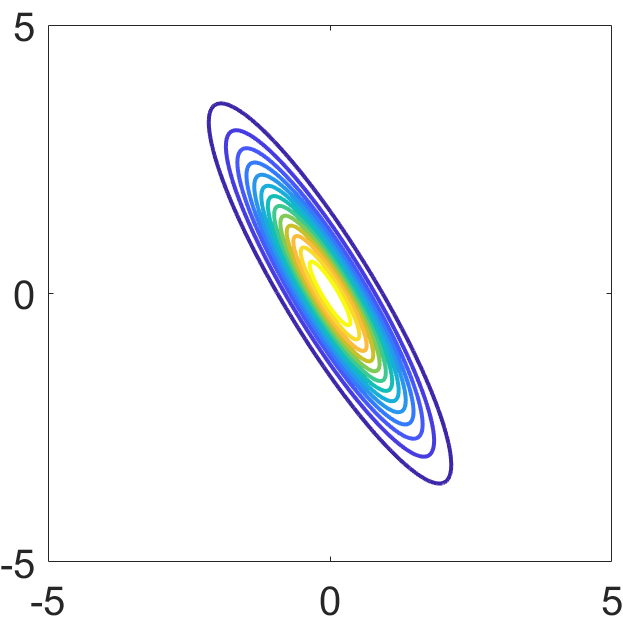}
		\caption{}
		\label{fig:bggd_cont3}
	\end{subfigure}
	\caption{\emph{First row}: half Laplacian distribution for different values of $\alpha\in\R_{++}$ (a), half Generalised Gaussian distribution for different values of $p\in\R_{++}$ and $\alpha=1$ (b). \emph{Second row}: bivariate Generalised Gaussian distribution for different values of $\alpha\in\R_{++}$, $p\in\R_{++}$, $\theta\in [-\pi/2,\pi/2)$, $a\in(0,1]$. \emph{Third row}: contour plots of the bivariate Generalised Gaussian pdfs displayed in the second row.}
	\label{fig:my_label_1}
\end{figure}

As shown in Figure \ref{fig:hld}, this choice allows for more flexibility in the description of local gradient contents; nonetheless, it has the major drawback of still limiting to the family of half-Laplacian distributions the choice of the local probability density function considered. 

To overcome this, one can leave further freedom to the heavy- VS. light-tailed behaviour of the exponential distribution considered. This can be done in practice by allowing, along with a space-variant pdf scale $\alpha_i$, a different exponential behaviour depending on a ``sharpness" (shape) parameters $p_i$, still possibly varying at any $i=1,\ldots,N$. This choice corresponds to consider a space-variant half-Generalised Gaussian Distribution (hGGD-sv) (see \eqref{eq:GGpdf}), whose expression for  $i=1,\ldots,N$ reads:
\begin{equation}  \label{eq:hGGD-sv}
\mathbb{P}(x_i;\alpha_i, p_i) = \begin{cases}  
\frac{\alpha_i p_i}{\Gamma(1/p_i)} \exp (-(\alpha_i x_i)^p_i),\quad & x_i\geq 0\\
0 \quad & x_i<0
\end{cases},\quad\alpha_i,p_i\in\R_{++}\,. \tag{hGGd-sv}
\end{equation}
In Figure \ref{fig:hggd} we show the plot of the hGGd pdf for different values of the shape parameter $p$ while leaving the scale parameter $\alpha=1$ fixed. One can easily notice 
that the family of hGG distributions is particularly rich, ranging from hyper-Laplacian distributions for $p<1$ to uniform distributions for $p=+\infty$. 
The prior on $\bm{u}$ corresponding to the pdf in \eqref{eq:hGGD-sv} reads
\begin{align}
\!\mathbb{P}(\bm{u}\mid \bm{\Theta})
{=}& c(\bm{\Theta})\mathbb{P}(\bm{z}(\bm{u})\mid\bm{\Theta})\\
\!{=}& c(\bm{\Theta})\prod_{i=1}^{N}\left(
\frac{\alpha_i p_i}{\Gamma(1/p_i)}
\exp\left({-}\left(
\alpha_i\left\|(\bm{\D u})_i\right\|_2\right)^{p_i}\right)\right)
\nonumber \\
\!\,{=}\,&c(\bm{\Theta})\,
\left(\prod_{i=1}^{N}\frac{\alpha_i p_i}{\Gamma(1/p_i)}\right)  \exp\!\left({-}\sum_{i=1}^{N}\alpha_i^{p_i}\left\|(\bm{\D u})_i\right\|_2^{p_i}\right),
\;\text{ with } \bm{\Theta} {=} \left(\bm{\alpha},\bm{p}\right)\in\R_{++}^{N\times 2}\,,
\label{eq:WTVp_prior}
\end{align}
where $z$ is defined in \eqref{eq:zfun}, while $c(\bm{\Theta})$ now takes the form
\begin{equation}
c(\bm{\Theta}) = \frac{1}{\displaystyle{ \int_{\bm{u}\in\R^N} \left(\prod_{i=1}^{N}\frac{\alpha_i p_i}{\Gamma(1/p_i)}
		\right) \exp\left(-\sum_{i=1}^{N}\alpha_i^{p_i}\left\|(\bm{\D}\bm{u})_i\right\|_2^{p_i}\right)d\bm{u}   }}\,.
\end{equation}

\medskip

We stress that, despite their differences, the choices \eqref{eq:hLpdf}, \eqref{eq:hLD-sv} and \eqref{eq:hGGD-sv} correspond to the formulation of prior assumptions on the quantities $\| (\bm{\D u})_i \|_2$, for $i=1,\ldots,N$. Clearly, this is somehow a `rigid' choice since it does not exploit the two-dimensionality of local image gradients $(\bm{\D u})_i = ( (\bm{\D_h u})_i, (\bm{\D_v u})_i)$ and, possibly, the correlation between their horizontal and vertical components $(\bm{\D_h u})_i$ and $(\bm{\D_v u})_i$. To do so, a different, possibly space-variant, prior assumption imposing \emph{a-priori} information on the local image gradient can be made. Namely, for all $\bm{y}_i := ( (\bm{\D_h u})_i, (\bm{\D_v u})_i)$ and for $i=1,\ldots,N$ one can assume that $\bm{y}_i$ follows a Bivariate Generalised Gaussian distribution (BGGd, see \eqref{eq:BGGpdf}) which is space-variant with respect to shape and scale and takes locally the form:
\begin{equation}\label{eq:MGGDi}
\mathbb{P}(\bm{y}_i ;p_i,\bm{\Sigma}_i)
\;{=}\;\frac{1}{2 \pi |\bm{\Sigma}_i|^{1/2}} \, \frac{p_i}{\Gamma(2/p_i) \, 2^{\:\!2/p_i}}
\: 
\exp\left(-\frac{1}{2}(\bm{y}_i^{T}\bm{\Sigma}_i^{-1}\bm{y}_i)^{p_i/2}\right)\,.
\tag{BGGd-sv}
\end{equation}
where, for every $i$, the  covariance matrix $\bm{\Sigma}_i \in \mathbb{R}^{2 \times 2}$ is symmetric positive definite with determinant $|\bm{\Sigma}_i|>0$. 
The associated non-stationary prior pdf on $\bm{u}$ can thus be written as
\begin{align}
\mathbb{P}(\bm{u}\mid\bm{\Theta}) 
\,\;{=}\;\,&c(\bm{\Theta})\,\mathbb{P}(\bm{z}(\bm{u})\mid\bm{\Theta})\\
\,\;{=}\;\,&c(\bm{\Theta})\,\prod_{i=1}^{N}\left(\frac{1}{2 \pi |\bm{\Sigma}_i|^{1/2}} \, \frac{p_i}{\Gamma(2/p_i) \, 2^{\:\!2/p_i}}
\exp\left(-\frac{1}{2}((\bm{\D u})_i^{T}\bm{\Sigma}_i^{-1}(\bm{\D u})_i)^{\textstyle{\frac{p_i}{2}}}\right)
\right)
\nonumber \\
\,\;{=}\;\,&c(\bm{\Theta})\,
\prod_{i=1}^{N}\left(\frac{1}{2 \pi |\bm{\Sigma}_i|^{1/2}} \, \frac{p_i}{\Gamma(2/p_i) \, 2^{\:\!2/p_i}}\right)
\exp\left(
-\frac{1}{2} \sum_{i=1}^{N}\left(
(\bm{\D u})_i^{T}\bm{\Sigma}_i^{-1}(\bm{\D u})_i
\right)^{\textstyle{\frac{p_i}{2}}}\right)\,,
\label{eq:bivprior}
\end{align}
where now $\bm{z}(\bm{u})=\bm{\D u}$, and the normalisation function $c$ is defined by
\begin{equation}
c(\bm{\Theta}) = \frac{1}{\displaystyle{ \int_{\bm{u}\in\R^N} \prod_{i=1}^{N}\left(\frac{1}{2 \pi |\bm{\Sigma}_i|^{1/2}} \, \frac{p_i}{\Gamma(2/p_i) \, 2^{\:\!2/p_i}}\right)
		\exp\left(
		-\frac{1}{2} \sum_{i=1}^{N}\left(
		(\bm{\D u})_i^{T}\bm{\Sigma}_i^{-1}(\bm{\D u})_i
		\right)^{\textstyle{\frac{p_i}{2}}}\right)d\bm{u}   }}\,.
\end{equation}


For a better interpretation of such choice, we now perform some simple manipulations to the generic $i$-th term of the sum appearing in \eqref{eq:MGGDi} so as to highlight how the information related to the local image scale and orientations are all encoded in the local covariance matrices $\bm{\Sigma_i}$.  To this purpose, we consider the following eigenvalue decomposition 
\begin{equation}
\bm{\Sigma_i}
\;{=}\;\bm{
	\V_i}^T\bm{ \E_i \V_i},
\;\;
\bm{\E_i} \;{=}\: \begin{pmatrix}
e^{(1)}_i &\!\! 0\\
0 &\!\! e^{(2)}_i
\end{pmatrix}\!, 
\;\;
e^{(2)}_i \geq e^{(1)}_i > 0, 
\;\;
\bm{\V_i}^T\! \bm{\V_i} = \bm{\V_i} \bm{\V_i}^T = \bm{\I}_2 , 
\label{eq:FF4}
\end{equation}
where for every $i=1,\ldots,N$, $e^{(1)}_i, e^{(2)}_i$ are the (positive) eigenvalues of $\bm{\Sigma_i}$ and $\bm{\V_i}$ is an orthonormal (rotation) matrix to be made precise. We can thus rewrite the $i$-th term of the sum in \eqref{eq:bivprior} as
\begin{equation}
\label{eq:comp1}
\Big( \:\! (\bm{\D u})_{i}^T \bm{\Sigma_i}^{-1} (\bm{\D u})_{i} \:\! \Big)^{\textstyle{\frac{p_{i}}{2}}} \! = \: 
\Big( \:\! (\bm{\D u})_{i}^T \bm{\V_i}^T \bm{\E_i}^{-1} \bm{\V_i} (\bm{\D u})_{i} \:\! \Big)^{\textstyle{\frac{p_{i}}{2}}} \! = \:
\left\| \, \widetilde{\bm{\Lambda}}_i \bm{\mathrm{R}}_{\:\!\text{-}\theta_i}\, (\bm{\D u})_i \right\|_2^{p_{i}}\,,
\end{equation}
where 
\begin{eqnarray}
&\widetilde{\bm{\Lambda}}_i 
\;{=}\: 
\begin{pmatrix}
{\widetilde{\lambda}_i^{(1)}} & 0\\
0 & \widetilde{\lambda}_i^{(2)}
\end{pmatrix}
\,\;{:=}\;\,\, 
\bm{\E_i}^{-1/2}
\:{=}\:
\begin{pmatrix}
1/\sqrt{e_i^{(1)}} &\!\!\!0 \\
0 & \!\!\!1/\sqrt{e_i^{(2)}}
\end{pmatrix}
,
\label{siginvdec0}
\\
&\bm{\mathrm{R}}_{\:\!\text{-}\theta_i} 
\;{=}\: 
\begin{pmatrix}
\cos\theta_i & \sin\theta_i\\
-\sin\theta_i & \cos\theta_i
\end{pmatrix} 
\,\;{=}\;\, \bm{\V_i}\,,\quad \theta_i\in[-\pi/2,\pi/2)\,,
\label{siginvdec}
\end{eqnarray}
and $\theta_i\in[-\pi/2,\pi/2)$ denotes the angle drawn locally with respect to the horizontal axis, as simple geometrical considerations show.
By now introducing the two parameter vectors $\bm{\alpha} \in \R_{++}^{N}$ and  $\bm{a}\in(0,1]^N$ with components
\begin{equation}
\alpha_i := \widetilde{\lambda}_i^{(1)}
\in \R_{++},
\quad\; a_i 
\;{:=}\; 
\frac{\widetilde{\lambda}_i^{(2)}}{\widetilde{\lambda}_i^{(1)}} \in (0,1], 
\quad\; i = 1,\ldots,N\,,
\label{eq:parss}
\end{equation}
we have that the matrix $\widetilde{\bm{\Lambda}}_i$ in \eqref{siginvdec} can be equivalently rewritten as
\begin{equation}
\widetilde{\bm{\Lambda}}_i 
\,\;{=}\;\, 
\widetilde{\lambda}_i^{(1)} \,
\left(
\begin{array}{cc}
1&0\\
0&\displaystyle{
	\widetilde{\lambda}_i^{(2)} / \widetilde{\lambda}_i^{(1)}
}
\end{array}
\right)
\;{=}\;\,
\alpha_i
\bm{\Lambda}_{a_i}
\,,\quad
\text{with}\;\: 
\bm{\Lambda}_{a_i} = 
\left(
\begin{array}{cc}
1&0\\
0&a_i
\end{array}
\right)\,.
\label{eq:comp2}
\end{equation}
Combining altogether, we have that \eqref{siginvdec}-\eqref{eq:comp2} entail that the term in \eqref{eq:comp1} can be indeed written as
\begin{equation}
\Big( \:\! (\bm{\D u})_{i}^T \bm{\Sigma_i}^{-1} (\bm{\D u})_{i} \:\! \Big)^{\textstyle{\frac{p_{i}}{2}}} 
=\;
\alpha_i^{p_i} \left\| \, 
\bm{\Lambda}_{a_i} \bm{\mathrm{R}}_{\:\!\text{-}\theta_i} (\bm{\D u})_{i} 
\right\|_2^{p_i} \, .
\label{eq:serve0}
\end{equation}
Furthermore, based on \eqref{eq:FF4} and \eqref{siginvdec}-\eqref{eq:comp2}, we observe that:
\begin{equation}
\big| \bm{\Sigma}_i \big|^{-1/2} 
\;{=}\;
\big| 
\bm{\V}_i^T\bm{\E}_i \bm{\V}_i
\big|^{-1/2} 
\;{=}\;
\big| \bm{\E}_i \big|^{-1/2} 
\;{=}\;
\big| \widetilde{\bm{\Lambda}}_i \big|
\;{=}\;
\big| \alpha_i^2 \bm{\Lambda}_{a_i} \big|
\;{=}\;
\alpha_i^{2} a_i >0.
\label{eq:serve}
\end{equation}
Plugging now \eqref{eq:serve0} and \eqref{eq:serve} into the expression  \eqref{eq:bivprior}, we obtain the following equivalent form
\begin{equation}
\mathbb{P}(\bm{u}\mid\bm{\Theta}) 
\;{=}\;
\frac{c(\bm{\Theta})}{(2\pi)^N}\left(
\prod_{i=1}^{N} 
\frac{\alpha_i^{2} \, p_i \, a_i}{\Gamma(2/p_i) \, 2^{\:\!2/p_i}}
\right)
\exp\left(
-\sum_{i=1}^{N} \alpha_i^{p_i} \left\| \, \bm{\Lambda}_{a_i} \bm{\mathrm{R}}_{\:\!\text{-}\theta_i} \, (\bm{\D u})_i \right\|_2^{p_{i}}\right)\, 
\label{eq:propreg}
\end{equation}
where the vector of hyperparameters is here:
\begin{equation}
\label{eq:def_alpha_theta}
\bm{ \Theta} = \left(\bm{\alpha},\bm{p},\bm{\theta,\bm{a}}\right)\in\R_{++}^{N\times 2} \times [-\pi/2,\pi/2)^N \times (0,1]^N\,.
\end{equation}
%

Compared to the univariate prior \eqref{eq:WTVp_prior}, prior \eqref{eq:propreg}-\eqref{eq:def_alpha_theta} is characterised by two additional vectors of (space-variant) parameters $\theta_i \in [-\pi/2,\pi/2)$ and $a_i \in (0,1]$, $i = 1.\ldots,N$. These parameters relate in fact to the bivariate nature of the BGGd in \eqref{eq:MGGDi}. In particular,
The parameter $\theta_i$ represents the direction of the major axis of elliptical contour lines of the local BGGd, while $a_i$ describes locally the eccentricity of the contour lines. More precisely, $a_i=1$ corresponds to circular contour lines, i.e. to a locally maximal isotropic pdf, whereas for $a_i\approx 0$ the contour lines approach lines drawing the angle $\theta_i$ w.r.t. to the horizontal axis, hence they are maximally anisotropic. 
The great flexibility of distribution in \eqref{eq:MGGDi} is highlighted in Figures \ref{fig:bggd1}-\ref{fig:bggd3}, where the pdfs corresponding to the choice of different scalar parameters $\alpha_i$, $p_i$, $a_i$ and $\theta_i$ are shown, while the corresponding contour plots are displayed in Figures \ref{fig:bggd_cont1}-\ref{fig:bggd_cont3}.

\begin{remark} 
	Note that the non-stationary prior in \eqref{eq:WTV_prior} reduces to the stationary TV prior in \eqref{eq:TV_prior} for constant choices of the scale parameters $\alpha_i= \alpha$, $\forall i$. Analogously, by setting $\alpha_i= \alpha$ and $p_i= p$, $\forall i$, in \eqref{eq:WTVp_prior}, we recover the space-invariant prior corresponding to the TV$_p$ regulariser in \eqref{eq:TVp}. The same consideration holds for the DTV regularisation term in \eqref{eq:DTV}, whose statistical counterpart is obtained starting from \eqref{eq:propreg} and setting $\alpha_i= \alpha$, $p_i= 1$, $\theta_i= \theta$ and $a_i= a$, $\forall i$.
	

	

\end{remark}


\subsection{Hierarchical modelling}
\label{subsec:hierarchcal}

The effort made in deriving the highly-parametric prior distributions in the previous section would be vain if not coupled with an automatic and robust procedure for the estimation of the unknown parameters $\bm{\Theta}$. The choice of recasting the original problem in probabilistic terms makes  very natural to model the unknown vector $\bm{\Theta}$ as well as the unknown  $\bm{u}$, as random variables. To do so, we thus need to introduce a further pdf encoding the \emph{a priori} beliefs on $\bm{\Theta}$, which, in the following, will be denoted by $\mathbb{P}(\bm{\Theta})$ and which will be referred to as \emph{hyperprior}. 

By proceeding as in \eqref{eq:bayes}, we seek for the analytic expression of the joint posterior pdf, which, by leaving the dependence on $\bm{\Theta}$ explicit, is related to the prior and likelihood pdf through
\begin{equation}
\label{eq:bayes2}
\mathbb{P}(\bm{u},\bm{\Theta} \mid\bm{ b}) \;{=}\;\frac{\mathbb{P}(\bm{u},\bm{\Theta})\mathbb{P}(\bm{b}\mid\bm{ \A u})}{\mathbb{P}(\bm{b})}\;{=}\;\frac{\mathbb{P}(\bm{u}\mid \bm{\Theta})\mathbb{P}(\bm{\Theta})\mathbb{P}(\bm{b}\mid \bm{\A u})}{\mathbb{P}(\bm{b})}\,,
\end{equation}
where we have used  $\mathbb{P}(\bm{u},\bm{ \Theta})=\mathbb{P}(\bm{u}\mid\bm{ \Theta})\mathbb{P}(\bm{\Theta})$.
Proceeding by standard MAP estimation, we thus have that the sought solution pair $\left\{\bm{u}^*,\bm{\Theta}^*\right\}$ is the one maximising $\mathbb{P}(\bm{u},\bm{\Theta}\mid\bm{ b})$, i.e.:
\begin{equation}
\label{eq:map_0}
\left\{\bm{u}^*,\bm{\Theta}^*\right\}\in\argmax_{\bm{u},\bm{\Theta}}\left\{
\mathbb{P}(\bm{u}\mid \bm{\Theta})\mathbb{P}(\bm{\Theta})\mathbb{P}(\bm{b}\mid \bm{\A u})\right\}\,,
\end{equation}
or, equivalently,
\begin{align}
\label{eq:map}
\begin{split}
\left\{\bm{u}^*,\bm{\Theta}^*\right\}\;{\in}\;&\argmin{\bm{u},\bm{\Theta}}\left\{-\ln\mathbb{P}(\bm{u}\mid\bm{ \Theta})-\ln\mathbb{P}(\bm{\Theta})-\ln\mathbb{P}(\bm{b}\mid \bm{\A u})\right\}\\
\;{=}\;&\argmin{\bm{u},\bm{\Theta}}\left\{-\ln c(\bm{\Theta})-\ln\mathbb{P}(\bm{z}(\bm{u})\mid\bm{ \Theta})-\ln\mathbb{P}(\bm{\Theta})-\ln\mathbb{P}(\bm{b}\mid \bm{\A u})\right\}
\end{split}
\end{align}
where the evidence term $\mathbb{P}(\bm{b})$ has been dropped as it does not depend either on $\bm{u}$ or $\bm{\Theta}$.

When tackling the joint model \eqref{eq:map}, two major difficulties arise, namely the computation of the highly-dimensional constant $c(\bm{\Theta})$ and the choice of an efficient algorithmic scheme for the numerical solution of the minimisation problem \eqref{eq:map}. Different strategies have been designed to overcome the former issue: most of them are based on a modification of the conditional prior $\mathbb{P}(\bm{u}\mid\bm{\Theta})$ which comes from either approximating $c(\bm{\Theta})$ (see \cite{VB1,VB2,VB3}) or neglecting it (see \cite{robust}). Here, we adopt this latter approach so that the joint hypermodel \eqref{eq:map} takes the form:
\begin{equation}
\label{eq:map_last}
\left\{\bm{u}^*,\bm{\Theta}^*\right\}\;{\in}\;\argmin{\bm{u},\bm{\Theta}}\left\{-\ln\mathbb{P}(\bm{z}(\bm{u})\mid\bm{ \Theta})-\ln\mathbb{P}(\bm{\Theta})-\ln\mathbb{P}(\bm{b}\mid \bm{\A u})\right\}\,.
\end{equation}
Neglecting $c(\bm{\Theta})$ provides a significant simplification of the problem of interest. Nonetheless, as we will show in Sections \ref{sec:parameter_estimation} and \ref{sec:restoration}, such simplification will result in an efficient ML-type parameter estimation strategy which will be shown to produce meaningful results. Clearly, a more accurate study of \eqref{eq:map} will require to deal explicitly with the computation of such constant by means, for instance, of analogous approches as those described in \cite{PereyraPartI,PereyraPartII}.

From a numerical perspective, the solution of problem \eqref{eq:map_last} can be addressed in different manners. A standard strategy illustrated in \cite{Calvetti08} is based on the design of an Iterated Sequential Algorithm (IAS) which, for $k\geq 0$ and upon a suitable initialisation for $\bm{u}^{(0)}$ reads:
\begin{align}
\label{eq:sub_th}
\bm{\Theta}^{(k+1)} \;{\in}\;& \argmin{\bm{\Theta}}\left\{-\ln\mathbb{P}(\bm{z}(\bm{u}^{(k)})\mid \bm{ \Theta})-\ln\mathbb{P}(\bm{\Theta})\right\}\\
\label{eq:sub_u}
\bm{u}^{(k+1)} \;{\in}\;& \argmin{\bm{u}}\left\{-\ln\mathbb{P}(\bm{z}(\bm{u})\mid\bm{\Theta}^{(k+1)})-\ln\mathbb{P}(\bm{b}\mid\bm{ \A u})\right\}\,,
\end{align}
where the function $\bm{z}(\cdot)$ has been defined in Section \ref{subsec:prior} depending on the specific form of the prior distribution at hand.

\section{The anatomy of space-variant regularisation models}
\label{sec:map}

In this section, we derive the explicit expressions of the negative log-prior term $-\ln\mathbb{P}(\bm{z}(\bm{u})\mid\bm{\Theta})$, appearing in the cost function of \eqref{eq:map_last}, depending on the particular choice of the prior pdf among the ones described in Section \ref{subsec:prior}.
For each considered prior, we will write explicitly the analytical form of the corresponding image regulariser, dissecting its properties in terms of regularisation features and providing some intuitions on their sparsity promoting behaviour.


\subsection{From non-stationary priors to space-variant regularisers}
\label{subsec:logpr}
Recalling \eqref{eq:TV_reg},  we start computing the negative logarithm of the stationary Gibbs' TV prior in \eqref{eq:TV_prior}. We have:
\begin{equation}
-\ln\mathbb{P}(\bm{z}(\bm{u})\mid \bm{\Theta})
\;{=}\; 
\alpha \mathrm{TV}(\bm{u})
- N \ln \alpha.
\label{eq:nlp_TV}
\end{equation}
%
We now rewrite TV as
\begin{equation}
\label{eq:TV_reg2}
\TV(\bm{u}) 
\,\;{=}\;\, 
\sum_{i=1}^{N} f_{\mathrm{TV}}\left((\bm{\D u})_i\right),
\end{equation}
where the space-invariant and non-parametric function $f_{\mathrm{TV}}: \R^2 \to \R_+$ is defined by
\begin{equation}
f_{\mathrm{TV}}(\bm{g}_i) \,\;{:=}\;\, \left\| \bm{g}_i \right\|_2 \, ,
\quad \bm{g}_i = (g_{i,1},g_{i,2}) \in \R^2 \, ,
\label{eq:TV_h}
\end{equation}
and is referred to in the following as the TV \emph{gradient penalty function}. As it is well-known, TV is bounded from below by zero, (non-strictly) convex, non-coercive due to $\mathrm{null} (\bm{\D}) \neq \{\bm{0}_2\}$ 
and non-smooth. This last property is indeed responsible of the good gradient sparsity-promoting effect of TV, which favours piece-wise constant solutions.
The TV gradient penalty function $f_{\mathrm{TV}}$ in \eqref{eq:TV_h} is shown in Figure \ref{fig:TV_h}(a).


%
\begin{figure}[!h]
	\centering
	\begin{tabular}{cc}
		\includegraphics[trim={0.85cm 0.62cm 2.17cm 0.78cm},clip,width=5.2cm]{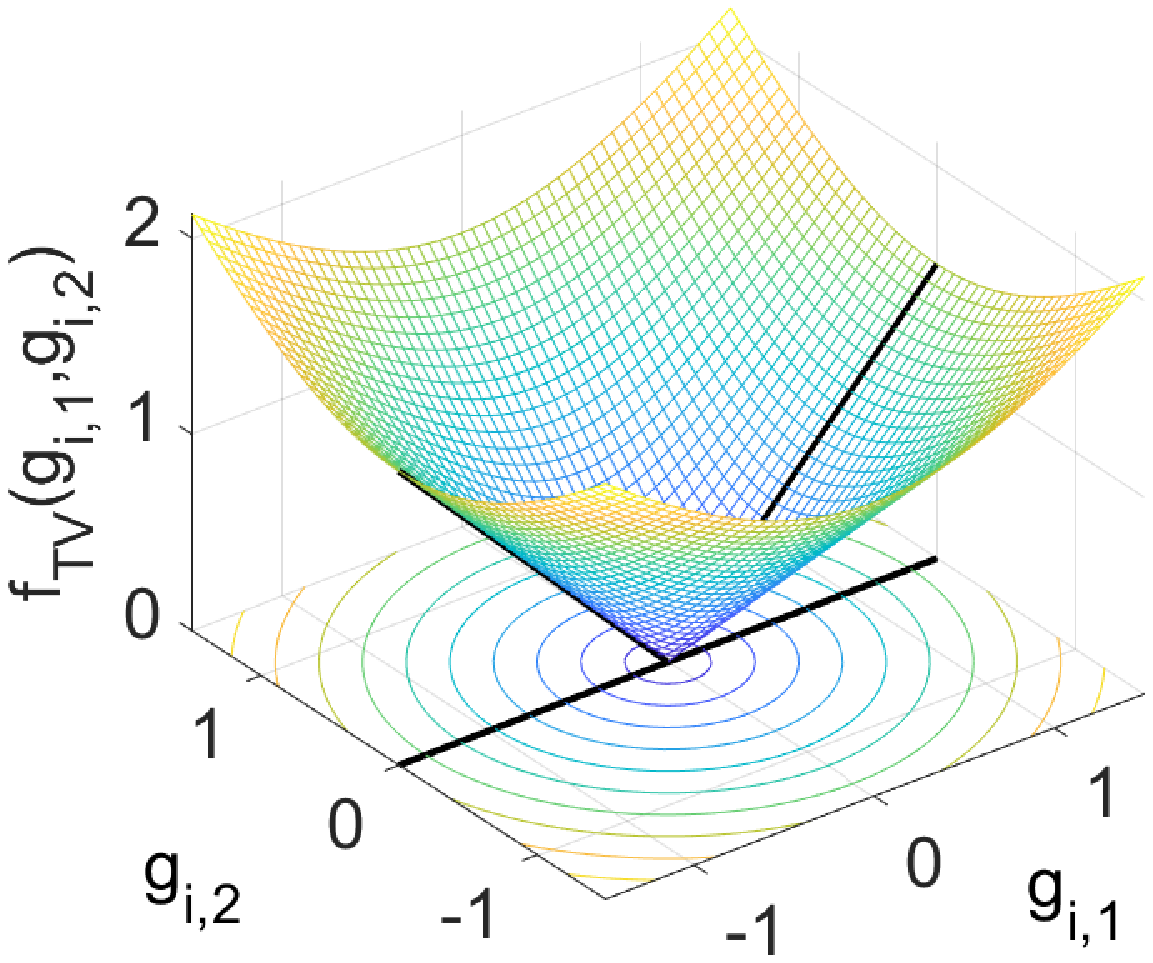}
		&
		\includegraphics[trim={0cm 0.0cm 0cm 0.0cm},clip,width=6.5cm]{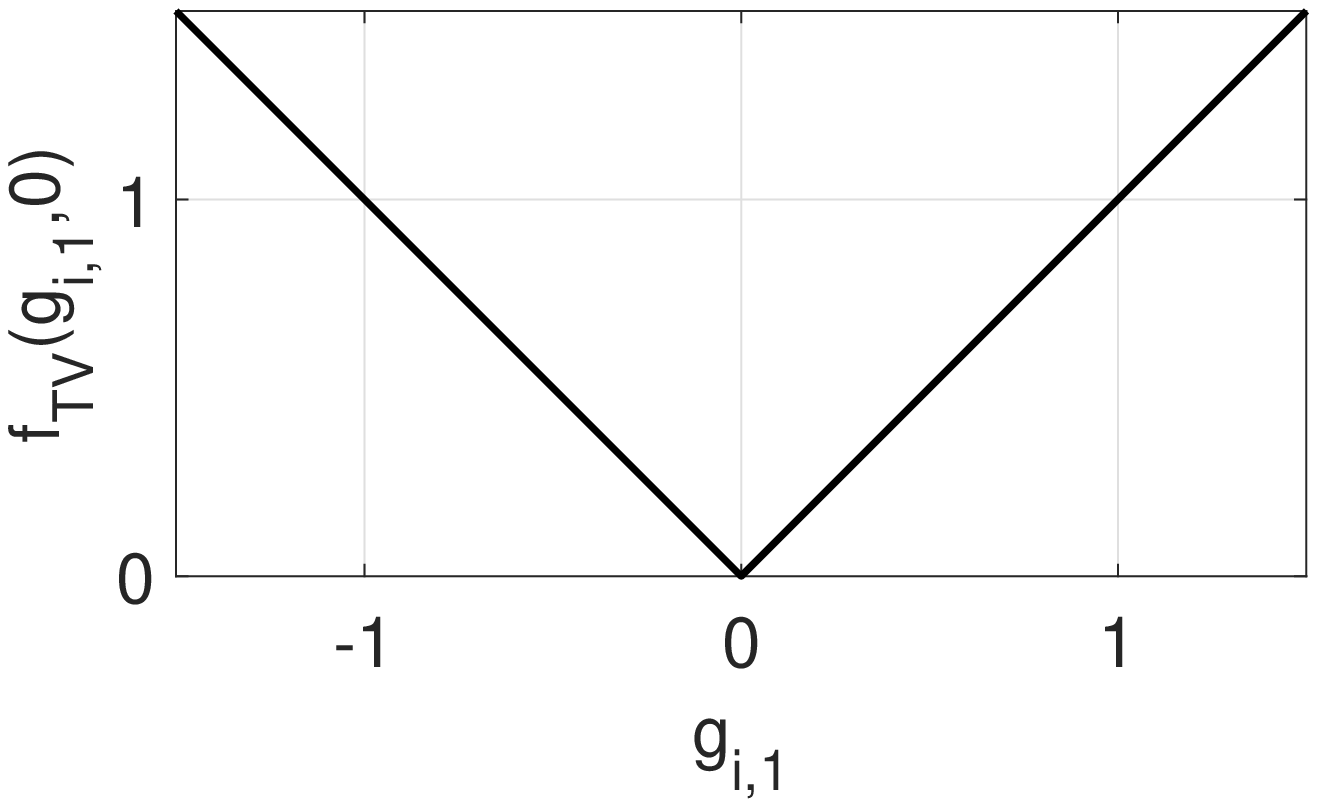}
		\\
		(a) $f_{\mathrm{TV}}$ &
		(b) 1D section of $f_{\mathrm{TV}}$ along the $x$-axis 
	\end{tabular}
	\caption{Space-invariant gradient penalty function $f_{\mathrm{TV}}$ defined in \eqref{eq:TV_h} for the TV regulariser \eqref{eq:TV_reg2}.}
	\label{fig:TV_h}
\end{figure}
To analyse in detail the properties of the TV regulariser,
it is useful to consider the 1D sections of the gradient penalty function $f_{\mathrm{TV}}$ along straight lines passing through the origin of the penalty domain and having direction defined by the angle \mbox{$\varphi \in [-\pi,+\pi)$.} Using a standard (arc-length) parametrisation for straight lines, namely $\left\{\,g_{i,1}(t;\varphi) = t \, \cos(\varphi), \;\: g_{i,2}(t;\varphi) = t \, \sin(\varphi), \;\: t \in \R \, \right\}$, the sections of $f_{\mathrm{TV}}$ in \eqref{eq:TV_h} read
\begin{equation}
s_i(t;\varphi) \;{=}\; \left| t \right|, \quad t \in \R, \quad i = 1,\ldots,N \, .
\label{eq:TV_sec}
\end{equation}
In Figure \ref{fig:TV_h}(b) we show one section of $f_{\mathrm{TV}}$ along the direction defined by the angle \mbox{$\varphi = 0$,} i.e., the $x$-axis.  However, as the expression \eqref{eq:TV_sec} does not depend on $\varphi$, we deduce that the same Figure could be obtained by representing the section corresponding to \emph{any} $\varphi$, for any pixel location $i$. 
The TV penalty $f_{\mathrm{TV}}$ in \eqref{eq:TV_h} - whence, the \eqref{eq:TV_reg} regulariser - is in fact space and rotationally-invariant (i.e. \emph{isotropic}).

Being isotropic, TV does not take explicitly into account directionality properties in the image. Moreover, the presence of a fixed, global exponent $p=1$ for the norms in the penalty \eqref{eq:TV_h} and of a global scale parameter $\alpha>0$ in \eqref{eq:TV_prior} and, hence, in the negative log-prior \eqref{eq:nlp_TV} makes the TV regulariser not capable to adapt the strength (associated to $\alpha$ in \eqref{eq:nlp_TV}) nor the nature (associated to the exponent of the norm in \eqref{eq:TV_h}) of the gradient sparsity-promotion effect to the local contents of the image to be recovered. 


\medskip 

In the following, we inspect how the non-stationary priors introduced in Section \ref{subsec:prior} can favour local regularisation features, namely strength, sharpness and directionality.

\begin{figure}[!t]
	\centering
	\renewcommand{\arraystretch}{0.0}
	\renewcommand{\tabcolsep}{0.08cm}
	\begin{tabular}{ccccc}
		\!\!\!\!\!\!\!\!&$\;$& WTV & WTV$_{\bm{p}}^{\mathrm{sv}}$ & WDTV$_{\bm{p}}^{\mathrm{sv}}$
		\vspace{0.2cm}\\
		\!\!\!\!\!\!\!\!{\rotatebox{90}{$\qquad\quad$ pixel $j$}}&&
		\includegraphics[trim={2.7cm 0.58cm 2.81cm 0.78cm},clip,width=3.85cm]{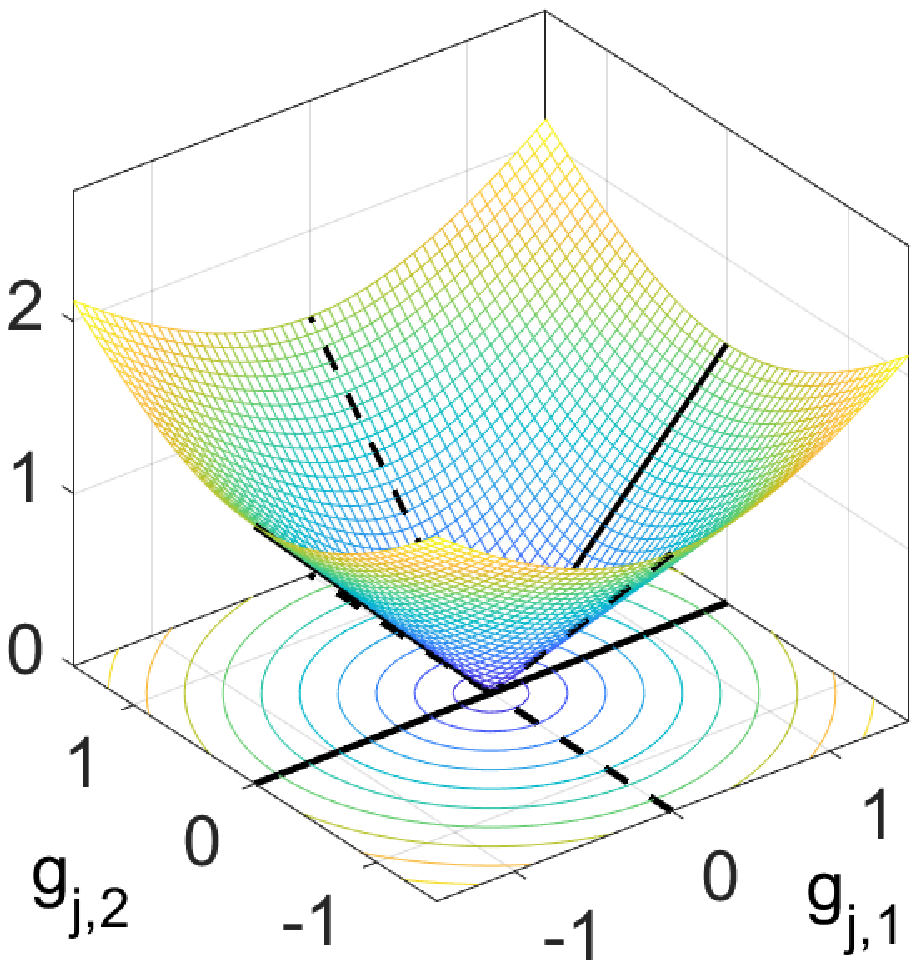}&
		\includegraphics[trim={2.7cm 0.58cm 2.81cm 0.78cm},clip,width=3.85cm]{images/WTVreg1.eps}&
		\includegraphics[trim={2.7cm 0.58cm 2.81cm 0.78cm},clip,width=3.85cm]{images/WTVreg1.eps}
		\vspace{0.12cm}\\
		\!\!\!\!\!\!\!\!&& \footnotesize{(a) $\alpha_j{=}1.0$} &\footnotesize{(b) $(\alpha,p)_j{=}(1.0,1.0)$}&
		\footnotesize{(c)\!\! $(\alpha,p,\theta,a)_j{=}(1.0,1.0,0,1.0)$}
		\vspace{0.17cm}\\
		\!\!\!\!\!\!\!\!{\rotatebox{90}{$\qquad\quad$ pixel $k$}}&&
		\includegraphics[trim={2.7cm 0.58cm 2.81cm 0.78cm},clip,width=3.85cm]{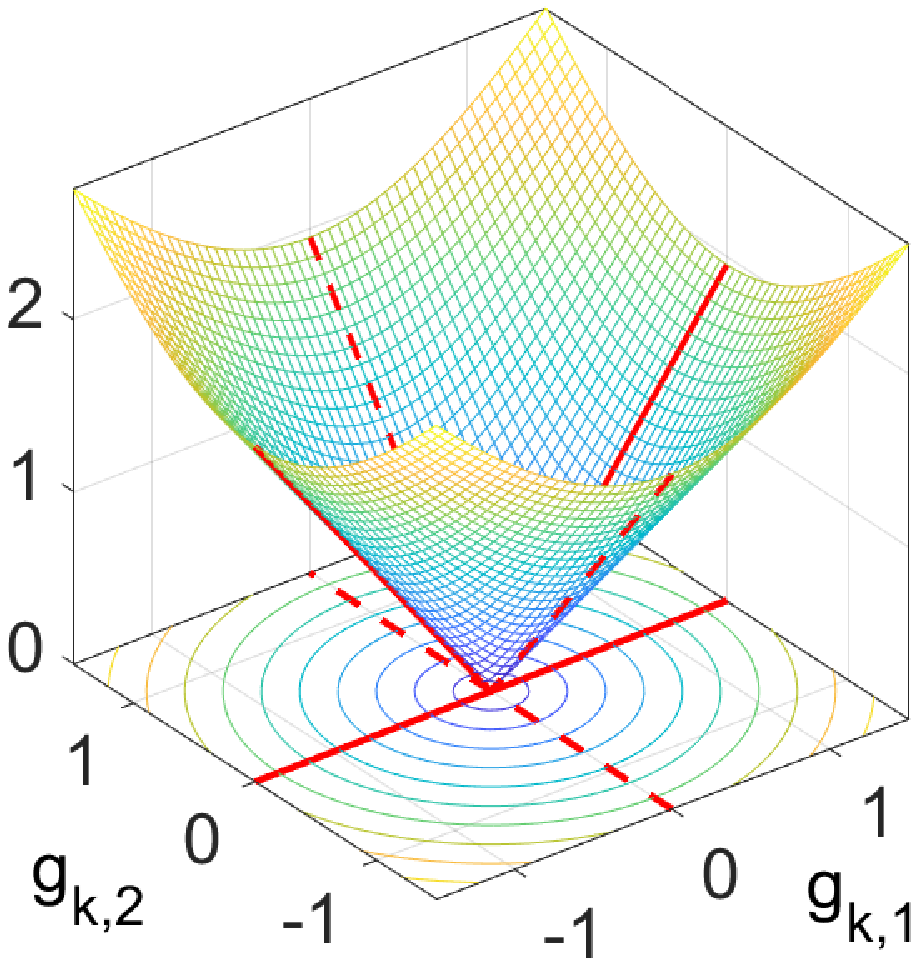}&
		\includegraphics[trim={2.7cm 0.58cm 2.81cm 0.78cm},clip,width=3.85cm]{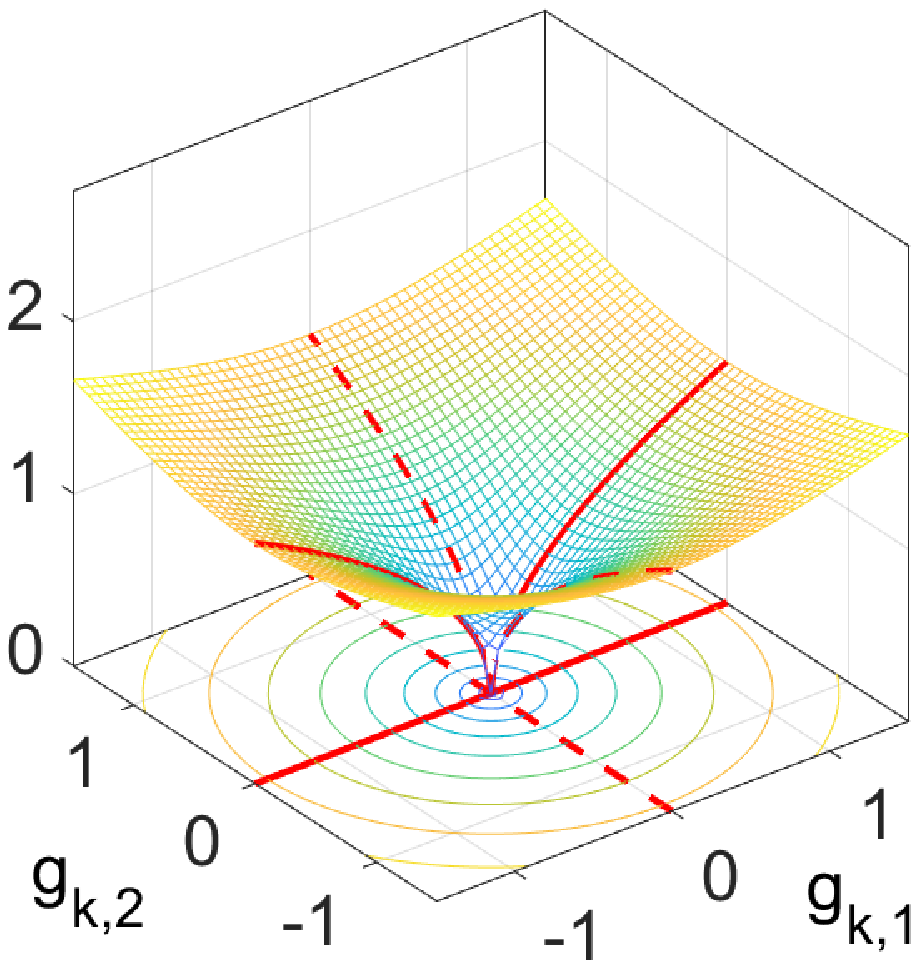}&
		\includegraphics[trim={2.7cm 0.58cm 2.81cm 0.78cm},clip,width=3.85cm]{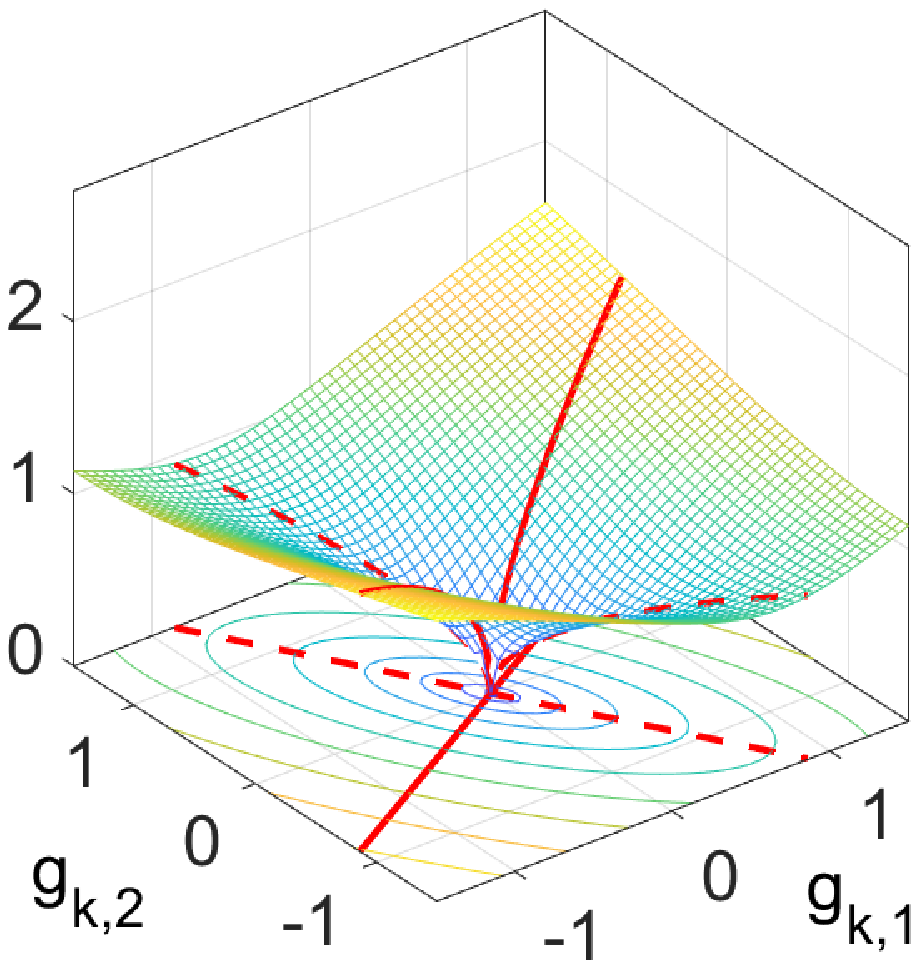}
		\vspace{0.12cm}\\
		\!\!\!\!\!\!\!\!&& \footnotesize{(d) $\alpha_k{=}1.3$}&
		\footnotesize{(e) $(\alpha,p)_k{=}(1.3,0.5)$}&\footnotesize{(f)$(\alpha,p,\theta,a)_k{=}(1.3,0.5,\frac{\pi}{6},0.4)$}
		\vspace{0.17cm}\\
		\!\!\!\!\!\!\!\!{\rotatebox{90}{$\qquad\quad$ pixel $l$}}&&
		\includegraphics[trim={2.7cm 0.58cm 2.81cm 0.78cm},clip,width=3.85cm]{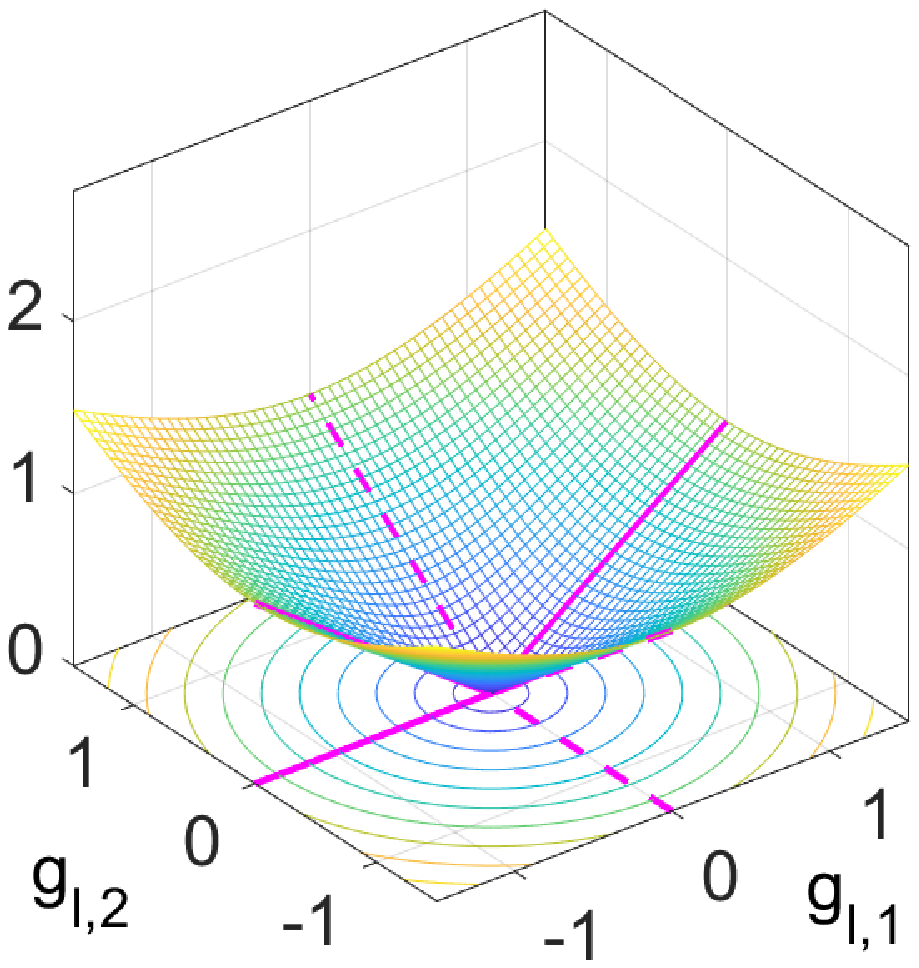}&
		\includegraphics[trim={2.7cm 0.58cm 2.81cm 0.78cm},clip,width=3.85cm]{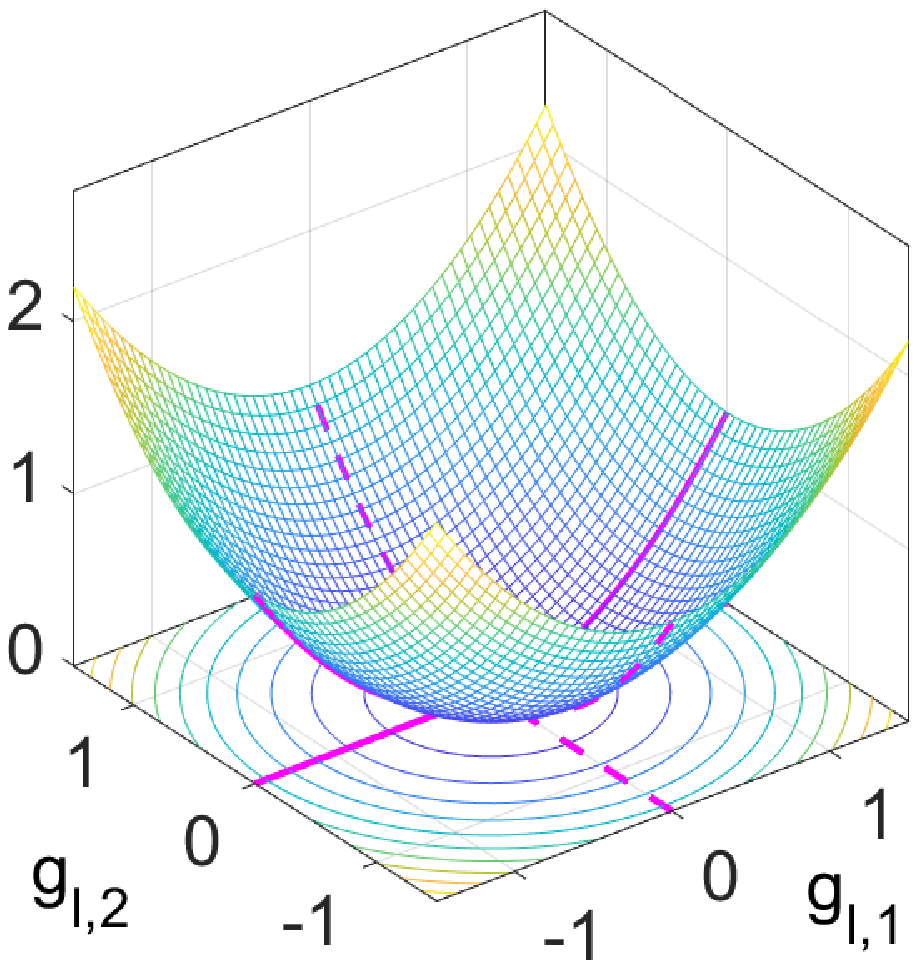}&
		\includegraphics[trim={2.7cm 0.58cm 2.81cm 0.78cm},clip,width=3.85cm]{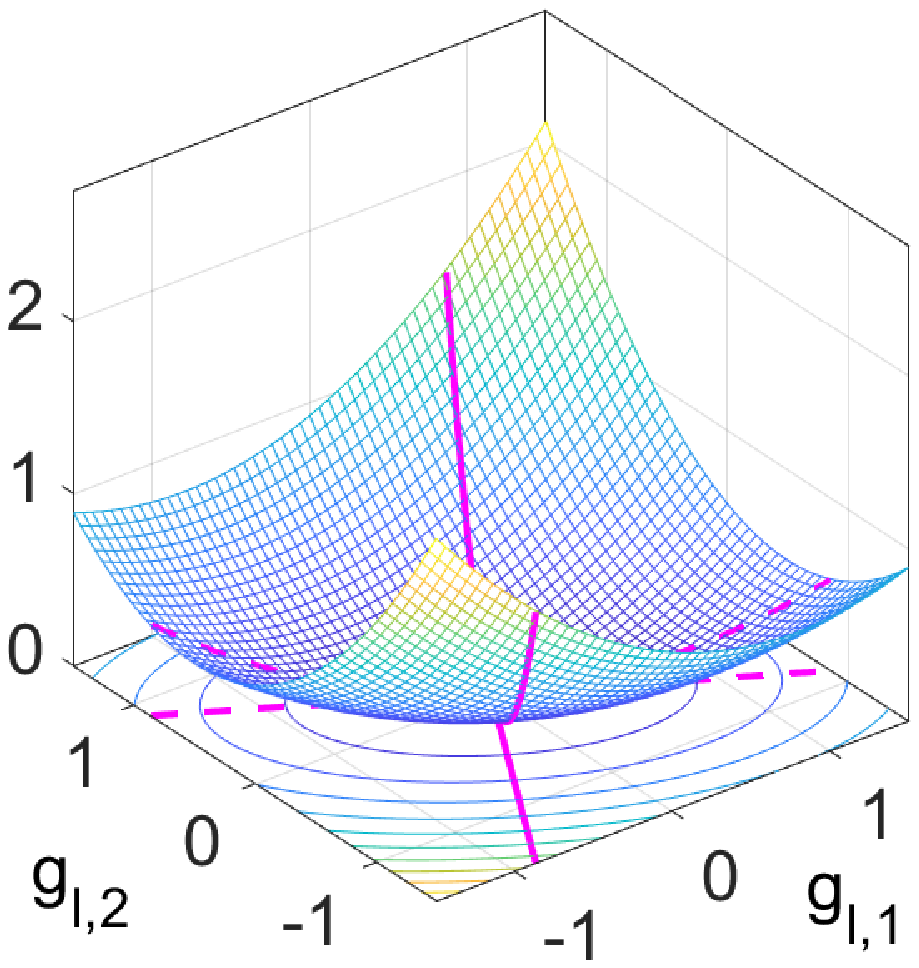}
		\vspace{0.12cm}\\
		\!\!\!\!\!\!\!\!&&\footnotesize{(g) $\alpha_l{=}0.7$}  &\footnotesize{(h)$(\alpha,p)_l{=}(0.7,2)$}&\footnotesize{(i)$(\alpha,p,\theta,a)_l{=}(0.7,2,\frac{\pi}{3},0.6)$}
		\vspace{0.17cm}\\
		&&
		\includegraphics[trim={0.67cm 0.2cm 1.28cm 0.75cm},clip,width=3.75cm]{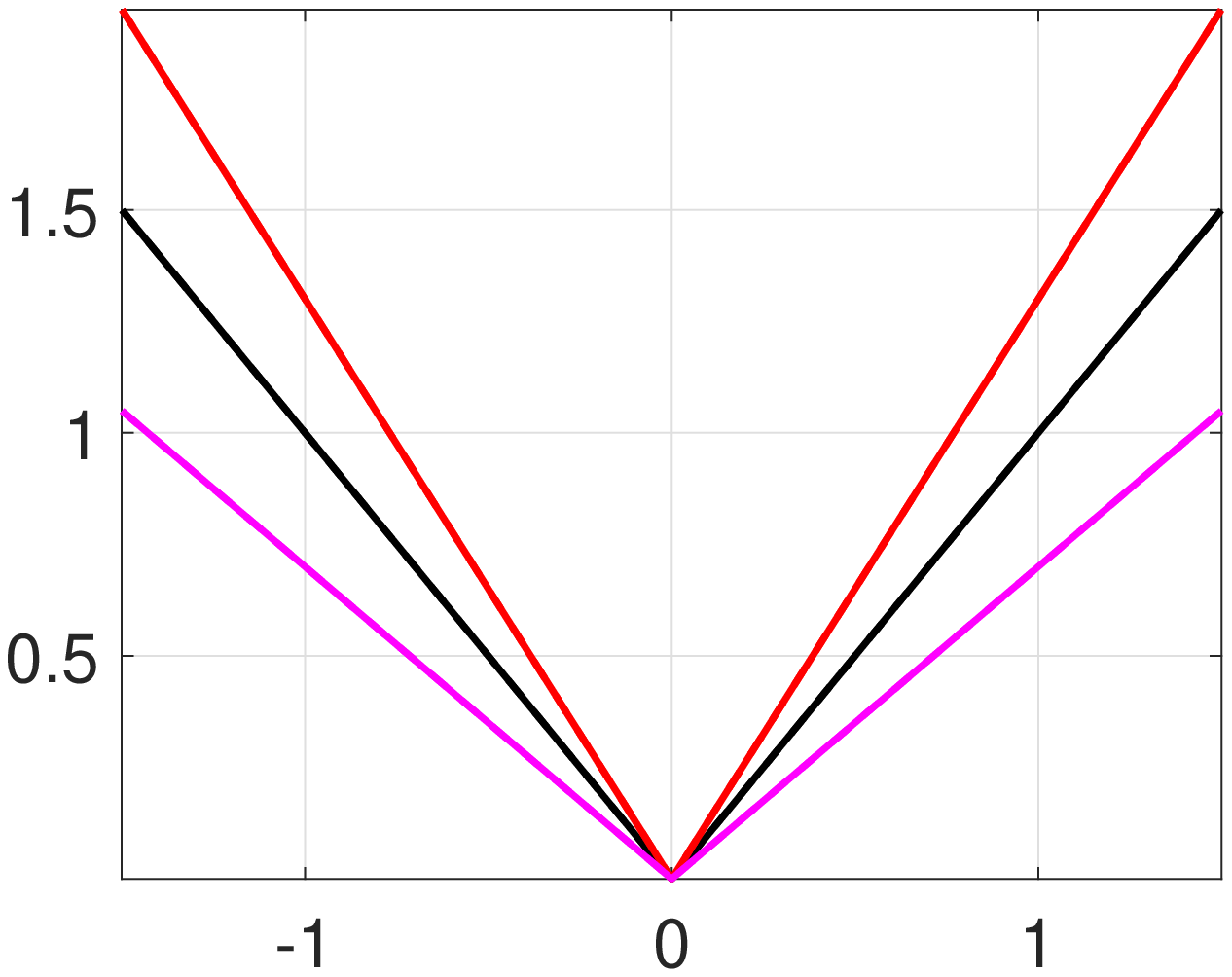}&
		\includegraphics[trim={0.67cm 0.2cm 1.28cm 0.75cm},clip,width=3.75cm]{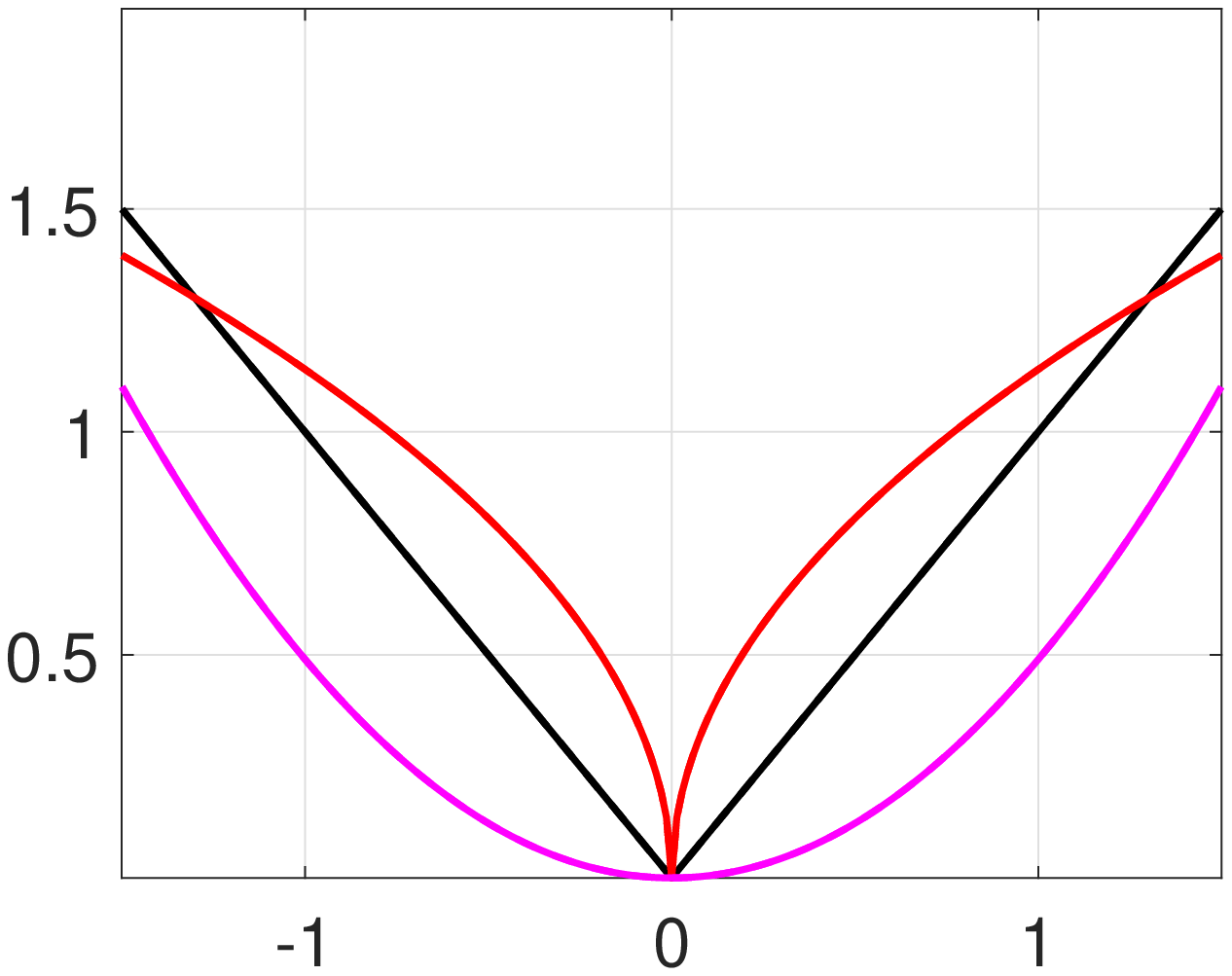}&
		\includegraphics[trim={0.67cm 0.2cm 1.28cm 0.75cm},clip,width=3.75cm]{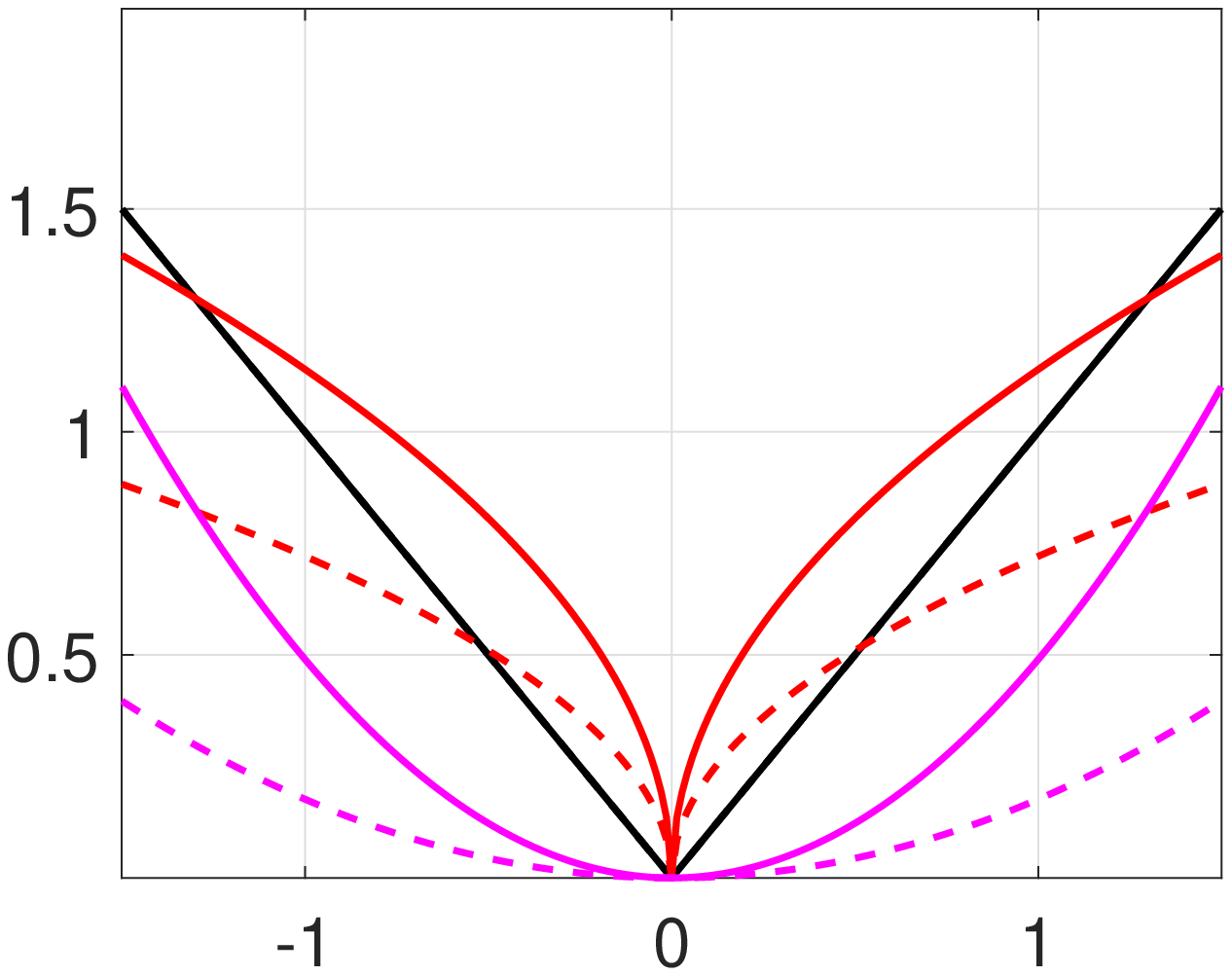}
		\vspace{0.12cm}\\
		&&
		\multicolumn{3}{l}{$\;\;\,$ (j) $\,$ 1D sections for angles $\varphi$ (solid lines) and $\,\varphi + \pi/2$ (dashed lines), with}
		\vspace{0.09cm}\\
		&& 
		\multicolumn{3}{l}{$\;\;\:\:\!$ $\varphi = 0 \,$ for WTV and WTV$_{\bm{p}}^{\mathrm{sv}}$, $\,\varphi = \theta_i\,$ for WDTV$_{\bm{p}}^{\mathrm{sv}}$. Notice that solid and}
		\vspace{0.02cm}\\
		&& 
		\multicolumn{3}{l}{$\;\;\:\:\!$ dashed lines coincide for isotropic penalties in (a)-(c),(d),(e),(g),(h)}
	\end{tabular}
	\caption{Graphs of the gradient penalty functions defined in \eqref{eq:WTV_h}, \eqref{eq:WTVp_h} and \eqref{eq:WDTVp_h} for the space-variant \eqref{eq:WTV_reg}, \eqref{eq:WTVp_reg} and \eqref{eq:WDTVp_reg} regularisers, respectively.}
	\label{fig:regs_h}
\end{figure}


\subsubsection{Local regularisation strength}

Recalling Section \ref{subsec:prior}, the first and probably the easiest way to make the \eqref{eq:TV_reg} regulariser spatially flexible  consists in allowing for a different amount of regularisation at every pixel in the image. From a Bayesian perspective, this corresponds to assuming a non-stationary hL prior distribution for the gradient magnitudes of $\bm{u}$.
By computing the negative logarithm in \eqref{eq:WTV_prior}, we have
\begin{equation}
-\ln\mathbb{P}(\bm{z}(\bm{u})\mid \bm{\Theta})
\;{=}\; 
\WTV(\bm{u};\bm{\Theta})
-  \sum_{i=1}^N \ln \alpha_i \, , 
\label{eq:nlp_WTV}
\end{equation}
where the space-variant WTV regulariser is defined in terms of hyperparameters \mbox{$\bm{\Theta}= \bm{\alpha}\,$} and reads
\begin{equation}
\WTV(\bm{u};\bm{\alpha}) 
\,\;{:=}\;\, 
\sum_{i=1}^{N}
\alpha_i\|(\bm{\D u})_i\|_2\,,
\quad
\bm{\alpha}\in \R_{++}^{N}\,.
\tag{\text{$\WTV$}}
\label{eq:WTV_reg}
\end{equation}
Analogously to \eqref{eq:TV_reg}, the \eqref{eq:WTV_reg} regulariser can be equivalently rewritten as
\begin{equation}
\label{eq:WTV_reg2}
\WTV(\bm{u};\bm{\alpha}) 
\,\;{=}\;\, 
\sum_{i=1}^{N} f_{\mathrm{WTV}}\left((\bm{\D u})_i;\alpha_i\right), \quad 
\alpha_i\in \R_{++},
\end{equation}
where the gradient penalty function $f_{\mathrm{WTV}}: \R^2 \to \R_+$ depends now locally on the parameter $\alpha_i$ and reads
\begin{equation}
f_{\mathrm{WTV}}(\bm{g}_i;\alpha_i) \,\;{:=}\;\, \alpha_i \left\| \bm{g}_i \right\|_2 \, ,
\quad \bm{g}_i = (g_{i,1},g_{i,2}) \in \R^2 \, .
\label{eq:WTV_h}
\end{equation}
%
%
Similarly as for the \eqref{eq:TV_reg} regulariser, \eqref{eq:WTV_reg} is still convex and non-differentiable.
However, the sparsity-promoting effect can now be locally modulated thanks to the presence of the local weights $\alpha_i$. 
To highlight this feature, we report in the first column of Figure \ref{fig:regs_h}  (i.e. Figures \ref{fig:regs_h}(a),(d),(g))  the graphs of the WTV gradient penalty function $f_{\mathrm{WTV}}$ defined in \eqref{eq:WTV_h} for three different values $\alpha_j = 1$, $\alpha_k = 1.3$, $\alpha_l = 0.7$ of the scale parameter, respectively, assuming that they represent the local weights of the WTV regulariser at different pixel positions $i \in \{j , k , l\}$. These three graphs 
share the same inverted right-circular conical shape with vertex at the origin as the TV penalty drawn in Figure \ref{fig:TV_h}, with the one in Figure \ref{fig:regs_h}(a) coinciding with the TV penalty.
Different values of the weight yield different slopes of the conical lateral surface - note that $\left\| \bm{\nabla} f_{\mathrm{WTV}} \left(g_{i,1},g_{i,2}\right)\right\|_2 = \alpha_i$ for any $\left(g_{i,1},g_{i,2}\right) \in \R^2 \setminus \{(0,0)\}$ - and, hence, different local regularisation strengths. The larger (smaller) is the local weight $\alpha_i$, the more (less) strongly the WTV regulariser will force $\left\| \left(\bm{\D u}\right)_i\right\|_2$, to be small. 

Similarly as before, we show the 1D sections of the three WTV penalty functions along the two directions defined by angles $\varphi = 0$ (solid lines) and $\varphi = \pi/2$ (dashed lines), corresponding to the x- and y-axis in the 3D plots in Figures \ref{fig:regs_h}(a),(d),(g) and, for better readability, in Figure \ref{fig:regs_h}(j), left. Like TV, the WTV regulariser is isotropic, hence the two sections - actually, any section along straight lines passing through the origin - of each of the three penalties coincide. Despite their space-variant feature, these sections are in fact still rotationally invariant as they take the form
\begin{equation}
s_i(t;\varphi) \;{=}\; \alpha_i \left| t \right|, \quad t \in \R, \quad i = 1,\ldots,N \, .
\label{eq:WTV_sec}
\end{equation}
Finally, one can notice from \eqref{eq:WTV_sec} and from Figure \ref{fig:regs_h}(j), left, that all sections are nothing but positively-scaled versions  of the absolute value function of scale parameter $\alpha_i$, i.e. of the TV sections in \eqref{eq:TV_sec}.

Due to its ability of promoting local TV smoothing, we remark that the WTV regulariser has been proposed and studied in several papers (e.g. \cite{Hintermueller2016,Hintermuller2017a,HintPapaf2019} and many more) from an analytical point of view and  motivated by means of  analogous probabilistic arguments in \cite{CLPS}.

\subsubsection{Local regularisation sharpness}  \label{sec:sharpness}

As previously mentioned, the weights $\alpha_i$ in the \eqref{eq:WTV_reg} regulariser allow to locally tune the strength of the gradient-sparsity promotion effect of the regularisation which, by construction, is of fixed TV type.  In fact, the presence of a global exponent $1$ for the gradient norms in definition of WTV does not allow to change, neither globally nor locally, the \emph{sharpness} of the associated gradient penalty functions, hence the nature of the involved sparsity-promotion. 

This motivates the introduction of a second set of  space-variant parameters $p_i>0$, $i = 1,\ldots,N$, being them the exponents of the gradient norms in the \eqref{eq:WTV_reg} definition and corresponding to the local shape parameters of the associated hGG pdf - see Definition \ref{def:hGGd}.
%

We proceed as above and compute the negative logarithm of the non-stationary hGG prior \eqref{eq:WTVp_prior}, thus getting
\begin{equation}
-\ln\mathbb{P}(\bm{u}\mid \bm{\Theta})
\;{=}\; 
\WTV^{\mathrm{sv}}_{\bm{p}}(\bm{u};\bm{\Theta})
-  \sum_{i=1}^N \ln \frac{\alpha_i p_i}{\Gamma(1/p_i)} \, , 
\label{eq:nlp_WTVp}
\end{equation}
where $\,$the $\,$space-variant $\,$WTV$_p^{\mathrm{sv}}\,$ regulariser, $\,$depending on the hyperparameters \mbox{$\bm{\Theta} =
	(\bm{\alpha},\bm{p})$,} is defined by
\begin{equation}
\WTV^{\mathrm{sv}}_{\bm{p}}(\bm{u};\bm{\alpha},\bm{p}) 
\,\;{:=}\;\, 
\sum_{i=1}^{N}\alpha_i^{p_i}\|(\bm{\D u})_i\|_2^{p_i},
\quad
(\bm{\alpha},\bm{p}) \in \R_{++}^{N\times 2}.
\label{eq:WTVp_reg}
\tag{\text{$\WTV^{sv}_{\bm{p}}$}}
\end{equation}
Like \eqref{eq:WTV_reg}, the \eqref{eq:WTVp_reg} regulariser can be rewritten in terms of a parametric, space-variant gradient penalty function, namely
\begin{equation}
\label{eq:WTVp_reg2}
\WTV_{\bm{p}}^{\mathrm{sv}}(\bm{u};\bm{\alpha},\bm{p}) 
\,\;{=}\;\, 
\sum_{i=1}^{N} f_{\WTV^{\mathrm{sv}}_{\bm{p}}}\left((\bm{\D u})_i;\alpha_i,p_i\right), \quad 
(\alpha_i,p_i) \in \R_{++}^2,
\end{equation}
where the function $f_{\WTV^{\mathrm{sv}}_{\bm{p}}}(\cdot)$ now reads:
\begin{equation}
f_{\WTV^{\mathrm{sv}}_{\bm{p}}}(\bm{g}_i;\alpha_i,p_i) \,\;{=}\;\, \alpha_i^{p_i} \left\| \bm{g}_i \right\|_2^{p_i} \, ,
\quad \bm{g}_i = (g_{i,1},g_{i,2}) \in \R^2 \, .
\label{eq:WTVp_h}
\end{equation}

Like \eqref{eq:TV_reg} and \eqref{eq:WTV_reg}, the \eqref{eq:WTVp_reg} regulariser is bounded below by zero, continuous and non-coercive. However, its other regularity properties depend on the actual values of the parameters $p_i$. If $p_i \geq 1$ for any $i = 1,\ldots,N$, then  WTV$_p^{\mathrm{sv}}$ is convex, whereas it is non-convex if there exists at least one $i$ such that $p_i < 1$. Then, it is differentiable whenever $p_i > 1$ for any $i$, otherwise it is non-smooth.

In the second column of Figure \ref{fig:regs_h} we show the graph of the WTV$_p^{\mathrm{sv}}$  gradient penalty function $f_{\WTV^{\mathrm{sv}}_{\bm{p}}}$ defined in \eqref{eq:WTVp_h} for three different parameter configurations $(\alpha_j,p_j) = (1,1)$, $(\alpha_k,p_k) = (1.3,0.5)$ and $(\alpha_l,p_l) = (0.7,2)$, where, we remark, the scale parameter values $\alpha_j$, $\alpha_k$, $\alpha_l$ are the same as for the WTV penalties reported in the first column. In case of unitary scale and shape parameters - see Figure \ref{fig:regs_h}(b) - the WTV$_p^{\mathrm{sv}}$ penalty coincides with the TV penalty and, more in general, for $p_j=p_k=p_l=1$, the WTV$_p^{\mathrm{sv}}$ penalties coincide with the WTV penalties. 
For non unitary shape parameters, the WTV$_p^{\mathrm{sv}}$ penalty function can assume different shapes, ranging from non-convex and non-differentiable ones ($p_i < 1$, Figure \ref{fig:regs_h}(e)), to strongly convex and differentiable ones ($p_i > 1$, Figure \ref{fig:regs_h}(h)). The degree of freedoms encoded by the shape parameters $p_i$ thus provide the  WTV$_p^{\mathrm{sv}}$ regulariser with the ability to adapt its gradient sparsity-promoting effect to the local image content. In particular, $p_i > 1$ - typically, $p_i \geq 2$ - should be used to avoid TV staircasing  in correspondence of smooth image regions, whereas $p_i < 1$ - typically, $p_i \leq 0.5$ - should be used in piece-wise constant regions to mitigate the undesirable contrast loss effect of TV. 

Similarly as for the previous  \eqref{eq:TV_reg} and \eqref{eq:WTV_reg} regularisers, let us now take a look at the 1D sections of the WTV$_p^{\mathrm{sv}}$ penalty in \eqref{eq:WTVp_h} for $\varphi\in[-\pi,\pi)$, which read
\begin{equation}
s_i(t;\varphi) \;{=}\; \alpha_i^{p_i} \left| t \right|^{p_i}, \quad t \in \R, \quad i = 1,\ldots,N \, .
\label{eq:WTVp_sec}
\end{equation}
By looking at the plot of such sections shown in Figure \ref{fig:regs_h}(j), centre, it is clear how the value of parameter $p_i$ can substantially change the regularisation effect at each pixel. In particular, by comparing the red, black and magenta sections in Figure \ref{fig:regs_h}(j), one can notice that for $p_i < 1$ small gradients are induced to be zero in a stronger way than for $p_i = 1$, but large gradients are less penalised (weaker contrast loss effect). On the other hand, for $p_i > 1$ 
the sparsity-promoting effect
is no longer present as the gradient penalty function is differentiable in $t=0$. More  generally, for $p_i > 1$ small gradients are less penalised than for $p_i = 1$, whereas large gradients are more penalised. We finally remark that, like for TV and WTV, the WTV$_p^{\mathrm{sv}}$ sections in \eqref{eq:WTVp_sec} do not depend on the direction angle $\varphi$, hence the \eqref{eq:WTVp_reg} regulariser still falls in the class of isotropic regularisers. 
This is visually confirmed by the  WTV$_p^{\mathrm{sv}}$ penalties shown in Figsures \ref{fig:regs_h}(b),(e),(h), which are  rotationally invariant (i.e. have circular level curves), and by the penalty sections along the $x$-axis and the $y$-axis, which coincide as it is evident from Figure \ref{fig:regs_h}(j), centre.

The WTV$_{\bm{p}}^{sv}$ regulariser has been first introduced in a simplified version, i.e., with $\alpha_i= \alpha$, $\forall i$, and interpreted in a probabilistic framework in \cite{VIP}. The general case with space-variant weights has been discussed in \cite{CMBBE}.

\subsubsection{Local anisotropy}
\label{sec:log_loc_dir}

As shown above, the \eqref{eq:TV_reg}, \eqref{eq:WTV_reg} and \eqref{eq:WTVp_reg} regularisers are isotropic.
For this reason, such regularisers are not able to exploit any information on the directionality of local image structures and, hence, to drive their local nonlinear diffusion effect along specific directions only. As motivated in Section \ref{subsec:globgen}, this can be a limitation, especially for images presenting local structures characterised by well-defined orientations. 
As illustrated in Section \ref{subsec:prior}, to circumvent this limitation, a non-stationary BGG prior can be assumed for modelling the local distribution of gradients of $\bm{u}$.

By computing the negative logarithm of the non-stationary BGG prior in \eqref{eq:propreg}, we have
\begin{equation}
-\ln\mathbb{P}(\bm{z}(\bm{u})\mid \bm{\Theta})
\;{=}\; 
\WDTV^{\mathrm{sv}}_{\bm{p}}(\bm{u};\bm{\Theta})
-  \sum_{i=1}^N \ln \left(\frac{\alpha_i^2 \, p_i\,a_i}{\Gamma(2/p_i)\,2^{2/p_i}}\right)
+ N \ln (2\pi)
\, , 
\label{eq:nlp_WDTVp}
\end{equation}
where the space-variant WDTV$_p^{\mathrm{sv}}$ regulariser is defined in terms of the hyperparameters $\,\bm{\Theta} = \left(\bm{\alpha},\bm{p},\bm{\theta},\bm{a}\right)$ and reads
\begin{align} 
\label{eq:WDTVp_reg}
\tag{\text{$\WDTV^{sv}_{\bm{p}}$}}
\WDTV_{\bm{p}}^{\mathrm{sv}}(\bm{u};\bm{\alpha},\bm{p},\bm{\theta},\bm{a}) 
\,\;{=}\;\, 
\sum_{i=1}^{N} 
\alpha_i^{p_i} \left\| \bm{\Lambda}_{a_i}\bm{\mathrm{R}}_{-\theta_i}(\bm{\D u})_i \right\|_2^{p_i}, \quad
\\
\left(\bm{\alpha},\bm{p},\bm{\theta},\bm{a}\right)
\,\;{\in}\;\,
\R_{++}^{N\times 2} \times [-\pi/2,\pi/2)^N \times (0,1]^N\,, \quad\quad\;\,
\nonumber
\end{align}
where the orthogonal (rotation) matrices $\bm{\mathrm{R}}_{-\theta_i}$ and the diagonal matrices $\bm{\Lambda}_{a_i}$ have been defined in \eqref{siginvdec} and (\ref{eq:comp2}), respectively.

Note that the \eqref{eq:WDTVp_reg} regulariser can also be written in terms of its parametric, space-variant gradient penalty functions as
\begin{eqnarray}
\label{eq:WTDVp_reg2}
\WDTV_{\bm{p}}^{\mathrm{sv}}(\bm{u};\bm{\alpha},\bm{p},\bm{\theta},\bm{a}) 
&\,\;{=}\;\,& 
\sum_{i=1}^{N} f_{\WDTV_{\bm{p}}^{\mathrm{sv}}}\left((\bm{\D u})_i;\alpha_i,p_i,\theta_i,a_i\right), \\ 
(\alpha_i,p_i,\theta_i,a_i) 
&\,\;{\in}\;\,&  \R_{++}^2\times [-\pi/2,\pi/2)\times(0,1]\,, \nonumber
\end{eqnarray}
with
\begin{equation}
f_{\WDTV_{\bm{p}}^{\mathrm{sv}}}(\bm{g}_i;\alpha_i,p_i,\theta_i,a_i) \,\;{=}\;\, \alpha_i^{p_i} \left\| \bm{\Lambda}_{a_i}\bm{\mathrm{R}}_{-\theta_i}\bm{g}_i \right\|_2^{p_i} \, ,
\quad \bm{g}_i = (g_{i,1},g_{i,2}) \in \R^2 \, .
\label{eq:WDTVp_h}
\end{equation}

%
%

Since $a_i \in (0,1]$ for any $i$, matrices $\bm{\M}_i := \bm{\Lambda}_{a_i} \bm{\mathrm{R}}_{-\theta_i} \in \R^{2 \times 2}$ are all non-singular. As a consequence, the \eqref{eq:WDTVp_reg} regulariser shares the same analytical properties as the \eqref{eq:WTVp_reg} regulariser. In particular, it is worth noting that the \eqref{eq:WDTVp_reg} regulariser reduces to the rotationally-invariant \eqref{eq:WTVp_reg} regulariser in the special case  $\,a_i = 1$ for any $i$, independently of the directionality parameters $\theta_i$.

In the last column of Figure \ref{fig:regs_h} we show the graph of the WDTV$_{\bm{p}}^{\mathrm{sv}}$ gradient penalty function $f_{\WDTV_{\bm{p}}^{\mathrm{sv}}}$ defined in \eqref{eq:WDTVp_h} for three different parameter configurations $(\alpha_j,p_j,\theta_j,a_j) = (1,1,0,1)$, $(\alpha_k,p_k,\theta_k,a_k) = (1.3,0.5,\pi/6,0.4)$ and $(\alpha_l,p_l,\theta_l,a_l) = (0.7,2,\pi/3,0.6)$,  where the scale and shape parameter values $(\alpha_j,p_j)$, $(\alpha_k,p_k)$ and $(\alpha_l,p_l)$ are the same as for the WTV$_{\bm{p}}^{\mathrm{sv}}$ penalties (second column). It is clear from these figures that the degrees of freedom represented by parameters $a_i$ allow to make the WDTV$_{\bm{p}}^{\mathrm{sv}}$ regulariser locally anisotropic, in the sense that it can locally 
penalise the gradient $\bm{g}_i = \left(\bm{\D u}\right)_i$ with different strength 
according to its direction. 
The level curves of the penalties in Figures \ref{fig:regs_h}(f),(i) corresponding to $a_i < 1$ are elliptical and not circular as for the case $a_i = 1$ in Figure \ref{fig:regs_h}(c). Furthermore, the smaller $a_i$, the more eccentric the ellipses and, hence, the more anisotropic the regulariser. 
The local directional parameters $\theta_i$ represent local image directions along which a stronger regularisation effect is typically desired (typically, edge direction). We observe that the elliptical level curves of the penalties in Figures \ref{fig:regs_h}(f),(i) are rotated of angle $\theta_i$ counterclockwise, with the minor and major axes aligned along the directions defined by $\theta_i$ and $\theta_i + \pi/2$, respectively, and that the 1D sections of the WDTV$_{\bm{p}}^{\mathrm{sv}}$ penalty functions in \eqref{eq:WDTVp_h} along directions defined by angle $\varphi$ take the form
\begin{equation}
s_i(t;\varphi) \;{=}\; 
\left(
\cos^2(\varphi - \theta_i) + a_i^2 
\sin^2(\varphi - \theta_i) 
\right)^{p_i/2} \, 
\alpha_i^{p_i} \left| t \right|^{p_i}, \quad t \in \R, \quad i = 1,\ldots,N \, .
\label{eq:WDTVp_sec}
\end{equation}
It is a simple calculation verifying that, for any fixed $\theta_i$, $a_i$, the positive real coefficient in brackets takes its maximum (equal to 1) and minimum (equal to $a_i^{p_i}$) values for $\varphi = \theta_i$ and $\varphi = \theta_i + \pi/2$, respectively. This entails that the 1D sections of the WDTV$_{\bm{p}}^{\mathrm{sv}}$ penalty along the dominant direction $\theta_i$ and its orthogonal $\theta_i + \pi/2$ are those characterised by the strongest and the weakest regularisation effect, respectively. Note that the sections exhibit the same sharpness - i.e., the same shape - but they are differently scaled; see the pairs of solid/dashed red and magenta curves in Figure \ref{fig:regs_h}(j), right. 

The WDTV$_{\bm{p}}^{sv}$ regulariser has been first introduced and analysed in probabilistic settings in \cite{DTVp_siam}.




\subsubsection{Comparing regularisers: proximal operators}  \label{sec:prox}

In order to gain more insights on the regularisation effects yielded by the different gradient penalty functions introduced in the previous sections and, consequently, on the different space-variant regularisers considered, we compare in this section the proximal operators $\mathrm{prox}_f^{\beta}: \R^2 \rightrightarrows \R^2$, $\,f \in \big\{ f_{\mathrm{TV}},$ $ f_{\mathrm{WTV}}, f_{\mathrm{WTV}_p^{\mathrm{sv}}}, f_{\mathrm{WDTV}_p^{\mathrm{sv}}} \big\}$ (see Definition \ref{def:proxx}) associated to the TV, WTV, WTV$^{sv}_{\bm{p}}$ and WDTV$_{\bm{p}}^{sv}$ penalty functions defined in (\ref{eq:TV_h}), (\ref{eq:WTV_h}), (\ref{eq:WTVp_h}) and (\ref{eq:WDTVp_h}), respectively.

%
Having fixed the value of parameter $\beta \in \R_{++}$, and regarding $\bm{w}$ as an input image gradient vector to be regularised, the 2D vector field $\,\bm{e}_f^{\beta}: \R^2 \rightrightarrows \R^2$ defined by
\begin{equation}
\bm{e}_f^{\beta}(\bm{w}) 
\,\;{:=}\;\,
\mathrm{prox}_f^{\beta}(\bm{w})
\;{-} \bm{w}, \quad\;
\bm{w} \in \R^2 \, ,
\label{eq:prox_ef}
\end{equation}
can be studied to represent the regularisation effect of the gradient penalty function considered on $\bm{w}$.

Analytical expressions of the proximal operators of the gradient penalty functions in (\ref{eq:TV_h}), (\ref{eq:WTV_h}), (\ref{eq:WTVp_h}) and (\ref{eq:WDTVp_h}) associated to the (\ref{eq:TV_reg}), (\ref{eq:WTV_reg}), (\ref{eq:WTVp_reg}) and (\ref{eq:WDTVp_reg}) regularisers have been previously studied in \cite{HWTV}, \cite{tvpl2} and \cite{DTVp_siam}, respectively, and are discussed (for completeness) in Section \ref{sec:t_admm} of this review.
%
Based on those expressions, we thus compute the vector field $\bm{e}_f^{\beta}$ in \eqref{eq:prox_ef} for each of the nine penalty functions considered in Figure \ref{fig:regs_h} and report the results in Figure \ref{fig:regs_prox}. In order to allow for a meaningful comparison between penalties, the same proximal parameter value $\beta = 3$ has been used.

\begin{figure}[!t]
	\centering
	\renewcommand{\arraystretch}{0.0}
	\renewcommand{\tabcolsep}{0.1cm}
	\begin{tabular}{ccccc}
		\!\!\!\!\!\!\!\!&\!\!\!\!& $\;\;\,$ WTV & $\quad$ WTV$_p^{\mathrm{sv}}$ & $\quad\:$ WDTV$_p^{\mathrm{sv}}$
		\vspace{0.3cm}\\ 
		\!\!\!\!\!\!\!\!{\rotatebox{90}{$\qquad\qquad\;\;$ pixel $j$}}&\!\!\!\!&
		\includegraphics[height=4cm]{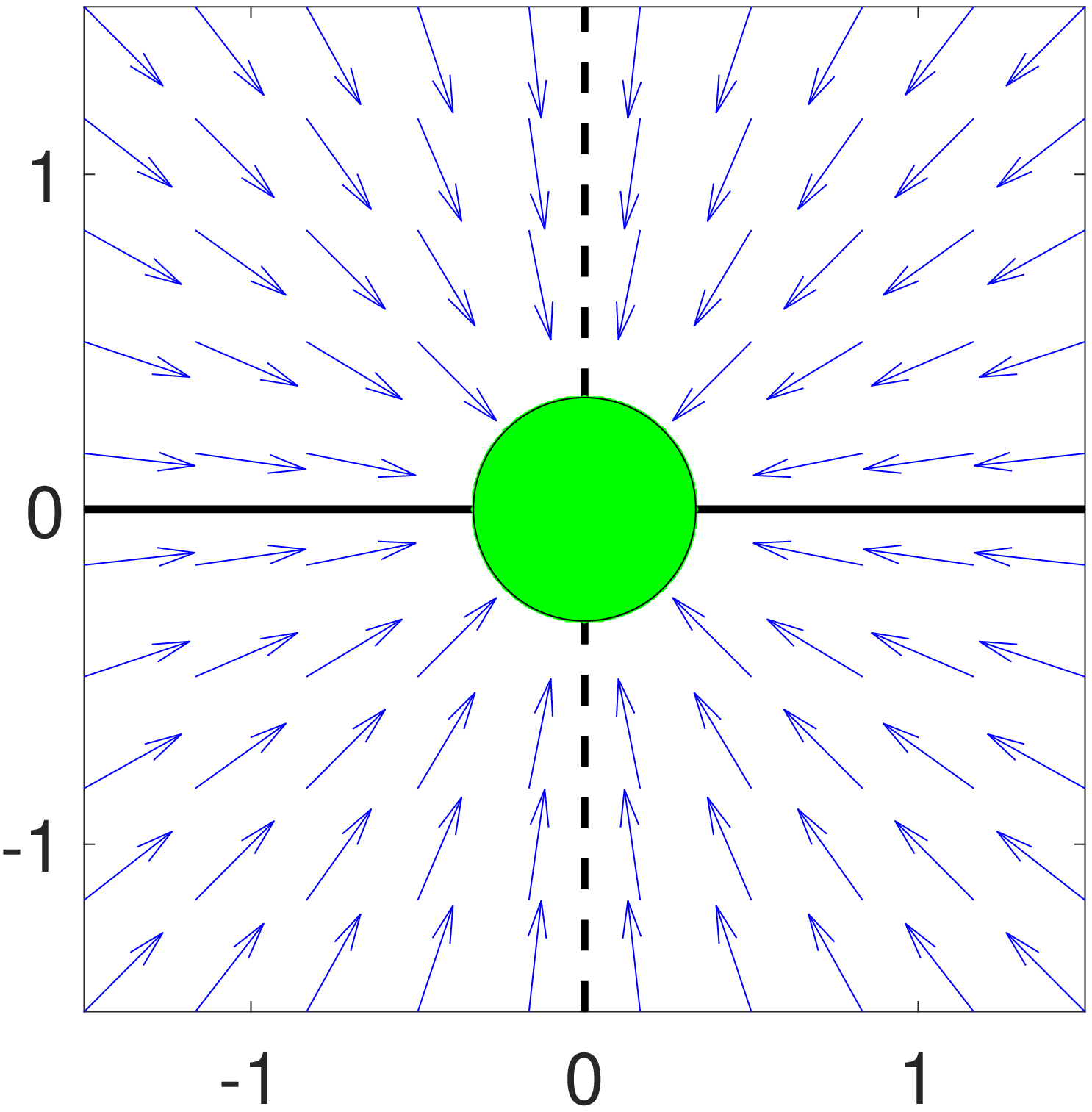}&
		\includegraphics[height=4cm]{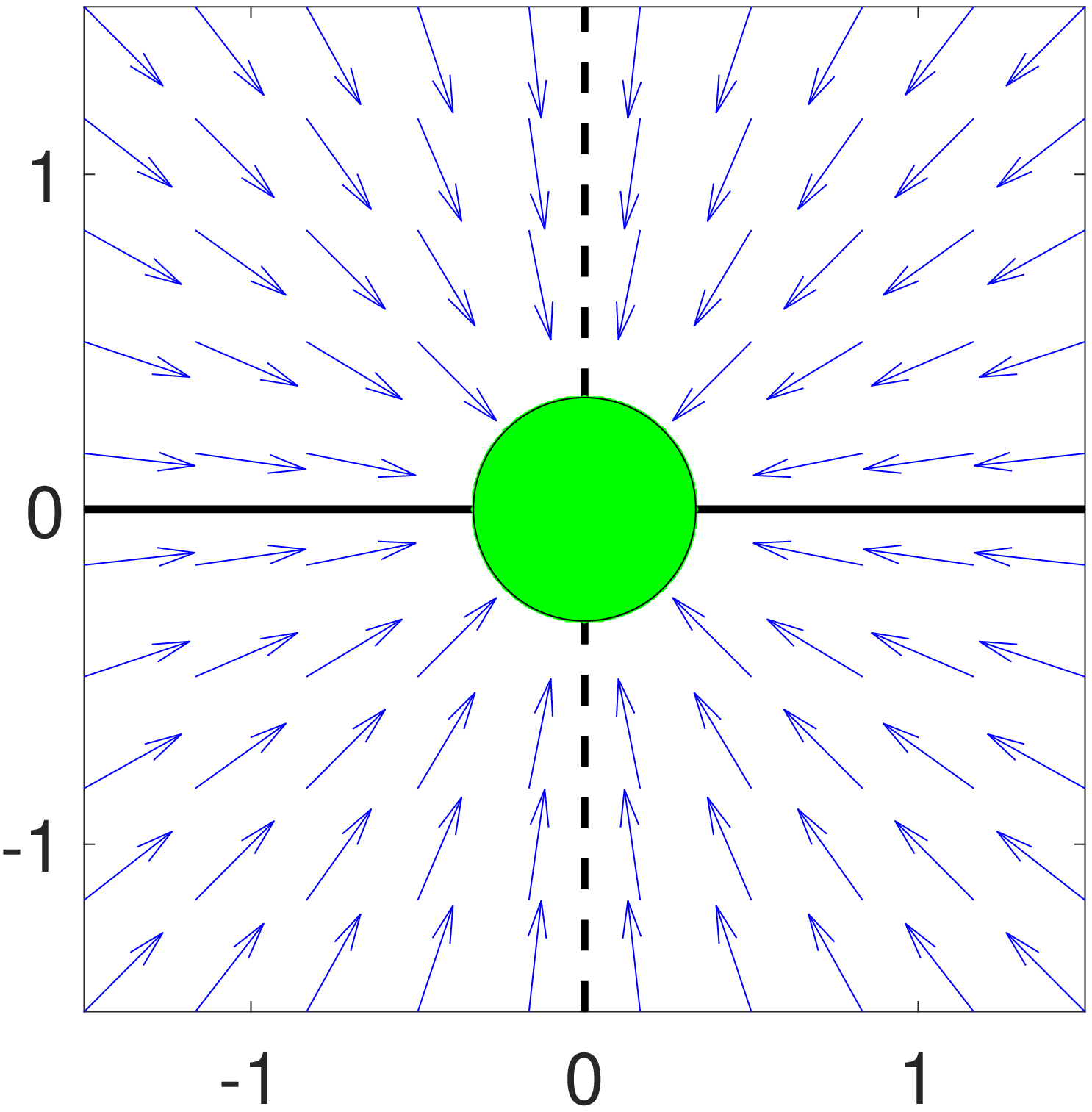}&
		\includegraphics[height=4cm]{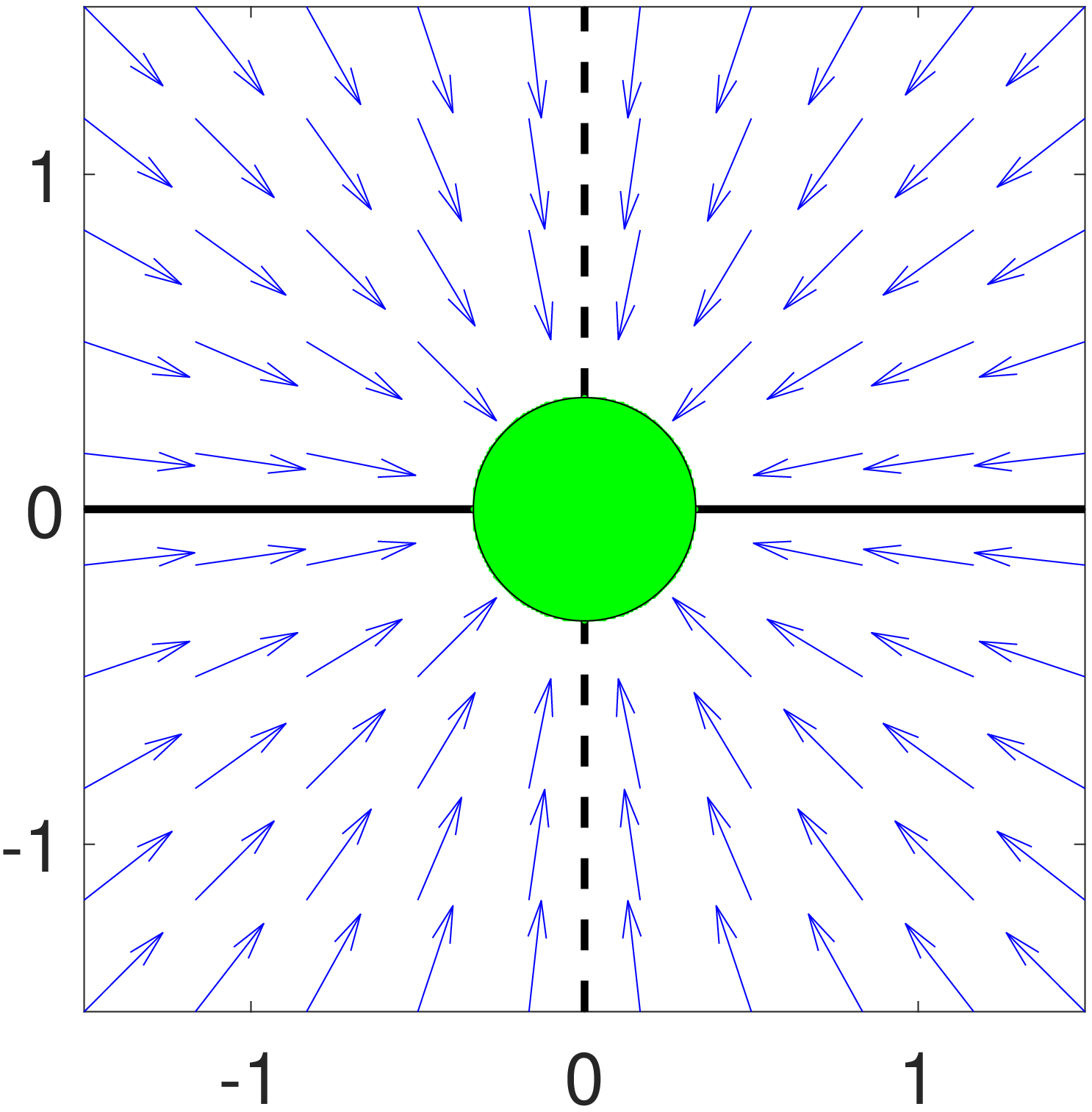}
		\vspace{0.1cm}\\
		\!\!\!\!\!\!\!\!&\!\!\!\!& $\;$ \footnotesize{(a) $\alpha_j{=}1.0$} &
		{\footnotesize (b) $(\alpha,p)_j{=}(1.0,1.0)$ }&\footnotesize{(c)\!\! $(\alpha,p,\theta,a)_j{=}(1.0,1.0,0,1.0)$ }
		\vspace{0.3cm}\\
		\!\!\!\!\!\!\!\!{\rotatebox{90}{$\qquad\qquad\;\;$ pixel $k$}}&\!\!\!\!&
		\includegraphics[height=4cm]{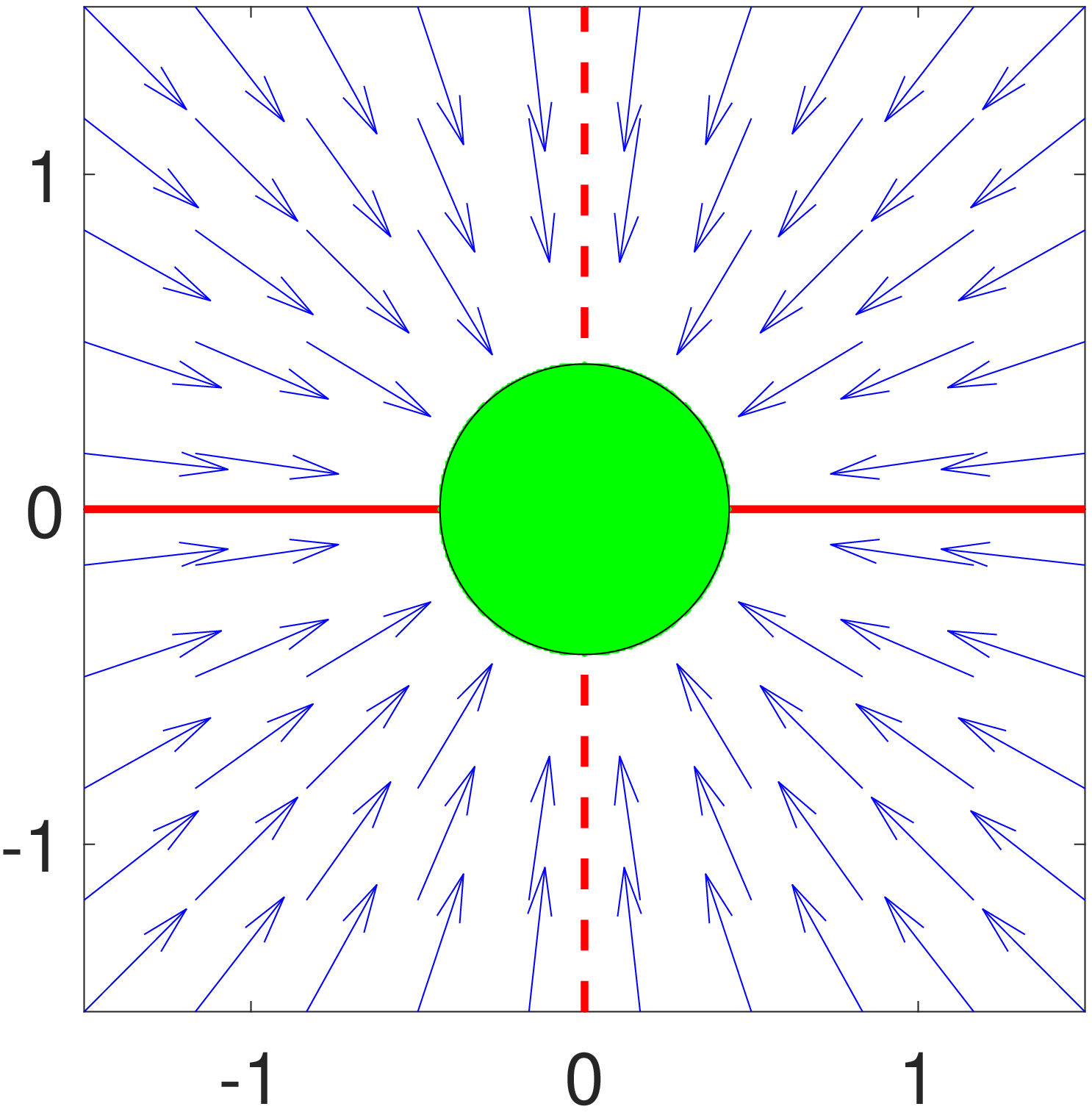}&
		\includegraphics[height=4cm]{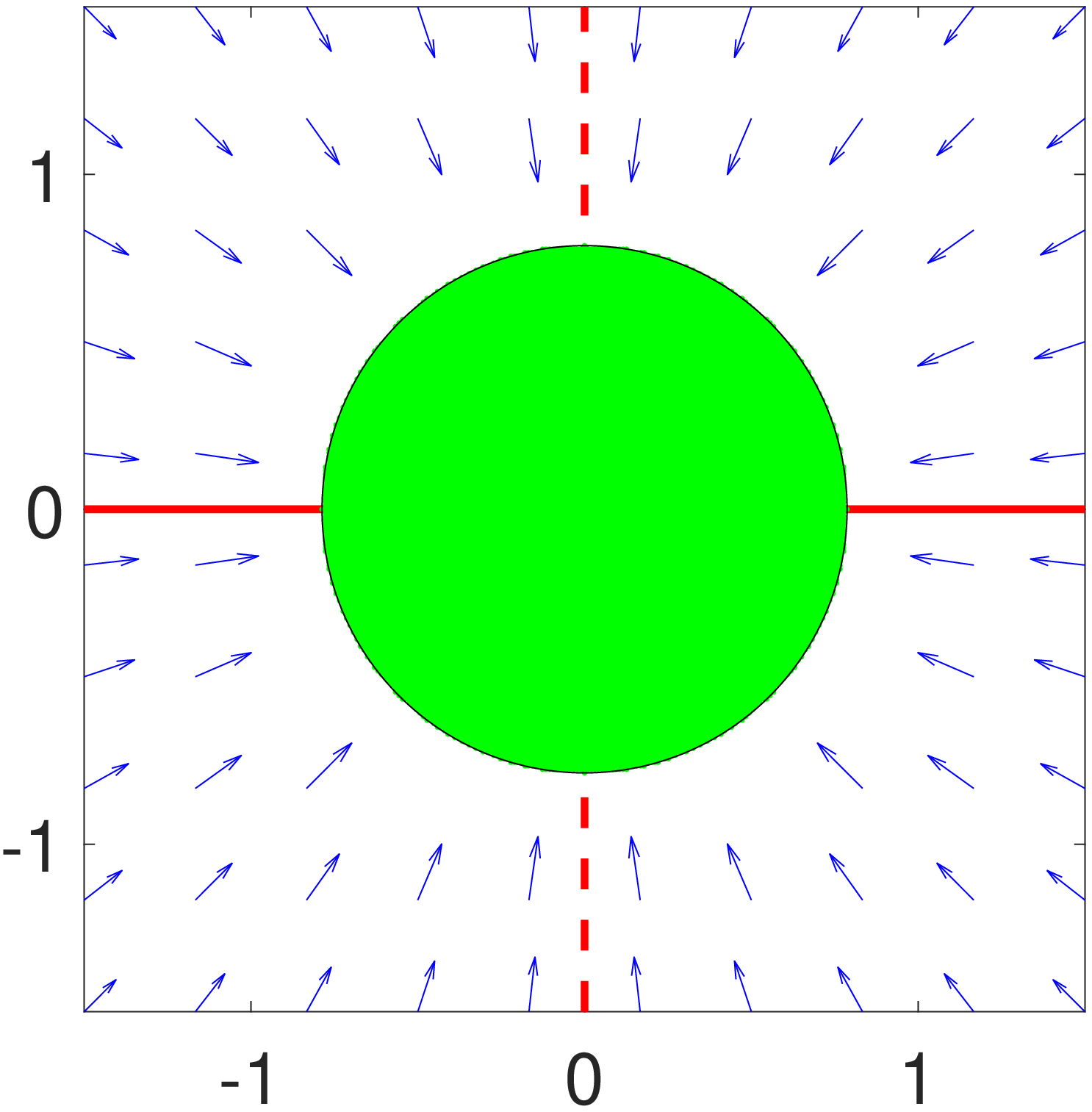}&
		\includegraphics[height=4cm]{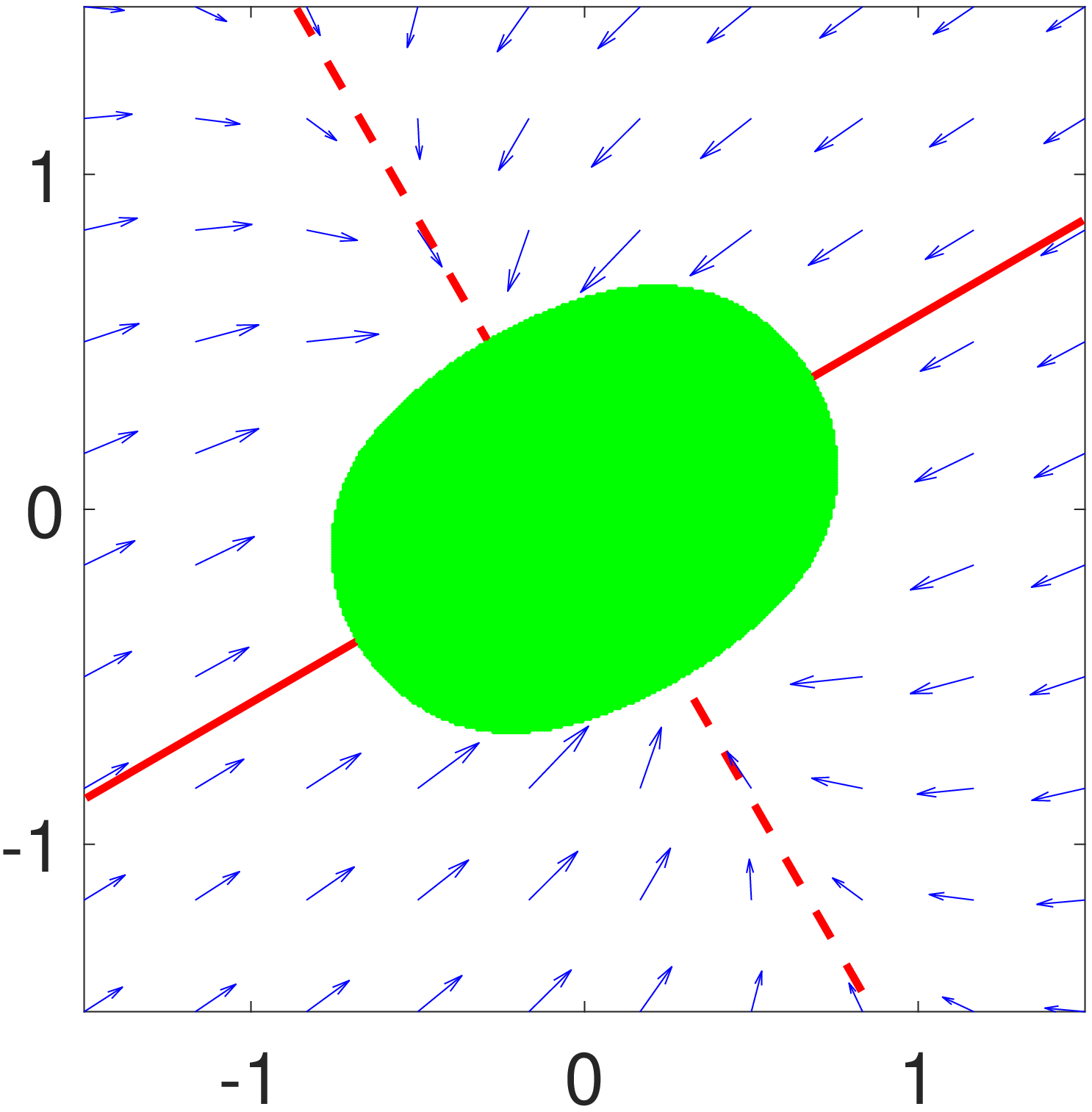}
		\vspace{0.1cm}\\
		\!\!\!\!\!\!\!\!&\!\!\!\!& \footnotesize{ (d) $\alpha_k {=} 1.3$} &\footnotesize{
			(e) $(\alpha,p)_k{=}(1.3,0.5)$} &\footnotesize{
			(f) \!\!$(\alpha,p,\theta,a)_k{=}(1.3,0.5,\frac{\pi}{6},0.4)$}
		\vspace{0.3cm}\\
		\!\!\!\!\!\!\!\!{\rotatebox{90}{$\qquad\qquad\;\;$ pixel $l$}}&\!\!\!\!&
		\includegraphics[height=4cm]{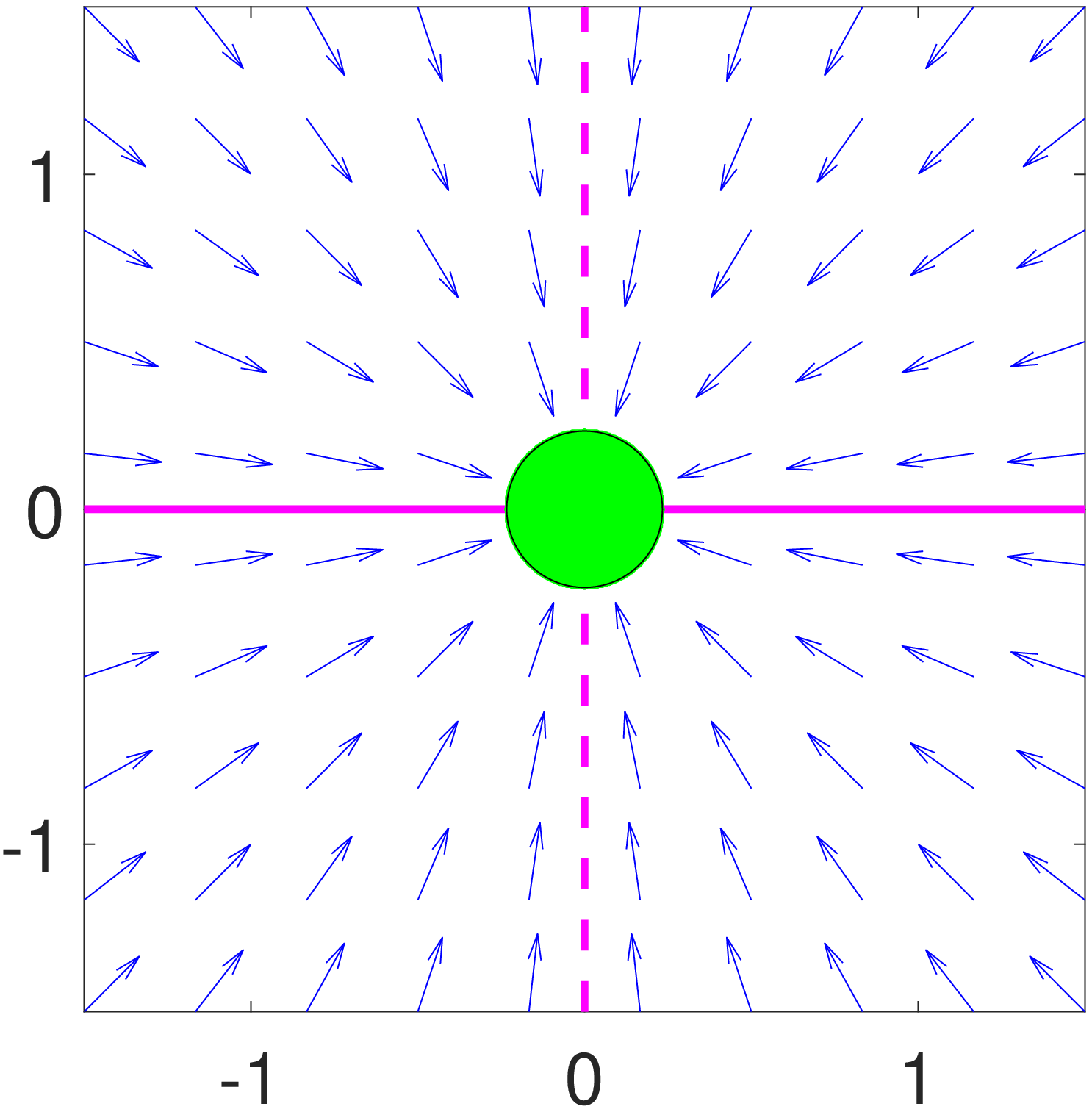}&
		\includegraphics[height=4cm]{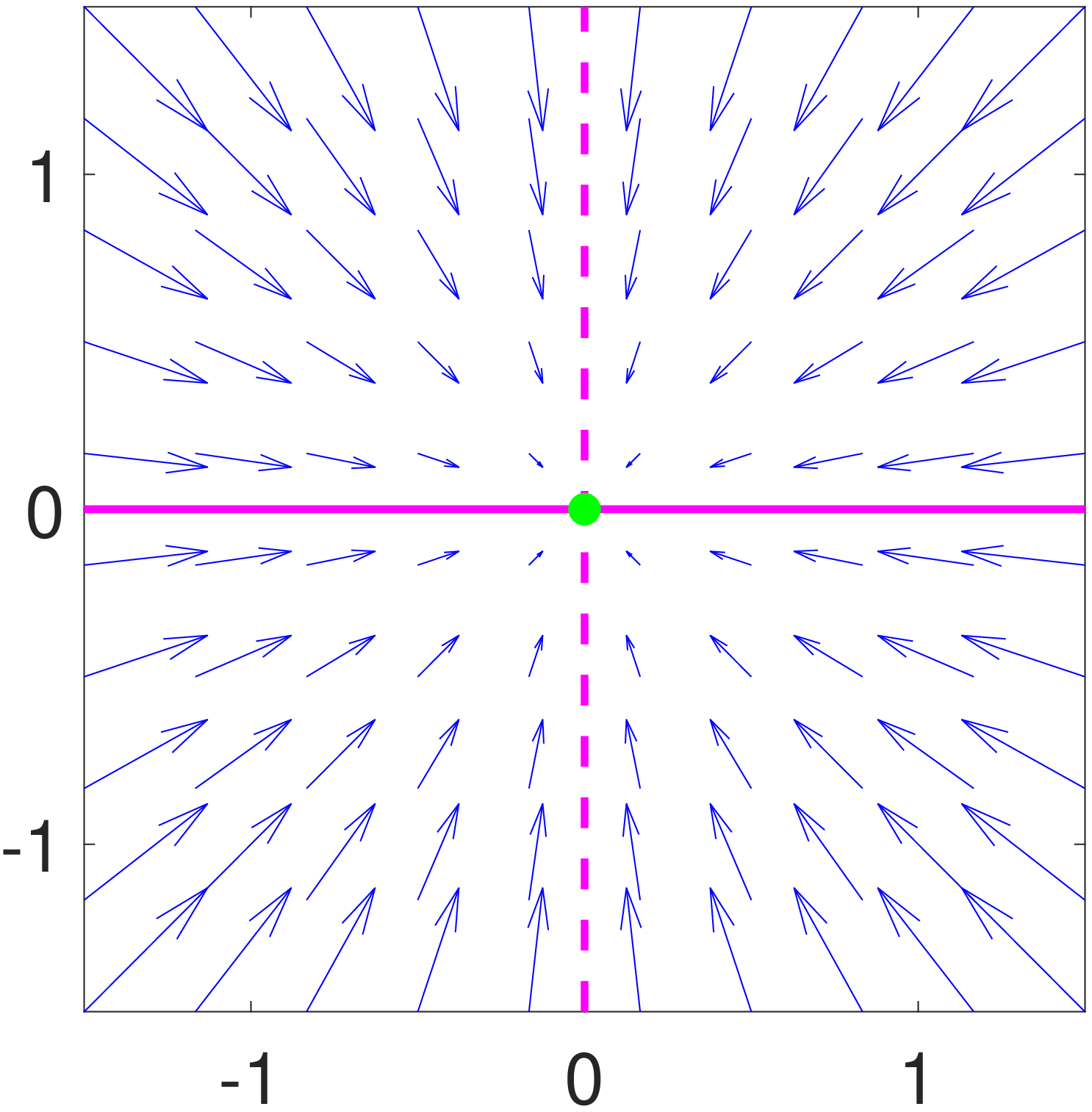}&
		\includegraphics[trim={1.07cm 1.08cm 1.37cm 1.57cm},clip,height=4cm]{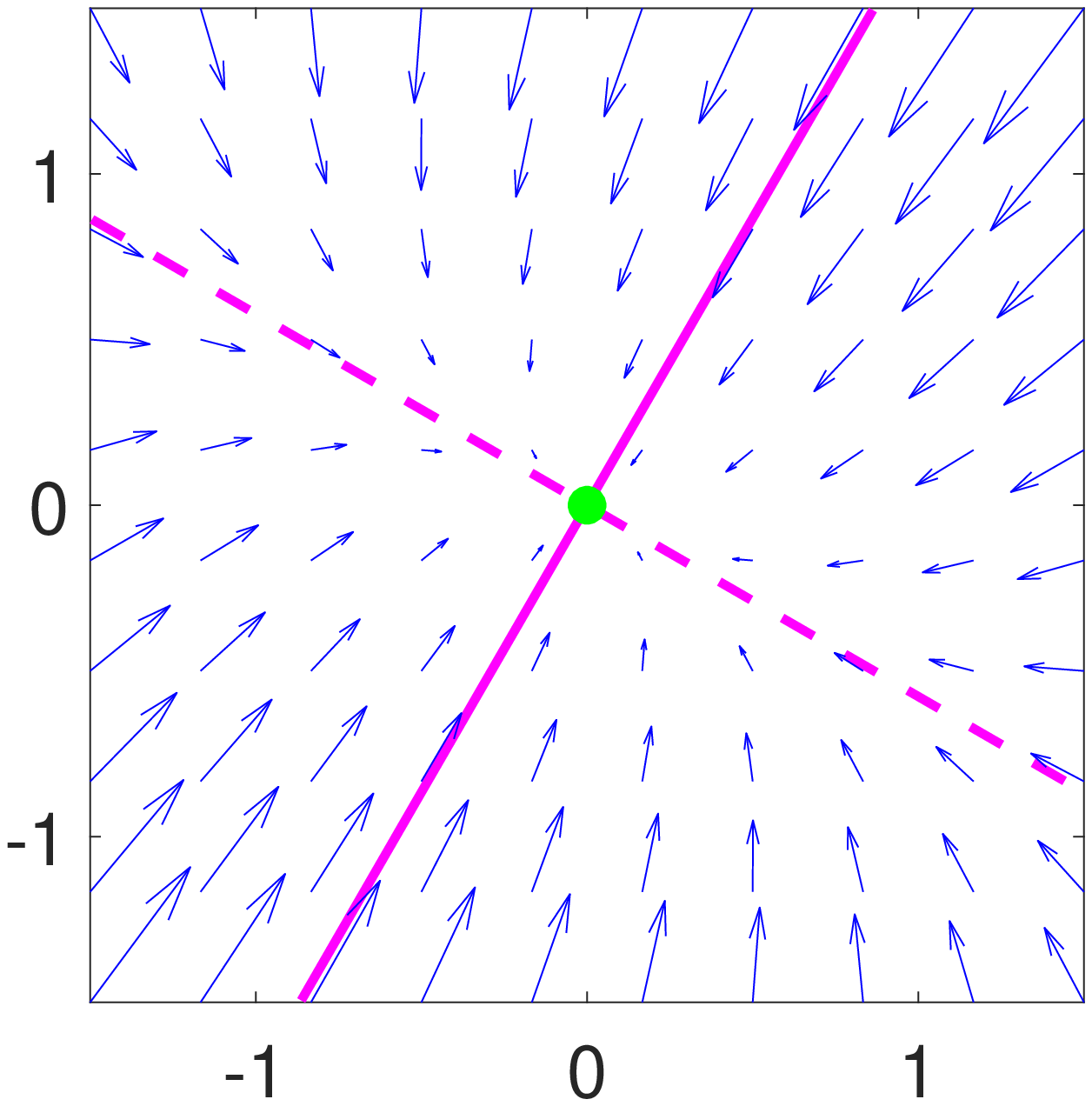}
		\vspace{0.1cm}\\
		\!\!\!\!\!\!\!\!&\!\!\!\!& \footnotesize{(g) $\alpha_l {=} 0.7$} &\footnotesize{(h) $(\alpha,p)_l{=}(0.7,2)$ }& \footnotesize{
			(i)\!\! $(\alpha,p,\theta,a)_l{=}(0.7,2,\frac{\pi}{3},0.6)$}
	\end{tabular}
	\caption{2D vector fields $\bm{e}_f^{\beta}: \R^2 \rightrightarrows \R^2$ in (\ref{eq:prox_ef}), representing the effect of the proximal operator $\mathrm{prox}_f^{\beta}$ on a (gradient) vector $\bm{w} = (w_1,w_2)$, for the same gradient penalty functions $f$ shown in Figures \ref{fig:regs_h}(a)-(i) and for a fixed proximity parameter $\beta = 3$.}
	\label{fig:regs_prox}
\end{figure}

We first remark that for all the considered gradient penalty functions $f$ and parameters $\beta>0$ we have $\mathrm{prox}_f^{\beta}(\bm{0}) = \bm{0} \,\;{\Longrightarrow}\;\, \bm{e}_f^{\beta}(\bm{0}) = \bm{0}$ and that for all penalties with shape parameter $p_i \leq 1$ - namely, the penalties in Figures \ref{fig:regs_h}(a)-(g) - there exists a region in the $\bm{w}$ domain (with centre the origin, size depending on $\beta$ and shape depending on the penalty itself) for which we have $\mathrm{prox}_f^{\beta}(\bm{w}) = \bm{0} \,\;{\Longrightarrow}\;\, \bm{e}_f^{\beta}(\bm{w}) = -\bm{w}$. This means that any input gradient vector $\bm{w}$ belonging to this region is ``completely'' regularised by the gradient penalty function, in the sense that it is proximal-mapped to the null gradient vector. 
For visualisation purposes, such sparsity-promoting regions are depicted in green (without showing the arrows pointing towards the origin) in the vector field representations of Figures  \ref{fig:regs_prox}(a)-(g).

As expected, for isotropic penalties - namely, the WTV and WTV$_p^{\mathrm{sv}}$ penalties shown in the first two columns of Figure \ref{fig:regs_h} and the WDTV$_p^{\mathrm{sv}}$ penalty with unitary anisotropy parameter depicted in Figure \ref{fig:regs_h}(c) - the associated vector fields $\bm{e}_f^{\beta}$ are radial with vectors pointing towards the origin and the sparsity-promotion regions are circularly shaped - see  Figures  \ref{fig:regs_prox}(a)-(e),(g). This means that the regularisation effect yielded by the isotropic penalties depicted in Figures \ref{fig:regs_h}(a)-(e),(g),(h) on $\bm{w}$ is only a shrinkage of its norm $\|\bm{w}\|_2$, namely
\begin{equation}\nonumber
\,\mathrm{prox}_f^{\beta}(\bm{w}) = \xi \, \bm{w} \;{\Longrightarrow}\; \bm{e}_f^{\beta}(\bm{w}) = -(1-\xi) \, \bm{w},
\end{equation}
with a shrinkage coefficient $\xi \in [0,1)$ only depending on the norm itself. This result has been proved, e.g., in \cite{tvpl2}(Proposition 1), where analytical expressions for $\xi$ as a function of $\|\bm{w}\|_2$ as well as of the shape and proximal parameters have been given for the proximal operator of a TV$_p$ penalty of the form $f(\bm{g};p) = \| \bm{g} \|_2^p$. Since it follows immediately from Definition \ref{def:proxx} that 
\begin{equation}\nonumber
\mathrm{prox}_f^{\beta}(\bm{w}) = \mathrm{prox}_{\widetilde{f}}^{\widetilde{\beta}}(\bm{w}),\;\quad\forall \bm{w} \in \R^2\,
\end{equation}
with $\widetilde{f}(\bm{g};\alpha,p) = \alpha^p \| \bm{w} \|_2^p$ and $\widetilde{\beta} = \alpha^p \beta,$, 
then the results in \cite{tvpl2} can be straightforwardly extended to cover the more general case of a WTV$_p$ penalty. These results provide an analytical interpretation of the visual results reported in Figures \ref{fig:regs_h}(a)-(e),(g),(h). In particular, by observing the vector fields depicted in these Figures, it is clear how larger scale parameter values $\alpha_i$ in the WTV penalty yield stronger gradient shrinkage effects as well as sparsity-promoting regions of larger radii. Then, by comparing the vector fields in Figures \ref{fig:regs_h}(d),(e), one can notice that, for a fixed scale parameter $\alpha_i$, decreasing the shape parameter $p_i$ (starting from $p_i = 1$) in the WTV$_p^{\mathrm{sv}}$ penalty yields weaker shrinkage effects on gradients outside the sparsity-promoting regions but larger radii of these regions. Finally, Figures \ref{fig:regs_h}(g),(h) show that increasing $p_i$ (for a fixed $\alpha_i$ and starting from $p_i = 1$) in the WTV$_p^{\mathrm{sv}}$ penalty yields stronger gradient shrinkage effects and, for any $p_i > 1$, the sparsity-promotion regions reduce to the pont $\bm{w} = \bm{0}_2$.

Clearly, the vector fields in Figures \ref{fig:regs_prox}(f),(i), associated to the WDTV$_p^{\mathrm{sv}}$ anisotropic penalties are not radial. To be more precise, they are radial only when restricted to input vectors $\bm{w}$ lying on the two straight lines having direction defined by angles $\theta_i$ and $\theta_i + \pi/2$ (solid/dashed red and magenta lines in Figures \ref{fig:regs_prox}(f),(i)). In general, the regularisation effect of the WDTV$_p^{\mathrm{sv}}$ penalties on input vectors $\bm{w}$ is stronger along the direction $\theta_i$. 
Finally, the  sparsity-promotion regions 
are elongated in the direction defined by $\theta_i$ and their elongation is negatively correlated with the value of the local anisotropy parameter $a_i$.

\section{Geometrical interpretation}
\label{sec:geom}

In this section, we enrich the statistical and analytical study of the space-adaptive regularisers introduced in the previous sections by providing some insights helpful to understand their local behaviour from a geometrical point of view. To do so, we unify and expand some considerations from \cite{BayramDTV2012,KonDonKnuDTGV19,ParMasSch18applied,Demircan2020} and start recalling the dual definition of TV:
\begin{align}  \label{eq:TV}
\mathrm{TV}(\bm{u}) 
= \sum_{i=1}^{N}  \max_{\bm{w}_i\in \mathcal{B}_1(\bm{0})} \langle (\bm{ \D u})_{i}, \bm{w}_{i}\rangle,
\end{align}
where $\mathcal{B}_1(\bm{0})$ denotes the two-dimensional Euclidean unit ball centred in the origin. 
Such constraint can be equivalently expressedby requiring $\| \bm{w}_{i}||_2 \leq 1$ for all $i=1,\ldots,N$.

Following \cite{BayramDTV2012}, we can now replace the set $\mathcal{B}_1(\bm{0})$ in \eqref{eq:TV} with a \emph{fixed} two-dimensional elliptical region $\mathcal{E}_{a,\theta}(\bm{0})$ centred in the origin and defined in terms of its orientation $\theta\in [-\pi/2,\pi/2)$ with respect to the horizontal $x$-axis  and eccentricity $a\in (0,1]$, that is:
\begin{equation}  \label{eq:super-ellipse}
\mathcal{E}_{a,\theta}(\bm{0}) {: =} \!\left\{ (x_1,x_2)\in\R^2: |x_1\cos\theta +x_2\sin\theta|^{2} + \left|\frac{-x_1\sin\theta+x_2\cos\theta}{a}\right|^{2}\leq 1\right\}.
\end{equation}
Note that as $a\to 0$, the set $\mathcal{E}_{a,\theta}(\bm{0}) $ degenerates to the line $x_2 = \tan \theta~ x_1$. 

Recalling definitions \eqref{siginvdec} and \eqref{eq:comp2} of the matrices $\bm{\Lambda}_a$ and $\bm{\mathrm{R}}_{-\theta}$ and denoting (formally, given the purely discrete setting we are working on) by $D_{\theta} u_{i}= (\bm{\D u})_{i}\cdot \bm{v}$ and $D_{\theta^\perp} u_{i}= (\bm{\D u})_{i}\cdot \bm{v}^\perp$ the directional derivatives along the direction $\bm{v}=(\cos\theta,\sin\theta)$ and its orthogonal $\bm{v}^\perp=(-\sin\theta,\cos\theta)$, we define element-wise the  \emph{directional gradient} $\widetilde{\bm{\D}}_{a,\theta}\bm{u} \in(\mathbb{R}^2)^N$ of $\bm{u}$ as
\begin{equation}
\widetilde{\bm{\D}}_{a,\theta}\bm{u}:=(\bm{\Lambda}_{a} \bm{\mathrm{R}}_{-\theta}  (\bm{\D u})_{i})_{i} = \begin{pmatrix}
D_\theta u_{i} \\
a D_{\theta^\perp} u_{i}
\end{pmatrix}_{i}.
\end{equation}
By this definition we can thus write the \emph{directional} formulation of TV firstly used in \cite{BayramDTV2012} and later applied in several other works (see, e.g., \cite{Zhang2013,KonDonKnuDTGV19,Demircan2020}) for promoting TV smoothness along $\bm{v}$. Note that the \eqref{eq:WDTVp_reg} reduces to this definition by choosing $\alpha_i=p_i= 1$, $\theta_i=\theta\in[-\pi/2,\pi/2)$ and $a_i=a\in(0,1]$, $\forall i$. It reads
\begin{align} 
\mathrm{DTV}(\bm{u}) 
=
\sum_{i=1}^{N} \| (\widetilde{\bm{\D}}_{a,\theta}\bm{u})_{i} \|_2 
= 
\sum_{i=1}^{N}  \max_{\bm{w}_{i}\in \mathcal{E}_{a,\theta}(\bm{0}) }  \langle (\bm{ \D u})_{i}, \bm{w}_{i}\rangle,
\label{eq:DTV_formulation} 
\end{align}
where here and in what follows we omit the explicit dependence of the regularisers on the hyperparameters to facilitate the overall readability.
Note that, differently from TV, DTV is computed as the sum of maximum values of scalar products in which the dual functions $\bm{w}_{i}$ are forced to belong to $\mathcal{E}_{a,\theta}(\bm{0})$ at any point. 
Following \cite{KonDonKnuDTGV19}, we can now observe that for $\bm{w}_{i}\in \mathcal{E}_{a,\theta}(\bm{0})$, we have
\begin{equation}\nonumber
\langle (\bm{\D u})_{i}, \bm{w}_{i} \rangle = \langle (\bm{\D u})_{i}, \bm{\mathrm{R}}_{\theta}\bm{\Lambda}_{a}\widetilde{\bm{w}}_{i} \rangle = \langle \bm{\Lambda}_{a}\bm{\mathrm{R}}_{-\theta} (\bm{\D u})_{i}, \widetilde{\bm{w}}_{i}  \rangle = \langle (\widetilde{\bm{\D}}_{a,\theta}\bm{u})_{i},\widetilde{\bm{w}}_{i}\rangle 
\end{equation}
where $\widetilde{\bm{w}}_{i}\in\mathcal{B}_{1}(\bm{0})$ for all $i=1,\ldots,N$.
Thus, we deduce:
\begin{equation}    \label{eq:directional_formulation} 
\max_{\bm{w}_{i} \in \mathcal{E}_{a,\theta}(\bm{0})}   \langle (\bm{ \D u})_{i}, \bm{w}_{i}\rangle  = 
\max_{\widetilde{\bm{w}}_{i}\in \mathcal{B}_1(\bm{0})} \langle (\widetilde{\bm{\D}}_{a,\theta}\bm{u})_{i}, \widetilde{\bm{w}}_{i} \rangle,
\end{equation} 
which can be used to show via the standard Cauchy-Schwarz inequality that at any point $i=1,\ldots,N$ the maximum is achieved by the normalised directional gradient vector, i.e. by the vector $\widetilde{\bm{w}}_{i}=(\widetilde{\bm{\D}}_{a,\theta}\bm{u})_{i}/\|(\widetilde{\bm{\D}}_{a,\theta}\bm{u})_{i}\|_2$, so that
\begin{align} 
&  \langle (\widetilde{\bm{\D}}_{a,\theta}\bm{u})_{i} , \widetilde{\bm{w}}_{i} \rangle =  \langle (\widetilde{\bm{\D}}_{a,\theta}\bm{u})_{i},  \frac{(\widetilde{\bm{\D}}_{a,\theta}\bm{u})_{i}}{\|(\widetilde{\bm{\D}}_{a,\theta}\bm{u})_{i}\|_2} \rangle \nonumber \\
& = \|(\widetilde{\bm{\D}}_{a,\theta}\bm{u})_{i} \|_2= \| \bm{\Lambda}_{a} \bm{\mathrm{R}}_{-\theta}  (\bm{\D u})_{i} \|_2,  \label{eq:max_vector}
\end{align}
which justifies \eqref{eq:DTV_formulation}. 

Inspired by \cite{ParMasSch18analysis}, we report in Figure \ref{fig:dualFunctions} a graphical representation of the considerations above.  There, we denote in red a fixed non-zero gradient vector $(\bm{\D u})_i \in \R^2$ evaluated at a certain point $i=1,\ldots,N$, in blue the direction $\bm{v}=(\cos\theta,\sin\theta)\in\R^2$ drawing an angle $\theta$ with the $x$-axis and in green the projection of $(\bm{\D u})_i$ along $\bm{v}$, i.e~the directional gradient $(\widetilde{\bm{\D}}_{a,\theta}\bm{u})_{i}$. The unitary ball $\mathcal{B}_1(\bm{0})$ is coloured black while the ellipses $\mathcal{E}_{a,\theta}(\bm{0}) $ for different values of $a\in(0,1] $ are coloured magenta. For each plot, the unitary vector $\widetilde{\bm{w}}_{i}$ realising the maximum in \eqref{eq:DTV_formulation} is drawn (magenta). Note that for $a=1$ we retrieve that the vector $\widetilde{\bm{w}}_{i}$  maximising the scalar product is the one parallel to $(\bm{\D u})_{i}$  (note that in such case the directionality does not affect the value computed, as $\bm{\mathrm{R}}_{-\theta}$ is unitary). However, as $a\to 0$ we observe that $\widetilde{\bm{w}}_{i}$ progressively aligns with $\bm{v}=(\cos\theta,\sin\theta)$, thus promoting directional regularisation.

\begin{figure}[t!] 
	\begin{subfigure}[t]{0.45\textwidth}
		\includegraphics[width=\textwidth]{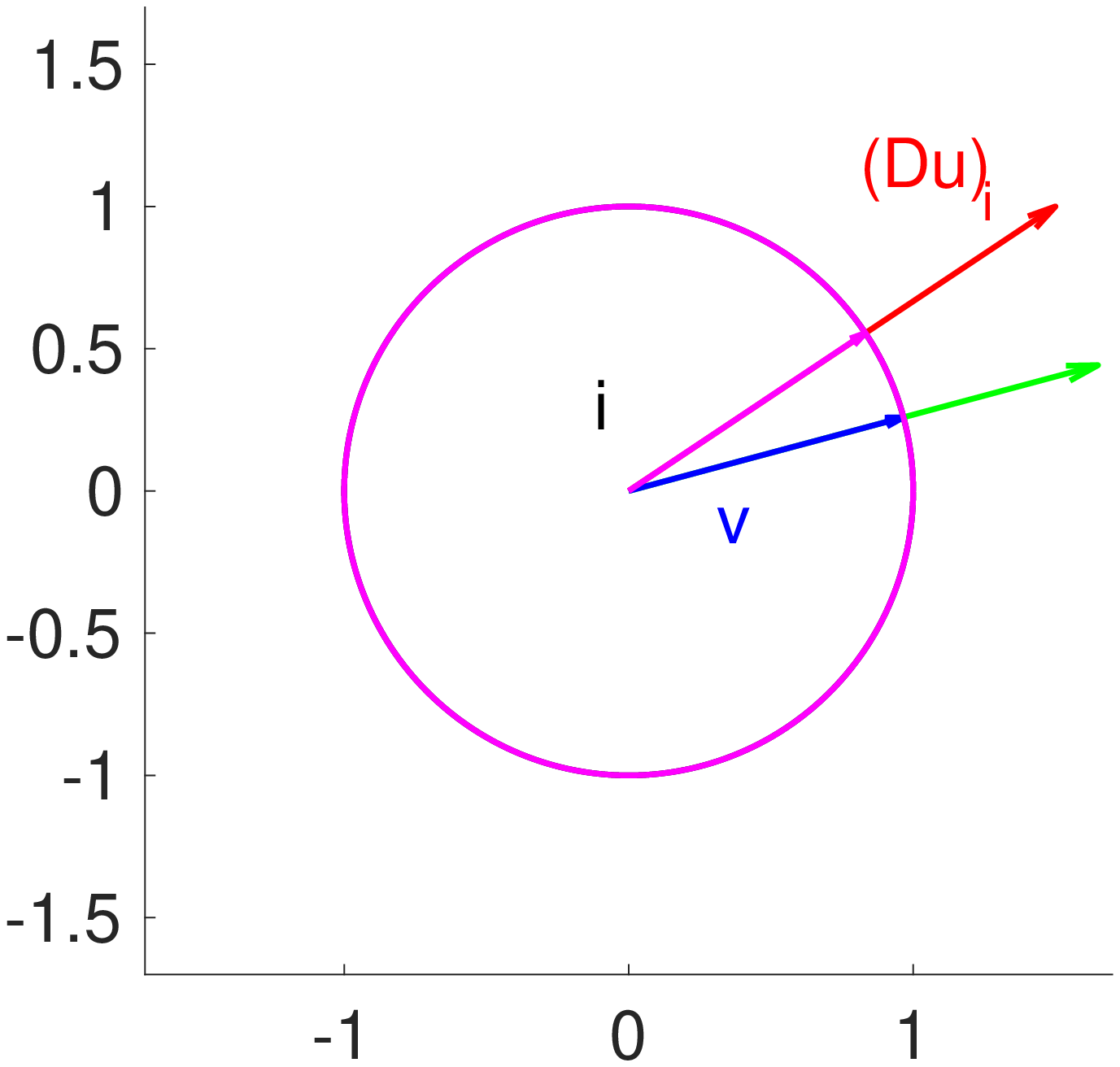}
		\caption{$a=1$}
	\end{subfigure}\ 
	\begin{subfigure}[t]{0.45\textwidth}
		\includegraphics[width=\textwidth]{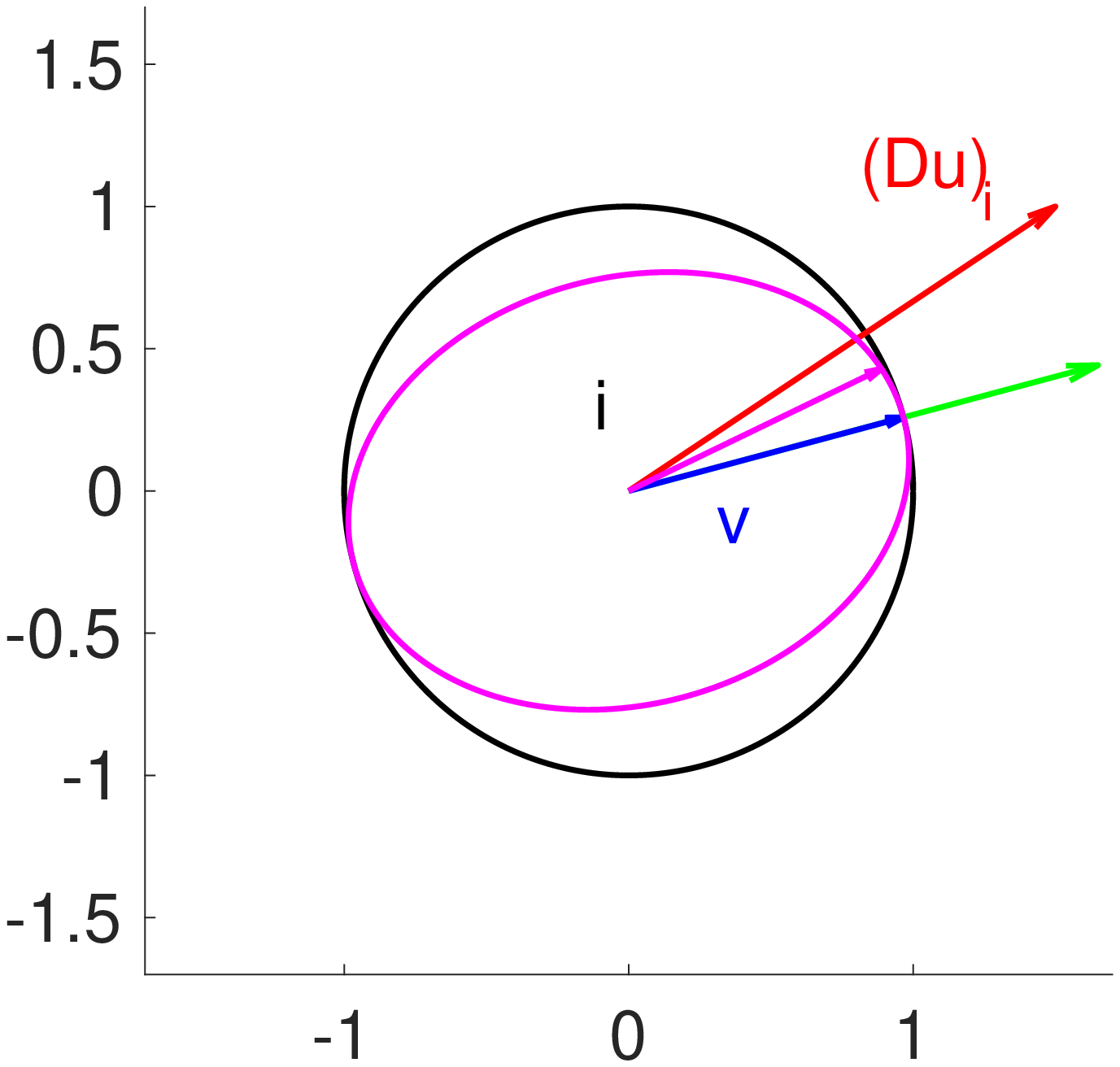}
		\caption{$a=0.5$}
	\end{subfigure}\\
	\begin{subfigure}[t]{0.45\textwidth}
		\includegraphics[width=\textwidth]{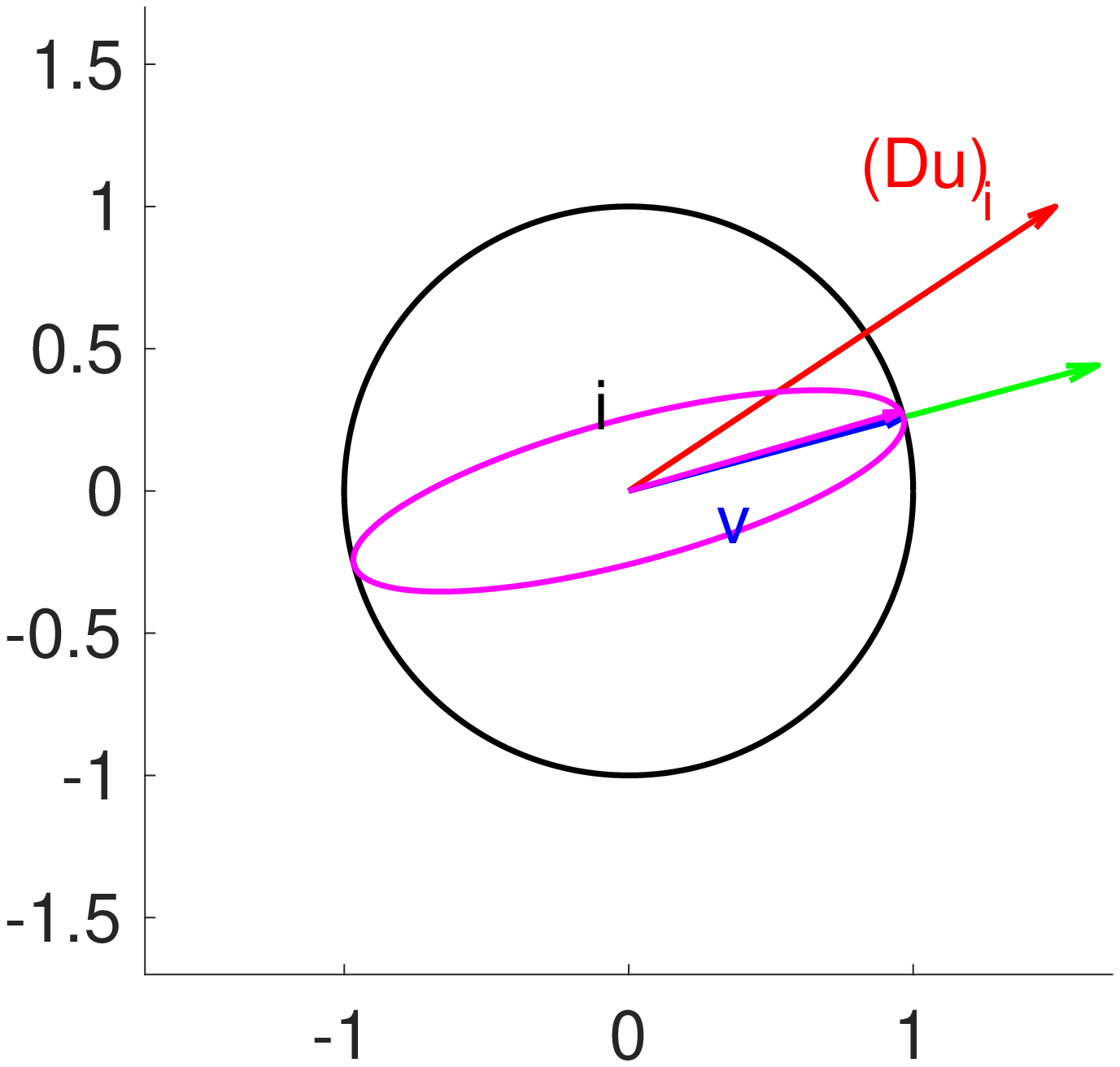}
		\caption{$a=0.25$}
	\end{subfigure}
	\begin{subfigure}[t]{0.45\textwidth}
		\includegraphics[width=\textwidth]{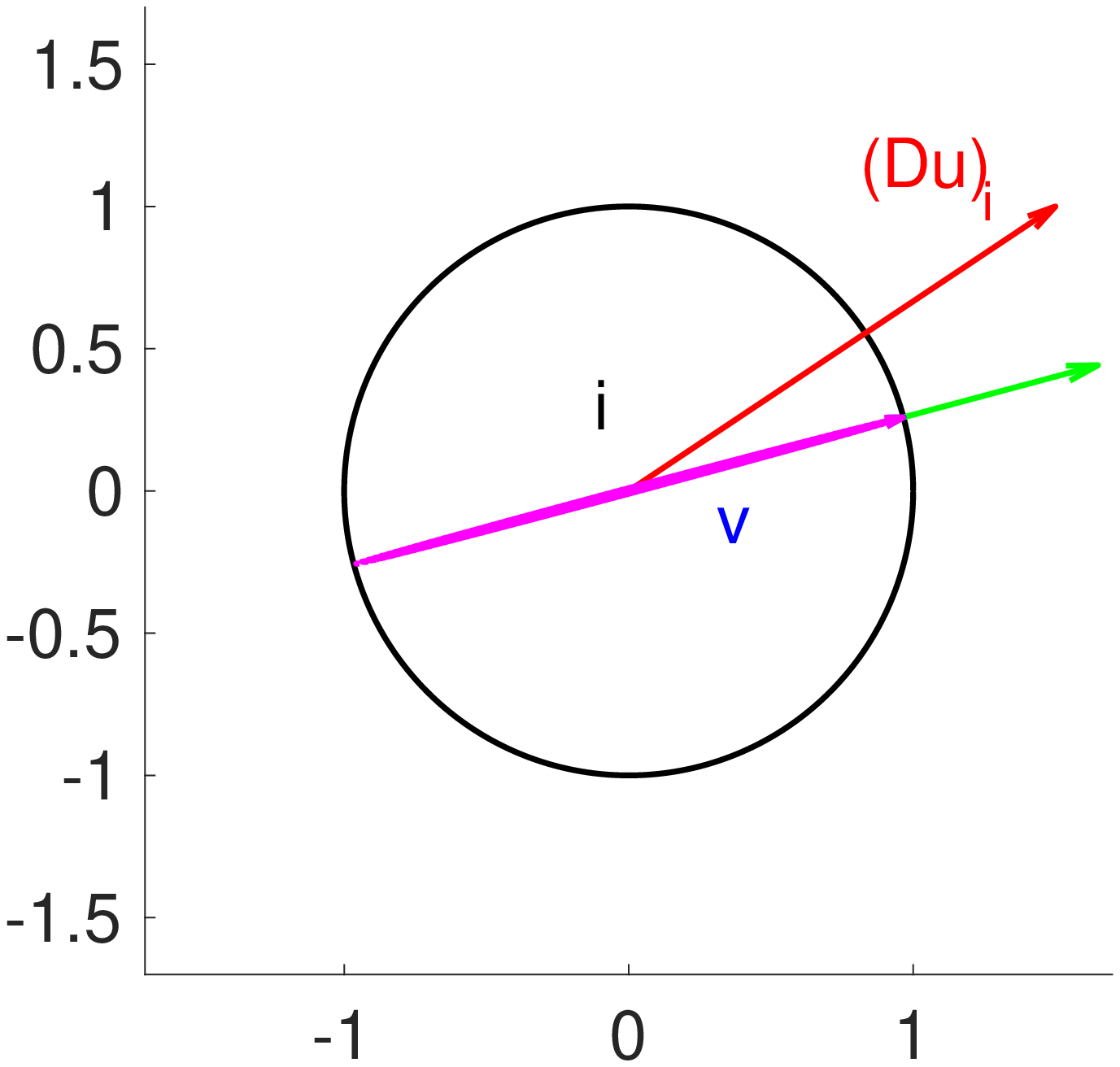}
		\caption{$a=0.01$}
	\end{subfigure}
	\caption{Directional behaviour of $\mathrm{DTV}$ regularisation \eqref{eq:DTV_formulation}. }
	\label{fig:dualFunctions}
\end{figure}

As previously remarked, the unit vector $\bm{v}$  defining the orientation of the ellipse $\mathcal{E}_{a,\theta}(\bm{0})$ is defined in terms of the angle $\theta$, which makes the use of the DTV regulariser useful in practice only when $\theta$ can be easily estimated. This is the case, for instance, of geometric textured images or of images of very specific scenes (see Figure \ref{fig:teas1}), which limits significantly the application of DTV in practice.

Such limitation can be overcome by considering the following natural space-variant extension of the DTV regulariser \eqref{eq:DTV_formulation}, which comes from \eqref{eq:WDTVp_reg} regulariser with $\alpha_i=p_i= 1$, $\forall i$, and space-variant $\theta_i\in[-\pi/2,\pi/2)$ and $a_i\in(0,1]$:
\begin{align}  \label{eq:DTV_sv}
\mathrm{DTV}^{sv}(\bm{u}) 
=  \sum_{i=1}^{N} \| (\widetilde{\bm{\D}}_{\bm{a},\bm{\theta}}\bm{u})_{i} \|_2  = \sum_{i=1}^{N} \| \bm{\Lambda}_{a_{i}} \bm{\mathrm{R}}_{-\theta_{i}} (\bm{\D u})_{i}\|_2  =  \sum_{i=1}^{N} 
\max_{\bm{w}_{i}\in \mathcal{E}_{a_{i},\theta_{i}}(\bm{0})}  \langle (\bm{ \D u})_{i}, \bm{w}_{i}\rangle.
\end{align}
where now $\bm{a}=(a_{i})_{i}\in (0,1]^{N}, \bm{\theta}=(\theta_{i})_{i} \in [0,\pi)^{N}$ and where we have used the simplified notation $(\widetilde{\bm{\D}}_{\bm{a},\bm{\theta}}\bm{u})_{i} =  \bm{\Lambda}_{a_{i}} \bm{\mathrm{R}}_{-\theta_{i}} (\bm{\D u})_{i}\in\mathbb{R}^2$.  In this case, a space-variant adjustment of the directional smoothing (from strongly anisotropic along the direction $\bm{v}_{i}=(\cos\theta_{i},\sin\theta_{i})$ with $a_{i}=0$ to fully isotropic with $a_{i}=1$) is allowed at any point. For such regulariser, the same geometrical considerations as before hold, the difference being that the orientations $\theta_{j}$ may change from one point to another. Few choices can be made here. Following the Edge Adaptive Total Variation (EATV) approach proposed in \cite{Zhang2013}, one possibility consists in estimating the local directions $\bm{v}_{i}$ by imposing that $\bm{v}_{i} \perp (\bm{\D b}_\sigma)_{i}$, where, for $\sigma>0$,  $\bm{b}_\sigma$  denotes a smoothed version of the given image $\bm{b}$. This choice, however, is very sensitive to  noise oscillations and it may misguide the local directional behaviour if these are too large. Alternatively, as considered in \cite{Grasmair2010,Estellers2015,Ehrhardt2016,Lefkimmiatis2015} and more recently in \cite{PANG2020,Demircan2020}, the dependence on the image to retrieve can be encoded explicitly in the definition of the regularisation by allowing $\theta_{i}$ to be a function of the target image $\bm{u}$ (i.e.~$\theta_{i} = \theta_{i}(\bm{u})$) using, for instance, information coming from the structure tensor. This procedure is much more robust, but the nonlinear dependence on $\bm{u}$ in the definition of $\theta_{i}$ may significantly complicate the problem from an optimisation viewpoint. For further estimation strategies based on maximum likelihood approaches, we refer the reader to \cite{HWTV,CMBBE,DTVp_siam} and to the following discussion in Section \ref{sec:parameter_estimation_results}. Whatever the approach considered, it is worth remarking that an accurate and robust estimation of the space-variant parameters $\bm{a}$ and $\bm{\theta}$ is a very challenging problem

\begin{remark}
	The values $a_{i}\in(0,1]$ for all $i=1,\ldots,N$ have to be interpreted as `confidence' parameters enforcing a strong anisotropic TV smoothing ($a_{i}\approx 0$) whenever a good local estimation of $\theta_{i}$ is available, while leaving the behaviour to be close-to-isotropic ($a_{i}\approx 1$) whenever the estimation of $\theta_{i}$ is unreliable.
\end{remark}


As noted in Sections \ref{sec:Bayes} and \ref{sec:map}, we can further incorporate in \eqref{eq:DTV_sv} an additional shape/sharpness parameter vector $\bm{p}=(p_i)_i\in\R_{++}^N$, thus considering the regulariser 
\begin{align}  
\mathrm{DTV}_{\bm{p}}^{sv}(\bm{u})
=  \sum_{i=1}^{N} \| \bm{\Lambda}_{a_{i}} \bm{\mathrm{R}}_{-\theta_{i}} (\bm{\D u})_{i}\|^{p_{i}}_2
= \sum_{i=1}^{N} \left( \max_{\bm{w}_{i}\in \mathcal{E}_{a_{i},\theta_{i}}(\bm{0})}  \langle (\bm{ \D u})_{i}, \bm{w}_{i}\rangle \right)^{p_{i}}, \label{eq:DTVp_sv2}
\end{align}
which is a particular instance of \eqref{eq:WDTVp_reg} taking $\alpha_i= 1$, $\forall i$.
The presence of the parameters $p_i$ in \eqref{eq:DTVp_sv2} does not alter the directional behaviour of such regulariser in comparison with the one observed for DTV$^{sv}$. However, as thoroughly discussed in Sections \ref{sec:sharpness} and \ref{sec:prox},  such behaviour is made sharper for $p_{i}<1$ and smoother for $p_{i}>1$. Note, in particular, that when $p_{i}= 2$ the DTV$_{2}^{sv}$ regulariser acts locally as a Tikhonov-type squared $\ell_2$-norm of the directional gradient $\widetilde{\bm{\D}}_{a_i,\theta_i}\bm{u}$.

We conclude this section with some considerations regarding weighted models. Recalling \eqref{eq:TV}, we notice that  introducing a space-variant parameter vector $(\alpha_{i})_{i}\in\mathbb{R}^{N}_{++}$ corresponds simply to inflate/deflate the Euclidean ball $\mathcal{B}_1(\bm{0})$ and to look for maxima therein, which corresponds to the choice
\begin{align}  \label{eq:WTV}
\mathrm{WTV}(\bm{u}) & = \sum_{i=1}^{N} \alpha_{i}\| (\bm{ \D u})_{i} \|_2  =
\sum_{i=1}^{N}
\max_{\bm{w}_{i}\in \mathcal{B}_{\alpha_{i}}(\bm{0})}  \langle (\bm{ \D u})_{i}, \bm{w}_{i} \rangle
\end{align}
where for $\alpha_i>0$, $\mathcal{B}_{\alpha_{i}}(\bm{0}):=\left\{ \bm{z}\in\mathbb{R}^2: \|\bm{z}\|_2 \leq \alpha_{i}\right\}$ and where the vector $\widetilde{\bm{w}}_{i} =  \alpha_{i} \frac{ (\bm{\D u})_{i}}{\|(\bm{\D u})_{i}\|_2}$ maximises the scalar products at any point. 
By analogous considerations as above, we can finally draw a connection with the regulariser defined in \eqref{eq:WTDVp_reg2}, which, recalling the discussion above, can be written as:
\begin{align}  \label{eq:WTVp}
\mathrm{WDTV}^{sv}_{\bm{p}}(\bm{u})  = \sum_{i=1}^{N} \alpha_{i}^{p_{i}}\| (\widetilde{\bm{\D}}_{\bm{a},\bm{\theta}}\bm{u})_{i} \|^{p_{i}}_2 = \sum_{i=1}^{N} \alpha_{i}^{p_{i}}
\left( \max_{\bm{w}_{i}\in \mathcal{E}_{a_{i},\theta_{i}}(\bm{0})}  \langle (\bm{ \D u})_{i}, \bm{w}_{i}\rangle \right)^{p_{i}}.
\end{align}



\begin{remark}
	If $p_{i}= 1$ for all $i=1,\ldots,N$, the regulariser \eqref{eq:WTVp} takes the form
	\begin{align}  \label{eq:WDTV}
	\mathrm{WDTV}^{sv}(\bm{u}) 
	=  \sum_{i=1}^{N} 
	\max_{\bm{w}_{i}\in \mathcal{E}_{a_{i},\theta_{i},\alpha_{i}}(\bm{0})} \langle (\bm{ \D u})_{i}, \bm{w}_{i}\rangle 
	\end{align}
	where $\mathcal{E}_{a_{i},\theta_{i},\alpha_{i}}(\bm{0})$ denotes the 2D ellipse centred in the origin with eccentricity $a_{i}\in(0,1]$, orientation $\theta_{i}\in[-\pi/2,\pi/2)$ w.r.t.~to the $x-$axis and width/height equal to $2\alpha_{i}$ and $2\alpha_{i}a_{i}$, respectively. 
\end{remark}

The authors believe that an interesting generalisation of the discussion above shall address situations where the constraint set is non-convex and, in particular, it is defined in terms of  \emph{Lam\'e curves} centred in $\bm{0}$, i.e. defined by
\begin{equation}  \label{eq:super-ellipse}
\!\!\!\mathcal{L}_{\beta,a,\theta}(\bm{0}) {: = }\left\{ (x_1,x_2)\in\R^2:~ |x_1\cos\theta {+}x_2\sin\theta|^{\beta} {+} \left|\frac{{-}x_1\sin\theta{+}x_2\cos\theta}{a}\right|^{\beta}{\leq} 1\right\}.
\end{equation}
Such set is non-convex as soon as $\beta<1$, see Figure \ref{fig:lame}. The use of such general shapes may lead to consider new gradient-based regularisations where the underlying geometry constraining the dual functions favours smoothing in  different ways.

\begin{figure}[t!] 
	\centering
	\begin{subfigure}[t]{0.45\textwidth}
		\centering
		\includegraphics[height=4.5cm]{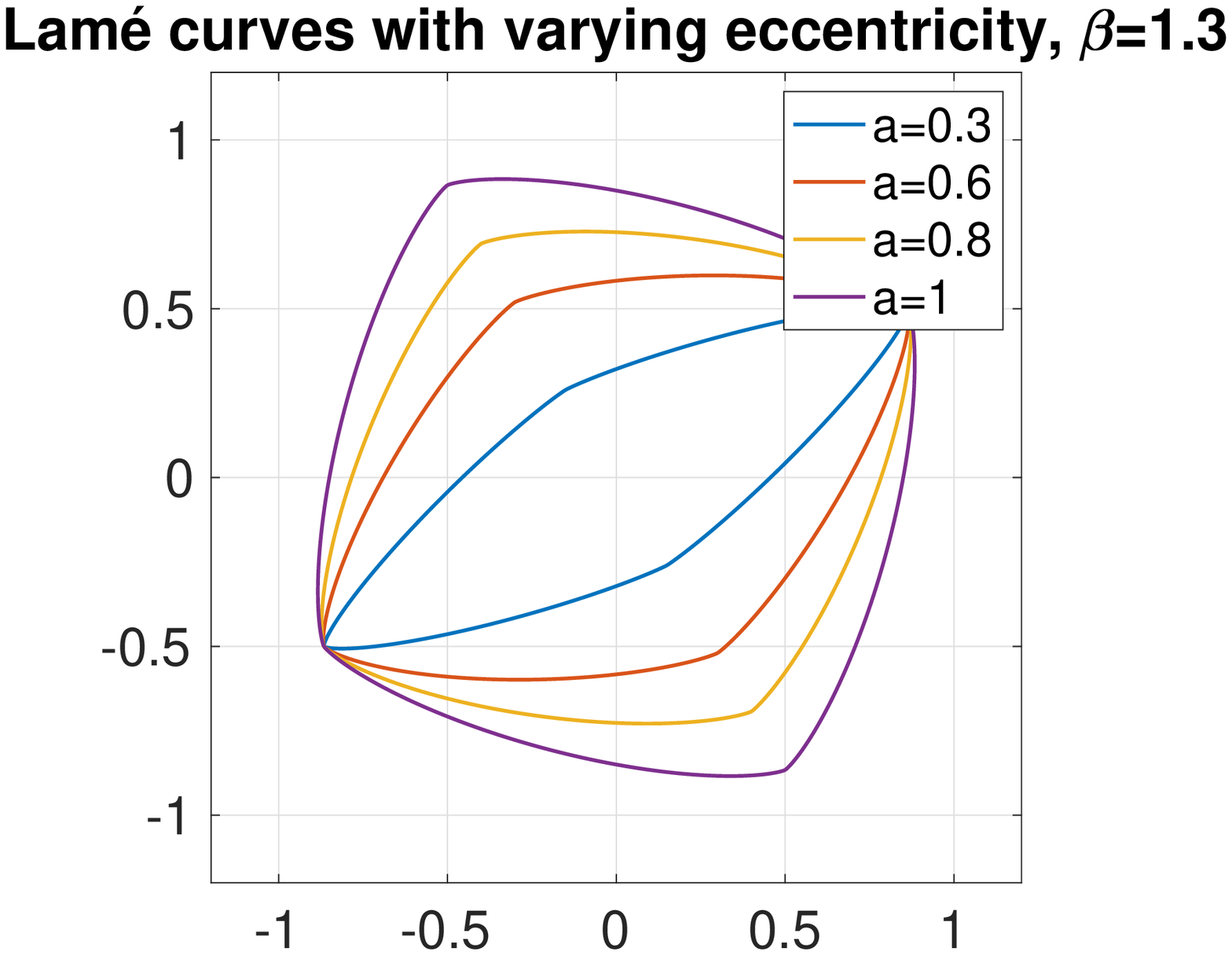}
	\end{subfigure}\ 
	\begin{subfigure}[t]{0.48\textwidth}
		\centering
		\includegraphics[height=4.5cm]{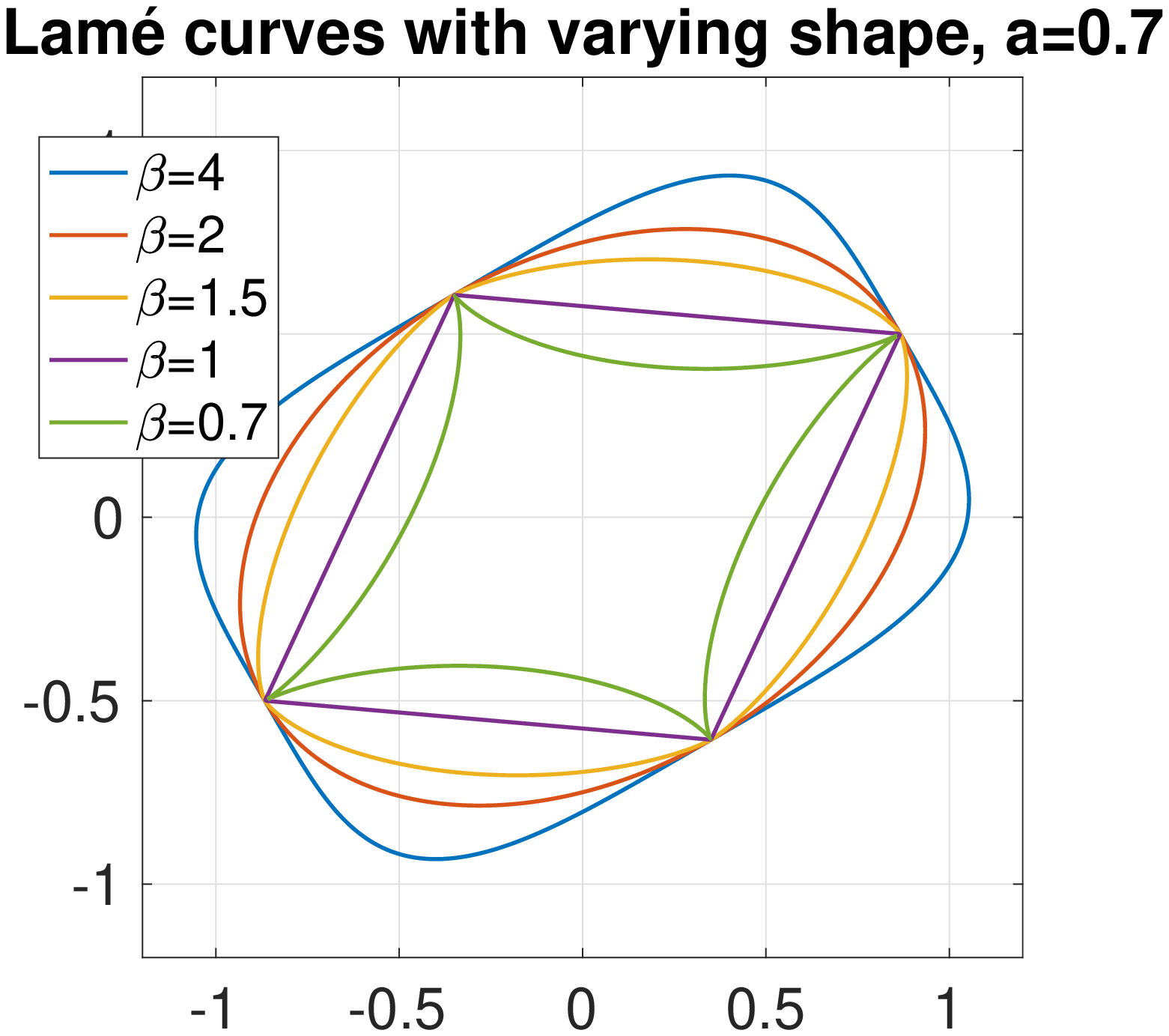}
	\end{subfigure}
	\caption{Lamé curves \eqref{eq:super-ellipse} with varying eccentricity and shape parameters, $\theta=\pi/6$.}
	\label{fig:lame}
\end{figure}


\section{Joint hypermodelling}
\label{sec:joint}

In this section, we provide explicit expressions of the negative log-hyperprior $-\ln\mathbb{P}(\bm{\Theta})$ and of the negative log-likelihood $-\ln\mathbb{P}(\bm{b}\mid \bm{\A u})$ in \eqref{eq:map_last}, which allows to  derive the final joint hypermodel.

\subsection{Non-informative hyperprior}
When no a priori knowledge or intuition about the value of the unknown prior hyperparameters is available, a uniform distribution for the random vector
$\bm{\Theta}$ can be set, thus considering a (possibly improper) non-informative hyperprior. In formulas this corresponds to set
\begin{equation}
\label{eq:hyper}
\mathbb{P}(\bm{\Theta}) = \varrho \, \chi_{\mathcal{D}_{\bm{\Theta}}}(\bm{\Theta})\,,\;\text{with }\varrho\in\R_{++}\,,
\end{equation}
%
from which it follows
\begin{equation}
-\ln\mathbb{P}(\bm{\Theta})
\;{=}\;
-\ln \varrho + \iota_{\mathcal{D}_{\bm{\Theta}}}(\bm{\Theta}) \, .
\label{eq:loghyp}
\end{equation}

%

\subsection{GG likelihood leads to L$_q$ fidelity term}
\label{subsec:loglik}
First, based on the expression of the considered GG likelihoods in \eqref{eq:likeli}-\eqref{eq:likeli_inf}, the negative log-likelihood term $-\ln\mathbb{P}(\bm{b}\mid \bm{\A u})$ in  \eqref{eq:map} takes the form
\begin{align}
\label{eq:lnlkh1}
q<+\infty : &\left\{
\begin{array}{cl}
-\ln\mathbb{P}(\bm{b}\mid \bm{\A u})   \,\;{=}\;\,&
\omega^q \,\left\|\bm{\A u}-\bm{b}\right\|_q^q 
\;{-}\;\displaystyle{
	M\ln\left(\frac{\omega}{2}\frac{q}{\Gamma(1/q)}\right)} \,.   \\
\phantom{ -\ln\mathbb{P}(\bm{b}\mid \bm{\A u}) } \;{=}\;\,&
\omega^q \,\left\|\bm{\A u}-\bm{b}\right\|_q^q  
\;{+}\; C_q \, , 
\end{array}\right.\\
&\\
\label{eq:lnlkh2}
q=+\infty : &\left\{
\begin{array}{cl}
-\ln\mathbb{P}(\bm{b}\mid \bm{\A u})   \,\;{=}\;\,& \iota_{[0,1/\omega]}\left(\|\bm{\A u}-\bm{b}\|_{\infty}\right) -\displaystyle{ M\ln\frac{\omega}{2}}\phantom{\frac{1}{X}} \\
\phantom{ -\ln\mathbb{P}(\bm{b}\mid \bm{\A u}) } \;{=}\;\,&
\iota_{[0,1/\omega]}\left(\|\bm{\A u}-\bm{b}\|_{\infty}\right)
\;{+}\; C_\infty \, , 
\end{array}\right.
\end{align}
%
%
where the quantities $C_q, C_\infty>0$ appearing in \eqref{eq:lnlkh1}-\eqref{eq:lnlkh2} do not depend on the optimisation variable $\bm{u}$, so they can be dropped - in \eqref{eq:map}.

We now introduce the functional $F_q(\bm{\A u};\bm{b}):\R^M\to\R_{+}$ which is defined as
\begin{eqnarray}
\label{eq:fid1}
q<+\infty : & \qquad F_q(\bm{\A u};\bm{b}) :=& q\,\omega^q\mathrm{L}_q(\bm{\A u};\bm{b}),\\
\label{eq:fid2}
q=+\infty : & \qquad F_{\infty}(\bm{\A u};\bm{b}) :=& \iota_{[0,1/\omega]}(\|\bm{\A u}-\bm{b}\|_{\infty})
\end{eqnarray}
with
\begin{equation}
\label{eq:Lqfid}
\mathrm{L}_q(\bm{\A u};\bm{b}) 
\,\;{=}\;\, 
\frac{1}{q}\|\bm{\A u}-\bm{b}\|_q^q\,.
\end{equation}
%

\subsection{Joint variational Bayesian hypermodels}
\label{subsec:joint}

We are now ready to derive the explicit instances of the hypermodel \eqref{eq:map_last} in terms of the selected priors, hyperpriors and fidelity functionals discussed above. To improve readability, we will consider in the following data terms $F_q$ with $q<+\infty$. However, as it will be remarked at the end of the section, analogous derivations can be easily extended to the case $q=+\infty$. 

\medskip

Problem \eqref{eq:map_last} can be reformulated as
%
%
\begin{equation}
\left\{\bm{u}^*,\bm{\Theta}^*\right\}
\;\;{\in}\;\;
\argmin{
	\bm{u}\in\R^N,
	\bm{\Theta}\in\mathcal{D}_{\bm{\Theta}}}
\left\{\,
\mathcal{R}(\bm{u},\bm{\Theta})\;{+}\; 
\mathcal{H}(\bm{\Theta})
\;{+}\;
\mu \, \mathrm{L}_q(\bm{\A u};\bm{b})
\,\right\}\,,
\label{eq:fin_model}
\end{equation}
where $\mathcal{R}(\bm{u},\bm{\Theta})$ denotes one of the regularisation terms previously discussed in Section \ref{subsec:logpr}, while $\mathcal{H}(\bm{\Theta})$ accounts for possibly multiple terms depending  only on the hyperparameter vector $\bm{\Theta}\in\mathcal{D}_{\bm{\Theta}}$. The uniform hyperprior \eqref{eq:hyper} acts here simply by enforcing optimisation on  $\mathcal{D}_{\bm{\Theta}}$ only. The parameter $\mu>0$ is a  regularisation parameter whose choice will be specified for each hypermodel in the following.

Similarly as what discussed in  Section \ref{subsec:logpr},
we have that $\mathcal{H}(\bm{\Theta})$ can be expressed in general form as
%
\begin{equation}
\label{eq:gs}
\mathcal{H}(\bm{\Theta}) = \sum_{i=1}^{N}h(\bm{\Theta}_i)   \,, \quad h:\mathcal{D}_{\bm{\Theta}_i}\to\R\,,
\end{equation}
where the function $h$ is a \emph{parameter penalty function} whose form will be specified for each regulariser.

We start our considerations from the TV prior \eqref{eq:TV_prior}. By plugging \eqref{eq:hyper} and  \eqref{eq:likeli} into \eqref{eq:map_last}, we get
\begin{align}
\left\{\bm{u}^*,\alpha^*\right\}
\;{\in}\;&\argmin{\bm{u}\in\R^N,\,\alpha\in\R_{++}}\left\{\alpha\sum_{i=1}^{N}\|(\bm{\D u})_i\|_2 \;{-}\; N\ln\alpha \;{+}\; q\,\omega^q\,\mathrm{L}_{q}(\bm{\A u};\bm{b})\right\}\\
\label{eq:TV_hyper}
\;{=}\;&\argmin{\bm{u}\in\R^N,\,\alpha\in\R_{++}}\left\{\mathrm{TV}(\bm{u}) \;{-}\; \frac{N}{\alpha}\ln\alpha \;{+}\; \mu\,\mathrm{L}_{q}(\bm{\A u};\bm{b})\right\}\,,\quad\text{with}\quad\mu\;{:=}\;\frac{q\,\omega^q}{\alpha}\,,
\end{align}
where, we recall that in this case $\bm{\Theta}=\alpha\in\mathbb{R}_{++}$, whence parameter penalty function $h_{\mathrm{TV}}$ reads
\begin{equation}  \label{eq:TV_h2}
h_{\mathrm{TV}}(\alpha) = -\frac{1}{\alpha}\ln\alpha\,,\quad i=1,\ldots,N\,. 
\end{equation}


For the TV$_p$ and the DTV regularisers in \eqref{eq:TVp} and \eqref{eq:DTV}, respectively,
hypermodels with similar form as in \eqref{eq:TV_hyper} can be derived. In particular, as far as the TV$_p$ regularisation term is concerned, we have that $\bm{\Theta}=(\alpha,p)\in\mathbb{R}^2_{++}$ and that the parameter penalty function takes the form
\begin{equation}
h_{\mathrm{TV}_p}(\alpha,p) \;{=}\; -\frac{1}{\alpha^p}\ln\frac{\alpha\,p}{\Gamma(1/p)}\,,\quad i=1,\ldots,N\,,
\end{equation}
while for the DTV$_p$ regulariser we have $\bm{\Theta}=(\alpha,p,\theta,a)\in\mathbb{R}^2_{++}\times [-\pi/2,\pi/2)\times (0,1]$ and the parameter penalty function reads
\begin{equation}
h_{\mathrm{DTV}_p}(\alpha,\theta,a) \;{=}\; -\frac{1}{\alpha}\ln\left(\frac{a}{2\pi}\frac{\alpha^2}{4}\right)\,,\quad i=1,\ldots,N\,.
\end{equation}

As far as space-variant hypermodels are concerned, we start considering the WTV regulariser for which $\bm{\Theta}=\bm{\alpha}\in \mathbb{R}_{+}^N$. Model \eqref{eq:fin_model} thus turns into
\begin{align}
\label{eq:WTV_hyp}
\begin{split}
\left\{\bm{u}^*,\bm{\alpha}^*\right\} 
\;{\in}\;&\argmin{\bm{u}\in\R^N,\,\bm{\alpha}\in\mathbb{R}_+^N}\Bigg\{\sum_{i=1}^{N}\alpha_i\,\|(\bm{\D u})_i\|_2 \;{-}\; \sum_{i=1}^N\ln\alpha_i +\mu\,\mathrm{L}_{q}(\bm{\A u};\bm{b})\Bigg\}\nonumber\\
& \text{with}\quad\mu\;{:=}\;q\,\omega^q\,,
\end{split}
\end{align}
where the function $h_{\mathrm{WTV}}$ is defined by
\begin{equation}
h_{\mathrm{WTV}}(\alpha_i)  = -\ln\alpha_i\,,\quad i=1,\ldots,N\,.
\end{equation}
For the WTV$_{\bm{p}}^{sv}$, we have that $\bm{\Theta}=(\bm{\alpha},\bm{p})\in \mathbb{R}_{++}^N\times\mathbb{R}_{++}^N $. The hypermodel \eqref{eq:fin_model} here specifies into
\begin{align}
\label{eq:WTVp_hyp}
\begin{split}
\left\{\bm{u}^*,\bm{\alpha}^*,\bm{p}^*\right\}
\;{\in}\;&\argmin{\bm{u}\in\R^N,\,\bm{\alpha}\in\mathbb{R}_{++}^N,\, \bm{p}\in\mathbb{R}_{++}^N}\Bigg\{\sum_{i=1}^{N}\alpha_i^{p_i}\,\|(\bm{\D u})_i\|_2^{p_i} \;{-}\; \sum_{i=1}^N\ln\frac{\alpha_i\,p_i}{\Gamma(1\,/\,p_i)}+ \mu\,\mathrm{L}_{q}(\bm{\A u};\bm{b})\Bigg\}\nonumber\\
& \text{with}\quad\mu\;{:=}\;q\,\omega^q\,,
\end{split}
\end{align}
with penalty function $h_{\mathrm{WTV}_{\bm{p}}^{sv}}$ defined by
\begin{equation} \label{eq:WTVp_h2}
h_{\mathrm{WTV}_{\bm{p}}^{sv}}(\alpha_i,p_i) = -\ln\frac{\alpha_i\,p_i}{\Gamma(1/p_i)}\,,\quad i = 1,\dots,N\,.
\end{equation}
Finally, for WDTV$_{\bm{p}}^{sv}$, we have $\bm{\Theta}=(\bm{\alpha},\bm{p},\bm{\theta},\bm{a})\in \mathbb{R}_{++}^N\times\mathbb{R}_{++}^N \times [-\pi/2,\pi/2)^N\times (0,1]^N $, whence the final hypermodel reads
\begin{align}
\label{eq:WDTVp_hyp}
\begin{split}
\left\{\bm{u}^*,\bm{\alpha}^*,\bm{p}^*,\bm{\theta}^*,\bm{a}^*\right\}
& \;{\in}\; \argmin{\bm{u}\in\R^N, 
	\bm{\alpha}\in\mathbb{R}_+^N,\, \bm{p}\in\mathbb{R}_{++}^N,\,
	\bm{\theta}\in[-\pi/2,\pi/2)^N,\,
	\bm{a}\in(0,1]^N
}\Bigg\{\sum_{i=1}^{N}\alpha_i^{p_i}\,\|\bm{\Lambda}_{a_i}\bm{\mathbf{\R}}_{-\theta_i}(\bm{\D u})_i\|_2^{p_i}\\
&\;{-}\;\sum_{i=1}^N\ln\left(\frac{a_i}{2\pi}\frac{p_i\alpha_i^2}{\Gamma(2\,/\,p_i)2^{2/p_i}}\right) + \mu\,\mathrm{L}_{q}(\bm{\A u};\bm{b})\Bigg\}\,,\quad\text{with}\quad\mu\;{:=}\;q\,\omega^q\,,
\end{split}
\end{align}
with parameter penalty function takes the form
\begin{equation}
h_{\mathrm{WDTV}_{\bm{p}}^{sv}}(\alpha_i,p_i,\theta_i,a_i) = -\ln\left(\frac{a_i}{2\pi}\frac{p_i\,\alpha_i^2}{\Gamma(2/p_i)2^{2/p_i}}\right)\,,\quad i = 1,\ldots,N\,.
\label{eq:WDTVp_h2}
\end{equation}

For all considered space-invariant and space-variant hypermodels, we summarise in Table~\ref{tab:models} the gradient and parameter penalty functions $f$ and $h$, respectively, as well as the parameter domains $\mathcal{D}_{\bm{\Theta}_i}$ and the regularisation parameters $\mu$. In the table, we also report the reference papers in which the aforementioned regularisers have been first introduced and/or analysed in probabilistic terms.

\begin{table}[!t]
	\centering
	\resizebox{\textwidth}{!}
	{
		\renewcommand{\arraystretch}{3}
		\renewcommand{\tabcolsep}{0.1cm}
		\begin{tabular}{c|c|c|c|c|c|c|}\hline\hline
			&$\mathcal{R}(\bm{u},\bm{\Theta})$&$f(\bm{g}_i;\bm{\Theta}_i)$&$h(\bm{\Theta}_i)$&$\mathcal{D}_{\bm{\Theta}_i}$&$\mu$&Ref\\
			\hline\hline
			\rowcolor{azure}
			&TV&$\|\bm{g}_i\|_2$&$\displaystyle{-\frac{1}{\alpha}\ln\alpha}$&$\alpha\in\R_{++}$&$\displaystyle{\frac{q\,\omega^q}{\alpha}}$&\cite{ROF}\\
			\rowcolor{azure}
			&TV$_p$&$\|\bm{g}_i\|_2^p$&$\displaystyle{-\frac{1}{\alpha^p}\ln\frac{\alpha\,p}{\Gamma(1/p)}}$&$(\alpha,p)\in\R_{++}^2$&$\displaystyle{\frac{q\,\omega^q}{\alpha^p}}$&\cite{tvpl2}\\
			\rowcolor{azure}
			\multirow{-3}{*}{\STAB{\rotatebox[origin=c]{90}{space-invariant}}}&DTV&$\|\bm{\Lambda}_a\bm{\mathrm{R}}_{-\theta}\bm{g}_i\|_2$&$\displaystyle{-\frac{1}{\alpha}\ln\left(\frac{a}{2\pi}\frac{\alpha^2}{4}\right)}$&$(\alpha,\theta,a)\in\R_{++}\times[-\pi/2,\pi/2)\times(0,1]$&$\displaystyle{\frac{q\,\omega^q}{\alpha}}$&\cite{BayramDTV2012}\\
			\hline
			\rowcolor{cosmiclatte}
			&WTV&$\alpha_i\|\bm{g}_i\|_2$&$\displaystyle{-\ln\alpha_i}$&$\alpha_i\in\R_{++}$&$\displaystyle{q\,\omega^q}$&\cite{CLPS}\\
			\rowcolor{cosmiclatte}
			&WTV$^{sv}_{\bm{p}}$&$\alpha_i^{p_i}\|\bm{g}_i\|_2^{p_i}$&$\displaystyle{-\ln\frac{\alpha_i\,p_i}{\Gamma(1/p_i)}}$&$(\alpha_i,p_i)\in\R_{++}^2$&$\displaystyle{q\,\omega^q}$&\cite{CMBBE}\\
			\rowcolor{cosmiclatte}
			\multirow{-3}{*}{\STAB{\rotatebox[origin=c]{90}{space-variant}}}&WDTV$^{sv}_{\bm{p}}$&$\alpha_i^{p_i}\|\bm{\Lambda}_{a_i}\bm{\mathrm{R}}_{-\theta_i}\bm{g}_i\|_2^{p_i}$&$\displaystyle{-\ln\left(\frac{a_i}{2\pi}\frac{p_i\,\alpha_i^2}{\Gamma(2/p_i)\,2^{2/p_i}}\right)}$&$(\alpha_i,p_i,\theta_i,a_i)\in\R_{++}^2\times[-\pi/2,\pi/2)\times(0,1]$&$\displaystyle{q\,\omega^q}$&\cite{DTVp_siam}\\
			\hline
		\end{tabular}
	}
	\caption{Gradient penalty function $f$, parameter penalty function $h$, parameters domain $\mathcal{D}_{\bm{\Theta}_i}$ and regularisation parameter $\mu$ for the space-invariant and space-variant regularisation hypermodels considered with L$_q$ data fidelity, $q<+\infty$.}
	\label{tab:models}
\end{table}

\begin{remark}[Additive i.i.d. uniform noise]
	When the corrupting noise is AIU, i.e. $q=+\infty$ and the data term is written as in \eqref{eq:fid2}, the regularisation parameter $\mu$ does not appear explicitly in the final hypermodels. However, one can clearly observe that functions $f,h$ and the parameter domain $\mathcal{D}_{\bm{\Theta}_i}$ have the same expressions as the ones listed in Table~\ref{tab:models}.
\end{remark}

As far as the value of the regularisation parameter $\mu$ is concerned, we remark  that when both the scale parameter $\omega$  and the shape parameter $q$ of the AIGG noise distribution are assumed to be known, the parameter $\mu>0$ in all models above is also known.
%
%
However, it is quite well known that setting a priori the $\mu$ value based on the true noise parameters does not guarantee that the empirical noise level calculated starting from the output residual image $\bm{r}^*(\mu)=\bm{\A u}^*(\mu)-\bm{b}$ coincides with the true underlying noise level. 

Hence, $\mu$ will be regarded in the following as a further unknown parameter to be estimated based on the GDP strategy presented in Section \ref{sec:gdp} and 
further detailed in Section~\ref{sec:admm}.

\section{Coupling image statistics with variational modelling: parameter selection}   \label{sec:parameter_estimation}

In this section, we address the estimation of the parameters $\bm{\Theta}$ arising in the final joint hypermodel \eqref{eq:fin_model}. As pointed out in  Section \ref{sec:map}, a key step considered in the following for tackling the $\bm{\Theta}$-update step in the alternating scheme \eqref{eq:sub_th}-\eqref{eq:sub_u} consists in neglecting the normalisation constant $c(\bm{\Theta})$. Although this approximation causes of course a lack of consistency with the original model, the estimation results reported in this section will support the rationale of our choice. An extensive analysis of the good statistical properties of the estimator considered in the sequel has been provided in \cite[Section 7]{DTVp_siam}. There, the authors showed that the considered estimator is unbiased, with empirical variance and root mean square error decaying to zero.


\subsection{Inspecting space-variance}  \label{sec:parameter_estimation_results}

According to \eqref{eq:fin_model}, the general form of the $\bm{\Theta}$-update \eqref{eq:sub_th}
reads
\begin{align}\nonumber
\bm{\Theta}^{(k+1)} \;{\in}\;&\argmin{\bm{\Theta}\in \mathcal{D}_{\bm{\Theta}}}\left\{-\ln\mathbb{P}(\bm{z}(\bm{u}^{(k)})\mid\bm{\Theta}) \right\}
\\
\label{eq:marg_min}
\;{=}\;&\argmin{\bm{\Theta}\in \mathcal{D}_{\bm{\Theta}}}\left\{\sum_{i=1}^{N}\left(f((\bm{\D u^{(k)}})_i;\bm{\Theta}_i) + h(\bm{\Theta}_i)\right)\right\}\,,
\end{align}
where $f$ and $h$ denote the general gradient and parameter penalty functions, respectively summarised in Table~\ref{tab:models} for the hypermodels of interest.
In light of the separability induced by the summation, problem \eqref{eq:marg_min} can be addressed by solving $N$ minimisation problems of the form
\begin{equation}
\label{eq:marg_sep_min}
\bm{\Theta_i}^{(k+1)} \in \argmin{\bm{\Theta_i}\in \mathcal{D}_{\bm{\Theta}_i}}\left\{f((\bm{\D u}^{(k)})_i;\bm{\Theta_i})+h(\bm{\Theta}_i)\right\}\,, \quad i\;{=}\;1 \ldots,N,
\end{equation}
where, notice, the information on $\bm{u}^{(k)}$ required to perform the update of $\bm{\Theta}_i$, is synthesised in the sole value $(\bm{\D u}^{(k)})_i$.
However, if the true underlying value $(\bm{\D u})_i$ is highly damaged by noise and blur, the local estimate is expected to be particularly poor and unreliable.

As a way to overcome this limitation, the estimation problem \eqref{eq:marg_sep_min} can be recast so as take into account the information encoded in a set of pixels close to pixel $i$. More specifically, for any $i=1,\ldots, N$, we consider the square neighbourhood $\mathcal{J}_i^r$ centred at pixel $i$ with side $2r+1$ and dimension $card(\mathcal{J}_i^r)=(2r+1)^2=:m$ and compute the discrete gradients points in $\mathcal{J}_i^r$. These quantities will be then used for the estimation of the $i$-th unknown parameter $\bm{\Theta}_i$. Statistically, the selected strategy relies on the assumption that in each of the considered neighbourhoods the gradients (or their magnitudes) are independently sampled from the same distribution.

We thus introduce the following sets of samples drawn around $i$, for $i=1,\ldots,N$:
\begin{equation}\label{eq:sampl}
\mathcal{S}_i := \left\{
\begin{array}{ll}
\left\{\|(\bm{\D u}^{(k)})_j\|_2\, :\, j\in\mathcal{J}_i^r\right\}&\text{   for the  }\WTV\text{  and  }\WTV^{sv}_{\bm{p}}\text{  regularisers }  \\
\left\{(\bm{\D u}^{(k)})_j\, :\, j\in\mathcal{J}_i^r\right\}& \text{   for the  }\WDTV^{sv}_{\bm{p}}\text{  regulariser }  
\end{array}\right..
\end{equation}

For each $i=1,\ldots,N$, by exploiting the mutual independence of the gradients, problem \eqref{eq:marg_sep_min} can thus be formulated as follows:
\begin{align}
\nonumber
\bm{\Theta}_i^{(k+1)} \;{\in}\;& \argmin{\bm{\Theta}_i\in \mathcal{D}_{\bm{\Theta}_i}}\left\{-\ln\prod_{j\in\mathcal{J}_i^r}\mathbb{P}(\mathcal{S}\mid \bm{\Theta}_i)\;{=}-\ln\mathbb{P}((\bm{\D u}^{(k)})_j;\bm{\Theta}_j)\right\}\\
\label{eq:th_sep_ens}
\;{=}\;&\argmin{\bm{\Theta}_i\in \mathcal{D}_{\bm{\Theta}_i}}
\left\{\sum_{j\in\mathcal{J}_i^r}\left(f((\bm{\D u}^{(k)})_j;\bm{\Theta}_j)+h(\bm{\Theta}_j)\right)\right\}\,.
\end{align}

We now specify the formulation of the minimisation problem \eqref{eq:th_sep_ens} in correspondence with the regularisation terms considered in this review. For the sake of better readability, in this section the outer iteration superscript $k$ will be neglected and the discrete gradient $(\bm{\D u})_j$ at pixel $j$ will be simply denoted by $\bm{g}_j$.

\subsection{Parameter estimation for the WTV regulariser} \label{sec:thWTV}
We start considering $\WTV$ regularisation.  Recalling the definition of the gradient and parameter penalty functions $f_{\mathrm{TV}}$, $h_{\mathrm{TV}}$ in \eqref{eq:TV_h},\eqref{eq:TV_h2} and the hyperparameter domain $\mathcal{D}_{\bm{\Theta}_i}$  specified in Table~\ref{tab:models}, the problem of interest turns into
\begin{equation}
\label{eq:min_par_wtv}
\alpha_i^{*}\in\argmin{\alpha_i\in \mathbb{R}_{++}}\left\{\mathcal{G}(\alpha_i)\;{:=}\;-\ln\mathbb{P}(\mathcal{S}_i\mid \alpha_i)\;{=}\; -m\ln\alpha_i + \sum_{j\in\mathcal{J}_i^r}\alpha_i\|\bm{g}_j\|_2\right\}\,.
\end{equation}
The following result holds true.
\begin{proposition}\label{prop:ex_scale}
	The function $\mathcal{G}:\mathbb{R}_{++}\to\R$ in \eqref{eq:min_par_wtv} is smooth and convex, hence it admits a unique global minimiser.
\end{proposition}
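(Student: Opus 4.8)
The plan is to reduce $\mathcal{G}$ to an explicit one-dimensional function and then read off all the claimed properties directly. Collecting the (fixed) gradient magnitudes into the single nonnegative constant $S := \sum_{j \in \mathcal{J}_i^r} \|\bm{g}_j\|_2 \ge 0$ and recalling that $m = \mathrm{card}(\mathcal{J}_i^r) = (2r+1)^2 > 0$, the objective in \eqref{eq:min_par_wtv} becomes
\[
\mathcal{G}(\alpha_i) \;{=}\; -\,m \ln \alpha_i \,{+}\, S\,\alpha_i, \qquad \alpha_i \in \R_{++}.
\]
Since $\alpha_i \mapsto \ln \alpha_i$ and $\alpha_i \mapsto \alpha_i$ are $C^\infty$ on $\R_{++}$, and $\mathcal{G}$ is a finite linear combination of these, smoothness is immediate.

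First I would establish (strict) convexity by differentiation. One computes $\mathcal{G}'(\alpha_i) = -m/\alpha_i + S$ and $\mathcal{G}''(\alpha_i) = m/\alpha_i^2$. Because $m > 0$, we have $\mathcal{G}''(\alpha_i) > 0$ for every $\alpha_i \in \R_{++}$, so $\mathcal{G}$ is in fact strictly convex on the open half-line; this is what actually delivers uniqueness of a minimiser, rather than plain convexity alone.

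For existence I would argue coercivity and then invoke the first-order condition. The relevant point is that the patch is nondegenerate, i.e.\ $S > 0$ (at least one gradient in $\mathcal{J}_i^r$ is nonzero). Under this assumption $\mathcal{G}(\alpha_i) \to +\infty$ both as $\alpha_i \to 0^+$ (through the $-m\ln\alpha_i$ term) and as $\alpha_i \to +\infty$ (through the $S\alpha_i$ term). Continuity together with this coercivity guarantees the existence of a global minimiser, which by strict convexity is unique and is characterised by the stationarity condition $\mathcal{G}'(\alpha_i) = 0$; this even yields the explicit closed form $\alpha_i^{*} = m/S$.

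The only genuine obstacle is the degenerate case $S = 0$, in which every gradient magnitude in the neighbourhood vanishes: then $\mathcal{G}(\alpha_i) = -m\ln\alpha_i$ is strictly decreasing and unbounded below, so no minimiser exists. I would therefore point out that this situation is excluded in the estimation setting considered here (the neighbourhoods are assumed to carry informative, hence nonzero, gradient content), so that $S > 0$ may be taken for granted and the ``hence'' in the statement is fully justified.
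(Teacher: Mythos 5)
Your proposal is correct, and its core is the same elementary-calculus argument the paper relies on (the paper states the proposition without a separate proof and immediately uses the first-order condition $\mathcal{G}'(\alpha_i)=0$ to obtain $\alpha_i^*$ in \eqref{eq:upd_par_wtv}). What you add is genuinely valuable: you observe that smoothness and convexity alone do \emph{not} yield existence of a minimiser, and you supply the missing ingredients explicitly --- strict convexity ($\mathcal{G}''(\alpha_i)=m/\alpha_i^2>0$) for uniqueness, and coercivity at both ends of $\R_{++}$ (which requires $S=\sum_{j\in\mathcal{J}_i^r}\|\bm{g}_j\|_2>0$) for existence. Your identification of the degenerate case $S=0$, where $\mathcal{G}(\alpha_i)=-m\ln\alpha_i$ is unbounded below and no minimiser exists, is exactly the situation the paper acknowledges only informally after the proposition, when it prescribes adding a small parameter $0<\varepsilon\ll 1$ to the local mean in \eqref{eq:upd_par_wtv} to ``avoid degenerate configurations arising when considering neighbourhoods with null gradients.'' So your proof both matches the paper's intended argument and makes precise the nondegeneracy hypothesis under which the ``hence'' in the statement is actually justified.
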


In particular, since $\mathcal{G}$ is differentiable on $\mathbb{R}_{++}$, the solution of the $i$-th minimisation problem \eqref{eq:min_par_wtv} can be simply found by imposing a first-order optimality condition:
\begin{equation}
\label{eq:upd_par_wtv}
\mathcal{G}'(\alpha_i) = -\frac{m}{\alpha_i} + \sum_{j\in\mathcal{J}_i^r}\|\bm{g}_j\|_2 = 0\,,\text{ whence }\alpha_i^{*} = \left(\frac{1}{m}\sum_{j\in\mathcal{J}_i^r}\|\bm{g}_j\|_2\right)^{-1}\,.
\end{equation}
Notice that in order to avoid degenerate configurations arising when considering neighbourhoods with null gradients, a small regularisation parameter $0<\varepsilon\ll 1$ can be added to the local mean in \eqref{eq:upd_par_wtv}. The selection of pixels involved in \eqref{eq:upd_par_wtv} can be efficiently carried out based on fast 2D convolution operators (realised by a fast 2D discrete transform) of the map of gradient norms with a square $(2r+1) \times (2r+1)$ averaging kernel.

In Figure \ref{fig:sky_wtv}, we analyse the performance of the parameter estimation strategy outlined above on selected sub-regions of the image in Figure \ref{fig:sky_im} as well as on the image itself.  
The local neighbourhoods shown here consist of an almost constant red-bordered region and two textured regions - see Figure \ref{fig:wtv_sub1} and Figures \ref{fig:wtv_sub2},\ref{fig:wtv_sub3}, respectively - the last two differing in terms of directional features; in fact, the magenta-bordered region presents horizontally oriented features, while the texture in the cyan-bordered neighbourhood does not present a dominant directionality.

We compute the hL pdfs returning the best fitting both of the  global and of the local histograms of the gradient magnitudes. More specifically, we first calculate \eqref{eq:upd_par_wtv} for the whole image, i.e. when the summation index $j$ goes from $1$ to $N$, that will return the global scale parameter. Then, the same formula is applied when the set of samples is restricted to the gradient magnitudes of the three sub-regions, so as to obtain local scale parameters. The estimated parameters are reported in the caption.
In Figure \ref{fig:wtv_glob}, and in the close-up in Figure \ref{fig:wtv_glob_zoom}, we show the histogram of the gradient magnitudes of the whole image. The superimposed solid green line represents the global hL distribution. The histogram of the gradient magnitudes in the selected neighbourhoods together with the corresponding estimated pdfs are shown in Figures \ref{fig:wtv_sub1_}-\ref{fig:wtv_sub1_zoom}, for the constant region, and in Figures \ref{fig:wtv_sub2_}-\ref{fig:wtv_sub2_zoom} and \ref{fig:wtv_sub3_}-\ref{fig:wtv_sub3_zoom} for the textured regions. In the local histograms, we also report the global pdf. The comparison immediately reveals how the space-variant approach guarantees a more accurate modelling of local features; this is also reflected into the values of the estimated global and local scale parameters, which appear to be very different from each other, except for the case of the two textured regions. In fact, as discussed before, directional dissimilarities can not be detected when adopting a (univariate) hL prior.

\begin{figure}[]
	\centering
	\begin{subfigure}{0.32\textwidth}
		\centering
		\includegraphics[width=1.2in]{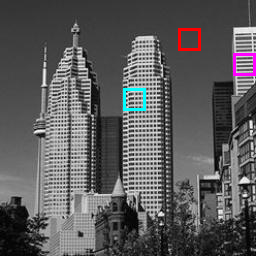} 
		\caption{Test image}
		\label{fig:sky_im}
	\end{subfigure}
	\begin{subfigure}{0.32\textwidth}
		\centering
		\includegraphics[width=1.6in]{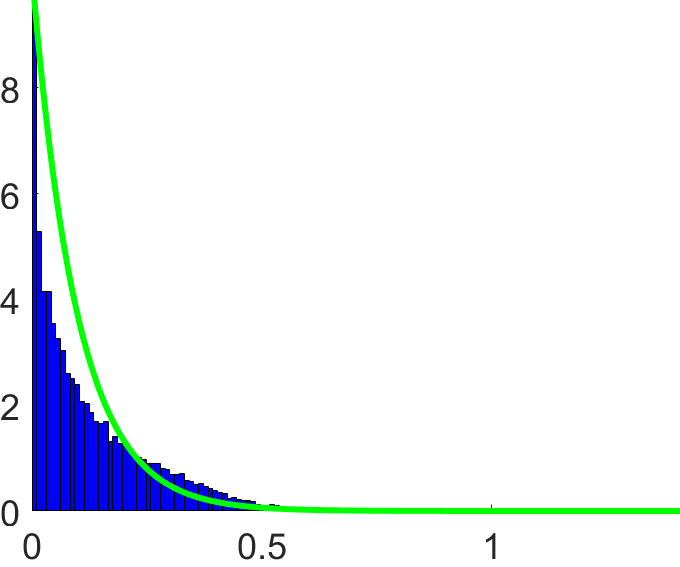} 
		\caption{Global histogram}
		\label{fig:wtv_glob}
	\end{subfigure}
	\begin{subfigure}{0.32\textwidth}
		\centering
		\includegraphics[width=1.6in]{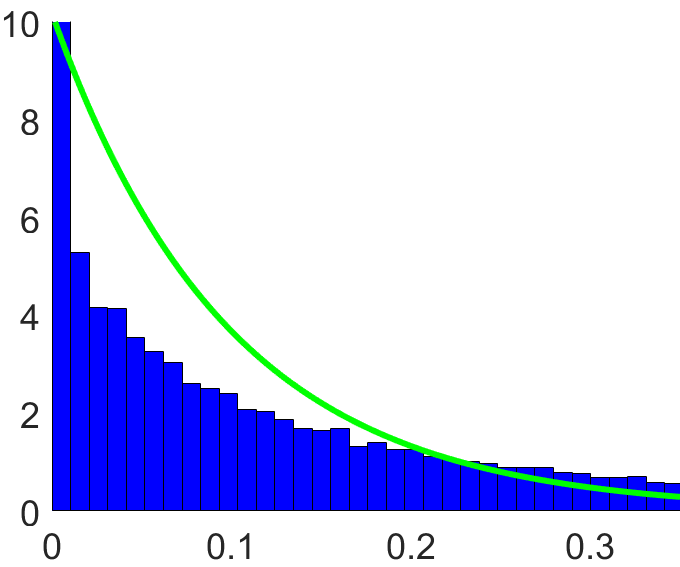}
		\caption{close-up}
		\label{fig:wtv_glob_zoom}
	\end{subfigure}\\
	\begin{subfigure}{0.32\textwidth}
		\centering
		\includegraphics[width=1.2in]{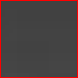} 
		\caption{local histogram}
		\label{fig:wtv_sub1}
	\end{subfigure}
	\begin{subfigure}{0.32\textwidth}
		\centering
		\includegraphics[width=1.6in]{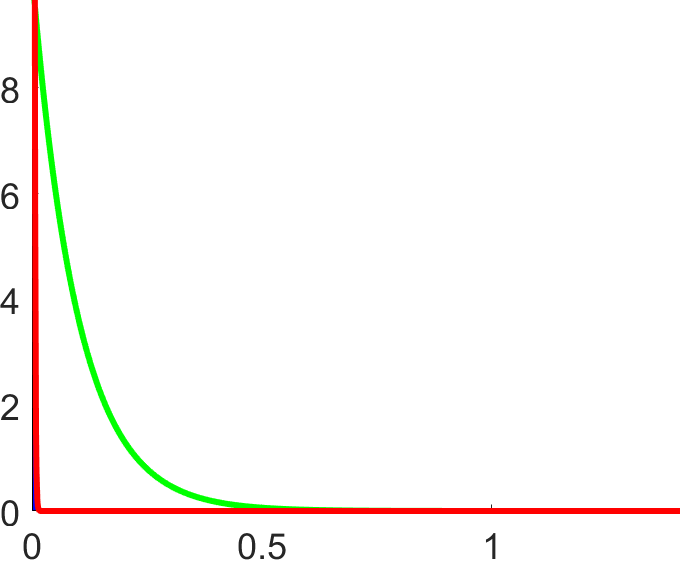} 
		\caption{local histogram}
		\label{fig:wtv_sub1_}
	\end{subfigure}
	\begin{subfigure}{0.32\textwidth}
		\centering
		\includegraphics[width=1.6in]{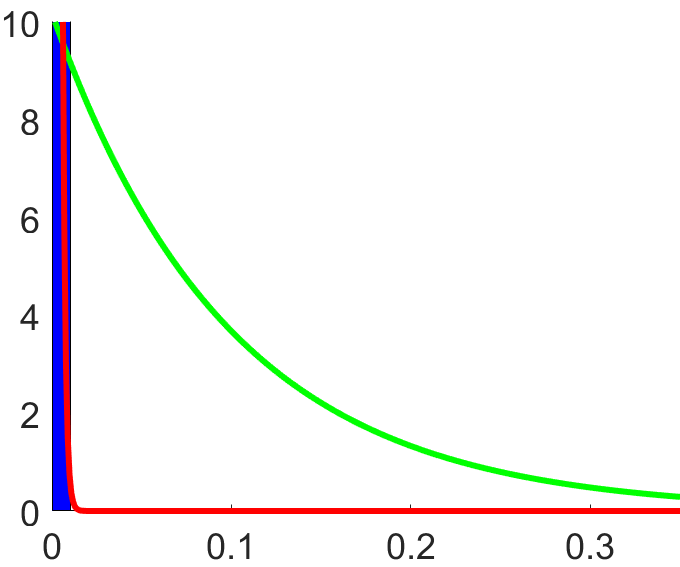}
		\caption{close-up}
		\label{fig:wtv_sub1_zoom}
	\end{subfigure}\\
	\begin{subfigure}{0.32\textwidth}
		\centering
		\includegraphics[width=1.2in]{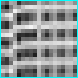} 
		\caption{local histogram}
		\label{fig:wtv_sub2}
	\end{subfigure}
	\begin{subfigure}{0.32\textwidth}
		\centering
		\includegraphics[width=1.6in]{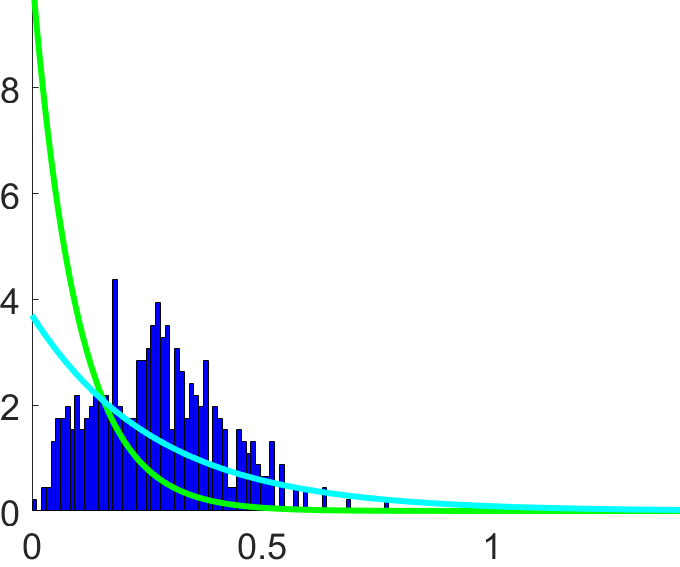} 
		\caption{local histogram}
		\label{fig:wtv_sub2_}
	\end{subfigure}
	\begin{subfigure}{0.32\textwidth}
		\centering
		\includegraphics[width=1.6in]{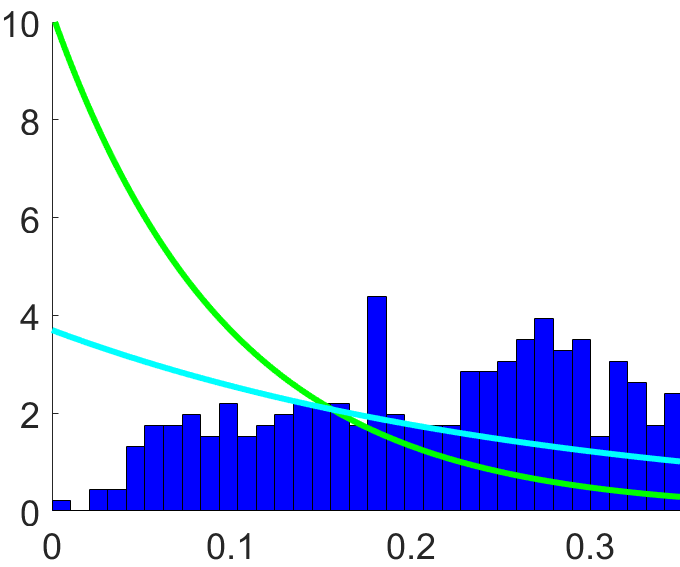}
		\caption{close-up}
		\label{fig:wtv_sub2_zoom}
	\end{subfigure}\\
	\begin{subfigure}{0.32\textwidth}
		\centering
		\includegraphics[width=1.2in]{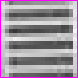} 
		\caption{local histogram}
		\label{fig:wtv_sub3}
	\end{subfigure}
	\begin{subfigure}{0.32\textwidth}
		\centering
		\includegraphics[width=1.6in]{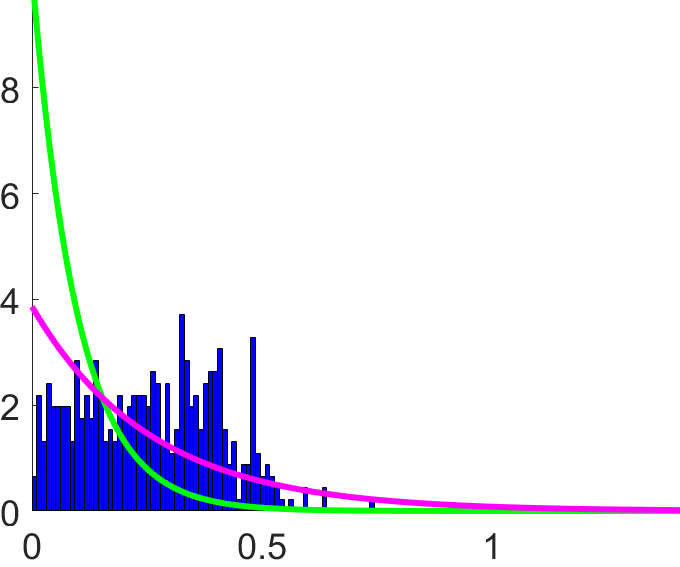} 
		\caption{local histogram}
		\label{fig:wtv_sub3_}
	\end{subfigure}
	\begin{subfigure}{0.32\textwidth}
		\centering
		\includegraphics[width=1.6in]{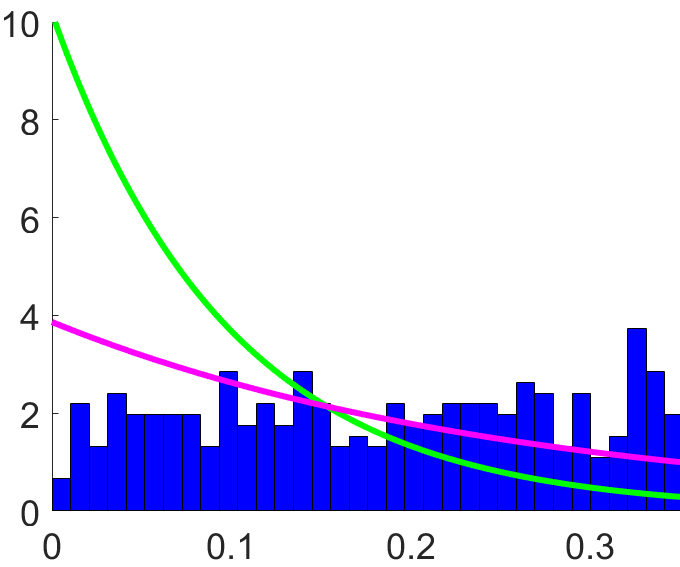}
		\caption{close-up}
		\label{fig:wtv_sub3_zoom}
	\end{subfigure}
	\caption{\emph{Parameters estimate for the $\WTV$ regulariser.} From top to bottom: histogram of the gradient magnitudes on the whole test image ($\alpha^*=10.17$), on a constant region ($\alpha^*=200.87$) and on two different texture regions ($\alpha^*=3.70$ and $\alpha^*=3.86$), with the corresponding close-up(s).}
	\label{fig:sky_wtv}
\end{figure}

\medskip

In order to analyse in more detail the connection between the estimated scale parameters and the local regularisation strength, in Figure \ref{fig:scale_} we show the $\bm{\alpha}$-map corresponding to different test images.
We observe that the scale parameters assume higher values on smooth or piece-wise constant regions, whereas lower values are obtained in correspondence of edges and texture. In those areas, a weaker regularisation is indeed preferable in order to preserve details.
Note also that the $\bm{\alpha}$-maps are sensitive to the choice of radius $r$. When considering small values of $r$ - see, for instance, the map on the \texttt{barbara} image with $r=2$ - possibly small artefacts due to image compression or resolution may appear. A similar effect is expected in the presence of noise. On the other hand, setting a large radius $r$, could make some details or finer structures in the image less detectable, as in the case of the map for the \texttt{geometric} image with $r=7$, where inner edges are not visible in the final map.

\begin{figure}
	\centering
	\resizebox{\textwidth}{!}{
		\begin{tabular}{ccccc}
			&&$r=2$&$r=5$&$r=7$\\
			\raisebox{0.1\height}{\rotatebox{90}{\texttt{\phantom{cc}}\texttt{geometric}}}&{ \includegraphics[height=0.95in]{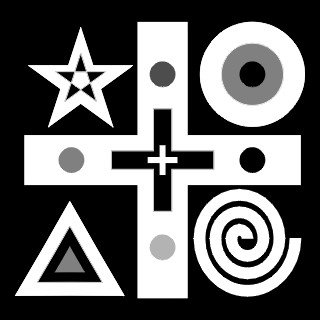}}&{ \includegraphics[height=0.95in]{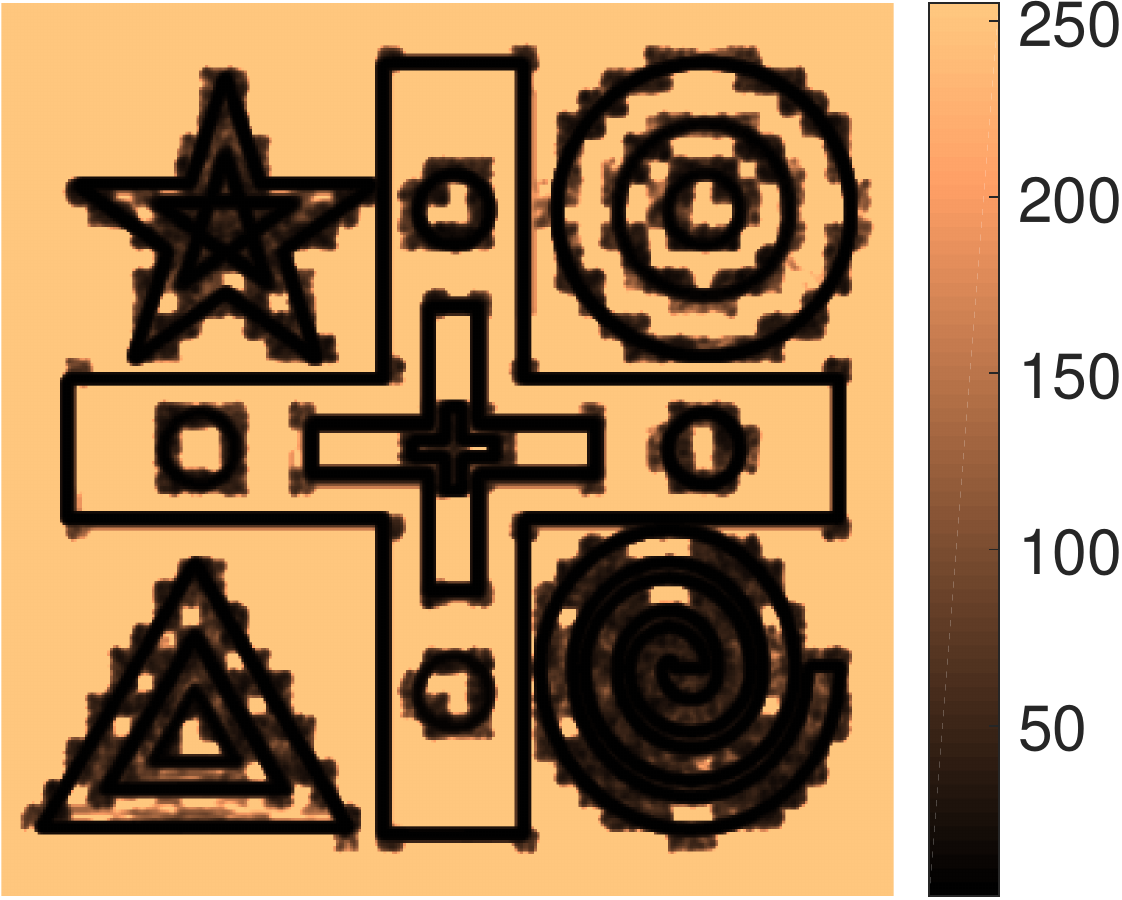} }&{
				\includegraphics[height=0.95in]{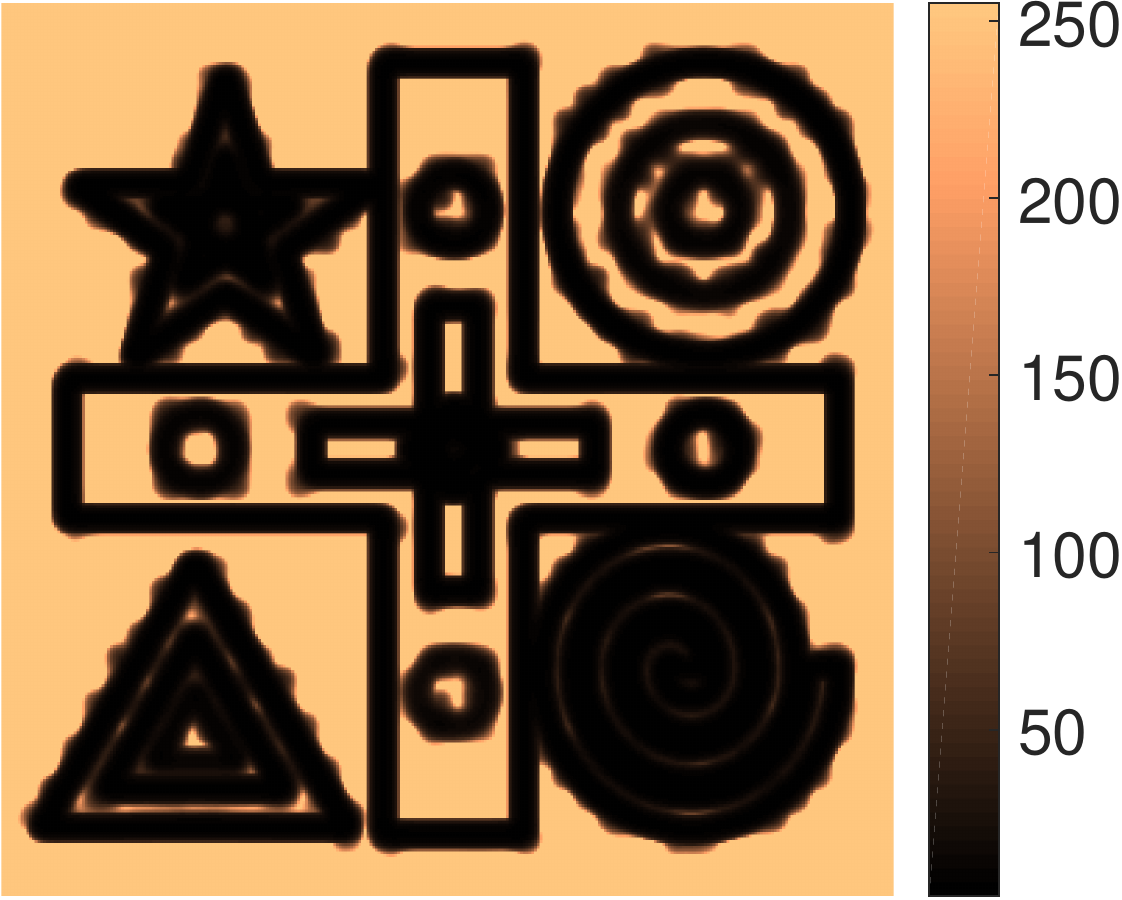}} &{
				\includegraphics[height=0.95in]{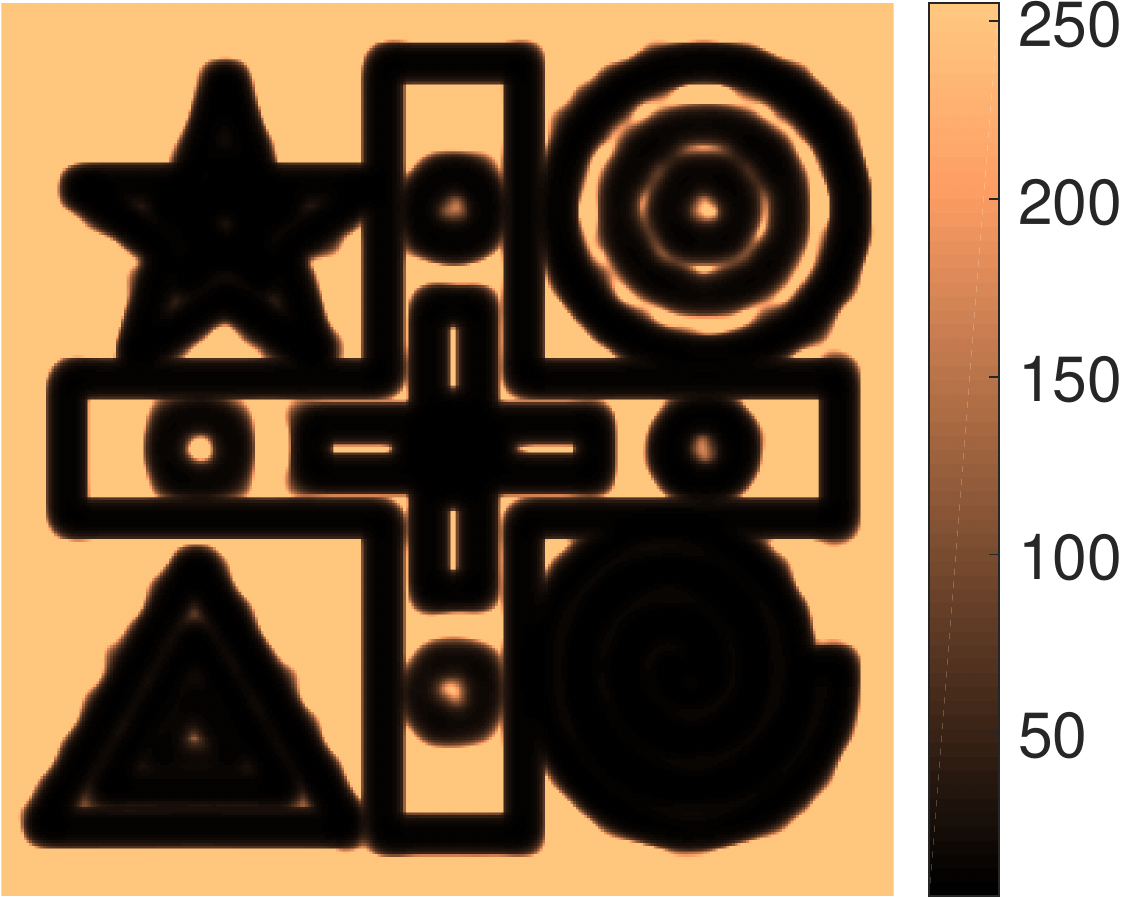}}\\
			\raisebox{0.1\height}{{\rotatebox{90}{\texttt{\phantom{cc}}\texttt{barbara}}}}&{\includegraphics[height=0.95in]{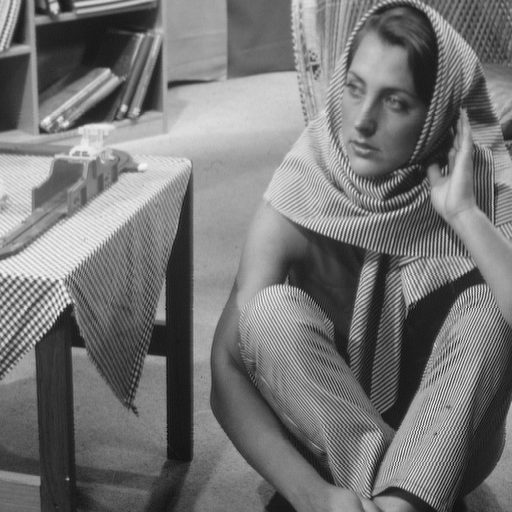} }&
			{\includegraphics[height=0.95in]{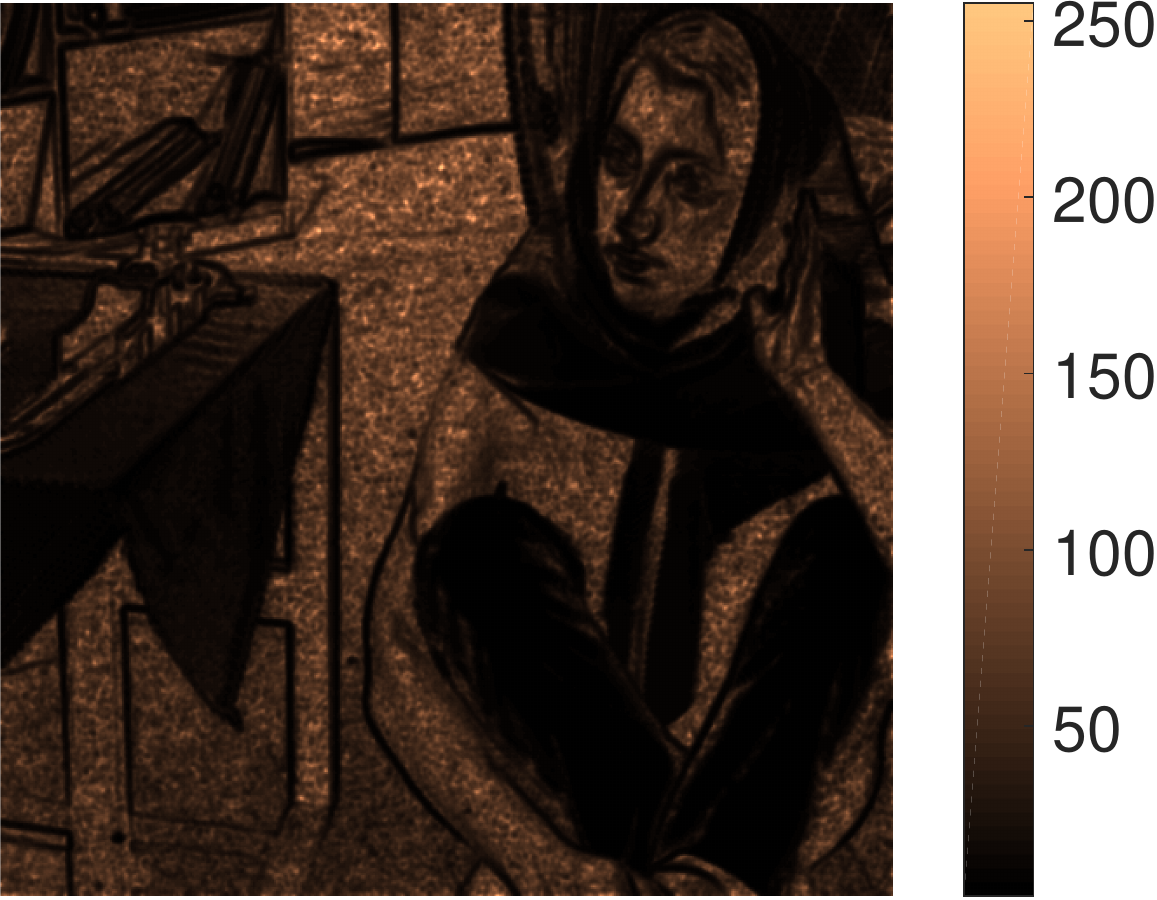} }&
			{	\includegraphics[height=0.95in]{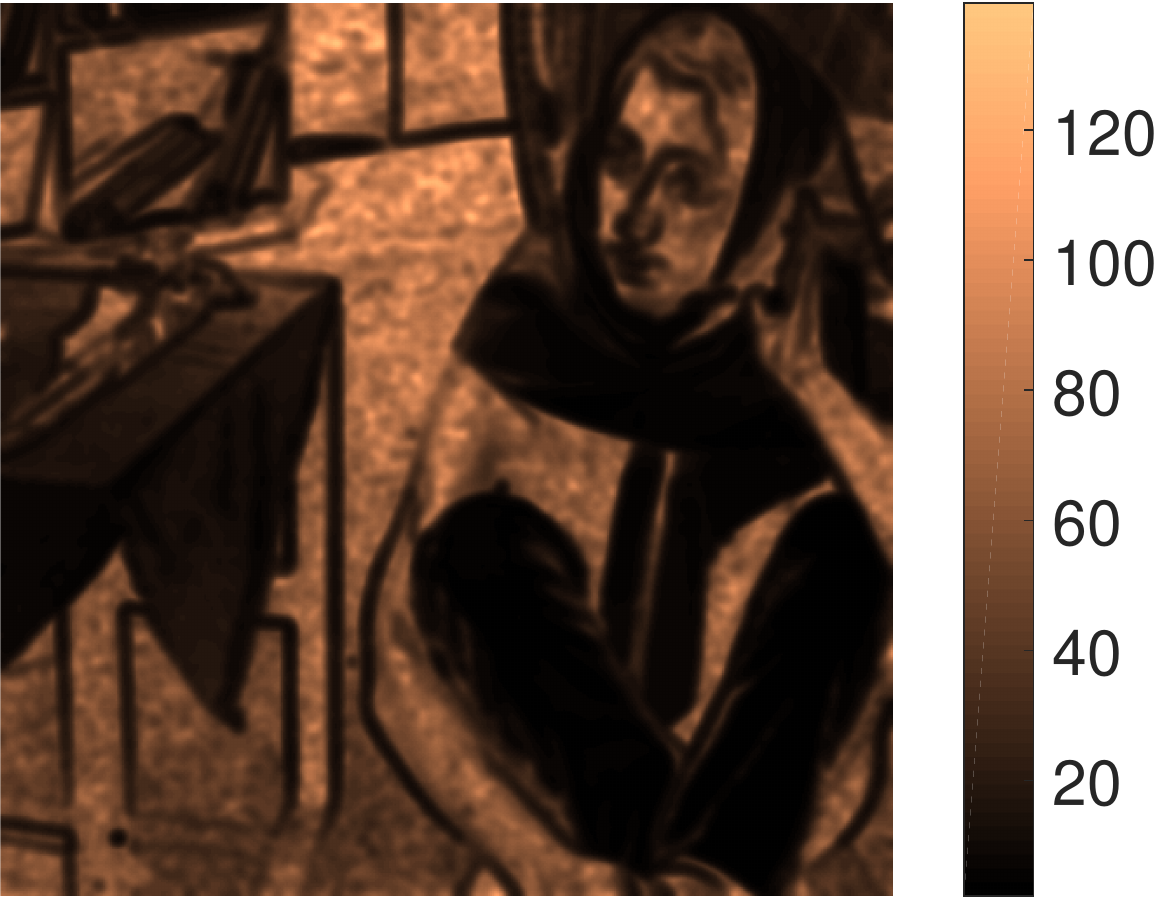}}&
			{	\includegraphics[height=0.95in]{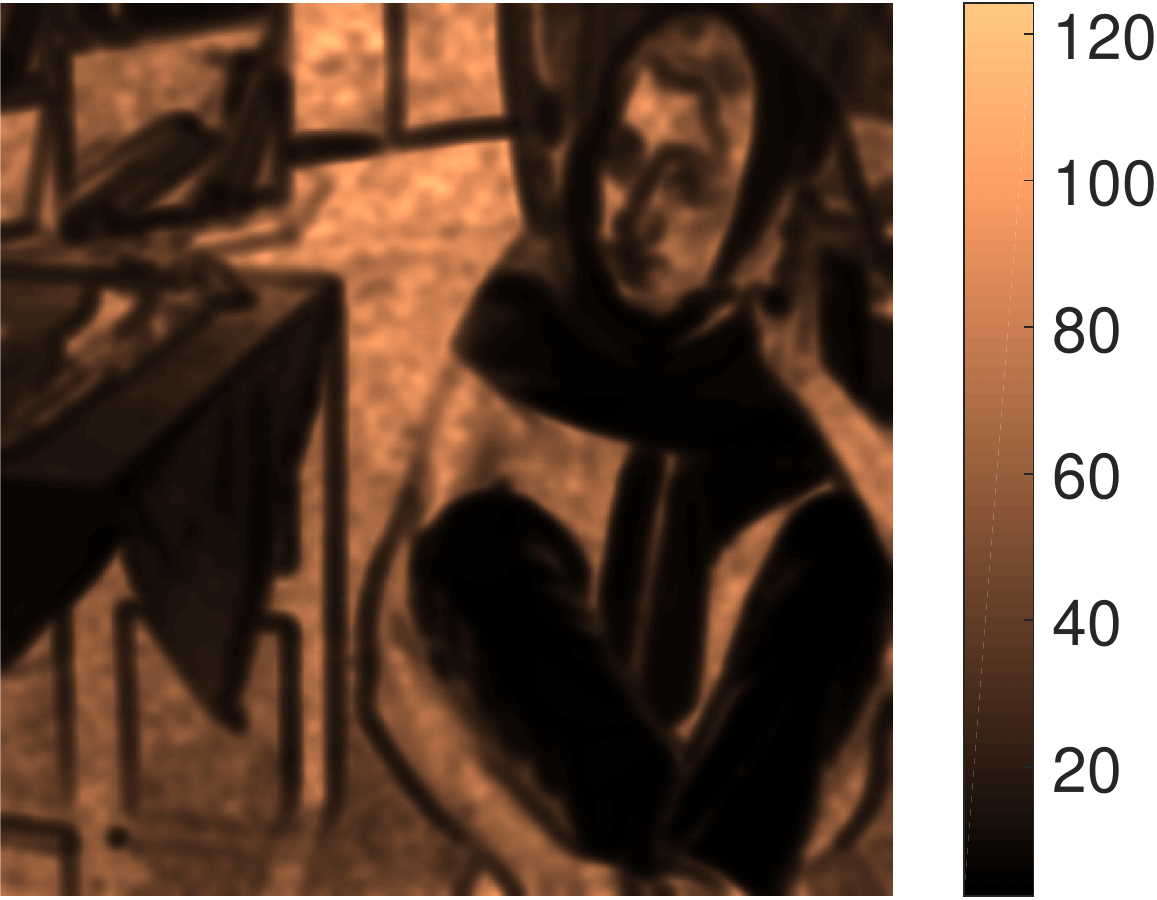} }\\
			\raisebox{0.1\height}{{\rotatebox{90}{\texttt{\phantom{cc}}\texttt{aneurism}}}}&	{\includegraphics[height=0.95in]{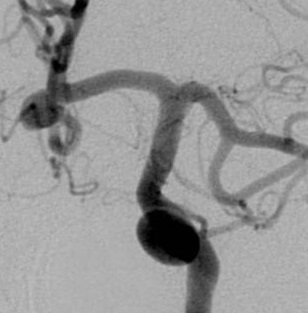} }&
			{	\includegraphics[height=0.95in]{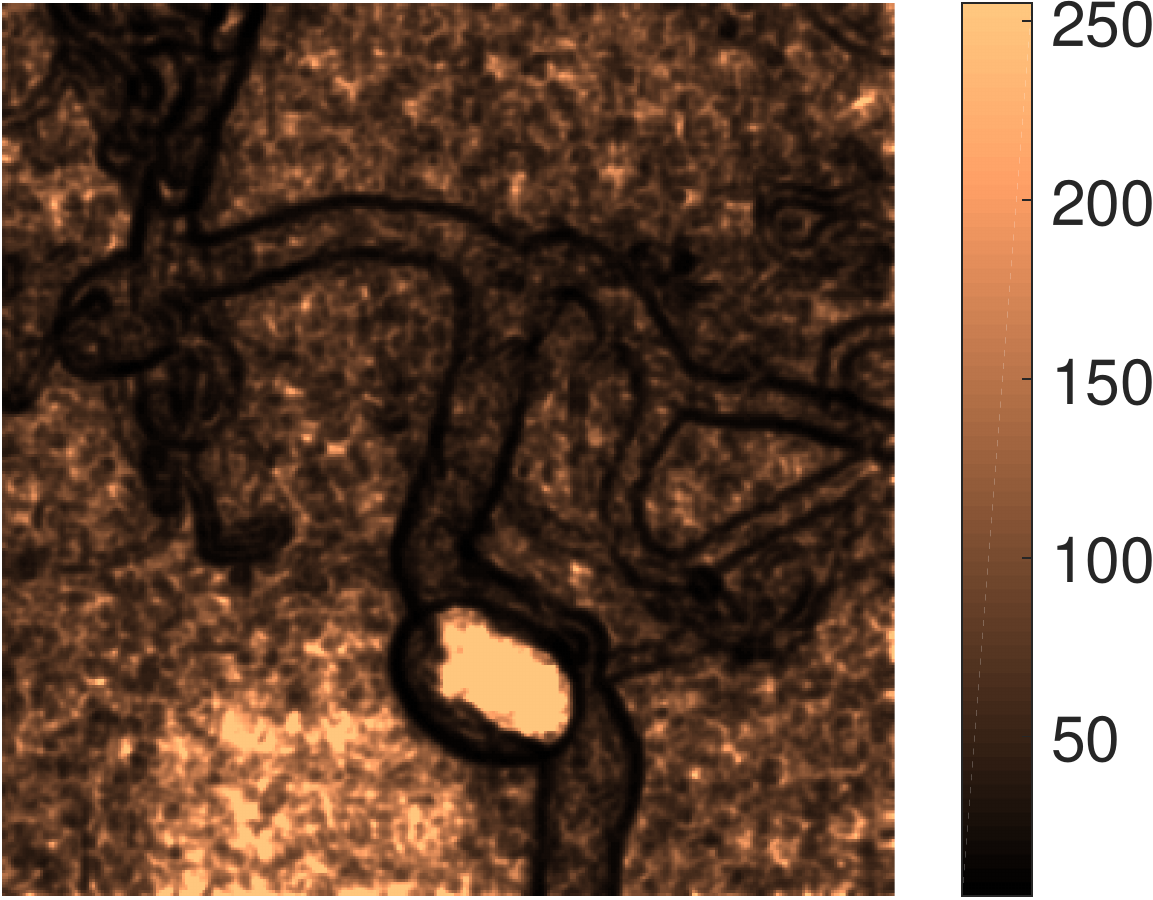}} &
			{	\includegraphics[height=0.95in]{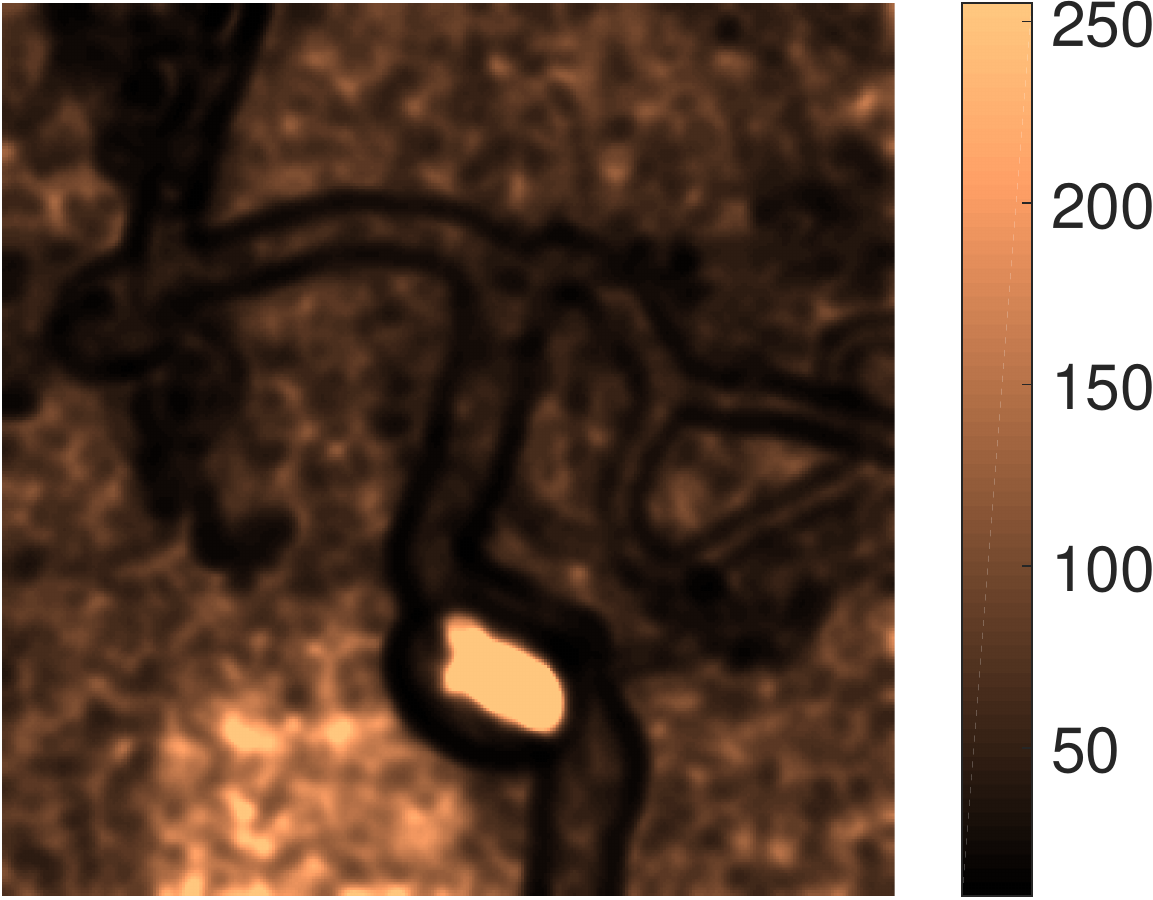}} &
			{\includegraphics[height=0.95in]{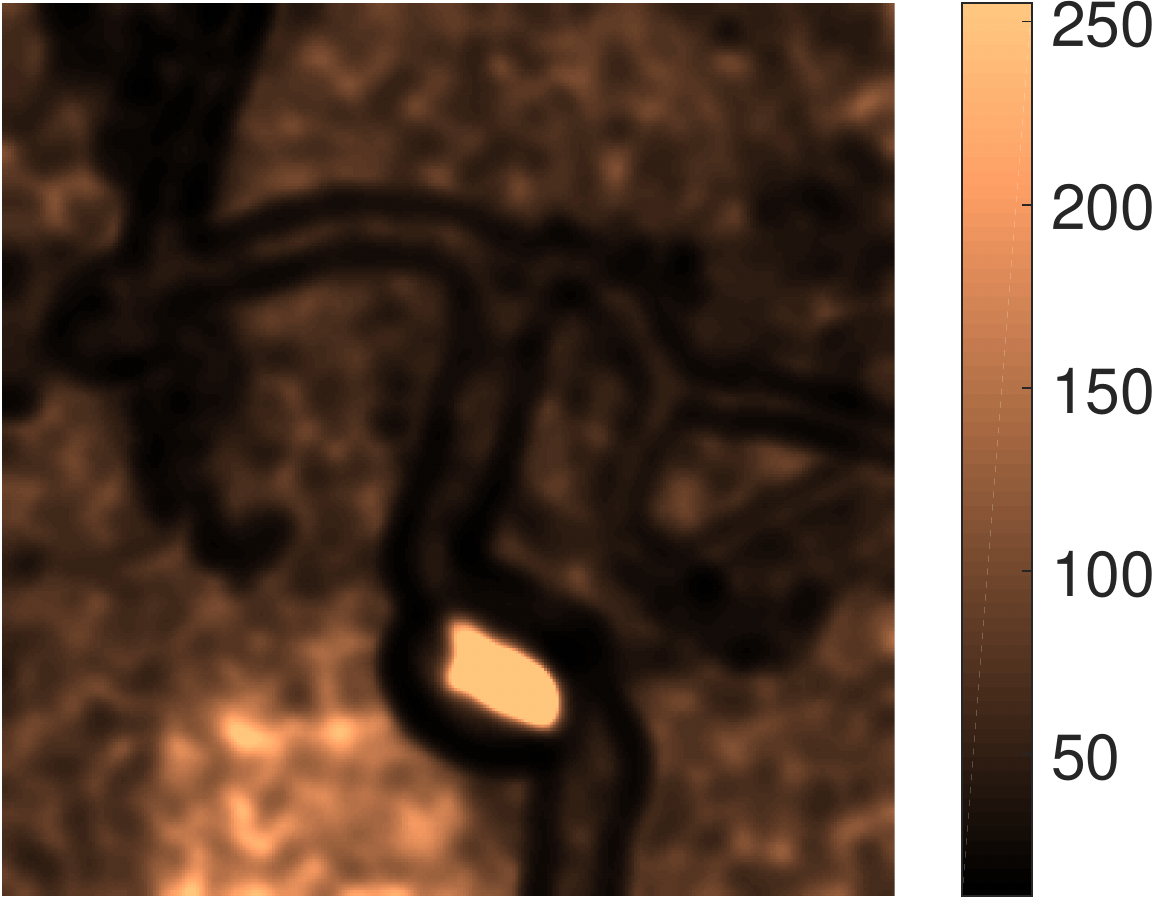} }
	\end{tabular}}
	\caption{Test images with the corresponding $\bm{\alpha}$-map for different values of radius $r$.}
	\label{fig:scale_}
\end{figure}

\subsection{Parameter estimation for the $\mathrm{WTV}_p^{sv}$ regulariser}\label{sec:thWTVp}

We now consider the $\mathrm{WTV}^{sv}_{\bm{p}}$ regulariser. Recalling the definitions for $f_{\mathrm{WTV}_{\bm{p}}^{sv}}$ and $h_{\mathrm{WTV}_{\bm{p}}^{sv}}$ given in \eqref{eq:WTVp_h} and \eqref{eq:WTVp_h2},  respectively, and the hyperparameter space $\mathcal{D}_{\bm{\Theta}_i}$ in Table~\ref{tab:models}, we have that the general problem \eqref{eq:th_sep_ens} reduces to 
\begin{align}
\label{eq:min_par_wtv_p}
\left\{\alpha_i^*,p_i^*\right\}\in&\argmin{(\alpha_i,p_i)\in \mathbb{R}_{++}^2 }\bigg\{\mathcal{G}(\alpha_i,p_i)\;{:=}\;-\ln\mathbb{P}(\mathcal{S}_i\mid \alpha_i,p_i) \;{=}\; -m\ln\alpha_i\\
&\phantom{XXXXXX}\;{+}\;m\ln\Gamma\left(1+\frac{1}{p_i}\right)+\sum_{j\in\mathcal{J}_i^r}\alpha_i^{p_i}\|\bm{g}_j\|_2^{p_i}\bigg\}\,,
\end{align}
Proceeding analogously as before, we have that by imposing a first order optimality condition on $\mathcal{G}(\alpha_i,p_i)$ with respect to $\alpha_i$, we get
\begin{equation}
\frac{\partial}{\partial \alpha_i}G(\alpha_i,p_i)  = -\frac{m}{\alpha_i} + p_i\alpha_i^{p_i - 1}\sum_{j\in\mathcal{J}_i^r}\|\bm{g}_j\|_2^{p_i} = 0\,,
\end{equation}
which yields the following closed-form formula for the estimation of $\alpha_i$:
\begin{equation}
\label{eq:est_scale_shape}
\alpha_i^*(p_i) = \left(\frac{p_i}{m}\sum_{j\in\mathcal{J}_i^r}\|\bm{g}_j\|_2^{p_i}\right)^{-\displaystyle{\frac{1}{p_i}}}\,.
\end{equation}
It is easy to verify that the second derivative of $\mathcal{G}$ with respect to $\alpha_i$ computed at $\alpha_i^*(p_i)$ is strictly positive, hence the stationary point in \eqref{eq:est_scale_shape} is a minimum. Similarly as for \eqref{eq:upd_par_wtv}, also in this case a parameter $0<\varepsilon\ll 1$ shall be added to the summation  \eqref{eq:est_scale_shape} so as to avoid degenerate configurations of gradient magnitudes. Plugging  \eqref{eq:est_scale_shape}, we have
\begin{align}
\label{eq:est_shape_shape}
p_i^*\;{\in}\;& \argmin{p_i\in \R_{++}}\bigg\{G(p_i)\;{:=}\; \mathcal{G}(\alpha_i(p_i),p_i)\;{=}\;\frac{m}{p_i}\log\left(\frac{p_i}{m}\sum_{j\in\mathcal{J}_i^r} \|\bm{g}_j\|_2^{p_i}\right) \\
\notag
&\phantom{XXXX}+m\ln\Gamma\left(1+\frac{1}{p_i}\right)+
\frac{m}{p_i}\bigg\}\,.
\end{align}
When addressing the study of $G$ on $\R_{++}$, one can immediately notice that its behaviour is related to the local configurations of gradient magnitudes. As a result, drawing any conclusion on the existence of minima is in general not trivial.  However, looking at the problem from a computational viewpoint, it appears reasonable to restrict the $p_i$ feasibility set to a bounded interval $[\epsilon,R]$, with $0<\epsilon<R$ and $R>1$. In this case, the following result holds.

\begin{proposition}\label{prop:prop_p1}
	The function $G:[\epsilon,R]\to \R$ defined in \eqref{eq:est_shape_shape} is continuous, hence it admits a minimum in its compact domain.
\end{proposition}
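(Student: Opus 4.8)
The plan is to establish continuity of $G$ on the compact interval $[\epsilon,R]$ by a term-by-term inspection, and then to invoke the Weierstrass extreme value theorem, according to which a real-valued continuous function on a non-empty compact set attains its minimum. The single feature that makes the whole argument go through smoothly is that the feasibility interval is bounded away from the origin, i.e.\ $p_i \geq \epsilon > 0$; this guarantees that the maps $p_i \mapsto 1/p_i$ and $p_i \mapsto m/p_i$ are continuous (indeed $C^\infty$) and uniformly bounded on $[\epsilon,R]$, with no singularity at $p_i = 0$ to worry about.

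First I would decompose $G$ into its three summands and treat each separately. For the middle term $m\,\ln\Gamma\!\left(1+1/p_i\right)$, I would observe that as $p_i$ ranges over $[\epsilon,R]$ the argument $1+1/p_i$ stays inside the compact subinterval $[\,1+1/R,\,1+1/\epsilon\,]\subset(0,+\infty)$, on which $\Gamma$ is continuous and strictly positive (by \eqref{eq:Gamma}); composing with the continuous logarithm and multiplying by the constant $m$ keeps this term continuous. The last term $m/p_i$ is continuous as already noted. The only genuinely delicate summand is the first one, $\frac{m}{p_i}\log\!\big(\tfrac{p_i}{m}\,S(p_i)\big)$ with $S(p_i):=\sum_{j\in\mathcal{J}_i^r}\|\bm{g}_j\|_2^{p_i}$: here each summand $\|\bm{g}_j\|_2^{p_i}=\exp(p_i\log\|\bm{g}_j\|_2)$ is continuous in $p_i$ (with the convention $0^{p_i}=0$ for $p_i>0$, which is the constant, hence continuous, map when a local gradient vanishes), so the \emph{finite} sum $S(p_i)$ is continuous.

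The main obstacle, and really the only point requiring care, is ensuring that the argument $\tfrac{p_i}{m}\,S(p_i)$ of the logarithm is strictly positive so that the first term is well defined and continuous; this fails precisely in the degenerate situation in which every gradient magnitude in the neighbourhood $\mathcal{J}_i^r$ vanishes. This is exactly the case ruled out by the regularising safeguard $0<\varepsilon\ll1$ introduced after \eqref{eq:est_scale_shape}, which replaces $S(p_i)$ by a strictly positive quantity bounded away from zero uniformly on $[\epsilon,R]$; consequently $\tfrac{p_i}{m}S(p_i)\geq \tfrac{\epsilon}{m}\varepsilon>0$, the composition with $\log$ is continuous, and multiplication by the continuous factor $m/p_i$ preserves continuity. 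Having shown that $G$ is a finite sum of continuous functions, hence continuous on $[\epsilon,R]$, I would conclude by the extreme value theorem that $G$ attains its minimum on the compact domain, which proves the claim.
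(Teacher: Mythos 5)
Your proof is correct and takes essentially the approach the paper intends: the paper states this proposition without any explicit proof, treating term-by-term continuity plus the Weierstrass extreme value theorem as immediate, which is exactly your argument. Your extra care in handling the degenerate configuration where every gradient magnitude in $\mathcal{J}_i^r$ vanishes---resolved via the $\varepsilon$-safeguard the paper introduces after \eqref{eq:est_scale_shape}---addresses the one point where $G$ as written could fail to be defined, and so makes the claim fully rigorous.
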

In Figure \ref{fig:sky_wtvp}, the estimation of the global and local shape parameters for the $\WTV^{sv}_{\bm{p}}$ regulariser is performed by setting $\epsilon=0.1$ and $R=10$. The dashed green line in Figures \ref{fig:wtvp_glob}-\ref{fig:wtvp_glob_zoom} represents the hGG pdf that best fits the global histogram of the gradient magnitudes where parameters have been estimated as above. One can already observe how the introduction of a further global parameter allows for a better modelling of the global histogram when compared to the solid green line, representing the hL pdf shown in Figure \ref{fig:sky_wtv}. In Figures \ref{fig:wtvp_sub1_},\ref{fig:wtvp_sub2_},\ref{fig:wtvp_sub3_}, we report coloured dashed lines corresponding to the estimated local hGG pdfs; in addition, we superimpose the global hGG pdf  together with the local hL pdfs plotted in Figure \ref{fig:sky_wtv} as solid lines. To facilitate the inspection, we also show close-up(s) of the local histograms in Figures \ref{fig:wtvp_sub1_zoom},\ref{fig:wtvp_sub2},\ref{fig:wtvp_sub3_zoom}.
The benefits associated to the use of a second space-variant parameter are here even more significant. The differences between the selected patches, and between the patches and the global image, is accurately highlighted by the estimated global and local parameters reported in the caption. Note, however that also in this case the selected hGG prior is not capable of detecting directional differences between the two textured sub-regions, due once again to its univariate behaviour.


\begin{figure}[!t]
	\centering
	\begin{subfigure}{0.32\textwidth}
		\centering
		\includegraphics[width=1.2in]{images/hist/sky_im.png} 
		\caption{Test image}
		\label{fig:wtvp_sky}
	\end{subfigure}
	\begin{subfigure}{0.32\textwidth}
		\centering
		\includegraphics[width=1.6in]{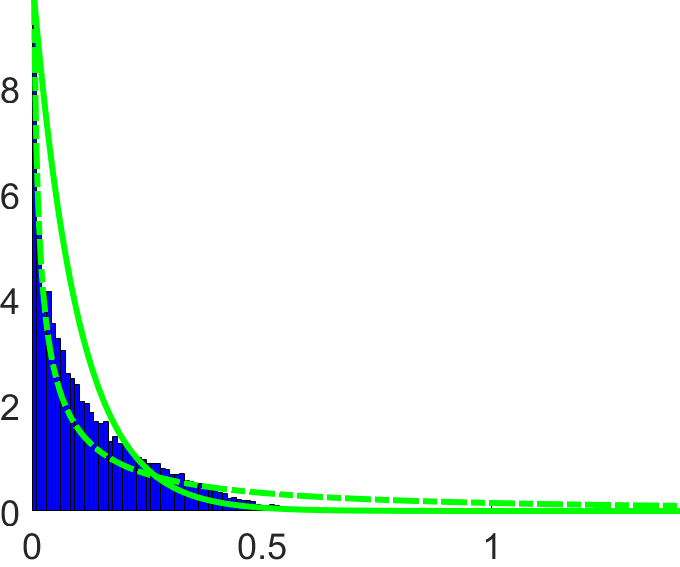} 
		\caption{Global histogram}
		\label{fig:wtvp_glob}
	\end{subfigure}
	\begin{subfigure}{0.32\textwidth}
		\centering
		\includegraphics[width=1.6in]{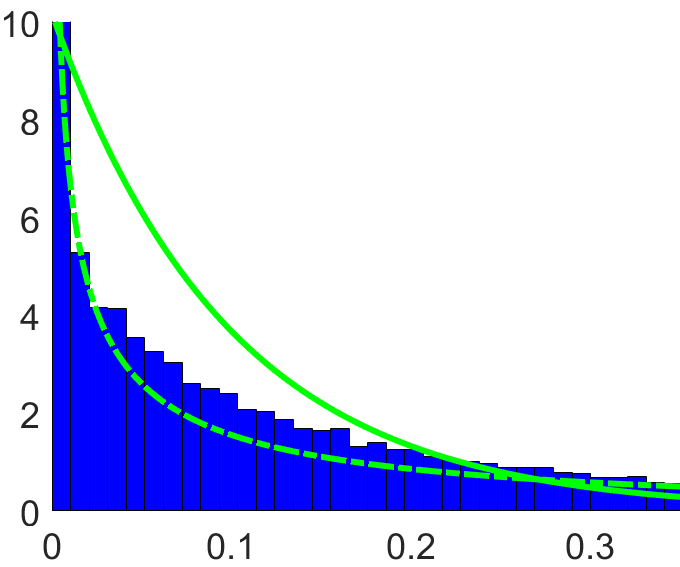}
		\caption{close-up}
		\label{fig:wtvp_glob_zoom}
	\end{subfigure}\\
	\begin{subfigure}{0.32\textwidth}
		\centering
		\includegraphics[width=1.2in]{images/hist/sky_zoom1.png} 
		\caption{local histogram}
		\label{fig:wtvp_sub1}
	\end{subfigure}
	\begin{subfigure}{0.32\textwidth}
		\centering
		\includegraphics[width=1.6in]{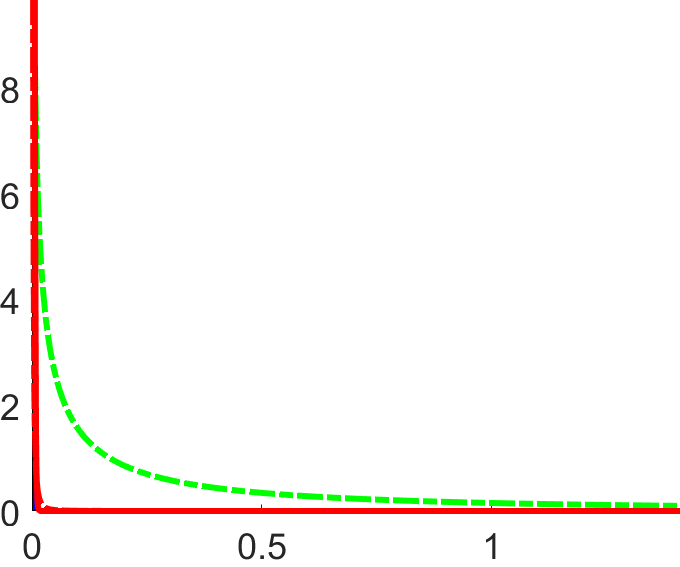} 
		\caption{local histogram}
		\label{fig:wtvp_sub1_}
	\end{subfigure}
	\begin{subfigure}{0.32\textwidth}
		\centering
		\includegraphics[width=1.6in]{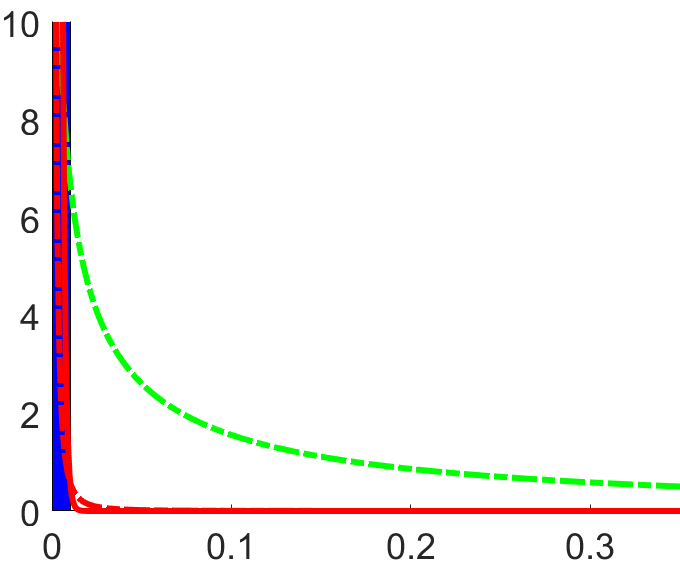}
		\caption{close-up}
		\label{fig:wtvp_sub1_zoom}
	\end{subfigure}\\
	\begin{subfigure}{0.32\textwidth}
		\centering
		\includegraphics[width=1.2in]{images/hist/sky_zoom2.png} 
		\caption{local histogram}
		\label{fig:wtvp_sub2}
	\end{subfigure}
	\begin{subfigure}{0.32\textwidth}
		\centering
		\includegraphics[width=1.6in]{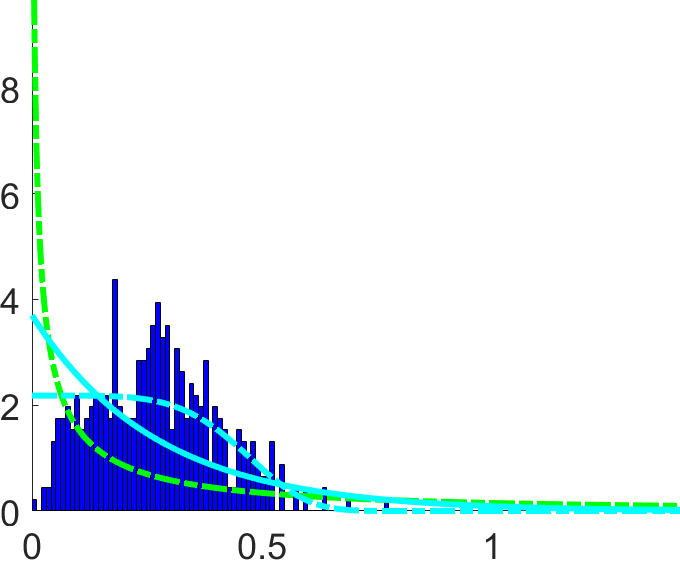}
		\caption{local histogram}
		\label{fig:wtvp_sub2_}
	\end{subfigure}
	\begin{subfigure}{0.32\textwidth}
		\centering
		\includegraphics[width=1.6in]{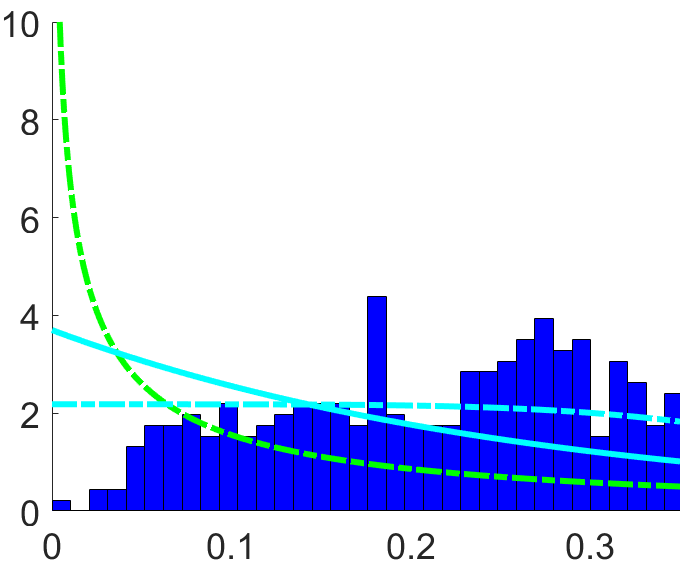}
		\caption{close-up}
		\label{fig:wtvp_sub2_zoom}
	\end{subfigure}\\
	\begin{subfigure}{0.32\textwidth}
		\centering
		\includegraphics[width=1.2in]{images/hist/sky_zoom3.png}
		\caption{local histogram}
		\label{fig:wtvp_sub3}
	\end{subfigure}
	\begin{subfigure}{0.32\textwidth}
		\centering
		\includegraphics[width=1.6in]{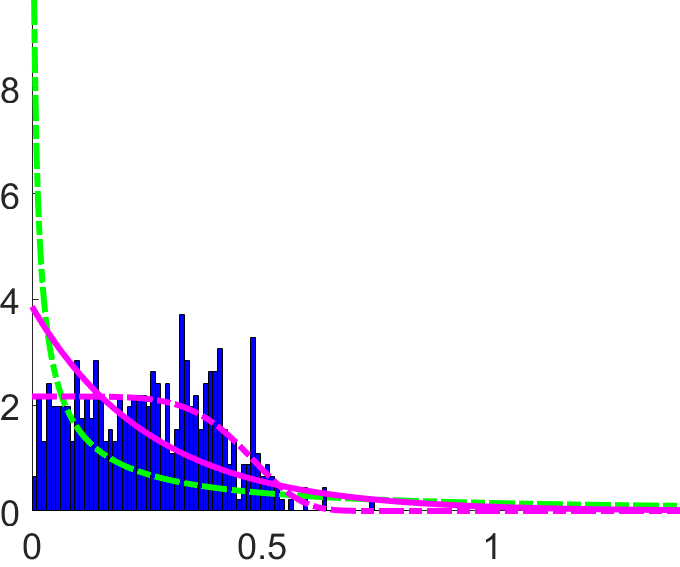}
		\caption{local histogram}
		\label{fig:wtvp_sub3_}
	\end{subfigure}
	\begin{subfigure}{0.32\textwidth}
		\centering
		\includegraphics[width=1.6in]{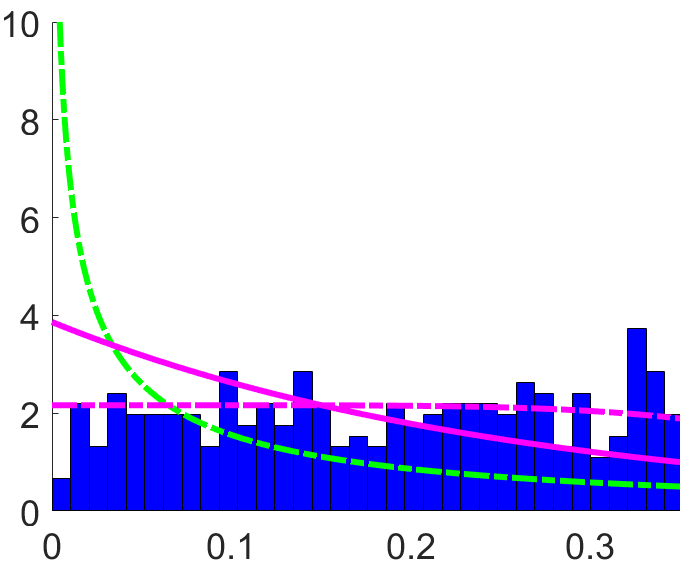}
		\caption{close-up}
		\label{fig:wtvp_sub3_zoom}
	\end{subfigure}
	\caption{\emph{Parameters estimate for the $\WTV^{sv}_{\bm{p}}$ regulariser.} From top to bottom: histogram of the gradient magnitudes on the whole test image ($p^*=0.2, \alpha^*=465.67$), on a constant region ($p^*=0.1, \alpha^*=687.34$) and on two different texture regions ($p^*=4.85, \alpha^*=2.56$ and $p^*=5.65, \alpha^*=2.78$), with the corresponding close-up(s).}
	\label{fig:sky_wtvp}
\end{figure}

Finally, in Figure \ref{fig:sky_maps}  we show the $\bm{\alpha}$- and $\bm{p}$-maps, obtained by considering neighbourhoods of different sizes ($r$) for the image in Figure \ref{fig:sky_im}. In all three cases, the method associates very low $\bm{p}$ values with flat regions (thus promoting enforced sparsity) and higher 
values with texture (where gradients show oscillations). Similarly as what observed for WTV,  the scale parameters $\bm{\alpha}$ are again smaller on regions characterised by finer details, as expected.

\begin{figure}
	\centering
	\resizebox{\textwidth}{!}{
		\begin{tabular}{cccc}
			&$r=2$&$r=5$&$r=7$\\
			\rotatebox{90}{$\bm{\alpha}$}&\raisebox{-0.5\height}{
				\includegraphics[width=1.4in]{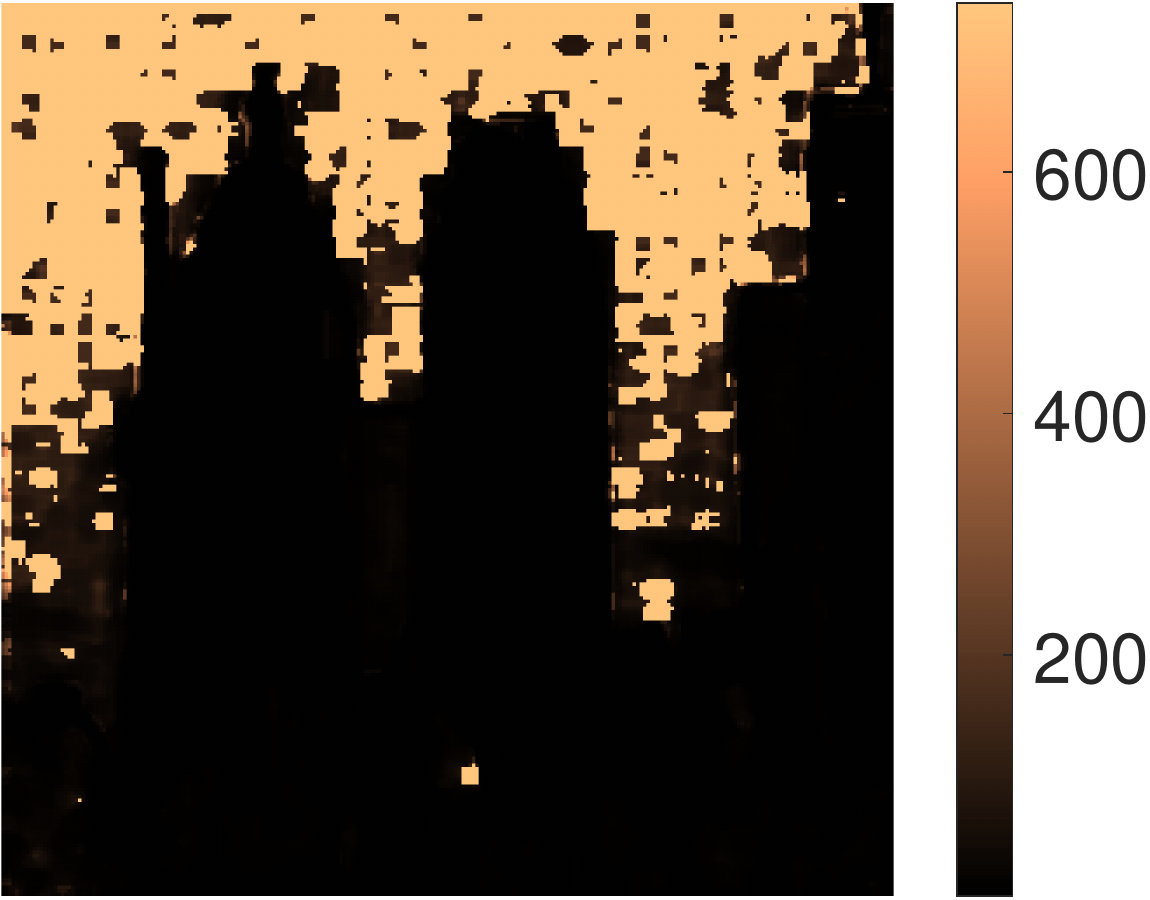}
			} & \raisebox{-0.5\height}{
				\includegraphics[width=1.4in]{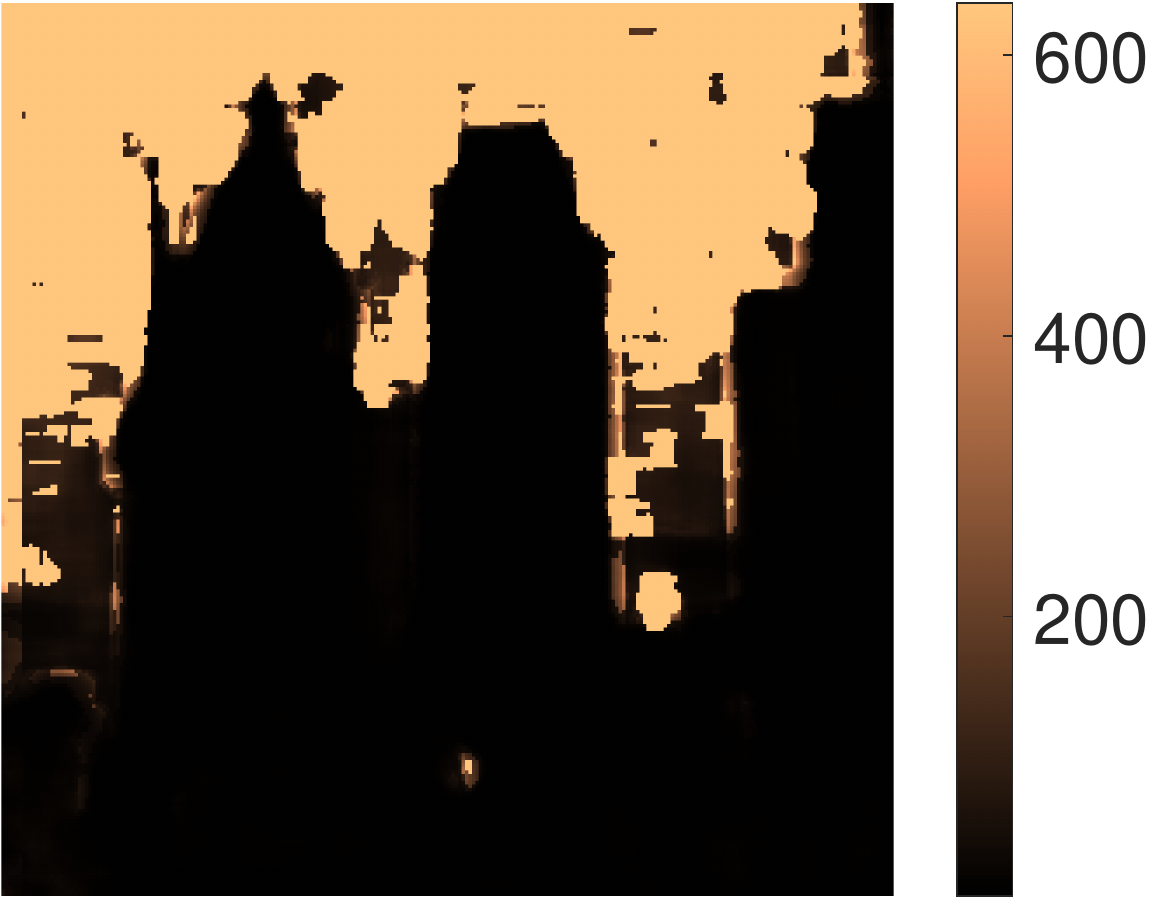}
			}  &\raisebox{-0.5\height}{
				\includegraphics[width=1.4in]{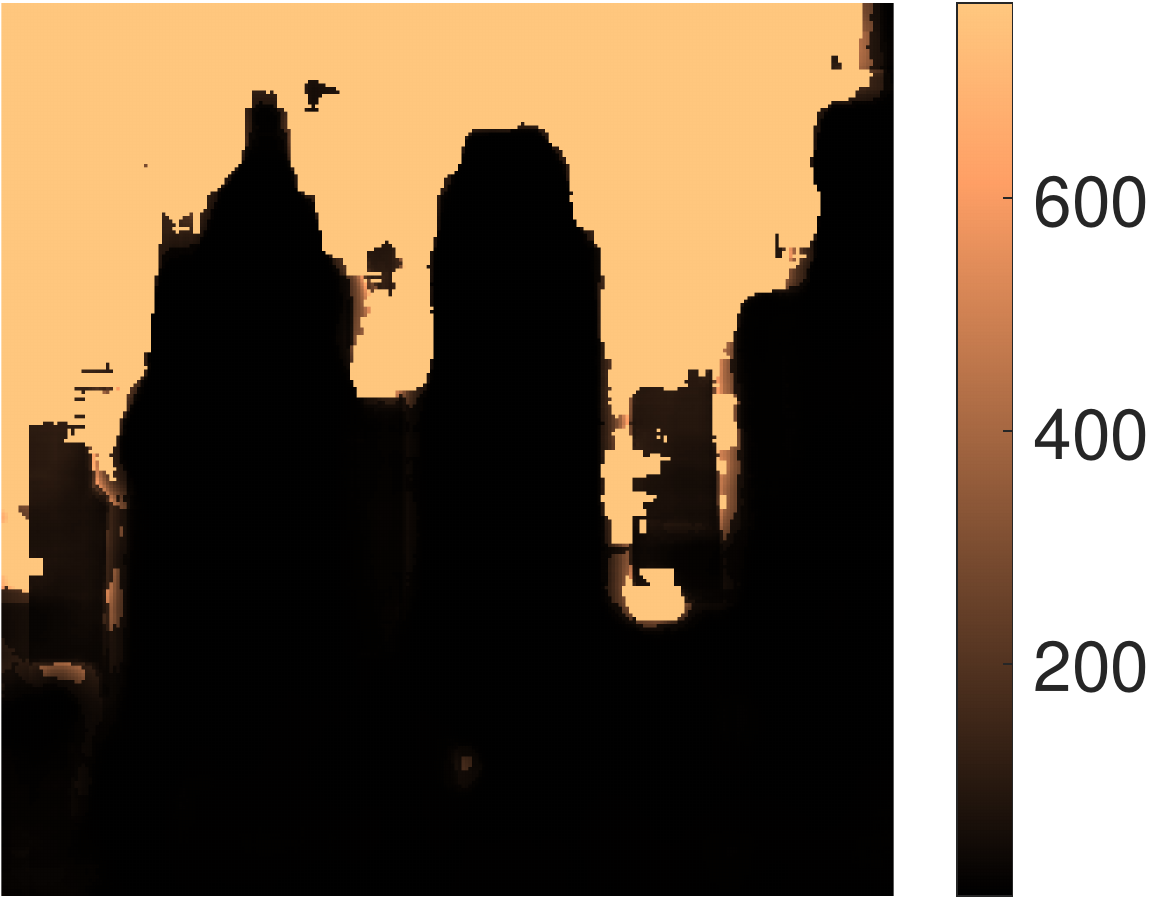} }\\
			{\rotatebox{90}{$\bm{p}$}}&\raisebox{-0.5\height}{\includegraphics[width=1.4in]{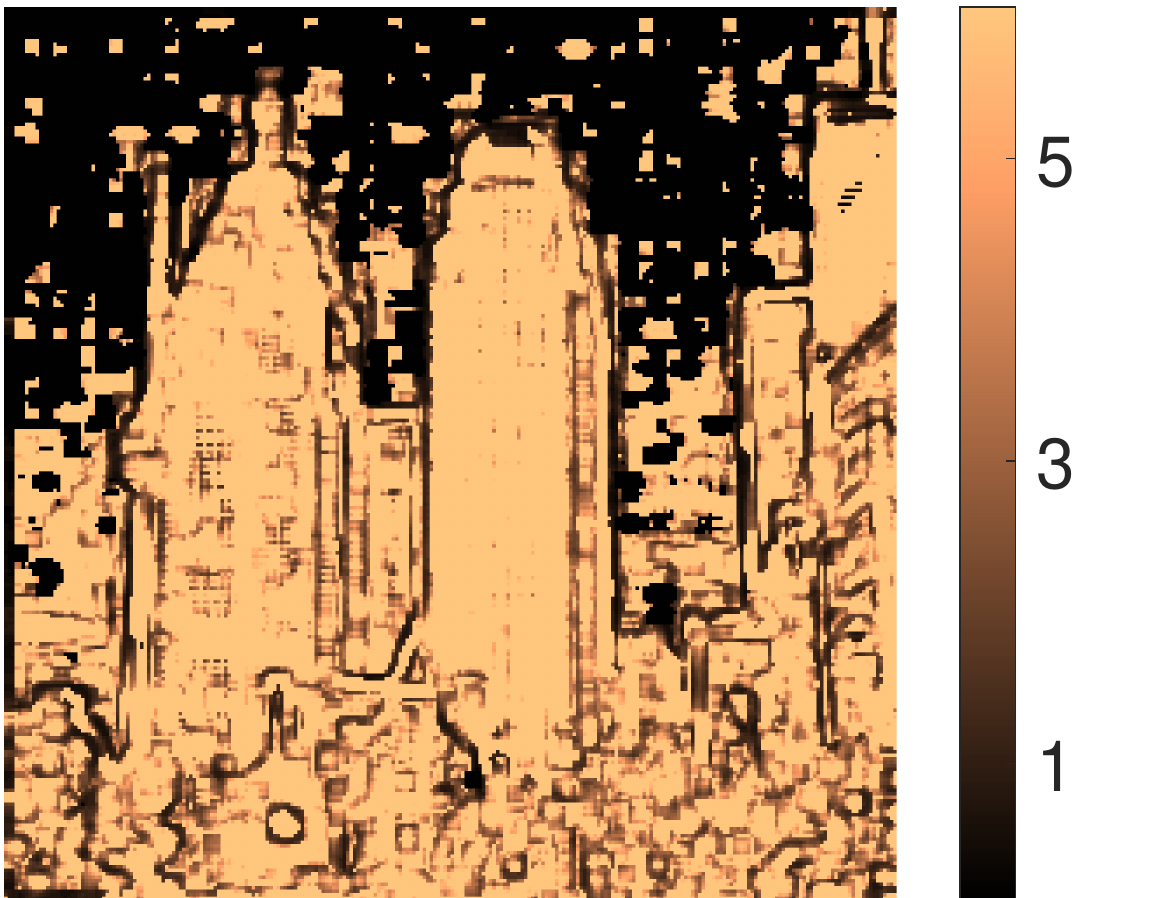} } &\raisebox{-0.5\height}{\includegraphics[width=1.4in]{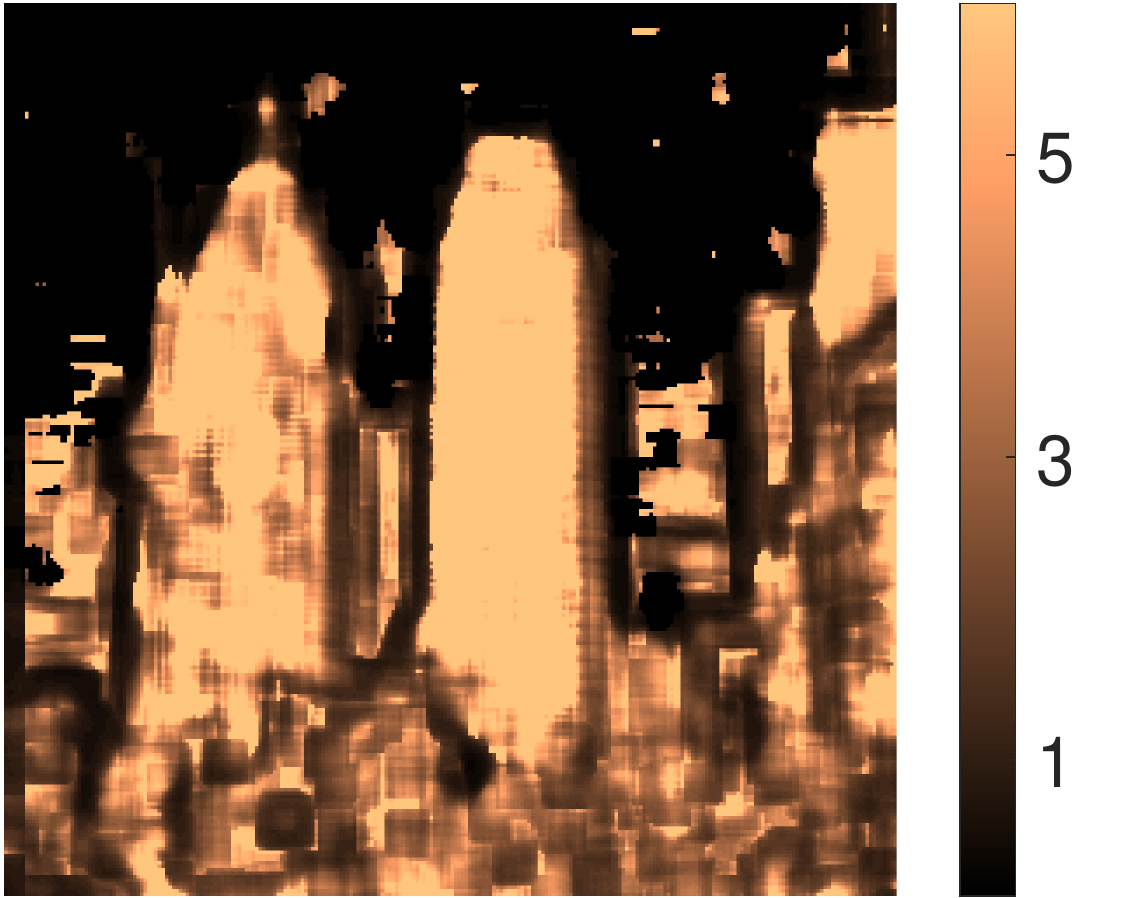}}  &
			\raisebox{-0.5\height}{\includegraphics[width=1.38in]{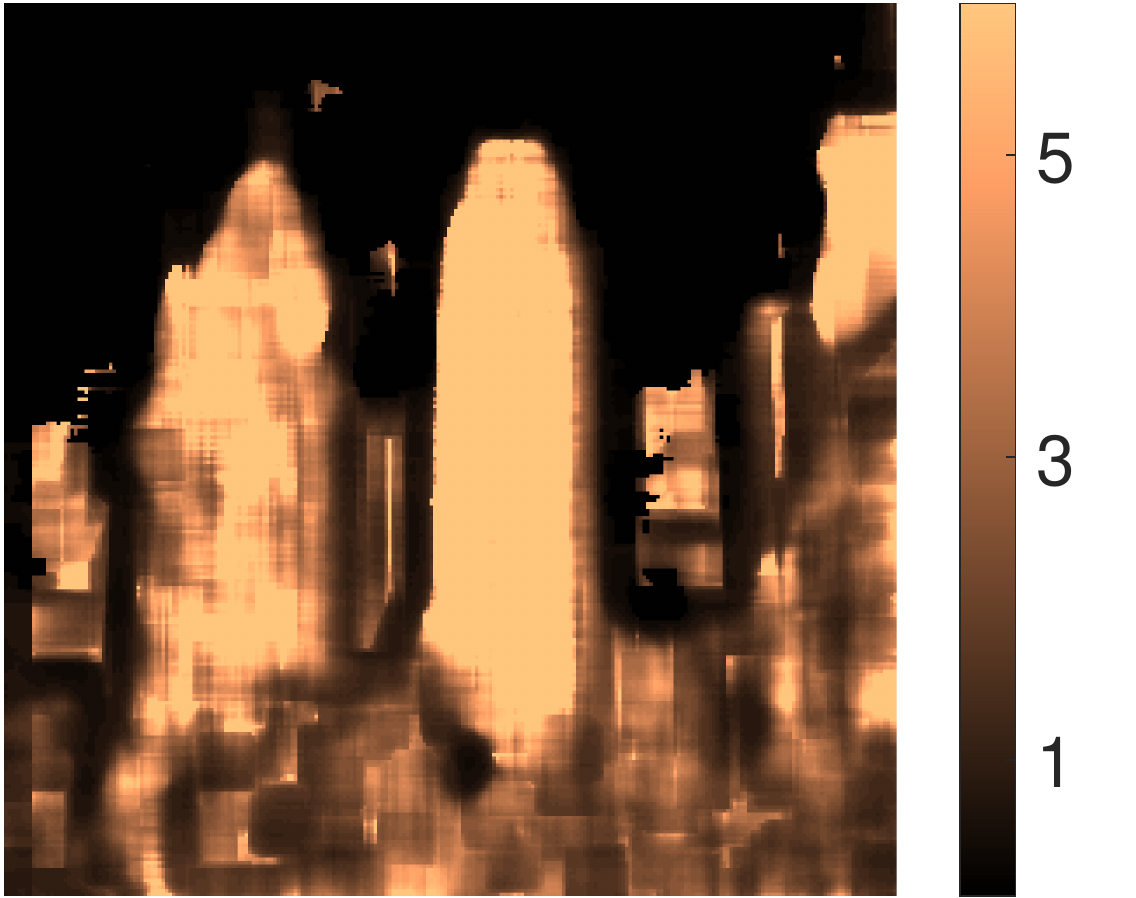}}
	\end{tabular}}
	\caption{The $\bm{\alpha}$ and $\bm{p}$ maps for different values of radius $r$ on the image \texttt{skyscraper} in Figure~\ref{fig:sky_im}.}
	\label{fig:sky_maps}
\end{figure}

\subsection{Parameter estimation for the $\mathrm{WDTV}^{sv}_p$ regulariser}\label{sec:thWDTV}
For the $\mathrm{WDTV}^{sv}_{\bm{p}}$ regularisation term, after selecting the functions $f_{\mathrm{WDTV}^{sv}_{\bm{p}}},h_{\mathrm{WDTV}^{sv}_{\bm{p}}}$ as in \eqref{eq:WDTVp_h},\eqref{eq:WDTVp_h2} and the domain $\mathcal{D}_{\bm{\Theta}_i}$ as specified in Table~\ref{tab:models},  we get that the problem of interest takes the form

\begin{equation}
\left\{\alpha^{*}_i,p^{*}_i,\theta_i^*,a^{*}_i\right\}\in 
\displaystyle{\argmin{(\alpha_i,p_i,\theta_i,a_i)\in \R_{++}^2 \times [-\pi/2,\pi/2)\times (0,1]}} \mathcal{G}(\alpha_i,p_i,\theta_i,a_i)
\label{pb}
\end{equation}
where
\begin{align}\label{eq:obj_DTV}
\begin{split}
\mathcal{G}(\alpha_i,p_i,\theta_i,a_i)  
& \;{:=}\;  -m\ln a_i + m\ln\Gamma\left(\frac{2}{p_i} \,+\, 1\right)-2m\ln\alpha_i\\
&\phantom{X}\;{+}\;m\left(\frac{2}{p_i}-1\right)\ln 2+ \alpha_i^{p_i} \sum_{j\in\mathcal{J}_i^r}(\,\bm{g}_{j}^{T}\,\bm{\mathrm{R}}_{\theta_i}\bm{\Lambda}_{a_i}^2\bm{\mathrm{R}}_{-\theta_i}\,\bm{g}_{j}\,)^{p_i/2}.
\end{split}
\end{align}
%
%
Note that $\mathcal{G}$ is differentiable on $\mathbb{R}_{++}^2 \times [-\pi/2,\pi/2)\times (0,1]$. By simply imposing a first-order optimality condition on $\alpha_i$, we get the following closed formula:
\begin{equation}
\frac{\partial \mathcal{G} }{\partial \alpha_i}=-2m\frac{1}{\alpha_i}+p_i\alpha_i^{p_i-1}\sum_{j\in\mathcal{J}_i^r}(\bm{g}_{j}^{T}\bm{\mathrm{R}}_{\theta_i}\bm{\Lambda}_{a_i}^2\bm{\mathrm{R}}_{-\theta_i}\bm{g}_{j})^{p_i/2}\end{equation}
which yields
\begin{equation}  \label{eq:expr_m}
\alpha_i^*(p_i,\theta_i,a_i)=\bigg(\frac{p_i}{2m}\sum_{j=1}^{m}(\bm{g}_{j}^{T}\bm{\mathrm{R}}_{\theta_i}\bm{\Lambda}_{a_i}^2\bm{\mathrm{R}}_{-\theta_i}\bm{g}_{j})^{p_i/2}\bigg)^{-\frac{1}{p_i}},
\end{equation}
and which can be regularised depending on $0<\varepsilon\ll 1$ as above.
The stationary point in \eqref{eq:expr_m} can be proved to be a minimum as the second derivative of $\mathcal{G}$ with respect to $\alpha_i$ at $\alpha_i^*$ is strictly positive. Plugging \eqref{eq:expr_m} into \eqref{eq:obj_DTV}, we thus get:
\begin{align}\label{fun_semifin}
\begin{split}
G(p_i,\theta_i,a_i) \;{:=}\;& \mathcal{G}(\alpha_i^*(p_i,\theta_i,a_i),p_i,\theta_i,a_i)\\
\;{=}\;&m\ln\left[\Gamma\left(\frac{2}{p_i}+1\right)\,\frac{1}{2\,a_i}\right]+\frac{2\,m}{p_i}\left(\ln\frac{p_i}{m}+1\right)\\
&\;{+}\; \frac{2\,m}{p_i} \ln \bigg(\sum_{j=1}^{m}(\bm{g}_{j}^{T}
\bm{\mathrm{R}}_{\theta_i}\bm{\Lambda}_{a_i}^2\bm{\mathrm{R}}_{-\theta_i}
\bm{g}_{j})^{p_i/2}\bigg)\,.
\end{split}
\end{align}
By making now explicit the dependence of $G$ on the entries of $(\bm{\mathrm{R}}_{\theta_i}\bm{\Lambda}_{a_i}^2\bm{\mathrm{R}}_{-\theta_i})$, we have that \eqref{fun_semifin} turns into:
\begin{align}
\label{eq:fun_fin}
\begin{split}
G(p_i,\theta_i,a_i)&=
m\ln\left[\Gamma\left(\frac{2}{p_i}+1\right)\,\frac{1}{2\,a_i}\right]+\frac{2\,m}{p_i}\left(\ln\frac{p_i}{m}+1\right)\\
&+ \frac{2m}{p_i}\ln
\Bigg(\sum_{j\in\mathcal{J}_i^r}
((\cos^2\theta_i+a_i^2\sin^2\theta_i)g_{j,1}^2+(\sin^2\theta_i+a_i^2\cos^2\theta_i)g_{j,2}^2\\
&+2(1-a_i^2)\cos\theta_i\sin\theta_i g_{j,1}g_{j,2})^{p_i/2}
\Bigg)\,.
\end{split}
\end{align}
Problem \eqref{pb}-\eqref{eq:obj_DTV} thus takes the form:
\begin{equation}
\label{eq:min_prob_unbb}
\left\{p_{i}^{*},\theta^{*}_{i},a_{i}^{*}\right\}\in
\displaystyle{\argmin{(p_i,\theta_i,a_i)\in \R_{++} \times [-\pi/2,\pi/2)\times (0,1]}} 
G(p_i,\theta_i,a_i)
\end{equation}

We now study the behaviour of $G$ as the triplet $(p_i,\theta_i,a_i)$ approaches the boundary of the set $\widehat{\mathcal{D}}_{\bm{{\Theta}}_i}:=\R_{++}\times [-\pi/2,\pi/2)\times (0,1]$.
%
%
%
%
Note that, since problem \eqref{eq:min_prob_unbb} is formulated over a non-compact set of $\R^3$, the existence of a solution is in general not guaranteed. One possible way to overcome the problem of non-compactness consists in characterising explicitly the configurations of the samples $\mathcal{S}_i$ for which the functional $G$ in \eqref{eq:fun_fin} does not attain its minimum in $\widehat{\mathcal{D}}_{\bm{{\Theta}}_i}$. To do so, let us first set:
\begin{align}
A(\theta_i,a_i):=&\frac{2m}{p_i} \log \Bigg[\sum_{j\in\mathcal{J}_i^r}((\cos^2\theta_i+a_i^2\sin^2\theta_i)g_{j,1}^2+(\sin^2\theta_i+a_i^2\cos^2\theta_i)g_{j,2}^2\\
&+2(1-a_i^2)\cos\theta_i\sin\theta_i g_{j,1}g_{j,2})^{p_i/2}\Bigg].
\end{align}
For any $p_i>0$, if $A(\theta_i,a_i)$ is bounded as $a_i\to 0^+$, then the functional $G$ in \eqref{eq:fun_fin} tends to $+\infty$ and the minimum is necessarily attained in the interior of $\widehat{\mathcal{D}}_{\bm{{\Theta}}_i}$.
However, if $A(\theta_i,a_i)$ is unbounded as $a_i\to 0^+$, nothing can be said about the behaviour of $G$ at the boundary and, as a consequence, nothing can be said about its minima. In particular, in this situation there may exist one or multiple configurations of the samples $\bm{g}_1,\ldots,\bm{g}_m\in\mathcal{S}_i$ for which $G$ tends to $-\infty$ at the boundary.
In order to characterise such configurations, note that as $a_i\to 0^+$ we have that by continuity:
\begin{equation}
A(\theta_i,a_i)\to \frac{2m}{p_i} \log \Bigg[\sum_{j=1}^{m} (\cos\theta_ix_{j,1}+\sin\theta_ix_{j,2})^{p_i}\Bigg],
\end{equation}
which tends to $-\infty$ if and only if 
\begin{equation}  \label{eq:configurations}
g_{j,2}=-\frac{\cos\theta_i}{\sin\theta_i}~g_{j,1},\qquad \forall j=1,...,m.
\end{equation}
This situation corresponds to the very particular case when the samples $\bm{g}_{j}$ lie all on the line passing through the origin with slope $-\cos\theta_i/\sin\theta_i$, and they can be thus considered as realisations of a degenerate BGG pdf characterised by a positive semidefinite covariance matrix. This sort of configurations can be avoided by requiring that $a_i$ does not get smaller than a fixed value $0<\delta\ll 1$.


A possible way to guarantee the existence of solutions of the problem \eqref{eq:min_prob_unbb} is to re-formulate the problem over a compact subset of $\R^3$, in analogy with what has been done in Section \ref{sec:thWTVp}. 
As noted above on the admissible values for $p_i$, we point out that the more we enforce sparsity (i.e. the closer $p_i$ is to zero), the more the BGGD will tend to a Dirac delta distribution, making the estimation of local anisotropy in a neighbourhood of the point considered almost impossible. Hence, the exponent $p_i$ is thought as confined in the closed interval $[\epsilon,R]$, with $0<\epsilon<R$.

We can thus reformulate problem \eqref{eq:min_prob_unbb} as 
\begin{align}\label{minpbpar}
\left\{p_i^{*},\theta_i^{*},a_i^{*}\right\}&\in\min_{p_i,\theta_i,a_i} G(p_i,\theta_i,a_i)\\
\notag&\\
\notag &\textrm{s.t.} \quad p_i\in[\epsilon,R],\;\;-\pi/2 \leq \theta_i \leq \pi/2\,,\;\;\delta \leq a_i \leq 1.
\end{align}
The following result holds true:
\begin{proposition}\label{prop:ex_scale}
	The function $G:[\epsilon,R]\times[-\pi/2,\pi/2]\times[\delta,1]\to\R$ in \eqref{eq:fun_fin} is continuous and admits a minimum in its compact domain.
\end{proposition}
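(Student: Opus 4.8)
The plan is to obtain the minimiser directly from the Weierstrass extreme value theorem. Indeed, the feasibility set $[\epsilon,R]\times[-\pi/2,\pi/2]\times[\delta,1]$ is a nonempty, closed and bounded box in $\R^3$, hence compact, so it will suffice to establish that the map $G$ in \eqref{eq:fun_fin} is continuous on it; the existence of a global minimiser in the compact domain then follows automatically, in direct analogy with Proposition~\ref{prop:prop_p1}. Accordingly, the whole argument reduces to a careful inspection of the continuity of the three groups of terms appearing in \eqref{eq:fun_fin}.

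The first two groups are routine once one exploits the restricted (compact) ranges of the variables. The term $m\ln[\Gamma(2/p_i+1)/(2a_i)]$ is continuous because, for $p_i\in[\epsilon,R]\subset\R_{++}$, the argument $2/p_i$ ranges over the compact interval $[2/R,2/\epsilon]$ on which $\Gamma$ is continuous and strictly positive, while $1/(2a_i)$ is continuous and bounded away from $0$ and $+\infty$ thanks to $a_i\in[\delta,1]$; hence the logarithm is evaluated at a quantity that is positive and bounded away from zero. Likewise, $(2m/p_i)(\ln(p_i/m)+1)$ is continuous since $p_i\mapsto 1/p_i$ and $p_i\mapsto\ln(p_i/m)$ are continuous on $[\epsilon,R]$.

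The delicate point — and the main obstacle — is the continuity of the last term, which contains $\ln S$ with $S(p_i,\theta_i,a_i):=\sum_{j\in\mathcal{J}_i^r}(\bm{g}_j^{T}\bm{\mathrm{R}}_{\theta_i}\bm{\Lambda}_{a_i}^2\bm{\mathrm{R}}_{-\theta_i}\bm{g}_j)^{p_i/2}$. The summand is manifestly a continuous function of $(p_i,\theta_i,a_i)$, so $S$ is continuous; the only risk is that $S$ vanishes, which would send $\ln S$ to $-\infty$. To rule this out I would use that $\bm{\Lambda}_{a_i}^2=\mathrm{diag}(1,a_i^2)$ is symmetric positive definite with smallest eigenvalue $a_i^2\geq\delta^2>0$, and that $\bm{\mathrm{R}}_{\theta_i}$, $\bm{\mathrm{R}}_{-\theta_i}$ are mutually inverse orthogonal matrices, so the conjugated matrix $\bm{\mathrm{R}}_{\theta_i}\bm{\Lambda}_{a_i}^2\bm{\mathrm{R}}_{-\theta_i}$ keeps the same spectrum $\{1,a_i^2\}$. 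Consequently the quadratic form obeys $\bm{g}_j^{T}\bm{\mathrm{R}}_{\theta_i}\bm{\Lambda}_{a_i}^2\bm{\mathrm{R}}_{-\theta_i}\bm{g}_j\geq\delta^2\|\bm{g}_j\|_2^2$ uniformly in $\theta_i$, whence $S(p_i,\theta_i,a_i)\geq\delta^{p_i}\sum_{j}\|\bm{g}_j\|_2^{p_i}$. This lower bound is strictly positive and bounded away from $0$ on the compact domain as soon as at least one sample $\bm{g}_j$ is nonzero, a condition secured by the small regularisation $\varepsilon$ added to the sample sums exactly as in \eqref{eq:upd_par_wtv} and \eqref{eq:est_scale_shape}. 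This is precisely the purpose of the lower threshold $\delta>0$ imposed on $a_i$: it excludes the degenerate configurations \eqref{eq:configurations} along which $S\to 0$, thereby keeping $\ln S$ finite and continuous.

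With $S$ continuous and bounded away from zero, the term $(2m/p_i)\ln S$ is continuous, so $G$ is a finite sum of continuous functions and is therefore continuous on the compact box. An application of the Weierstrass theorem then yields the existence of a global minimiser, which completes the proof.
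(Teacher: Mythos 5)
Your proof is correct and follows essentially the same route as the paper: there, too, the minimum is obtained by restricting to the compact box precisely so that the logarithmic term cannot degenerate (that is the whole point of the preceding analysis of the limit $a_i\to 0^+$ and of the threshold $\delta$), after which the proposition is stated as an immediate consequence of continuity and compactness, i.e.\ Weierstrass, with no further argument given. Your uniform spectral bound $S(p_i,\theta_i,a_i)\geq \delta^{p_i}\sum_{j}\|\bm{g}_j\|_2^{p_i}$, valid on the whole box because conjugation by $\bm{\mathrm{R}}_{\theta_i}$ preserves the spectrum $\{1,a_i^2\}$ of $\bm{\Lambda}_{a_i}^2$, is a clean explicit justification of the continuity of the log term that the paper leaves implicit (subject, in both cases, to the standing non-degeneracy assumption that not all samples $\bm{g}_j$ vanish, handled via the $\varepsilon$-regularisation).
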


In Figure \ref{fig:est_DTVp}, we analyse the performance of the outlined parameter estimation strategy for the WDTV$_{\bm{p}}^{sv}$ regulariser, where again the search interval for the local parameter $p_i$ has been set as $[\epsilon,R]=[0.1,10]$. More specifically, in the left column we display selected neighbourhoods from a synthetic image - i.e. a vertical edge in Figure \ref{fig:sub1}, an horizontal edge in Figure \ref{fig:sub2}, and a circular profile in Figure \ref{fig:sub3} - and the two textured regions already considered in Figures \ref{fig:sky_wtv} and \ref{fig:sky_wtvp}. In the middle column of Figure \ref{fig:est_DTVp}, we report the samples extracted from each neighbourhood, together with the level curves of the estimated local BGG pdfs, while in the last column we show the scatter plot of the samples, by drawing once again the level curves of the underlying distribution to facilitate the analysis. 

The estimated pdfs for the three geometrical profiles lie along the horizontal axis, the vertical axis and the first quadrant bisector of the scatter diagram $\bm{\D}_h \bm{u}$-$\bm{\D}_v \bm{u}$, respectively. This behaviour, as expected, corresponds to  the dominant orientation of the gradients in the neighbourhoods.  Finally, the textured regions in Figures \ref{fig:sub3},\ref{fig:sub4}, statistically indistinguishable from the hLd and hGGd viewpoint, result to be significantly different now; in fact, the samples in the former are spread more homogeneously in the scatter diagram, while the gradients in the latter present a dominant edge orientation which is almost aligned with the horizontal axis of the diagram. Such difference is now reflected into the estimated BGG pdfs.

\begin{figure}
	\centering
	\begin{subfigure}[t]{0.32\textwidth}
		\centering
		\includegraphics[scale=1.1]{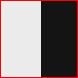}
		\caption{}
		\label{fig:sub1}
	\end{subfigure} 
	\begin{subfigure}[t]{0.32\textwidth}
		\centering
		\includegraphics[scale=0.22]{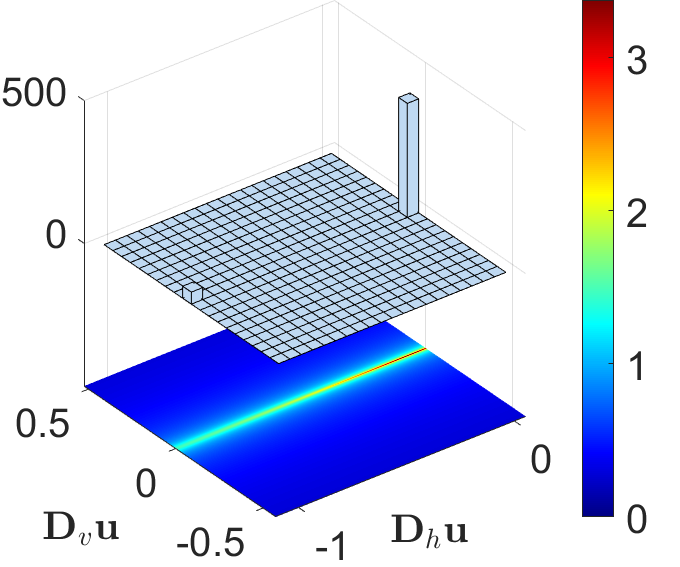}
		\caption{}
	\end{subfigure} 
	\begin{subfigure}[t]{0.32\textwidth}
		\centering
		\includegraphics[scale=0.22]{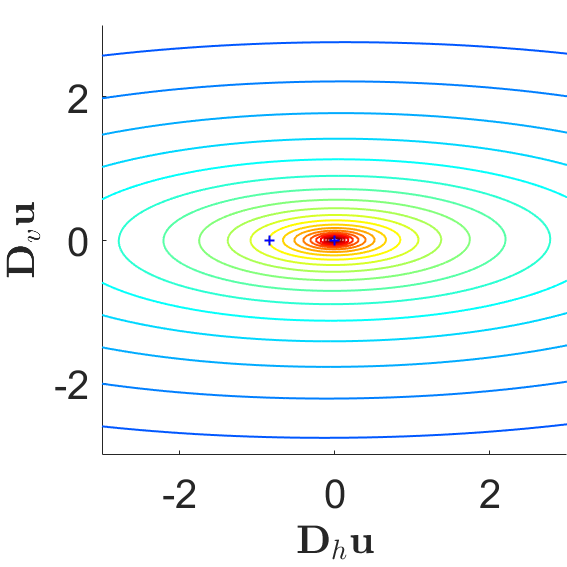}
		\caption{}
	\end{subfigure} \\
	\begin{subfigure}[t]{0.32\textwidth}
		\centering
		\includegraphics[scale=1.1]{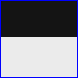}
		\caption{}
		\label{fig:sub2}
	\end{subfigure} 
	\begin{subfigure}[t]{0.32\textwidth}
		\centering
		\includegraphics[scale=0.22]{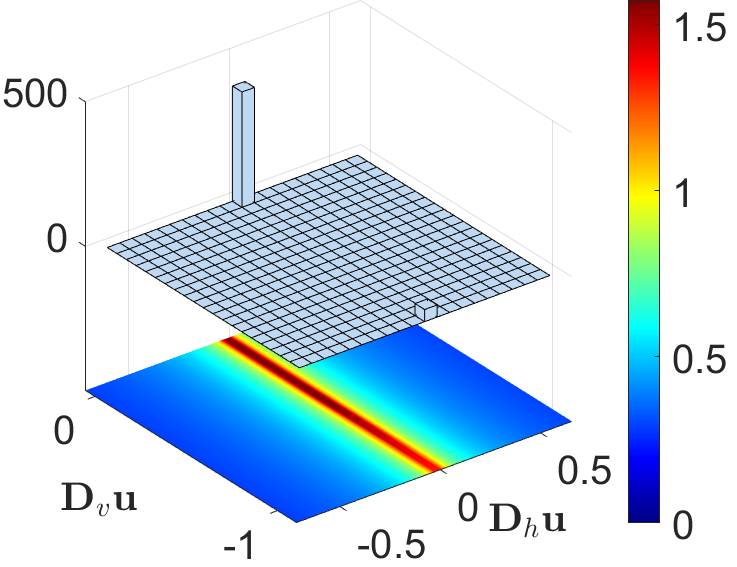}
		\caption{}
	\end{subfigure}
	\begin{subfigure}[t]{0.32\textwidth}
		\centering
		\includegraphics[scale=0.22]{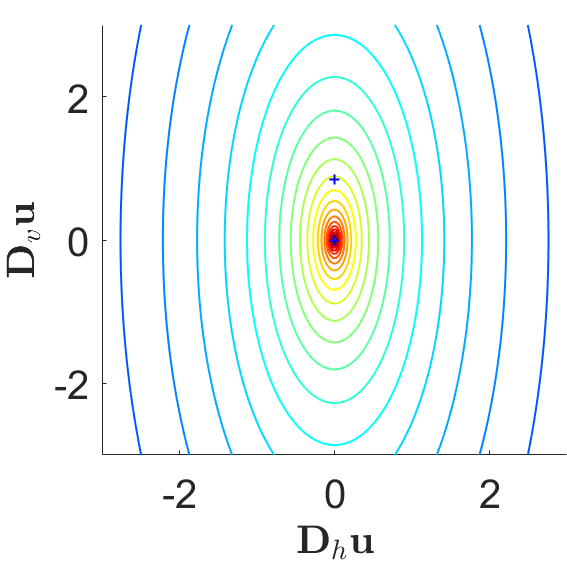}
		\caption{}
	\end{subfigure}\\
	\begin{subfigure}[t]{0.32\textwidth}
		\centering
		\includegraphics[scale=1.1]{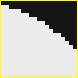}
		\caption{}
		\label{fig:sub3}
	\end{subfigure} 
	\begin{subfigure}[t]{0.32\textwidth}
		\centering
		\includegraphics[scale=0.22]{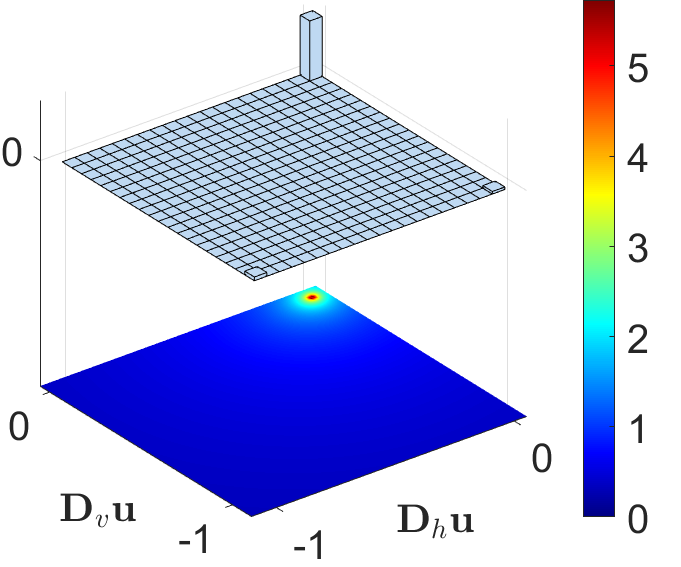} 
		\caption{}
	\end{subfigure} 
	\begin{subfigure}[t]{0.32\textwidth}
		\centering
		\includegraphics[scale=0.22]{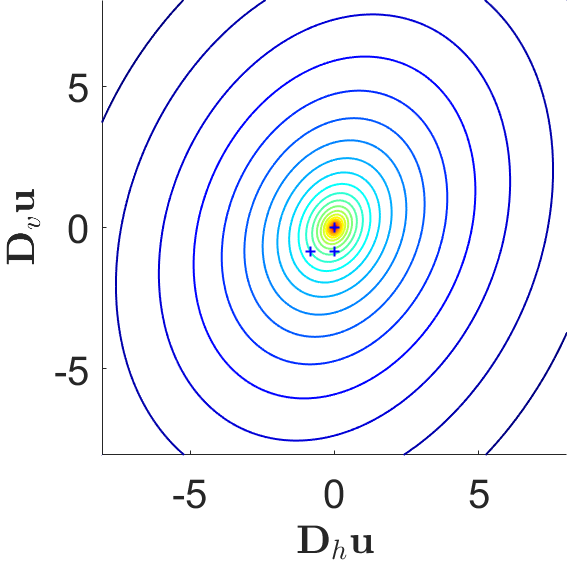}
		\caption{}
	\end{subfigure} \\
	\begin{subfigure}[t]{0.32\textwidth}
		\centering
		\includegraphics[scale=1.1]{images/hist/sky_zoom2.png}
		\caption{}
		\label{fig:sub4}
	\end{subfigure} 
	\begin{subfigure}[t]{0.32\textwidth}
		\centering
		\includegraphics[scale=0.22]{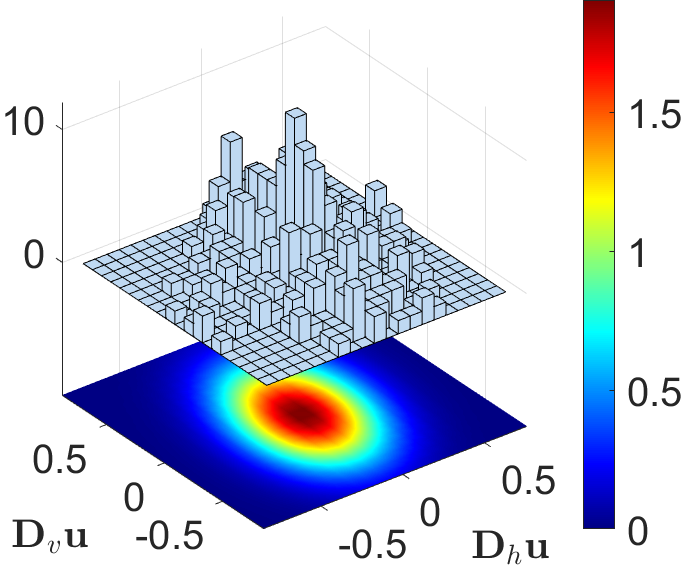}
		\caption{}
	\end{subfigure}
	\begin{subfigure}[t]{0.32\textwidth}
		\centering
		\includegraphics[scale=0.22]{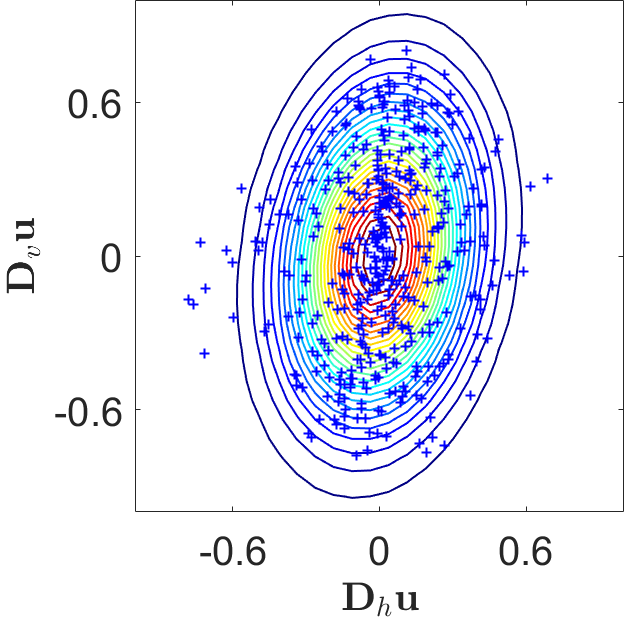}
		\caption{}
	\end{subfigure}\\
	\begin{subfigure}[t]{0.32\textwidth}
		\centering
		\includegraphics[scale=1.1]{images/hist/sky_zoom3.png}
		\caption{}
		\label{fig:sub5}
	\end{subfigure} 
	\begin{subfigure}[t]{0.32\textwidth}
		\centering
		\includegraphics[scale=0.22]{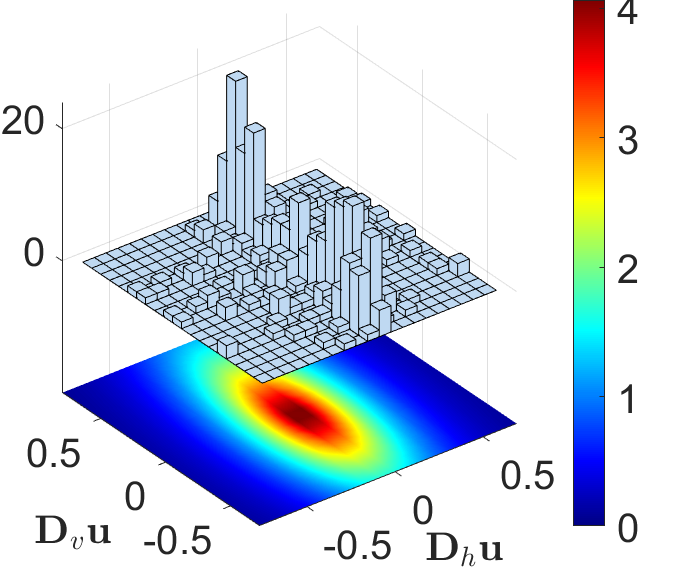}
		\caption{}
	\end{subfigure}
	\begin{subfigure}[t]{0.32\textwidth}
		\centering
		\includegraphics[scale=0.22]{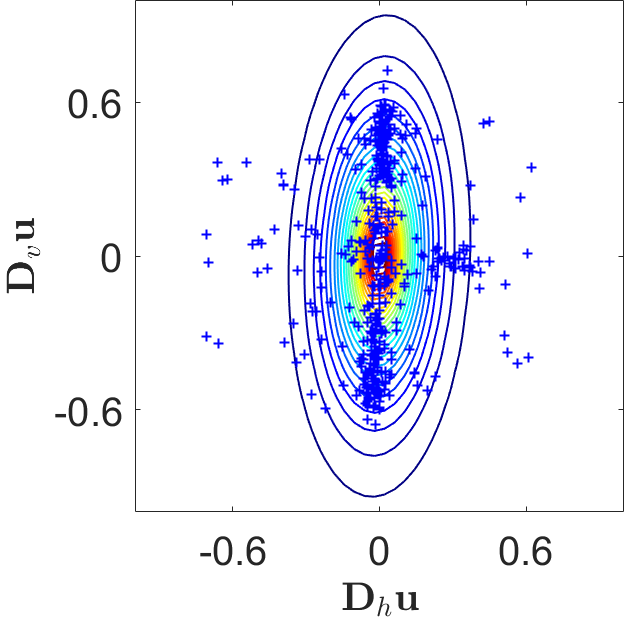}
		\caption{}
	\end{subfigure}
	\caption{\emph{Parameters estimate for the WDTV$_{\bm{p}}^{sv}$ regulariser.} From left to right: selected neighbourhoods, histogram and scatter plot of gradient samples with level curves of the estimated BGGd. From top to bottom, the estimated parameters are: $(\alpha,p,\theta,a)=(2.23,0.1,-89.90,0.33)$, $(\alpha,p,\theta,a)=(2.10,0.1,0,0.37)$, $(\alpha,p,\theta,a)=(1.14,0.1,47.50,0.78)$, $(\alpha,p,\theta,a)=(1.30,2.97,-12.50,0.64)$, $(\alpha,p,\theta,a)=(1.94,1.24,-2.18,0.39)$.}%
	\label{fig:est_DTVp}
\end{figure}

\section{Algorithmic optimisation}
\label{sec:admm}

From an optimisation point of view, it is not trivial to design a unified optimisation solver for the general $\bm{u}$-estimation problem in the alternating scheme \eqref{eq:sub_th}-\eqref{eq:sub_u}, as it may be either extremely easy (smooth and convex) or extremely difficult (non-convex and non-smooth). We can surely think of specific optimisation algorithms that could be effectively used for solving \eqref{eq:sub_th}-\eqref{eq:sub_u} in specific scenarios such as (l-)BFGS \cite{BFGS} for the smooth and convex case, Nesterov-type proximal schemes \cite{Nesterov-2004,Beck-Teboulle-2009b,Condat13,RFP13} and dual/primal-dual methods \cite{KK04,CGM99,HS06,Chamb04,ChambollePock2011} for the non-smooth convex case and, e.g., \cite{Ochs-etal-2014} for the non-smooth non-convex case). However, in the following we will stick with one single optimisation algorithm for better clarity and consider the Alternating Directional Method of Multipliers (ADMM) \cite{BOYD_ADMM} whose different subproblems can be solved by means of classical tools in the field of proximal calculus \cite{CombPes11}, numerical linear algebra and adaptive discrepancy principle \cite{APE}. Note that albeit proposed and widely applied in convex scenarios, non-convex variants of ADMM have been recently proposed and endowed with global convergence guarantees \cite{Wang2018,Bolte2018}, although not always applicable to the problem at hand due to the (often limiting) assumptions on the operators involved. However, as we will comment in the following, empirical convergence is often observed for general non-convex ADMM algorithms, which makes their use often amenable in practice. For further details on the recent developments of convex and non-convex optimisation algorithms for variational imaging models, we refer the reader to \cite{chambolle2016introduction} and the references therein.

\subsection{ADMM optimisation}

By dropping out the terms in \eqref{eq:fin_model} which do not depend on the unknown image $\bm{u}$, the $\bm{u}$-update step in the alternating scheme \eqref{eq:sub_th}-\eqref{eq:sub_u} reads
\begin{equation}
\bm{u}^{(k+1)}
\in
\argmin{\mathbf{u}\in\R^N}
\bigg\{ 
\sum_{i=1}^N f((\bm{\D u})_i;\bm{\Theta}_i^{(k+1)})
+
F_q(\bm{\A u};\bm{b})
\bigg\},
\label{eq:ADMM_u} 
\end{equation}
where $F_q(\bm{\A u};\bm{b})$ is defined in \eqref{eq:fid1}-\eqref{eq:fid2} and the gradient penalty functions $f$ are summarised in Table~\ref{tab:models} for the different regularisers considered.

By introducing the auxiliary variables $\bm{g}\in\R^{2N}$ and $\bm{r}\in\R^M$, and dropping out the iteration superscript, problem \eqref{eq:ADMM_u}  can be reformulated as:
\begin{equation}
\{ \bm{u}^*,\bm{g}^*,\bm{r}^* \}
\:\;{\in}\;\:
\argmin{\bm{u},\bm{g},\bm{r}}
\bigg\{
\sum_{i = 1}^{N} f(\bm{g}_i;\bm{\Theta}_i)
+
F_q(\bm{r};\bm{b})
\: \bigg\}
\;\;\:\mathrm{s.t.:}\;\,
\left\{\!
\begin{array}{ll}
\bm{g} &\!\!\!\!{=}\; \bm{\D u}   \\
\bm{r} &\!\!\!\!{=}\; \bm{\A u}
\end{array}
\right.
\label{eq:ADMM_func}  
\end{equation}
For every $i=1,\ldots,N$, the quantity $\bm{g}_i=((\bm{\D}_h\bm{ u})_i, (\bm{\D}_v\bm{u})_i)\in\R^2$ stands for the local image gradient at pixel $i$. By means of this change of variable, we can avoid considering the dependence on the linear operator $\bm{\D}$ of the (in general) non-differentiable and possibly non-convex function $f$, while the use of $\bm{r}$ is helpful for the GDP strategy introduced in Section \ref{sec:gdp}. 

We define the augmented Lagrangian functional of problem \eqref{eq:ADMM_func}  as follows:
\begin{align}\label{eq:ADMM_AL}
\begin{split}
\mathcal{L}(\bm{u},\bm{g},\bm{r},\bm{\rho}_t,\bm{\rho}_r;\bm{\Theta})
& \:{:=}\;
\sum_{i=1}^N f(\bm{g}_i;\bm{\Theta}_i)
+
F_q(\bm{r};\bm{b})
- \langle \bm{\rho}_t , \bm{g} - \bm{\D u} \rangle \\
& \:{+}\;
\frac{\beta_g}{2}  \| \bm{g} - \bm{\D u} \|_2^2-
\langle  \bm{\rho}_r , \bm{r} - \bm{\A u} \rangle
+
\frac{\beta_r}{2}  \| \, \bm{r} - \bm{\A u} \|_2^2 ,
\end{split}
\end{align}
where $\beta_g, \beta_r \in \R_{++}$ are the ADMM penalty parameters, while $\bm{\rho}_t \in \mathbb{R}^{2N}$, $\bm{\rho}_r \in \mathbb{R}^M$ are the vectors of Lagrange multipliers associated with the linear constraints $\bm{g}=\bm{\D u}$ and $\bm{r} = \bm{\A u}$
in \eqref{eq:ADMM_func}, respectively.

Solving \eqref{eq:ADMM_func} amounts to seek for solutions of the  saddle point problem:
\begin{eqnarray}
&&\quad
\mathrm{Find}\;\;
(\bm{u}^*, \bm{g}^*, \bm{r}^*) \in \R^N {\times}\: \R^{2N} {\times}\: \R^{M}\;\:\mathrm{and}\;\; 
(\bm{\rho}_t^*, \bm{\rho}_r^*) \in \R^{2N} {\times}\: \R^M 
\:\text{such that:}
\nonumber\\
&&\quad
\mathcal{L}(\bm{u}^*,\bm{g}^*,\bm{r}^*,\bm{\rho}_t,\bm{\rho}_r; \bm{\Theta})
\leq 
\mathcal{L}(\bm{u}^*,\bm{g}^*,\bm{r}^*,\bm{\rho}_t^*, \bm{\rho}_r^*;\bm{\Theta})
\leq 
\mathcal{L}(\bm{u},\bm{g},\bm{r},\bm{\rho}_t^*,\bm{\rho}_r^*, \bm{\Theta})
\label{eq:ADMM_saddle}\\
&&\quad
\:\forall \: (\bm{u},\bm{g},\bm{r})\in \R^N {\times}\: \R^{2N} {\times}\: \R^{M}, \;\, \forall \: (\bm{\rho}_g, \bm{\rho}_r)\in \R^{2N} {\times}\: \R^M.
\nonumber
\end{eqnarray}
%

\smallskip
Upon suitable initialisation, and for any $j\geq 0$, the $j$-th iteration of the ADMM algorithm applied to solve the saddle-point problem \eqref{eq:ADMM_saddle} thus reads:
\begin{eqnarray}
\bm{u}^{(j+1)} && \;{\in}\;\,
\argmin{\bm{u} \in \mathbb{R}^N}\; 
\mathcal{L}\big(\bm{u},\bm{g}^{(j)},\bm{r}^{(j)},
\bm{\rho}_g^{(j)},\bm{\rho}_r^{(j)};\bm{\Theta}\big),
\label{eq:ADMM_sub_u} \\
\bm{g}^{(j+1)} && \;{\in}\;\,
\argmin{\bm{g} \in \mathbb{R}^{2N}}\;
\mathcal{L}\big(\bm{u}^{(j+1)},\bm{g},\bm{r}^{(j)},\bm{\rho}_g^{(j)},\bm{\rho}_r^{(j)};\bm{\Theta}\big),
\label{eq:ADMM_sub_t} \\
\bm{r}^{(j+1)} && \;{\in}\;\,
\argmin{\bm{r} \in \mathbb{R}^M}\;
\mathcal{L}\big(\bm{u}^{(j+1)},\bm{g}^{(j+1)},\bm{r},\bm{\rho}_g^{(j)},\bm{\rho}_r^{(j)};\bm{\Theta}\big),
\label{eq:ADMM_sub_r} \\
\bm{\rho}_g^{(j+1)} && \;{=}\;\,
\bm{\rho}_g^{(j)} - \beta_g \big(  \bm{g}^{(j+1)} - \bm{\D u}^{(j+1)} \big),
\label{eq:ADMM_sub_rho_t} \\
\bm{\rho}_r^{(j+1)} && \;{=}\;\, 
\bm{\rho}_r^{(j)} - \beta_r  \big(  \bm{r}^{(j+1)} - \bm{\A}\bm{u}^{(j+1)} \big).
\label{eq:ADMM_sub_rho_r}
\end{eqnarray}



In the following, we make precise the solution of the three sub-problems for the primal variables $\bm{u}$, $\bm{g}$, and $\bm{r}$ in (\ref{eq:ADMM_sub_u})-(\ref{eq:ADMM_sub_r}). The automatic estimation of the regularisation parameter $\mu$ will be addressed in Section \ref{sec:r_admm} concerned with the $\bm{r}$-update.

\subsection{Subproblem for the primal variable \emph{u}} \label{sec:u_admm}
Subproblem (\ref{eq:ADMM_sub_u}) reads
\begin{eqnarray}
\bm{u}^{(j+1)} 
&\;{\in}\;&
\argmin{\bm{u} \in \R^N}
\left\{
\langle \bm{\rho}_g^{(j)} , \bm{\D u} \rangle
\;{+}\;
\langle \bm{\rho}_r^{(j)} , \bm{\A u} \rangle
\;{+}\;
\frac{\beta_g}{2}  \| \bm{g}^{(j)} - \bm{\D u} \|_2^2
\;{+}\;
\frac{\beta_r}{2}  \| \bm{r}^{(j)} - \bm{\A u} \|_2^2
\right\},
\nonumber
\end{eqnarray}
which is quadratic with first-order optimality condition given by
\begin{equation}
\left(
\beta_g \bm{\D}^T\bm{\D} + \beta_r \bm{\A}^T\bm{\A}
\right) \bm{u} 
=
\beta_g\bm{\D}^T\left( \bm{g}^{(j)} - \frac{1}{\beta_g} \bm{\rho}_t^{(j)}\right)
+
\beta_r\bm{\A}^T\left( \bm{r}^{(j)} - \frac{1}{\beta_r} \bm{\rho}_r^{(j)}\right) \, .
\label{eq:subu_ls}
\end{equation}
The coefficient matrix of the linear system above is symmetric positive semidefinite and, under the assumption 
%
\begin{equation}
\mathrm{null} (\bm{\A}) \cap \mathrm{null} (\bm{\D}) \;{=}\; \{\bm{0}_N\},
\end{equation}
then it is positive definite so that $\bm{u}^{(j+1)}$ is the unique solution of linear system (\ref{eq:subu_ls}). Matrix $\bm{\A}$ is typically sparse, hence (\ref{eq:subu_ls}) can be solved efficiently by means of (preconditioned) Conjugate Gradient methods. When $\bm{\A}$ is a convolution matrix - like in image restoration with space-invariant blur - the linear system can be solved more efficiently by means of fast 2D discrete transforms.
\bigskip


\subsection{Subproblem for the primal variable \emph{g}}\label{sec:t_admm}
After dropping all terms not depending on $\bm{g}$ in (\ref{eq:ADMM_AL}), subproblem (\ref{eq:ADMM_sub_t}) reads
\begin{eqnarray}
\bm{g}^{(j+1)}
&\;{\in}\;&
\argmin{\bm{g} \in \R^{2N}}
\left\{
\sum_{i=1}^{N} f(\bm{g}_i;\bm{\Theta}_i)
\:{-}\;
\langle \bm{\rho}_g^{(j)} , \bm{g} - \bm{\D u}^{(j+1)} \rangle
+
\frac{\beta_g}{2} \big\| \, \bm{g} - \bm{\D u}^{(j+1)} \big\|_2^2
\right\}
\nonumber\\
&\;{=}\; &
\argmin{\bm{g} \in \R^{2N}}
\left\{
\sum_{i=1}^{N} f(\bm{g}_i;\bm{\Theta}_i)
\;{+}\;
\frac{\beta_g}{2}
\|\bm{g}-\bm{w}^{(j)}\|_2^2
\right\}, 
\end{eqnarray}
with vector $\bm{w}^{(j)} \in \R^{2N}$ defined by
\begin{equation}
\bm{w}^{(j)}
\;{:=}\;
\bm{\D} \bm{u}^{(j+1)} + \frac{1}{\beta_g} \bm{\rho}_t^{(j)}
\: .
\end{equation}
%
Solving the $2N$-dimensional minimisation problem above is thus equivalent to solve the following $N$ independent $2$-dimensional problems:
\begin{align}
\bm{g}^{(j+1)}_i 
\,\;{\in}\;\:&
\argmin{\bm{g}_i \in \R^2}
\left\{ \,
f(\bm{g}_i;\bm{\Theta}_i)
\;{+}\;
\frac{\beta_g}{2} 
\left\| \bm{g}_i - \bm{w}_i^{(j+1)} \right\|_2^2
\,\right\}
\nonumber\\
\,\;{=}\;\:&
\mathrm{prox}_{
	f(\,\cdot\,;\bm{\Theta}_i)}^{\beta_g}\left(\bm{w}_i^{(j+1)}\right)\,,
\quad i = 1, \ldots , N \: ,\label{eq:sub_t_i}
\end{align}
%
%
where $\,\mathrm{prox}_{
	f(\,\cdot\,;\bm{\Theta}_i)}^{\beta_g}: \R^2 \rightrightarrows \R^2$ denotes the proximal operator of the gradient penalty function $f(\,\cdot\,;\bm{\Theta}_i)$ with proximity parameter $\beta_g$ - see Definition \ref{def:proxx} and Section \ref{sec:prox} - 
%
and where the vectors $\bm{w}^{(j+1)}_i \in \R^2$ at any iteration read
\begin{equation}
\bm{w}^{(j+1)}_i \:\;{=}\;\: \left( \bm{\D} \bm{u}^{(j+1)} \right)_i + \frac{1}{\beta_g} \left( \bm{\rho}^{(j)}_t \right)_i \;\: ,
\quad i = 1, \ldots , N \: .
\label{eq:sub_t_i_q}
\end{equation}
%


We start detailing the solving procedure for problem \eqref{eq:sub_t_i} under the adoption of a $\WDTV^{sv}_{\bm{p}}$ regularisation term, which corresponds to consider the gradient penalty function $f_{\WDTV^{sv}_{\bm{p}}}$ defined in \eqref{eq:WDTVp_h}; the proximal maps arising for the $\WTV$ and $\WTV^{sv}_{\bm{p}}$ regularisers will be discussed afterwards as special cases. In \cite{DTVp_siam}, the authors proved a result on the existence of solutions for problem \eqref{eq:sub_t_i}. Before reporting the statement, we recall that in the following, for $\bm{v},\bm{w}\in\R^n$ we denote by $\bm{v} \circ\bm{w}$, $|\bm{v}|$ and $\mathrm{sign}(\bm{v})$ 
the component-wise (or Hadamard) product between $\bm{v}$ and $\bm{w}$ and 
the component-wise absolute value and sign of $\bm{v}$, respectively.

\begin{lemma}\label{lem::prox_sol1}
	Let $f: \R^2 \to \R_+$ be the (parametric and not necessarily convex) function defined by
	\begin{equation}
	f(\bm{g}) \,\;{:=}\;\,
	\alpha^p \left\| 
	\bm{\Lambda}_a \bm{\mathrm{R}}_{-\theta} \, \bm{g} \right\|_2^p \, , \quad \bm{g} \in \R^2 \, ,
	\end{equation}
	with parameters $\alpha,p \in \R_{++}$, $a \in (0,1]$, $\theta \in [-\pi/2,\pi/2)$, $\bm{\Lambda}_a = \diag(1,a)$ and $\bm{\mathrm{R}}_{-\theta}$ the $2 \times 2$ rotation matrix of angle $-\theta$, and let $\,\mathrm{prox}_f^{\beta}: \R^2 \rightrightarrows \R^2$ be the proximal operator of $f$ with proximity parameter $\beta \in \R_{++}$ defined by
	\begin{equation}
	\bm{g}^* \,\;{\in}\;\, \mathrm{prox}_f^{\beta}(\bm{w})
	\,\;{:=}\;\,
	\argmin{\bm{g} \in \R^2}
	\left\{
	F(\bm{g}) \;\;{:=}\;\,
	f(\bm{g})
	+ \frac{\beta}{2} \left\| \bm{g} - \bm{w} \right\|_2^2
	\right\}, \quad
	\bm{w} \in \R^2 \, .
	\label{eq:prox_full}
	\end{equation}
	Then, problem \eqref{eq:prox_full} admits at least one solution, which is unique when $p\geq 1$. Moreover, after defining
	\begin{equation}
	\widetilde{\bm{w}} \,\;{:=}\;\, \bm{\mathrm{R}}_{-\theta}\, \bm{w}, \quad\; 
	\bm{s} \,\;{:=}\;\, \mathrm{sign}(\widetilde{\bm{w}}), \quad\;
	\overline{\bm{w}} \,\;{:=}\;\, 
	|\widetilde{\bm{w}}|, \quad\;
	\bar{\beta} \,\;{:=}\;\, \frac{\beta}{\alpha^p},
	\label{eq:poi}
	\end{equation}
	we have that any solution $\bm{g}^*$ of \eqref{eq:prox_full} can be expressed as
	\begin{equation}
	\bm{g}^* 
	\;{=}\;\, 
	\bm{\mathrm{R}}_{\theta} \left( \bm{s} \:{\circ}\: \bm{z}^*\right),
	\quad\;\:
	\bm{z}^* \;{\in}\; \argmin{\bm{z} \in \mathcal{H}_1 \subset \R^2} \, H(\bm{z}) \, , 
	\label{eq:propprox1}
	\end{equation}
	where $H: \R^2 \to \R_+$ and $\mathcal{H}_1 \subset \R^2$ are defined by
	\begin{equation}
	H(\bm{z}) :=
	\left\| 
	\bm{\Lambda}_a \, \bm{z} \right\|_2^p
	{+}\;
	\frac{\bar{\beta}}{2} 
	\left\| \, \bm{z} - \overline{\bm{w}} \, \right\|_2^2, \qquad
	\mathcal{H}_1 := \mathcal{H} \;{\cap}\; \left( \big[ 0,\overline{w}_1 \big] \times \big[0,\overline{w}_2\big] \right),  \label{eq:def_arc_hyperb}
	\end{equation}
	with $\mathcal{H}$ being 
	
	\begin{enumerate}
		\item the rectangular hyperbola defined by
		\begin{equation}
		\mathcal{H} {:=}\!
		\left\{ 
		\bm{z} \in \R^2: \;
		\left(z_1-c_1\right) \left(z_2-c_2\right){=} c_1 c_2,\, c_1 {=} -\frac{a^2\,\overline{w}_1}{1-a^2}, c_2 {=} \frac{ \overline{w}_2}{1-a^2}  \right\}
		\label{eq:Ah}
		\end{equation}
		for $a \in (0,1)$ and $\,\overline{w}_1 \overline{w}_2 \neq 0$;
		\item the line defined by
		\begin{equation}
		\mathcal{H} \;{:=}\;
		\left\{\,
		\bm{z} \in \R^2: \;\, 
		\overline{w}_2 z_1 - \overline{w}_1 z_2 = 0 \,\right\}
		\label{eq:Ah2}
		\end{equation}
		for $a \in (0,1)$ and $\,\overline{w}_1 \overline{w}_2 = 0$, or for  $a = 1$ and any $\,\overline{w}_1,  \overline{w}_2 \in \R_{+}$.
	\end{enumerate}
\end{lemma}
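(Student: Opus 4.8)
The plan is to reduce the two-dimensional proximal problem \eqref{eq:prox_full} to a one-parameter geometric problem by a chain of variable changes, and then to read off the curve $\mathcal{H}$ directly from the stationarity system by eliminating the single $p$-dependent scalar. For existence I would note that the objective $F$ in \eqref{eq:prox_full} is continuous, bounded below by zero and coercive, since the quadratic term $\tfrac{\beta}{2}\|\bm{g}-\bm{w}\|_2^2$ dominates as $\|\bm{g}\|_2\to\infty$ while $f\geq 0$; hence its sublevel sets are compact and Weierstrass' theorem yields at least one minimiser. For $p\geq 1$ the map $t\mapsto t^p$ is convex and nondecreasing on $\R_+$, so $\bm{g}\mapsto\|\bm{\Lambda}_a\bm{\mathrm{R}}_{-\theta}\bm{g}\|_2^p$ is convex (the convex norm composed with the invertible linear map $\bm{\Lambda}_a\bm{\mathrm{R}}_{-\theta}$, $a>0$), whence $F$ is strongly convex and the minimiser is unique.

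Next I would carry out the reductions encoded in \eqref{eq:poi}. Setting $\bm{y}=\bm{\mathrm{R}}_{-\theta}\bm{g}$ exploits the orthogonality of $\bm{\mathrm{R}}_{-\theta}$ to turn the penalty into $\alpha^p\|\bm{\Lambda}_a\bm{y}\|_2^p$ and the fidelity into $\tfrac{\beta}{2}\|\bm{y}-\widetilde{\bm{w}}\|_2^2$; dividing by $\alpha^p$ produces $\bar\beta=\beta/\alpha^p$. Since $\bm{\Lambda}_a$ is diagonal, $\|\bm{\Lambda}_a\bm{y}\|_2$ depends only on $|y_1|,|y_2|$, so coordinatewise replacing $y_k$ by $|y_k|$ leaves the penalty unchanged while not increasing $(y_k-\widetilde w_k)^2$ once $y_k$ is given the sign of $\widetilde w_k$; this yields the factorisation $\bm{g}^*=\bm{\mathrm{R}}_{\theta}(\bm{s}\circ\bm{z}^*)$ with $\bm{s}=\mathrm{sign}(\widetilde{\bm{w}})$ and $\bm{z}^*\geq\bm{0}$ minimising $H$. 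A monotonicity argument then confines $\bm{z}^*$ to the box: decreasing any $z_k>\overline w_k$ toward $\overline w_k$ strictly decreases both $\|\bm{\Lambda}_a\bm{z}\|_2^p$ and $(z_k-\overline w_k)^2$, giving the feasible set $\mathcal{H}_1$.

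I would then extract $\mathcal{H}$ from first-order optimality. Away from the origin $H$ is differentiable and a minimiser solves $p\|\bm{\Lambda}_a\bm{z}\|_2^{p-2}\bm{\Lambda}_a^2\bm{z}+\bar\beta(\bm{z}-\overline{\bm{w}})=\bm{0}$. Writing $\lambda:=p\|\bm{\Lambda}_a\bm{z}\|_2^{p-2}$ and reading the two coordinates with $\bm{\Lambda}_a^2=\diag(1,a^2)$ gives $(\lambda+\bar\beta)z_1=\bar\beta\,\overline w_1$ and $(\lambda a^2+\bar\beta)z_2=\bar\beta\,\overline w_2$; eliminating $\lambda$, which carries all the $p$-dependence, leaves the $p$-free relation $(1-a^2)z_1z_2+a^2\overline w_1 z_2-\overline w_2 z_1=0$, i.e. exactly the rectangular hyperbola \eqref{eq:Ah}, and one checks it passes through both $(0,0)$ and $(\overline w_1,\overline w_2)$. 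The degenerate cases follow directly: if $a=1$ the penalty is isotropic and stationarity forces $\bm{z}^*$ parallel to $\overline{\bm{w}}$, while if $\overline w_1\overline w_2=0$ one coordinate equation forces the corresponding $z_k$ to vanish; both reduce to the line \eqref{eq:Ah2}.

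The hard part will be the non-convex regime $p<1$, where $H$ is neither convex nor smooth at the origin. There the stationarity system may admit several roots and the global minimiser can sit at $\bm{z}^*=\bm{0}$, so the argument must treat the origin separately, observing that it lies on $\mathcal{H}$ in every case, and must rule out minimisers on the far faces $z_k=\overline w_k$ by checking the sign of the partial derivatives of $H$ there, so that every candidate indeed lies on $\mathcal{H}_1$. Some care is also needed to make the elimination of $\lambda$ legitimate, i.e. to ensure both coordinate equations are simultaneously active at an interior stationary point.
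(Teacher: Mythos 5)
Your proposal is correct, but it reaches the hyperbola by a genuinely different route from the paper. You obtain \eqref{eq:Ah} analytically: at an interior stationary point you write $p\|\bm{\Lambda}_a\bm{z}\|_2^{p-2}\bm{\Lambda}_a^2\bm{z}+\bar{\beta}(\bm{z}-\overline{\bm{w}})=\bm{0}$ and eliminate the scalar $\lambda:=p\|\bm{\Lambda}_a\bm{z}\|_2^{p-2}$, which indeed yields $(1-a^2)z_1z_2+a^2\overline{w}_1z_2-\overline{w}_2z_1=0$, i.e.\ exactly the hyperbola \eqref{eq:Ah} (and the degenerate cases \eqref{eq:Ah2} follow the same way). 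The paper, which only sketches the argument and defers the full proof to \cite{DTVp_siam}, proceeds instead by restricting $H$ to the one-parameter family of ellipses $\mathcal{E}_R(\bm{0})$ in \eqref{eq:ell_prox}, the level curves of the penalty term: on each $\mathcal{E}_R$ the term $\|\bm{\Lambda}_a\bm{z}\|_2^p\equiv R^{p/2}$ is constant, so minimising $H$ there is a pure Euclidean projection of $\overline{\bm{w}}$ onto the ellipse, and the locus of these projections as $R$ varies is precisely $\mathcal{H}$; intersecting with the monotonicity box gives $\mathcal{H}_1$. The trade-off is this: your stationarity-plus-elimination derivation is more computational and makes transparent why the curve is $p$-free (all the $p$-dependence is absorbed into $\lambda$), but it forces you to treat separately exactly the delicate situations you flag — the non-smooth origin and the box edges when $p<1$ — and those patches, while they do work (the origin lies on $\mathcal{H}$; on the edge $z_1=0$, $z_2>0$ one has $\partial H/\partial z_1=-\bar{\beta}\,\overline{w}_1<0$, so no minimiser sits there, and symmetrically for the other edges), are where the real labour of a rigorous write-up lies. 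The level-set argument buys uniformity: it never invokes stationarity, smoothness or convexity of $H$, so every $p\in\R_{++}$, the origin and the boundary are handled at once, since any global minimiser must be the closest point to $\overline{\bm{w}}$ on its own ellipse and hence lies on $\mathcal{H}$. Your existence (coercivity) and uniqueness ($p\geq 1$ convexity) arguments, and the rotation/sign/box reductions leading to \eqref{eq:propprox1}, coincide with what the cited proof does.
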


\begin{corollary}\label{corol}
	The minimisers $\bm{z}^* \in \R^2$ in \eqref{eq:propprox1} can be obtained as follows:
	\begin{equation}
	\bm{z}^* = 
	\left( 
	z_1^*,
	\frac{c_2 \, z_1^*}{z_1^* - c_1} 
	\right) \, ,
	\end{equation}
	where  $c_1$, $c_2 \in \R$ are defined in \eqref{eq:Ah} and $z_1^* \in \R$ is the solution(s) of the following $1$-dimensional constrained minimisation problem:
	\begin{equation}
	z_1^* \,\;{\in}\;\, \argmin{\xi \;{\in}\; [0,\overline{w}_1]}
	\, \left\{ \,
	h(\xi) 
	\;{:=}\; \left( h_1(\xi) \right)^{p/2}
	\:{+}\;\,
	\frac{\bar{\beta}}{2} \,
	h_2(\xi)
	\, \right\} \, ,
	\end{equation}
	\begin{equation}
	h_1(\xi) = \xi^2 \left( 1 + \frac{a^2 \, c_2^2}{(\xi-c_1)^2} \right), \, \quad
	h_2(\xi) = 
	\left(
	\xi - \overline{w}_1
	\right)^2 
	+
	\left(
	\frac{c_2\,\xi}{\xi - c_1} - \overline{w}_2
	\right)^2 .
	\end{equation}
\end{corollary}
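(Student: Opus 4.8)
The plan is to take the reduced problem $\bm{z}^* \in \argmin{\bm{z} \in \mathcal{H}_1} H(\bm{z})$ furnished by Lemma~\ref{lem::prox_sol1} and collapse it into a one-dimensional problem by exploiting the rational structure of the rectangular hyperbola $\mathcal{H}$. Throughout I would work in the generic case $a \in (0,1)$, $\,\overline{w}_1 \overline{w}_2 \neq 0$, for which $\mathcal{H}$ is the hyperbola \eqref{eq:Ah}; the degenerate configurations in which $\mathcal{H}$ is the line \eqref{eq:Ah2} reduce to a straightforward projection and can be handled separately (or recovered as limits $a \to 1^-$, $\,\overline{w}_1 \overline{w}_2 \to 0$).

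First I would solve the defining equation $(z_1 - c_1)(z_2 - c_2) = c_1 c_2$ for $z_2$ in terms of $z_1 =: \xi$. Since $c_1 = -a^2 \overline{w}_1/(1-a^2) \leq 0$, on the branch meeting the box one has $\xi - c_1 \neq 0$, and a direct manipulation gives
\[
z_2 \;{=}\; c_2 + \frac{c_1 c_2}{\xi - c_1} \;{=}\; \frac{c_2\,\xi}{\xi - c_1},
\]
which is exactly the second component of the claimed expression for $\bm{z}^*$. This yields a smooth parametrization $\xi \mapsto \big(\xi,\, c_2 \xi/(\xi - c_1)\big)$ of $\mathcal{H}$ on the branch $\xi > c_1$.

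Next I would substitute this parametrization into the objective $H$ in \eqref{eq:def_arc_hyperb}. Using $\bm{\Lambda}_a = \diag(1,a)$, one computes $\|\bm{\Lambda}_a \bm{z}\|_2^2 = \xi^2 + a^2 z_2^2 = \xi^2\big(1 + a^2 c_2^2/(\xi - c_1)^2\big) = h_1(\xi)$ and $\|\bm{z} - \overline{\bm{w}}\|_2^2 = (\xi - \overline{w}_1)^2 + (c_2\xi/(\xi-c_1) - \overline{w}_2)^2 = h_2(\xi)$, so that $H$ restricted to $\mathcal{H}$ coincides with $h(\xi) = (h_1(\xi))^{p/2} + \tfrac{\bar{\beta}}{2} h_2(\xi)$, matching the scalar objective in the statement.

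The delicate step, and the main obstacle, is to show that the two-dimensional box constraint $\mathcal{H}_1 = \mathcal{H} \cap \big([0,\overline{w}_1]\times[0,\overline{w}_2]\big)$ is captured \emph{exactly} by the single scalar constraint $\xi \in [0,\overline{w}_1]$. To this end I would exploit the signs $c_1 \leq 0$ and $c_2 = \overline{w}_2/(1-a^2) \geq 0$ to establish monotonicity of $\xi \mapsto z_2(\xi)$: differentiation gives $z_2'(\xi) = -c_1 c_2/(\xi - c_1)^2 \geq 0$. Evaluating the endpoints yields $z_2(0) = 0$ and $z_2(\overline{w}_1) = c_2(1-a^2) = \overline{w}_2$, so monotonicity forces $z_2(\xi) \in [0,\overline{w}_2]$ for every $\xi \in [0,\overline{w}_1]$, and conversely every point of $\mathcal{H}_1$ on this branch is the image of some $\xi \in [0,\overline{w}_1]$. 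The sign bookkeeping here is also what guarantees that the chosen branch (rather than the spurious branch $\xi < c_1$) is the one intersecting the box. Consequently, minimizing $H$ over $\mathcal{H}_1$ is equivalent to minimizing $h$ over $[0,\overline{w}_1]$, and recovering $z_2^*$ from $z_1^* = \xi^*$ via the parametrization completes the argument.
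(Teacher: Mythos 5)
Your proposal is correct and coincides with the intended derivation: the Corollary is exactly the outcome of parametrising the hyperbola branch by its first coordinate and substituting into $H$ from \eqref{eq:def_arc_hyperb}, and the paper itself omits this proof (giving only a graphical sketch of the Lemma and deferring to \cite{DTVp_siam}). Your sign and monotonicity bookkeeping ($c_1\le 0$, $c_2\ge 0$, hence $z_2'(\xi)=-c_1c_2/(\xi-c_1)^2\ge 0$ with $z_2(0)=0$ and $z_2(\overline{w}_1)=\overline{w}_2$) is precisely the step needed to justify that the two-dimensional box constraint defining $\mathcal{H}_1$ collapses exactly to the scalar constraint $\xi\in[0,\overline{w}_1]$, and it is carried out correctly.
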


We will omit the proof of this lemma, and provide only a brief graphical sketch of the key steps leading to \eqref{eq:propprox1}. First, in order to get some clues about the approximate position of the minimiser $\bm{z}^*$ in the plane $z_1$-$z_2$, we restrict the study of the function $H$ to the one-parameter family of ellipses
\begin{equation}\label{eq:ell_prox}
\mathcal{E}_R(\bm{0}) = \left\{(z_1,z_2)\in\R^2\mid  z_1^2 + a^2 z_2^2 = R\right\}\,.
\end{equation}
%
One can prove that $\bm{z}^*$ needs to belong to the hyperbola $\mathcal{H}$ defined in \eqref{eq:Ah}. More specifically, the sought $\bm{z}^*$ has to coincide with one of the two points in $\mathcal{E}_R\cap\mathcal{H}$ belonging to the first quadrant of the plane $z_1$-$z_2$. In Figure \ref{fig:prox_siam}, we show one ellipse $\mathcal{E}_R$, which is depicted with a blue dashed line, and the hyperbola $\mathcal{H}$, plotted with a solid magenta line. We conclude that $\bm{z}^*$ lies on the arc of hyperbola $\mathcal{H}_1$ which is delimited by the origin $\bm{O}$ and $\overline{\bm{w}}$; $\mathcal{H}_1$ is also illustrated in Figure \ref{fig:prox_siam} with a solid red line.




\begin{remark}
	
	Upon the adoption of the $\WTV^{sv}_{\bm{p}}$ regulariser, a similar result can be proven - see \cite[Proposition 1]{tvpl2}. More specifically, in isotropic settings there holds $a=1$, which yields that the parametric family of ellipses in \eqref{eq:ell_prox} reduces to a parametric family of circles. Moreover, the hyperbola $\mathcal{H}$ and the arc $\mathcal{H}_1$ turn into a line and a segment, respectively. The simplified configuration is reported in Figure \ref{fig:prox_wtvp}; notice that also in this case $\mathcal{H}_1$ lies between the origin $\bm{O}$ and $\overline{\bm{w}}=|\bm{w}|$.
	
	\begin{figure}
		\centering   
		\begin{subfigure}{0.45\textwidth}
			\centering
			\includegraphics[height=4.3cm]{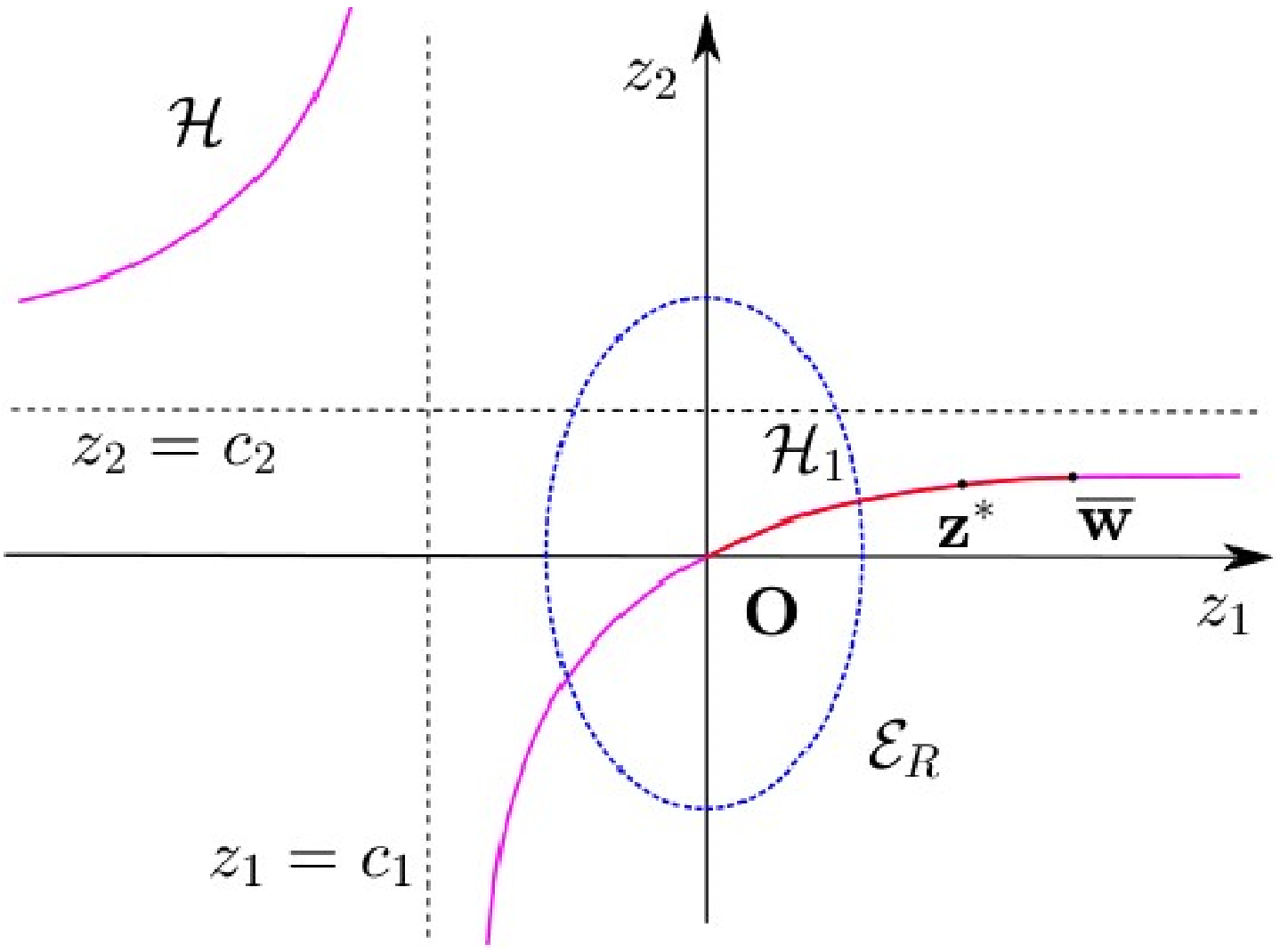}
			\caption{}
			\label{fig:prox_siam}
		\end{subfigure}
		\begin{subfigure}{0.45\textwidth}
			\centering
			\includegraphics[height=4.3cm]{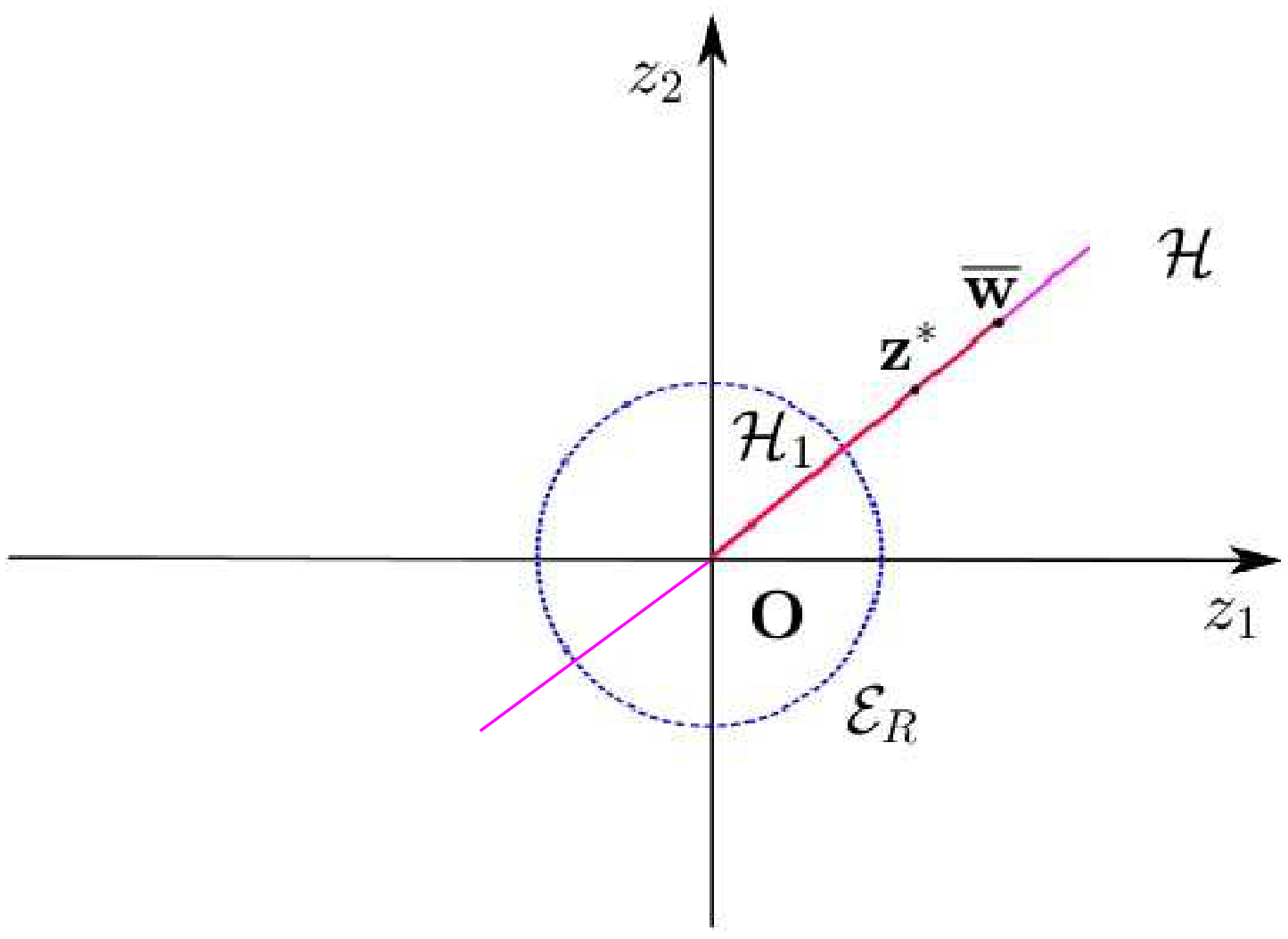}
			\caption{}
			\label{fig:prox_wtvp}
		\end{subfigure}
		\caption{Graphical representation of the bivariate minimisation problems arising for the $\WDTV^{sv}_{\bm{p}}$ (a) and the $\WTV^{sv}_{\bm{p}}$ regularisers (b).}
		\label{fig:prox}
	\end{figure}
	
	For the $\WTV$ regulariser, the solutions of the separable $\bm{g}_i$-subproblems can be written in closed form by means of a soft-thresholding operator, see, e.g. \cite{HWTV}.
	
\end{remark}

\subsection{Subproblem for the primal variable \emph{r}} \label{sec:r_admm}

For $1\leq q < +\infty$ and  $F_q$ as in \eqref{eq:fid1}, after dropping all terms not depending on $\bm{r}$ in (\ref{eq:ADMM_AL}), subproblem (\ref{eq:ADMM_sub_r}) reads
\begin{eqnarray}
\bm{r}^{(j+1)} 
&\;{\in}\; &
\argmin{\bm{r} \in \R^M}
\left\{
\mu^{(j)} \, \mathrm{L}_q(\bm{r};\bm{b}) 
\:{-}\;
\langle \bm{\rho}_r^{(j)} , \bm{r} - \bm{\A u}^{(j+1)} \rangle
+
\frac{\beta_r}{2} \big\| \, \bm{r} - \bm{\A u}^{(j+1)} \big\|_2^2
\right\}
\nonumber\\
&\;{=}\; &
\argmin{\bm{r} \in \R^M}
\left\{
\lambda^{(j)}\,\mathrm{L}_q(\bm{r};\bm{b}) 
\:{+}\; \frac{1}{2} \, \| \bm{r} - \bm{y}^{(j)} \|_2^2
\right\}
\label{eq:subr_l}
\end{eqnarray}
%
%
where the  variables $\lambda^{(j)} \in \R_{++}$ and $\bm{y}^{(j)} \in \R^M$ are defined by
\begin{equation}\label{eq:qk}
\lambda^{(j)} \;{:=}\; \frac{\mu^{(j)}}{\beta_r}, 
\quad\;\:
\bm{y}^{(j)} \;{:=}\; \bm{\A u}^{(j)} + \frac{1}{\beta_r} \bm{\rho}_r^{(j)} \, .
\end{equation}
%

Note that, as already observed in Section \ref{subsec:joint}, the regularisation parameter $\mu$ is not assumed to be fixed but it is rather estimated along the ADMM iterations (whence the $^{(j)}$ superscript) based on the GDP strategy detailed in Section \ref{sec:gdp}.
In order to update $\mu^{(j)}$, i.e. $\lambda^{(j)}$, so that the GDP is automatically satisfied, we can regard $\lambda^{(j)}$ as a Lagrange multiplier, and then exploit the well-known duality property which allows to replace the unconstrained problem in \eqref{eq:subr_l} with its constrained formulation
\begin{equation}\label{eq:r_con}
\bm{r}^{(j+1)} \;{\in}\; 
\argmin{\bm{r} \in \mathcal{B}_{\delta}^{q}}
\left\{ \| \bm{r} - \bm{y}^{(j)} \|_2^2\right\} = \pi_{\mathcal{B}_{\delta}^{q}}\left(\bm{y}^{(j)}\right)\,,
\end{equation}
where $\pi_{\mathcal{B}_{\delta}^q}$ denotes the projection onto the $\ell_q$-ball $\mathcal{B}_{\delta}^q=\left\{\bm{r}\in\R^M\,:\,\|\bm{r}\|_q\leq \delta_q\right\}$, with $\delta_q$ given in \eqref{eq:GDP2}.

We remark that, although the presence of the regularisation parameter $\mu$ is not explicit in problem \eqref{eq:r_con}, it is actually embedded in the radius $\delta_q$.
%
%

\smallskip

When the underlying noise is AIU, i.e. $F_q$ is set as in \eqref{eq:fid2}, the $\bm{r}$-update can be expressed as the constrained minimisation problem in \eqref{eq:r_con}, where the constraint set is the $\ell_{\infty}$-ball with radius $\delta_{\infty}$ defined in \eqref{eq:GDP2}.

Note that the projections onto the $\ell_2$ and the $\ell_{\infty}$ balls can be efficiently computed by:
\begin{align}
\label{eq:proj2}
q=2   :&  \qquad\pi_{\mathcal{B}_{\delta}^2}\left(\bm{y}^{(j)}\right)\;{=}\;\left\{
\begin{array}{lc}
\bm{y}^{(j)}&\text{  if  }\|\bm{y}^{(j)}\|^2_2\leq \delta_2\\
\min\left(\delta_2,\lVert\bm{y}^{(j)}\rVert_2\right) \displaystyle{\frac{\bm{y}^{(j)}}{\lVert\bm{y}^{(j)}\rVert_2}}&\text{  otherwise}
\end{array}\right.\\
\label{eq:projInf}
q=+\infty :&  \qquad \pi_{\mathcal{B}_{\delta}^{\infty}}\left(\bm{y}^{(j)}\right) \;{=}\;\left\{\begin{array}{lc}
\bm{y}^{(j)}&\text{  if  }\|\bm{y}^{(j)}\|_{\infty}\leq \delta_{\infty}\\    \min\left(\max(\bm{y}^{(j)},-\delta_{\infty}),\delta_{\infty}\right) & \text{  otherwise}
\end{array} \right.
\end{align}
where all the operations have to be intended componentwise.

For $q=1$, the projection can be computed as follows:
\begin{align}
q=1   :&  \qquad\pi_{\mathcal{B}_{\delta}^1}\left(\bm{y}^{(j)}\right)\;{=}\;\left\{
\begin{array}{lc}
\bm{y}^{(j)}&\text{  if  }\|\bm{y}^{(j)}\|_1\leq \delta_1\\
\mathrm{sign}(\bm{y}^{(j)})\bm{x}&\text{  otherwise}
\end{array}\right.
\end{align}
where 
\begin{equation}
\bm{x}=\max(\bm{y}^{(j)}-\tau,0)\;\text{ and }\;\tau\in\R \;:\; \|\bm{x}\|_1 = \delta_1.   
\end{equation}
Setting a suitable $\tau$ for the problem at hand is possibly a very expensive task from the computational viewpoint. Nonetheless, in \cite{condat} a complexity linear $O(M)$ projection algorithm has been proposed, which improves previous $O(M^2)$ and $O(M\log M)$ strategies considered, e.g., in  \cite{ell1_2,ell1_1,ell1_3}.





%

In Figure \ref{fig:balls}, we show the $\ell_q$-balls $\mathcal{B}_{\delta}^{q}(\bm{0})$ for the considered choices of $q$ in 2-dimensional settings. In the three plots, we also report the vector $\bm{y}^{(j)}$ in the case it does not belong to $\mathcal{B}_{\delta}^{q}(\bm{0})$, and the projection $\pi\left(\bm{y}^{(j)}\right)$ onto the ball.

\begin{figure}[!t]
	\centering   
	\begin{subfigure}{0.3\textwidth}
		\centering
		\includegraphics[height=3.8cm]{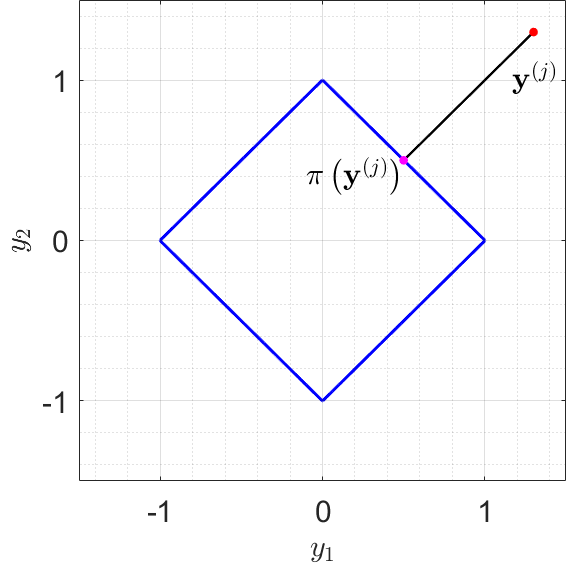}
		\caption{$q=1$}
		\label{fig:l1}
	\end{subfigure}
	\begin{subfigure}{0.3\textwidth}
		\centering
		\includegraphics[height=3.8cm]{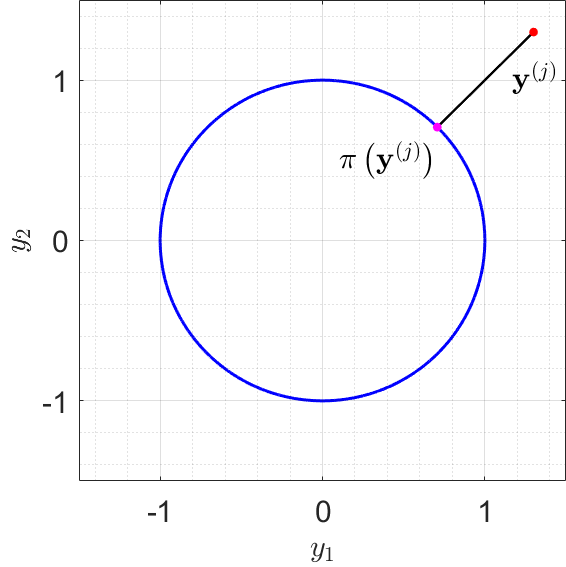}
		\caption{$q=2$}
		\label{fig:l2}
	\end{subfigure}
	\begin{subfigure}{0.3\textwidth}
		\centering
		\includegraphics[height=3.8cm]{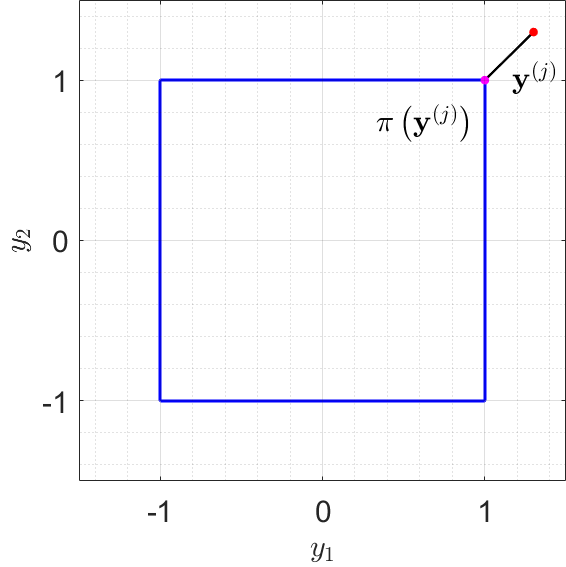}
		\caption{$q=+\infty$}
		\label{fig:linf}
	\end{subfigure}
	\caption{Projections on the unitary $\ell_q$ balls for $q=1$, $q=2$ and $q=+\infty$.}
	\label{fig:balls}
\end{figure}

The alternating scheme outlined in Section \ref{subsec:hierarchcal} requires to solve the $\bm{u}$-update via the ADMM until a fixed tolerance has been reached after having performed the $\bm{\Theta}$-update. As a result, the adoption of a \emph{pure} alternating scheme yields a computational burden that can be partially remedied by nesting the parameter estimation in the ADMM scheme, as formalised in the following Algorithm \ref{alg:1}. As clearly detailed in Section \ref{sec:parameter_estimation}, the estimation of the parameters involved in the expression of the $\WTV^{sv}_{\bm{p}}$ and $\WDTV^{sv}_{\bm{p}}$ regularisers represents a further computational bottleneck. Therefore, one can decide to further lighten the algorithmic scheme in Algorithm \ref{alg:1} by not performing the parameters update at each iteration $j$, but at every few iterations.


\begin{algorithm}[H]\small
	\caption{ Joint ADMM-scheme for hypeparameter estimation and image reconstruction}
	\vspace{0.2cm}
	{\renewcommand{\arraystretch}{1.2}
		\renewcommand{\tabcolsep}{0.0cm}
		\vspace{-0.08cm}
		\begin{tabular}{lll}	\multicolumn{2}{l}{\textbf{inputs}:} & \;\;
			observed image $\,\bm{b}$, forward model operator $\,\bm{\A}$ \\	
			\multicolumn{2}{l}{\textbf{parameters}:} & \;\;
			radius $r>0$, discrepancy parameter $\tau=1$,\\
			\multicolumn{2}{l}{\phantom{\textbf{parameters}:}} & \;\; ADMM penalty parameters $\beta_g,\beta_r>0$ \\\multicolumn{2}{l}{\textbf{outputs}:$\;\:$} & \;\;
			estimated image $\,\bm{u}^*$ and parameters vector $\bm{\Theta^*}$ \\
		\end{tabular}
		\vspace{0.2cm}
		\begin{tabular}{lll}
			\textbf{$\bullet$} & \multicolumn{2}{l}{\;\textbf{ Initialisation}\textbf{:}\;\;$\bm{u}^{(0)}$, $\bm{\rho}_t^{(0)}= \bm{0}_{2N}$, $\bm{\rho}_r^{(0)}=\bm{0}_{M}$}\\
			\textbf{$\bullet$} & \multicolumn{2}{l}{\;\;\textbf{Nested alternating scheme:}}\vspace{0.1cm}\\
			& \multicolumn{2}{l}{\textbf{for}$\:$ \textit{j = 0, 1, 2, $\, \ldots \,$ until convergence}$\,$ \textbf{do}:} \vspace{0.15cm}\\
			&\multicolumn{2}{l}{$\quad\;\bf{\cdot}$   \emph{parameters update} }
			\vspace{-0.02cm} \\
			&\multicolumn{2}{l}{$\;\quad\;\;$   update $\bm{\Theta}^{(j+1)}\;\;\,\,$ as detailed in Section \ref{sec:thWTV}, \ref{sec:thWTVp} or \ref{sec:thWDTV} }
			\vspace{-0.02cm} \\
			&\multicolumn{2}{l}{$\quad\;\bf{\cdot}$   \emph{primal variables update} }
			\vspace{-0.02cm} \\
			& \multicolumn{2}{l}{$\quad\;\;$   update $\bm{u}^{(j+1)}\;\;\,\;\;\,$ by solving \eqref{eq:subu_ls}}
			\vspace{-0.02cm} \\
			& \multicolumn{2}{l}{$\quad\;\;$   update $\bm{g}^{(j+1)}\;\;\,\quad$  as detailed in Section \ref{sec:t_admm}}
			\vspace{-0.02cm} \\
			& \multicolumn{2}{l}{$\quad\;\;$   update $\bm{r}^{(j+1)}\;\;\,\;\;\;$ as detailed in Section \ref{sec:r_admm}}
			\vspace{-0.02cm} \\
			&\multicolumn{2}{l}{$\quad\;\bf{\cdot}$   \emph{dual variables update} }
			\vspace{-0.02cm} \\
			& \multicolumn{2}{l}{$\quad\;\;$   update $\bm{\rho}_{g}^{(j+1)}\;\;\,\quad$ by \eqref{eq:ADMM_sub_rho_t}}
			\vspace{-0.02cm} \\
			& \multicolumn{2}{l}{$\quad\;\;$   update $\bm{\rho}_r^{(j+1)}\;\;\,\quad$ by \eqref{eq:ADMM_sub_rho_r}}
			\vspace{-0.02cm} \\
			& \multicolumn{2}{l}{\textbf{end$\;\:$for}} \vspace{0.11cm} 
		\end{tabular}
	}
	\label{alg:1}
\end{algorithm}

\section{Applications to image restoration}
\label{sec:restoration}

In this section, we evaluate the performances of the space-variant regularisers discussed so far, namely the $\WTV$, the $\WTV^{sv}_{\bm{p}}$ and $\WDTV^{sv}_{\bm{p}}$ regularisers in comparison with the space-invariant TV \cite{ROF} and TV$_p$ \cite{tvpl2} regularisers. As an example, we will consider the problem of image deblurring, for which the forward linear operator $\bm{\A}\in\R^{N\times N}$ in \eqref{eq:linmod_2} models the action of a space-invariant blur kernel. 


\paragraph{Test images, quality measures and parameters}
In order to highlight the flexibility of the space-variant approach described in this work, the regularisers of interest will be tested on the restoration of images characterised by different global and local properties. More specifically, we will consider the  \texttt{geometric} image in Figure \ref{fig:test_geom}, which is purely piece-wise constant, the \texttt{skyscraper} image in Figure \ref{fig:test_sky}, which presents a mixture of piece-wise constant, piece-wise linear and textured features, and  the \texttt{stairs} image in Figure \ref{fig:test_stairs}, which is highly textured with fine oriented details. The three test images have all been corrupted by space-invariant Gaussian blur defined by a convolution kernel generated using the Matlab routine \texttt{fspecial} with parameters \texttt{band}\:{=}\:5 and \texttt{sigma}\:{=}\:1. The \texttt{band} parameter represents the side length (in pixels) of the square support of the kernel, whereas \texttt{sigma} is the standard deviation (in pixels) of the isotropic bivariate Gaussian distribution defining the kernel in continuous settings. Then, the blurred images have been degraded by AIGG noise realisations from different distributions with standard deviation $\sigma=0.1$. More specifically, we considered $q=1$ (Laplace noise) for the \texttt{geometric} test image, $q=2$ (Gaussian noise) for the \texttt{skyscraper} test image and $q=+\infty$ (uniform noise) for the \texttt{stairs} test image. The blur- and noise-corrupted images are displayed on the bottom row of Figure \ref{fig:test_rest}.

\begin{figure}
	\centering
	\begin{subfigure}{0.32\textwidth}
		\centering
		\includegraphics[height=3.5cm]{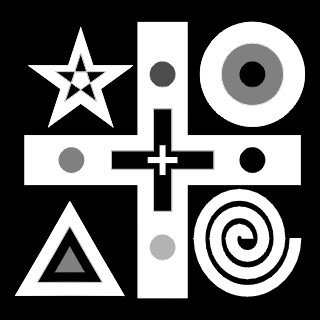}
		\caption{}
		\label{fig:test_geom}
	\end{subfigure}
	\begin{subfigure}{0.32\textwidth}
		\centering
		\includegraphics[height=3.5cm]{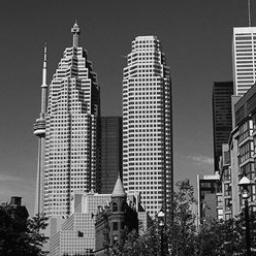}
		\caption{}
		\label{fig:test_sky}
	\end{subfigure}
	\begin{subfigure}{0.32\textwidth}
		\centering
		\includegraphics[height=3.5cm]{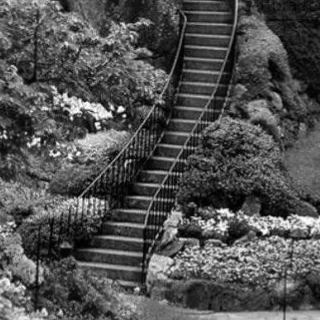}
		\caption{}
		\label{fig:test_stairs}
	\end{subfigure}\\
	\begin{subfigure}{0.32\textwidth}
		\centering
		\includegraphics[height=3.5cm]{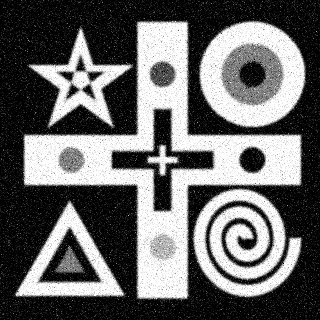}
		\caption{}
		\label{fig:data_geom__}
	\end{subfigure}
	\begin{subfigure}{0.32\textwidth}
		\centering
		\includegraphics[height=3.5cm]{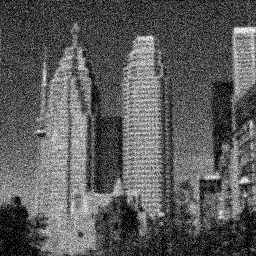}
		\caption{}
		\label{fig:data_sky__}
	\end{subfigure}
	\begin{subfigure}{0.32\textwidth}
		\centering
		\includegraphics[height=3.5cm]{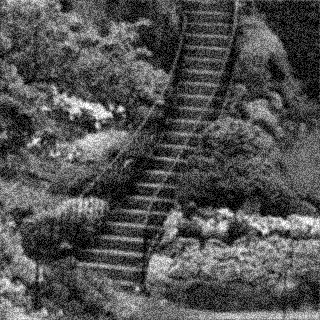}
		\caption{}
		\label{fig:data_stairs__}
	\end{subfigure}
	\caption{Original test images \texttt{geometric} ($320\times 320$), \texttt{skyscraper} $(256\times 256)$  and \texttt{stairs} ($320\times 320$) (top), and observed data corrupted by Gaussian blur and AIGG noise with $q=1$, $q=2$ and $q=+\infty$, respectively (bottom).}
	\label{fig:test_rest}
\end{figure}

The quality of the obtained restorations $\bm{u}^*$ versus the associated ground-truth image $\bm{u}$ is assessed by means of two scalar measures, the Improved Signal-to-Noise Ratio (ISNR)
\begin{equation}\mathrm{ISNR}(\bm{b}; \bm{u}; \bm{u}^*) :=10\log_{10}\left(\frac{\|\bm{b}-\bm{u}\|_2^2}{\|\bm{u}^*-\bm{u}\|_2^2}\right)\,,
\end{equation}
and the Structural Similarity Index (SSIM) \cite{ssim}. The larger the ISNR and SSIM values, the higher the restoration quality. For all tests, the ADMM iterations are stopped as soon as
\begin{equation}
\eta^{(j+1)} := \frac{\|\bm{u}^{(j+1)}-\bm{u}^{(j)}\|_2}{\|\bm{u}^{(j)}\|_2} < 10^{-5}\,,\quad j\in \mathbb{N}.
\end{equation}
The penalty parameters $\beta_g$, $\beta_r$ are manually set. 

The estimation of the hyperparameters in the space-variant regularisers $\WTV$, $\WTV^{sv}_{\bm{p}}$ and $\WDTV^{sv}_{\bm{p}}$ is performed by manually setting the radius $r$, so as to attain the highest ISNR and SSIM values. Moreover, for the WTV regulariser, the existence of a very efficient procedure for the computation of the $\{\alpha_i\}_i$ weights allows to update the $\bm{\alpha}$-map at each iteration of the ADMM-based scheme; in order to hold back the computational effort coming along with the estimation of the unknown $\{\alpha_i,\,p_i\}_i$ in the $\WTV^{sv}_{\bm{p}}$ and $\{\alpha_i,p_i,\theta_i,a_i\}_i$ in the $\WDTV^{sv}_{\bm{p}}$ regulariser, we update the maps of parameters every $30$ iterations. 


For what concerns the estimation of the local $p_i$ for the WTV$_{\bm{p}}^{sv}$ and the WDTV$_{\bm{p}}^{sv}$, as well as of the global $p$ in the TV$_p$ regulariser, we fix the compact set $[\epsilon,R]$ of Propositions \ref{prop:prop_p1} and \ref{prop:ex_scale}, equal to $[0.5,2]$. Notice the the choice of the lower bound allows the $\bm{u}$-estimation problem \eqref{eq:ADMM_u} to result in non-convex regularisers. This implies that a particular attention has to be put in the design of a suitable initial guess,  which can prevent the performed hypermodels to get trapped in bad local minima.


We initialise Algorithm \ref{alg:1} using a suitable initialisation minimising noise whiteness for a standard Tikhonov-L$_2$ problem as proposed recently in \cite{Lanzaetna}.

\paragraph{Restoration of \texttt{geometric}}

First, we discuss the performance of the considered regularisers for the restoration of the \texttt{geometric} test image. The restored images are shown in Figure \ref{fig:geom_rest_new}, while the achieved ISNR and SSIM values are reported in Table \ref{tab:vals_rest}. Notice that, in general, the TV regulariser is well-suited for the restoration of piece-wise constant images; however, as discussed in \ref{sec:features_TV}, it also suffers from several drawbacks. Our results confirm that using instead a TV$_p$ regulariser ($p=0.5$) reduces such artefacts. Overall, the three considered space-variant regulariseres appear to be more effective than plain TV.

\begin{table}[!t]
	\centering
	\renewcommand{\arraystretch}{1.1}
	\renewcommand{\tabcolsep}{0.3cm}
	\begin{tabular}{c|c|c|c|c|c}\hline\hline
		&$\TV$&$\TV_p$&$\WTV$&$\WTV^{sv}_{\bm{p}}$&$\WDTV^{sv}_{\bm{p}}$\\
		\hline\hline
		\multicolumn{6}{c}{$\qquad$\texttt{geometric}}\\
		\hline
		ISNR&8.8499 & 9.0568 & 9.5567 & 9.6041 & 10.2188\\
		SSIM&0.9227&0.9225&0.9343&0.9346&0.9388\\
		\hline
		\multicolumn{6}{c}{$\qquad$\texttt{skyscraper}}\\
		\hline
		ISNR&2.3239&2.5775&2.7906&2.9894&3.2083\\
		SSIM&0.6255&0.6432&0.6711&0.6789&0.7166\\
		\hline
		\multicolumn{6}{c}{$\qquad$\texttt{stairs}}\\
		\hline
		ISNR&3.9417&4.5251&4.6836&5.0718&5.2031\\
		SSIM&0.6515&0.6912&0.6879&0.7149&0.7307\\
	\end{tabular}
	
	\caption{ISNR and SSIM values achieved by the considered regularisers for the three test images corrupted by blur and different AIGG noises.}
	\label{tab:vals_rest}
\end{table}

In Figure \ref{fig:geom_maps}, we show the output maps of parameters for the $\WTV$, $\WTV^{sv}_{\bm{p}}$ and $\WDTV^{sv}_{\bm{p}}$ regularisers, obtained with $r=1$, $r=3$ and $r=1$, respectively. For all three regularisers, the $\bm{\alpha}$-maps present higher weights in the background, while showing that weaker regularisation is performed along the profiles of the geometrical figures. Notice that the $\bm{p}$-values in the WTV$_{\bm{p}}^{sv}$ and in the WDTV$_{\bm{p}}^{sv}$ approach 2 in the background, which combined with the high regularisation weights allow for an effective smoothing and noise removal therein. Finally, the $\bm{\theta}$ and $\bm{a}$ maps in the bottom row of Figure \ref{fig:geom_maps} show that the estimator detects a clear directionality in correspondence of the figure profiles, where the angles $\bm{\theta}$ have been accurately estimated and $\bm{a}$ assume small values.


\begin{figure}
	\centering
	\renewcommand{\tabcolsep}{0.02cm}
	\resizebox{\textwidth}{!}{
		\begin{tabular}{cccc}
			\includegraphics[height = 2.5cm]{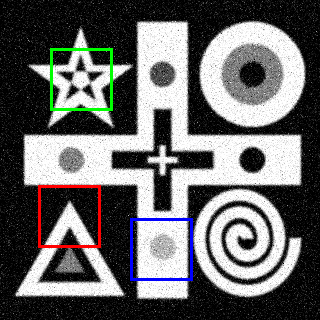}&\includegraphics[height = 2.5cm]{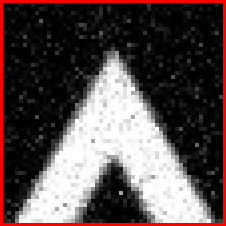}&\includegraphics[height = 2.5cm]{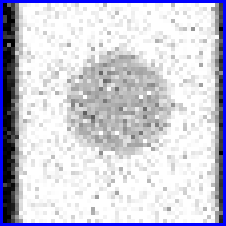}&\includegraphics[height = 2.5cm]{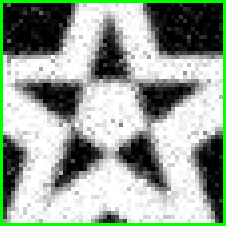}\\ 
			\includegraphics[height = 2.5cm]{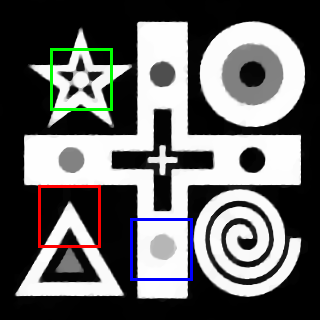}&
			\includegraphics[height = 2.5cm]{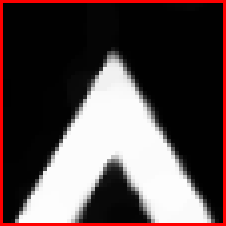}&\includegraphics[height = 2.5cm]{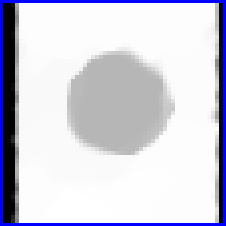}&\includegraphics[height = 2.5cm]{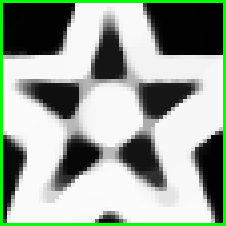}\\
			\includegraphics[height = 2.5cm]{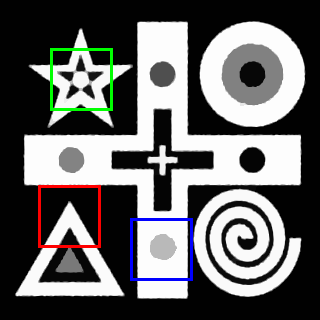}&
			\includegraphics[height = 2.5cm]{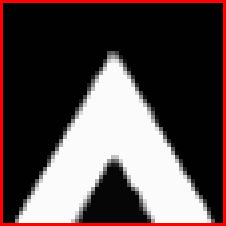}&\includegraphics[height = 2.5cm]{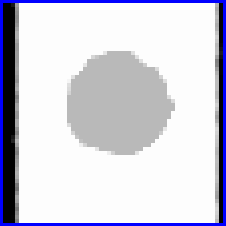}&\includegraphics[height = 2.5cm]{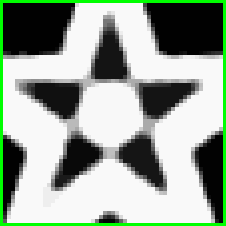}\\
			\includegraphics[height = 2.5cm]{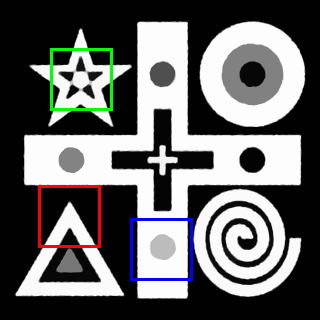}&
			\includegraphics[height = 2.5cm]{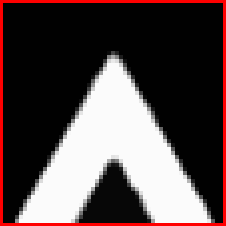}&\includegraphics[height = 2.5cm]{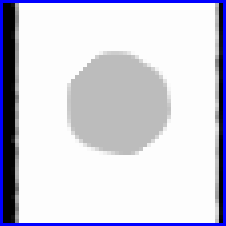}&\includegraphics[height = 2.5cm]{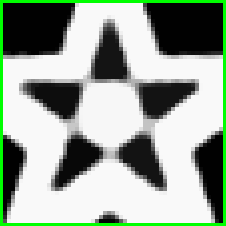}\\
			\includegraphics[height = 2.5cm]{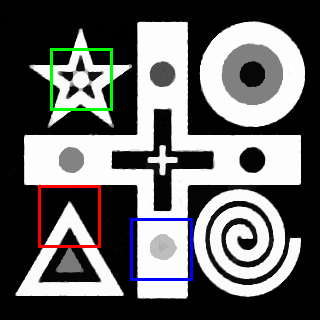}&
			\includegraphics[height = 2.5cm]{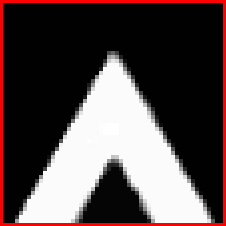}&\includegraphics[height = 2.5cm]{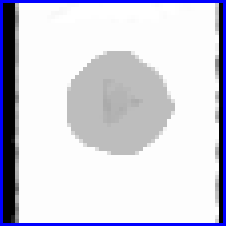}&\includegraphics[height = 2.5cm]{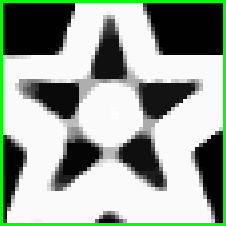}\\
			\includegraphics[height = 2.5cm]{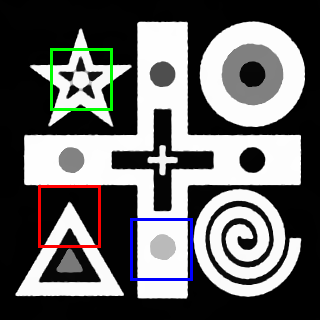}&
			\includegraphics[height = 2.5cm]{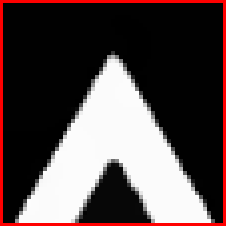}&\includegraphics[height = 2.5cm]{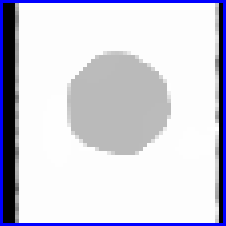}&\includegraphics[height = 2.5cm]{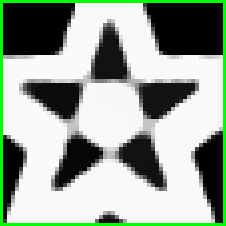}\\
	\end{tabular}}
	\caption{From top to bottom: for the test image \texttt{geometric}, observed image $b$, performance of the TV, the TV$_p$ (with output $p=0.5$), the WTV, the WTV$_{\bm{p}}^{sv}$ and the WDTV$_{\bm{p}}^{sv}$ regularisers with the respective close-up(s).}
	\label{fig:geom_rest_new}
\end{figure}

\begin{figure}
	\centering
	\renewcommand{\tabcolsep}{0.01cm}
	\resizebox{\textwidth}{!}{
		\begin{tabular}{cccc}
			\includegraphics[height = 2.5cm]{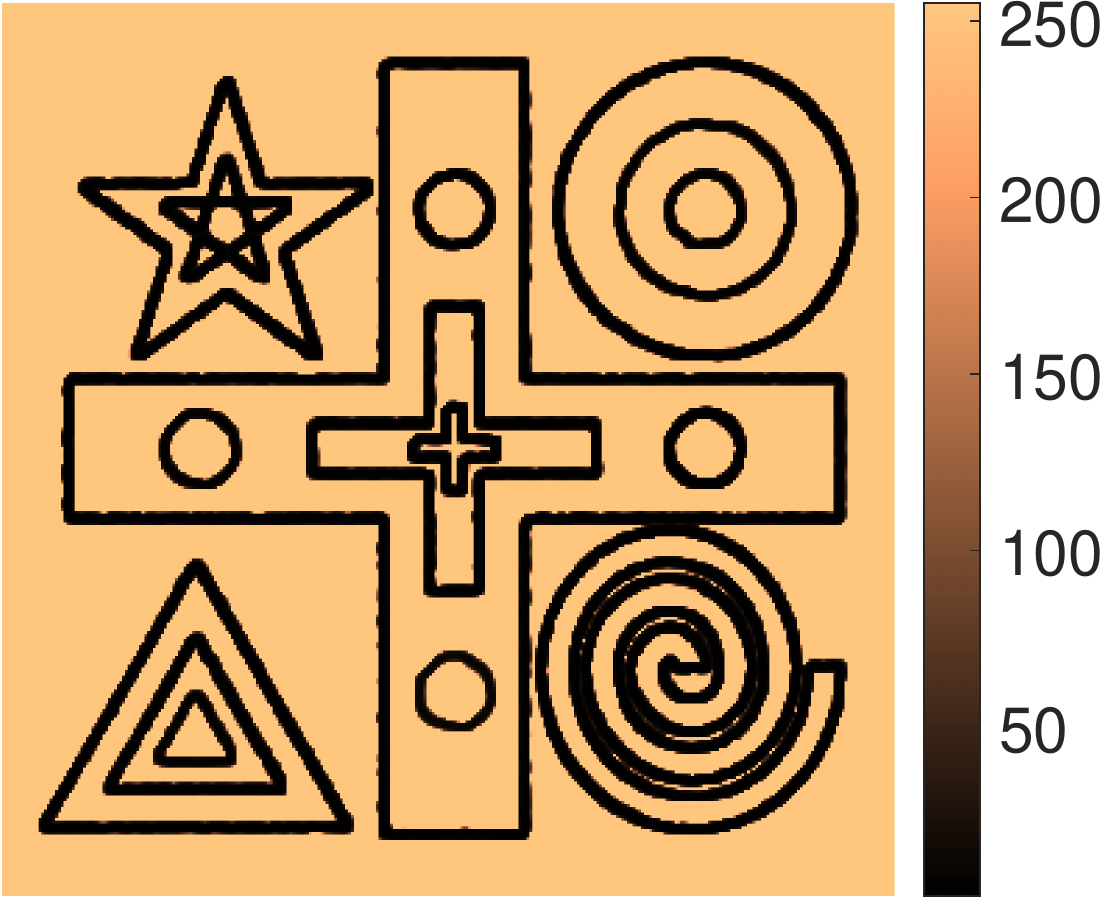}&&&\\ 
			$\bm{\alpha}$&&&\\
			\includegraphics[height = 2.5cm]{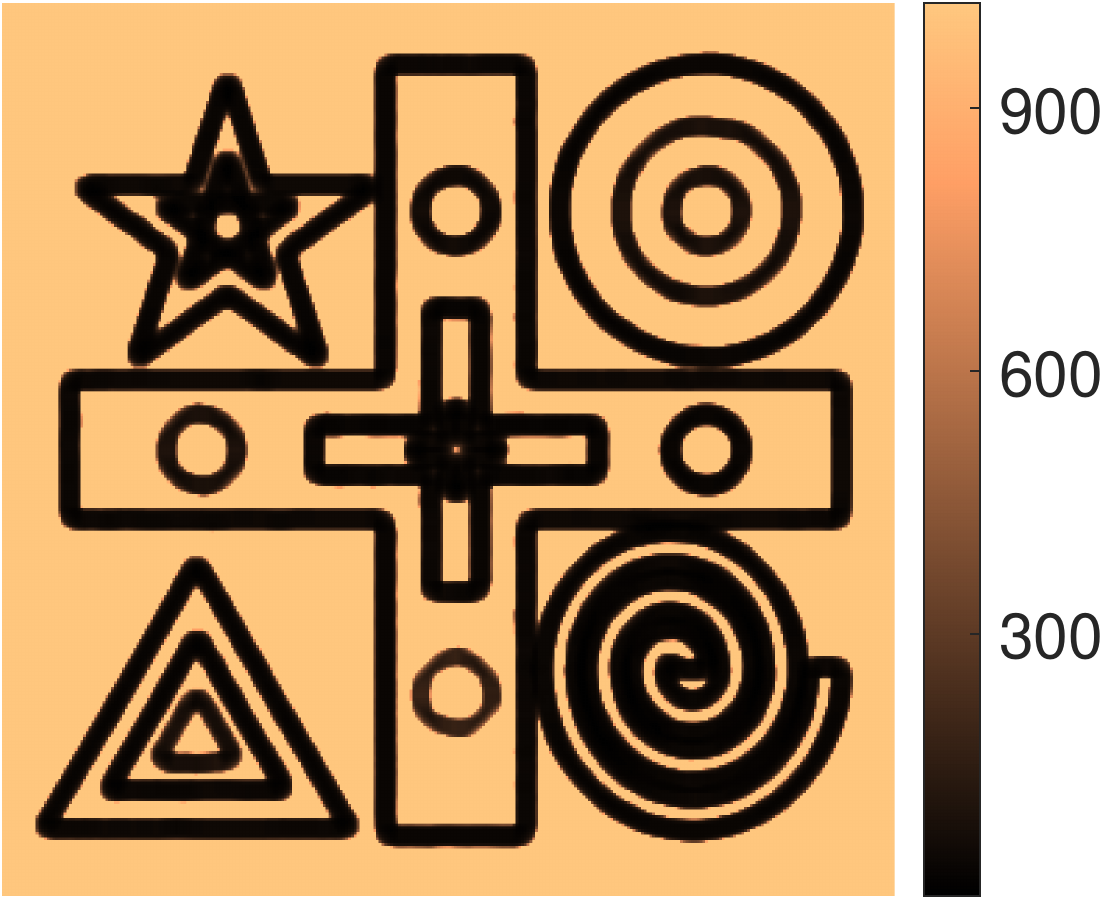}&
			\includegraphics[height = 2.5cm]{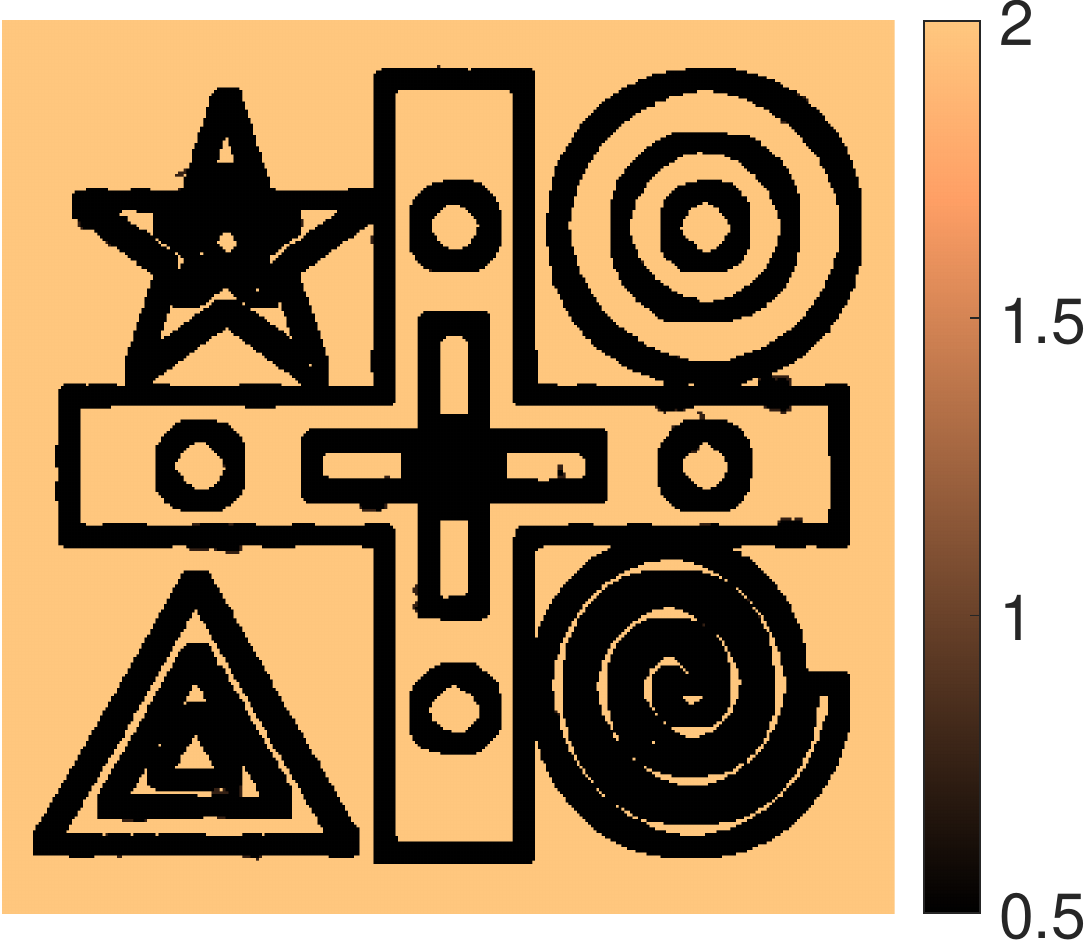}&&\\
			$\bm{\alpha}$&$\bm{p}$&&\\
			\includegraphics[height = 2.5cm]{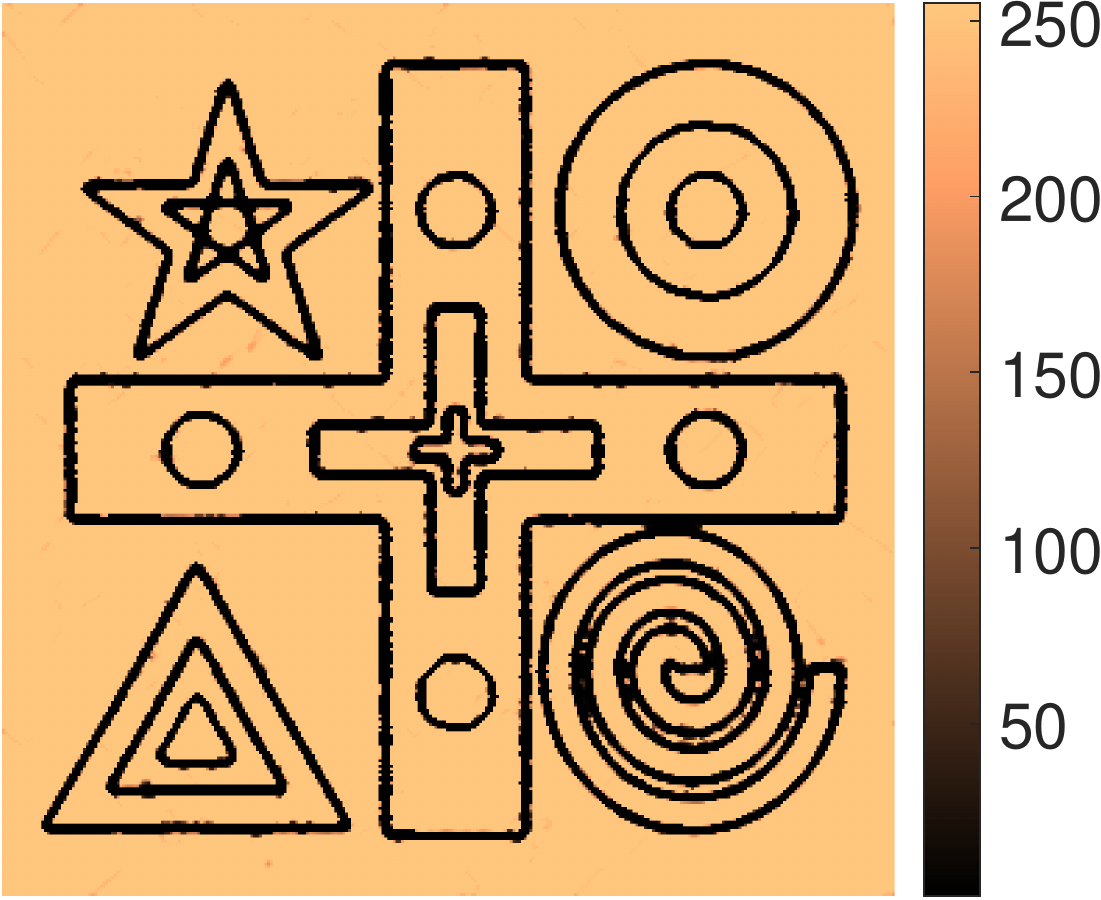}&
			\includegraphics[height = 2.5cm]{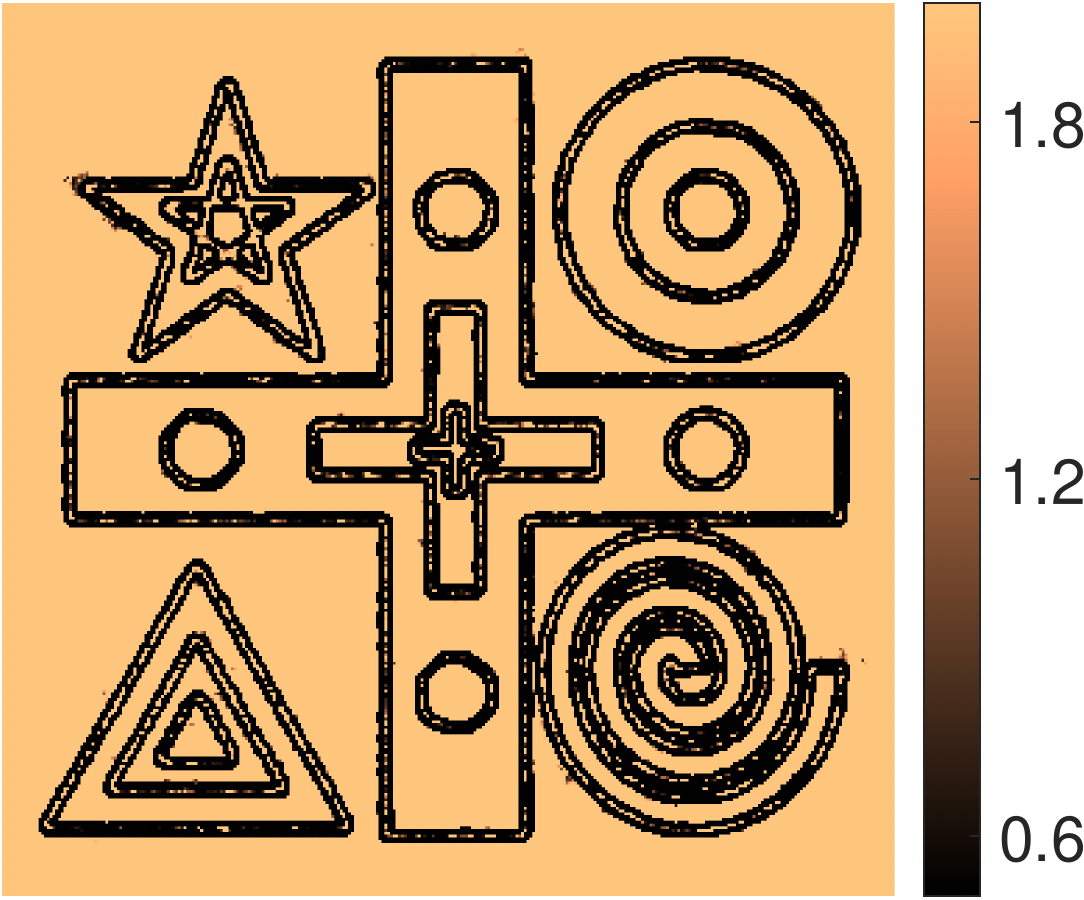}&\includegraphics[height = 2.5cm]{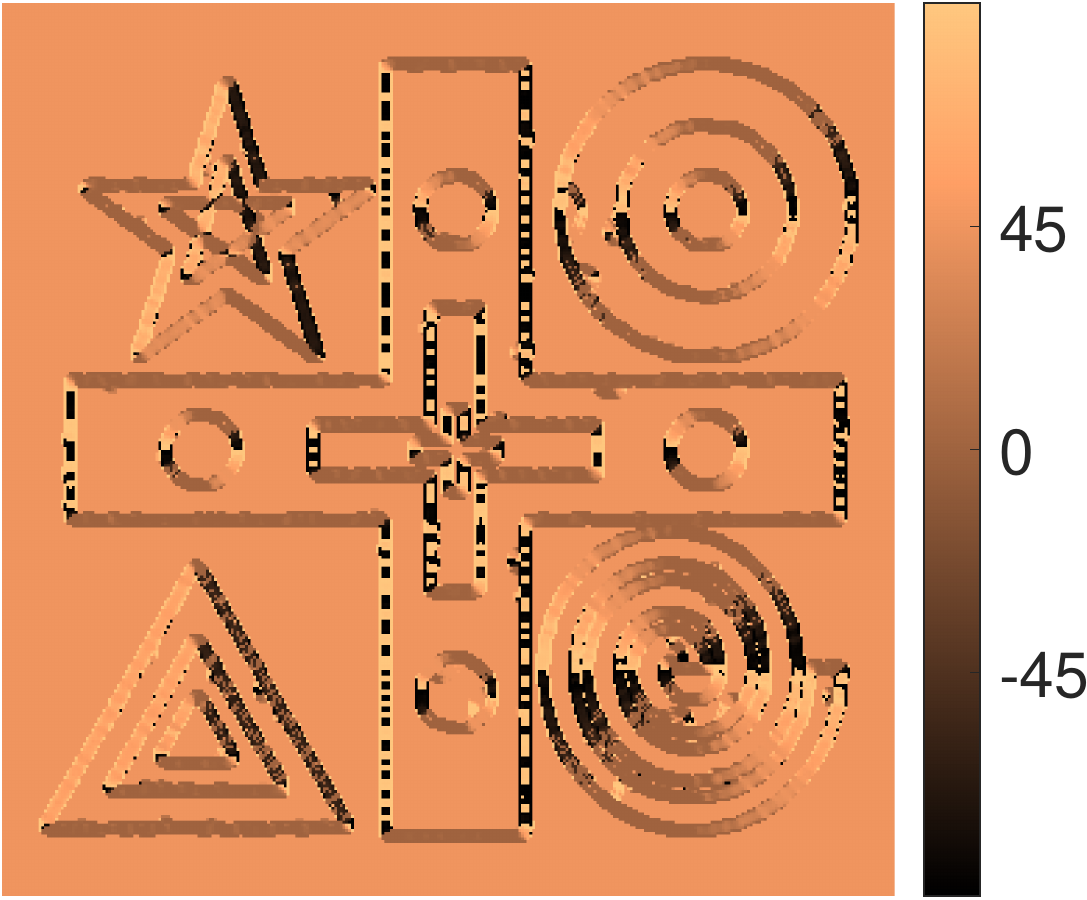}&
			\includegraphics[height = 2.5cm]{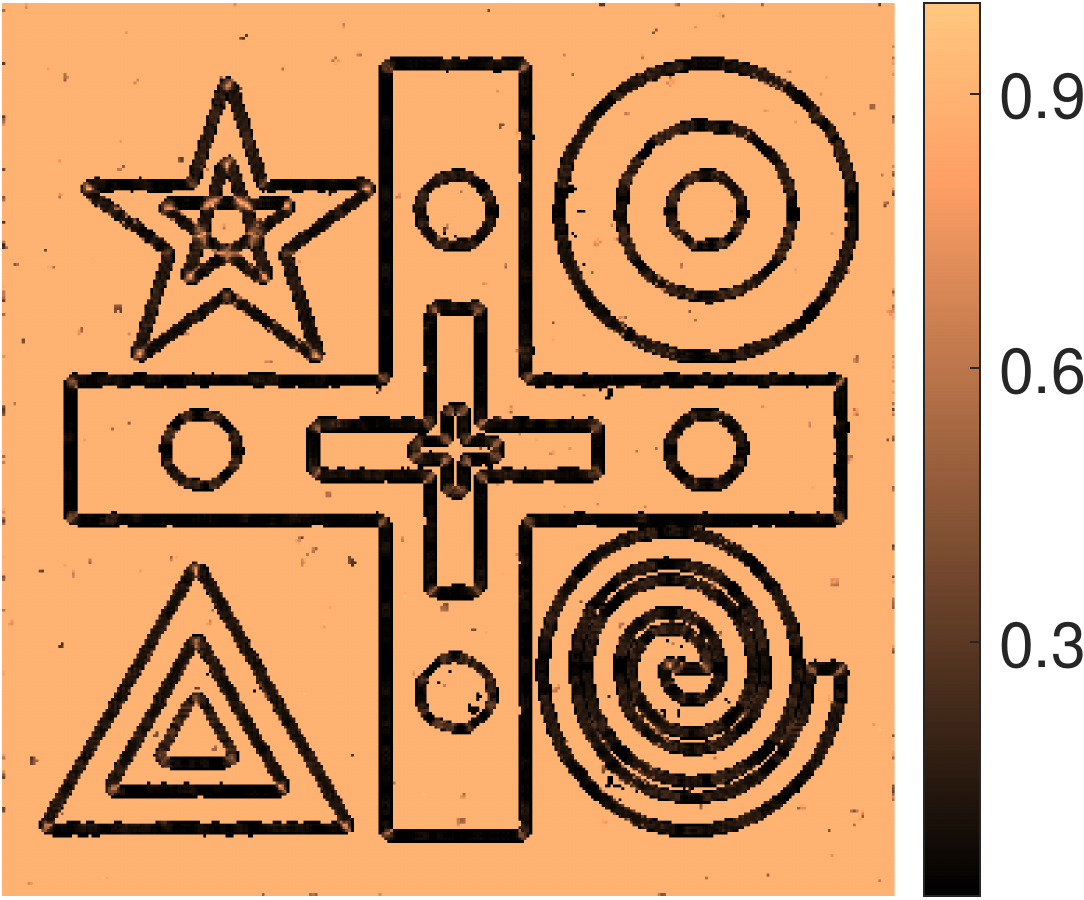}
			\\
			$\bm{\alpha}$&$\bm{p}$&$\bm{\theta}$&$\bm{a}$\\
	\end{tabular}}
	\caption{From top to bottom: for the test image \texttt{geometric}, output maps of the parameters for the $\WTV$ ($r=1$), the $\WTV^{sv}_{\bm{p}}$ ($r=3$) and the $\WDTV^{sv}_{\bm{p}}$ ($r=1$) regularisers.}
	\label{fig:geom_maps}
\end{figure}

\paragraph{Restoration of \texttt{skyscraper}}

We now consider the restoration of the test image \texttt{skyscraper}, which, due to its composite nature, is expected to largely benefit from a space-variant approach. From the restored images and the selected details in Figure \ref{fig:sky_rest_new}, one can clearly notice how each additional space-variant parameter effectively contributes in gradually improving the output result, as also reflected in the ISNR and SSIM values reported in Table \ref{tab:vals_rest}.
In Figure \ref{fig:maps_sky}, we show the output map of parameters for the $\WTV$, $\WTV^{sv}_{\bm{p}}$ and $\WDTV^{sv}_{\bm{p}}$ regularisers, computed for $r=15$, $r=15$ and $r=3$, respectively.

As a general comment, we highlight that the weights $\alpha_i$ assume larger values on the background so that a strong regularisation is performed regardless of the corresponding $p_i$; in fact, the $\bm{p}$-maps for the WTV$_{\bm{p}}^{sv}$ and the WDTV$_{\bm{p}}^{sv}$ regularisers appear to be different in this region. From the $\bm{\theta}$-map reported in the bottom row of Figure \ref{fig:maps_sky}, we observe that also in this case the estimator is capable of detecting the direction of the buildings profile as well as the horizontal oriented texture. Finally, the $\bm{a}$ values indicate a stronger dominance in terms of directionality along the edges of the buildings.

\begin{figure}
	\centering
	\renewcommand{\tabcolsep}{0.02cm}
	\resizebox{\textwidth}{!}{
		\begin{tabular}{cccc}
			\includegraphics[height = 2.5cm]{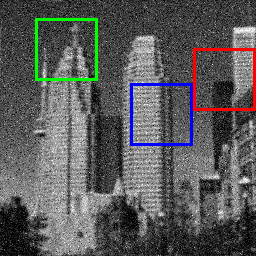}&\includegraphics[height = 2.5cm]{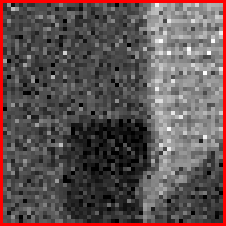}&\includegraphics[height = 2.5cm]{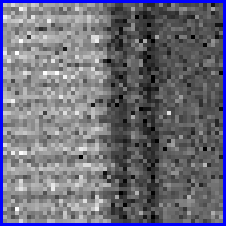}&\includegraphics[height = 2.5cm]{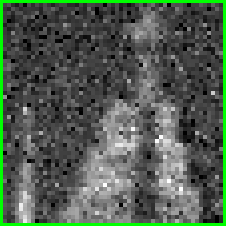}\\ 
			\includegraphics[height = 2.5cm]{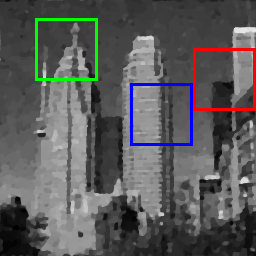}&
			\includegraphics[height = 2.5cm]{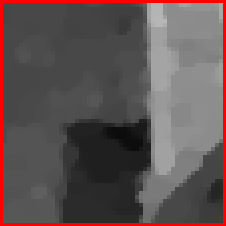}&
			\includegraphics[height = 2.5cm]{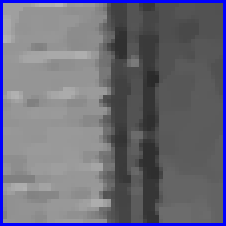}&
			\includegraphics[height = 2.5cm]{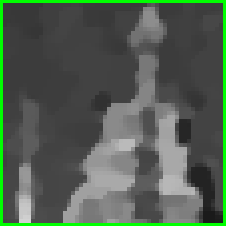}\\
			\includegraphics[height = 2.5cm]{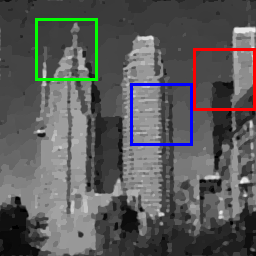}&
			\includegraphics[height = 2.5cm]{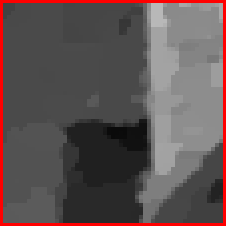}&
			\includegraphics[height = 2.5cm]{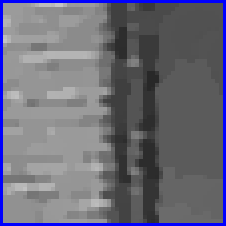}&
			\includegraphics[height = 2.5cm]{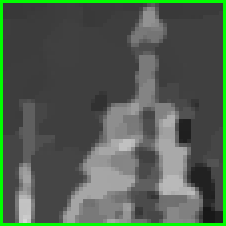}\\
			\includegraphics[height = 2.5cm]{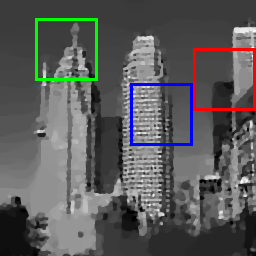}&
			\includegraphics[height = 2.5cm]{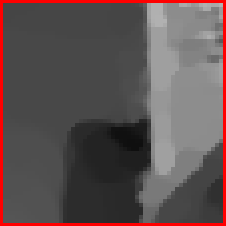}&
			\includegraphics[height = 2.5cm]{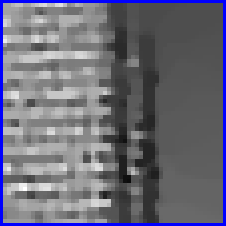}&
			\includegraphics[height = 2.5cm]{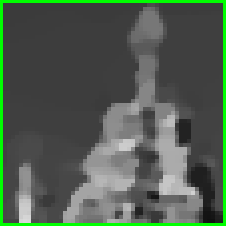}\\
			\includegraphics[height = 2.5cm]{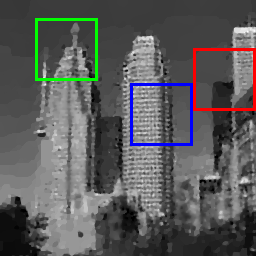}&
			\includegraphics[height = 2.5cm]{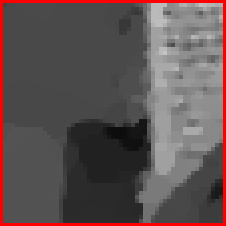}&
			\includegraphics[height = 2.5cm]{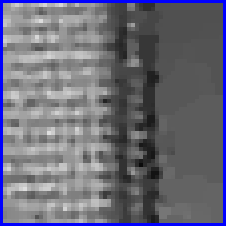}&
			\includegraphics[height = 2.5cm]{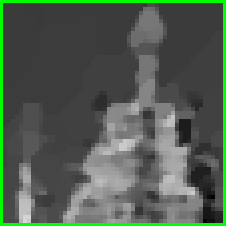}\\
			\includegraphics[height = 2.5cm]{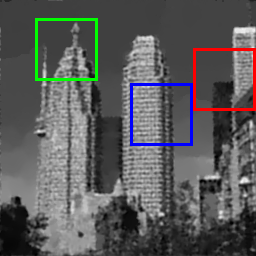}&
			\includegraphics[height = 2.5cm]{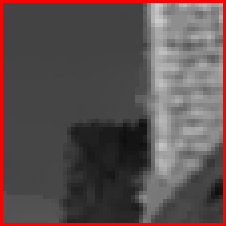}&
			\includegraphics[height = 2.5cm]{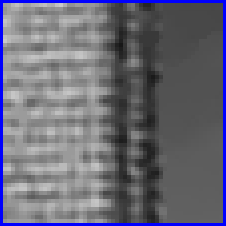}&
			\includegraphics[height = 2.5cm]{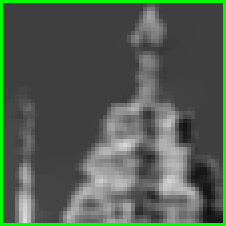}
	\end{tabular}}
	\caption{From top to bottom: for the test image \texttt{skyscraper}, observed image $b$, performance of the TV, the TV$_p$ (with output $p=0.5$), the WTV, the WTV$_{\bm{p}}^{sv}$ and the WDTV$_{\bm{p}}^{sv}$ regularisers with the respective close-up(s).}
	\label{fig:sky_rest_new}
\end{figure}

\begin{figure}
	\centering
	\renewcommand{\tabcolsep}{0.01cm}
	\resizebox{\textwidth}{!}{
		\begin{tabular}{cccc}
			\includegraphics[height = 2.5cm]{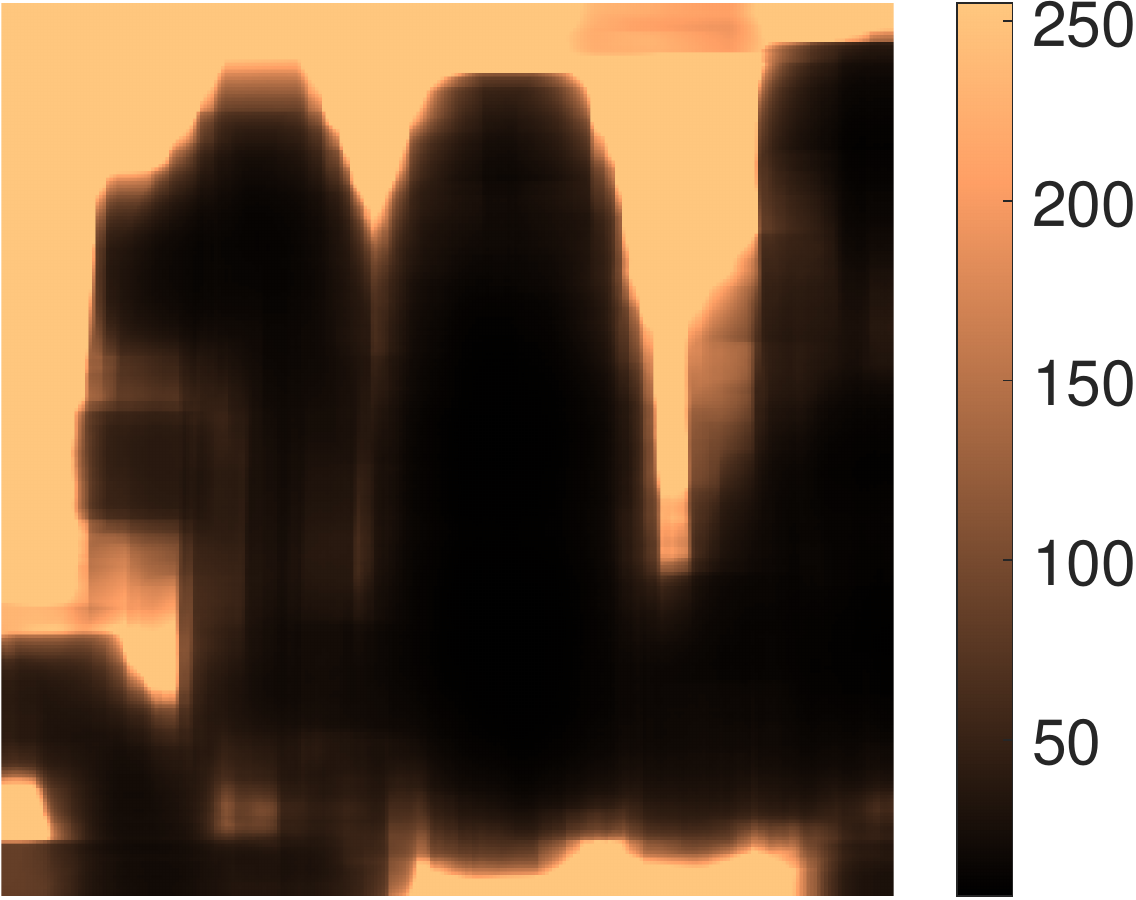}&&&\\ 
			$\bm{\alpha}$&&&\\
			\includegraphics[height = 2.5cm]{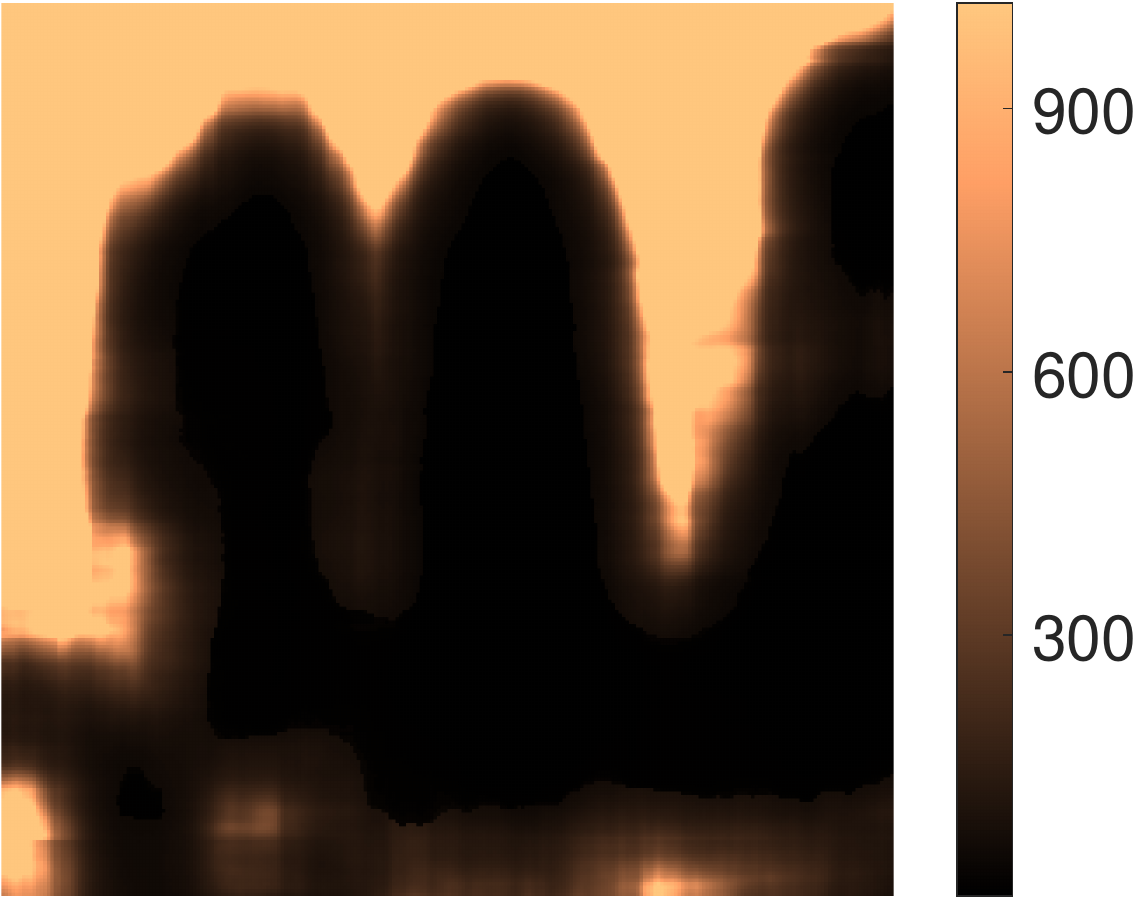}&
			\includegraphics[height = 2.5cm]{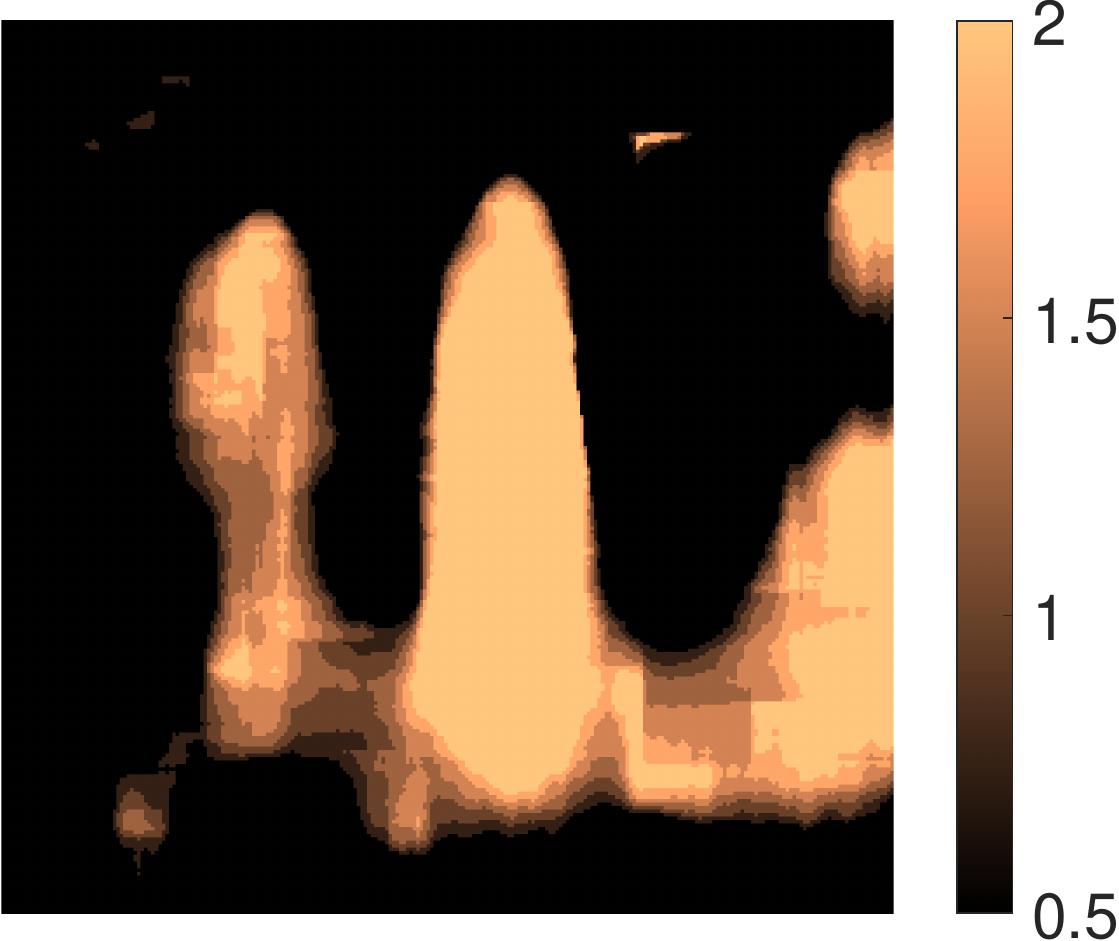}&&\\
			$\bm{\alpha}$&$\bm{p}$&&\\
			\includegraphics[height = 2.5cm]{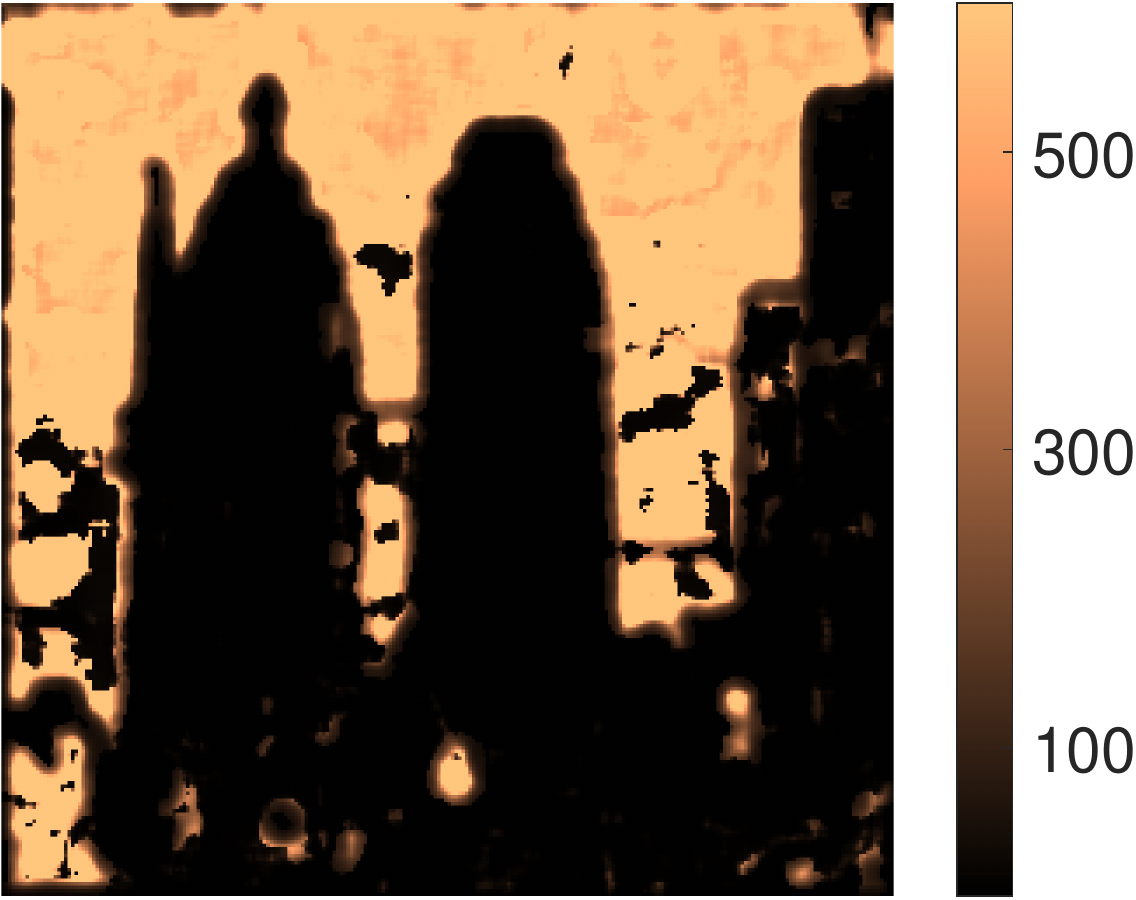}&
			\includegraphics[height = 2.5cm]{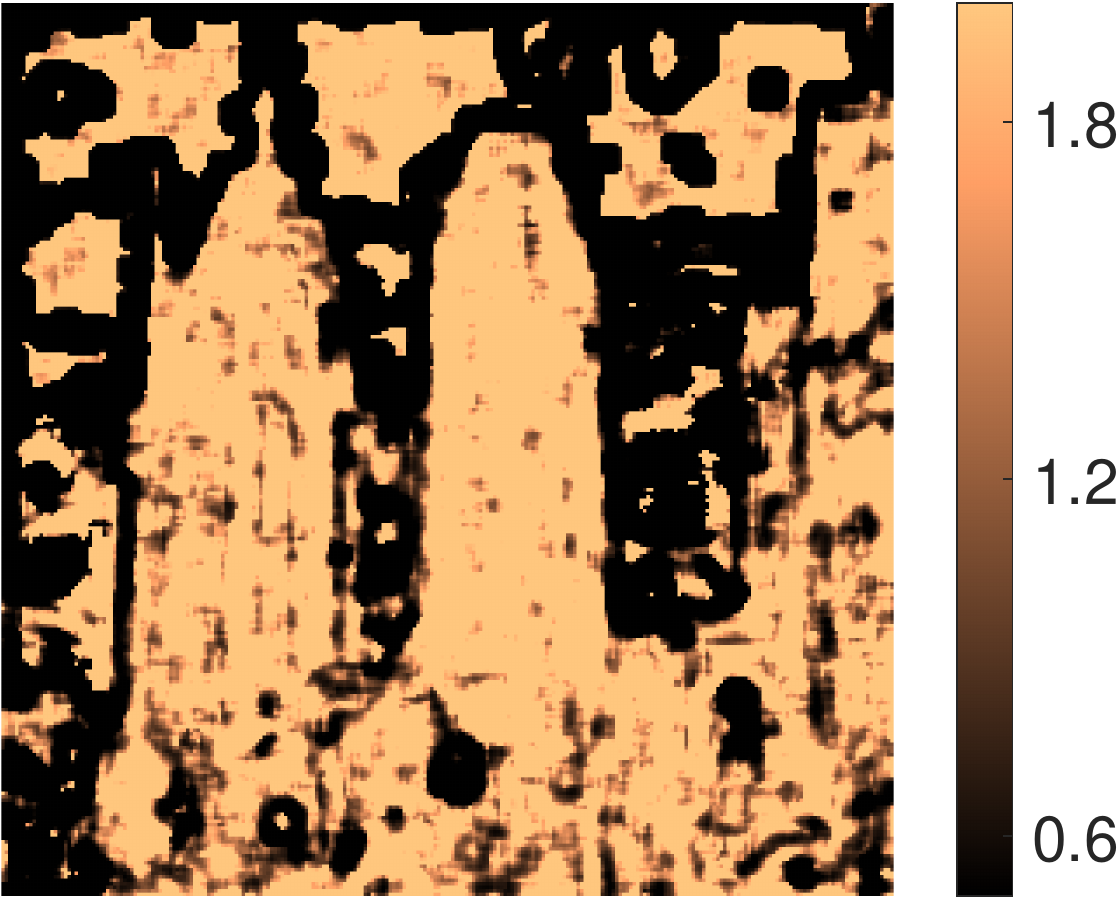}&\includegraphics[height = 2.5cm]{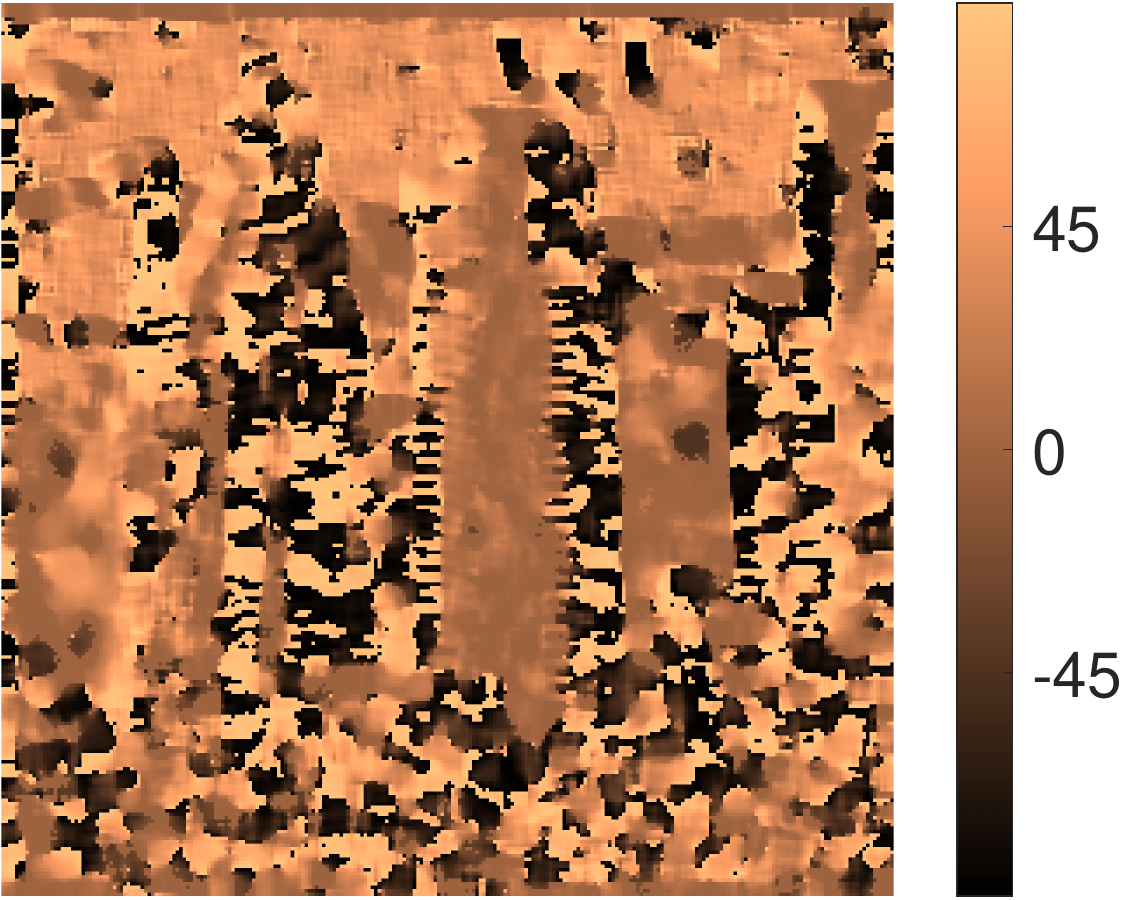}&
			\includegraphics[height = 2.5cm]{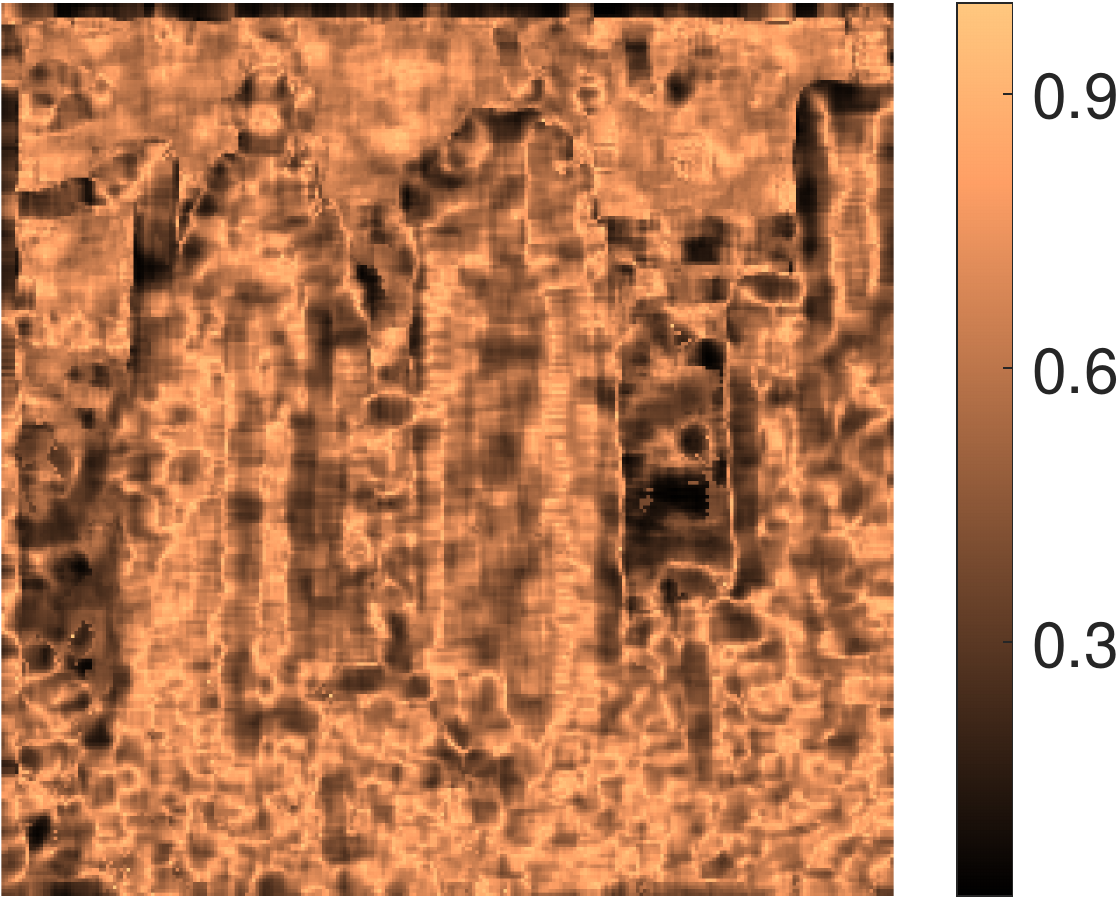}
			\\
			$\bm{\alpha}$&$\bm{p}$&$\bm{\theta}$&$\bm{a}$\\
	\end{tabular}}
	\caption{From top to bottom: for the test image \texttt{skyscraper}, output maps of the parameters for the $\WTV$ ($r=15$), $\WTV^{sv}_{\bm{p}}$ (r=15) and $\WDTV^{sv}_{\bm{p}}$ ($r=3$) regularisers.}
	\label{fig:maps_sky}
\end{figure}



%
\paragraph{Restoration of \texttt{stairs}}


In this final test, we consider the highly textured image \texttt{stairs}. From the ISNR and SSIM values reported in Table~\ref{tab:vals_rest} and from the restored images displayed in Figure \ref{fig:stairs_rest}, we notice that the WTV regulariser is outperformed by the space-invariant TV$_p$ regulariser, with output estimated $p=1.56$, in terms of SSIM. In fact, the TV$_p$ performs a type of regularisation which, although global, appears to be more suitable for describing the image of interest. 
A further improvement is achieved by the $\WTV^{sv}_{\bm{p}}$ regulariser which preserves the textured regions in the image while smoothing out the limited piece-wise constant parts. Finally, the $\WDTV^{sv}_{\bm{p}}$ regularisation term slightly refines the output by driving the regularisation along the local directionalities.

The maps of the parameters for the space-variant WTV with radius $r=2$, and for $\WTV^{sv}_{\bm{p}}$ and $\WDTV^{sv}_{\bm{p}}$ with radius $r=1$ are shown in Figures \ref{fig:maps_stairs}. From the $\bm{p}$-maps for $\WTV^{sv}_{\bm{p}}$ and $\WDTV^{sv}_{\bm{p}}$, one can observe that values of $p_i$ equal or close to $2$ are spread out all over the image, thus indicating that a Tikhonov-type of regularisation, combined with the suitable local weights, is more effective in dealing with this sort of images, due to the presence of large textured regions where the distribution of gradients is thus very spread. The directions in the central part of the image are precisely detected, as shown in the $\bm{\theta}$-map, as well as the confidence in the estimation, represented by $\bm{a}$, which appear to be particularly relevant along the steps.

\begin{figure}
	\centering
	\renewcommand{\tabcolsep}{0.02cm}
	\resizebox{\textwidth}{!}{
		\begin{tabular}{cccc}
			\includegraphics[height = 2.5cm]{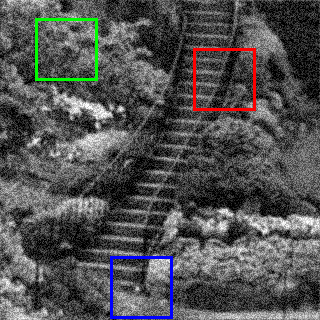}&\includegraphics[height = 2.5cm]{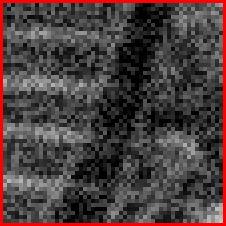}&\includegraphics[height = 2.5cm]{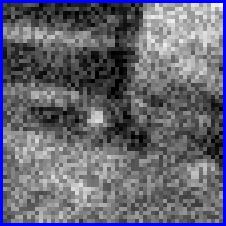}&\includegraphics[height = 2.5cm]{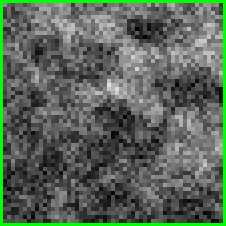}\\ 
			\includegraphics[height = 2.5cm]{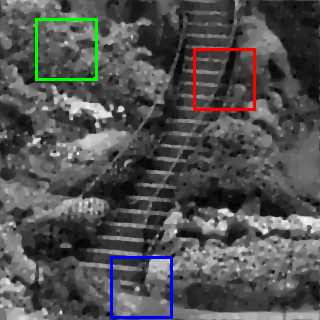}&
			\includegraphics[height = 2.5cm]{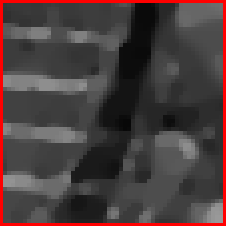}&
			\includegraphics[height = 2.5cm]{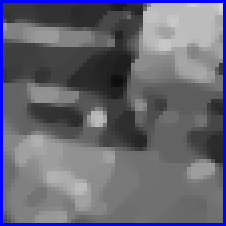}&
			\includegraphics[height = 2.5cm]{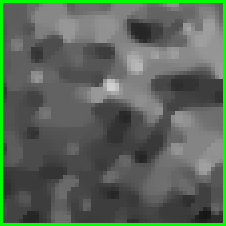}\\
			\includegraphics[height = 2.5cm]{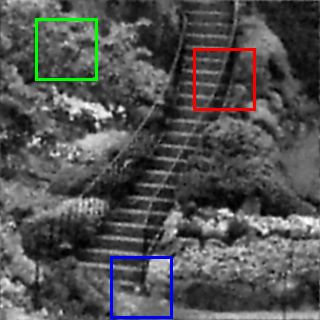}&
			\includegraphics[height = 2.5cm]{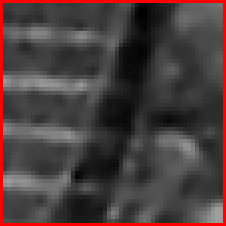}&
			\includegraphics[height = 2.5cm]{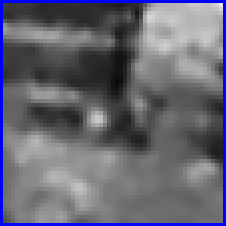}&
			\includegraphics[height = 2.5cm]{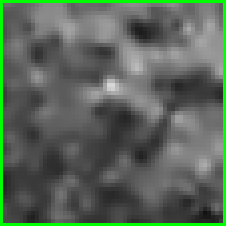}\\
			\includegraphics[height = 2.5cm]{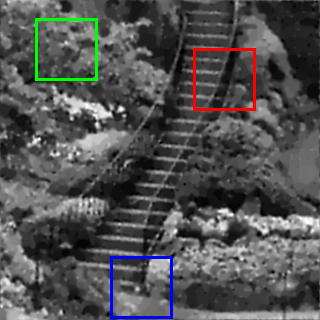}&
			\includegraphics[height = 2.5cm]{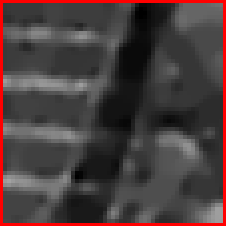}&
			\includegraphics[height = 2.5cm]{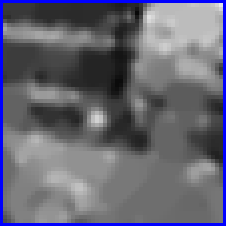}&
			\includegraphics[height = 2.5cm]{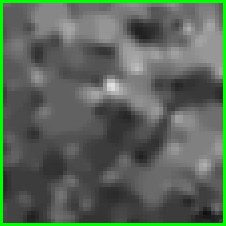}\\
			\includegraphics[height = 2.5cm]{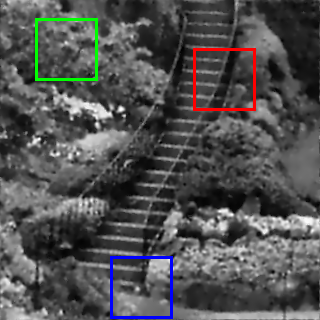}&
			\includegraphics[height = 2.5cm]{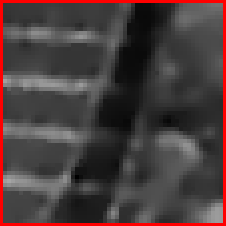}&
			\includegraphics[height = 2.5cm]{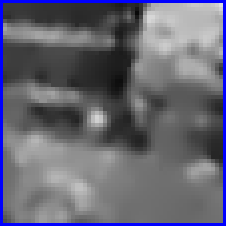}&
			\includegraphics[height = 2.5cm]{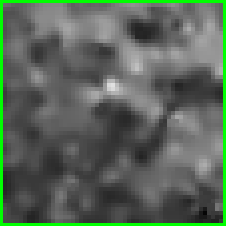}\\
			\includegraphics[height = 2.5cm]{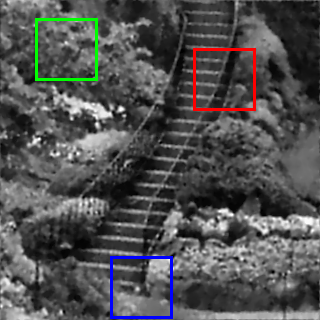}&
			\includegraphics[height = 2.5cm]{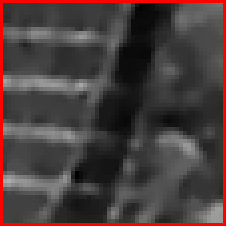}&
			\includegraphics[height = 2.5cm]{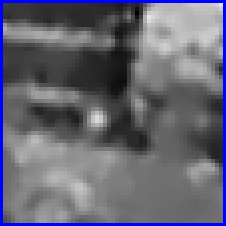}&
			\includegraphics[height = 2.5cm]{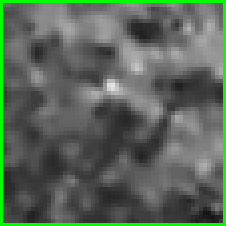}
	\end{tabular}}
	\caption{From top to bottom: for the test image \texttt{stairs}, observed image $b$, performance of the TV, the TV$_p$ (with output $p=1.56$), the WTV, the WTV$_{\bm{p}}^{sv}$ and the WDTV$_{\bm{p}}^{sv}$ regularisers with the respective close-up(s).}
	\label{fig:stairs_rest}
\end{figure}

\begin{figure}
	\centering
	\renewcommand{\tabcolsep}{0.01cm}
	\resizebox{\textwidth}{!}{
		\begin{tabular}{cccc}
			\includegraphics[height = 2.5cm]{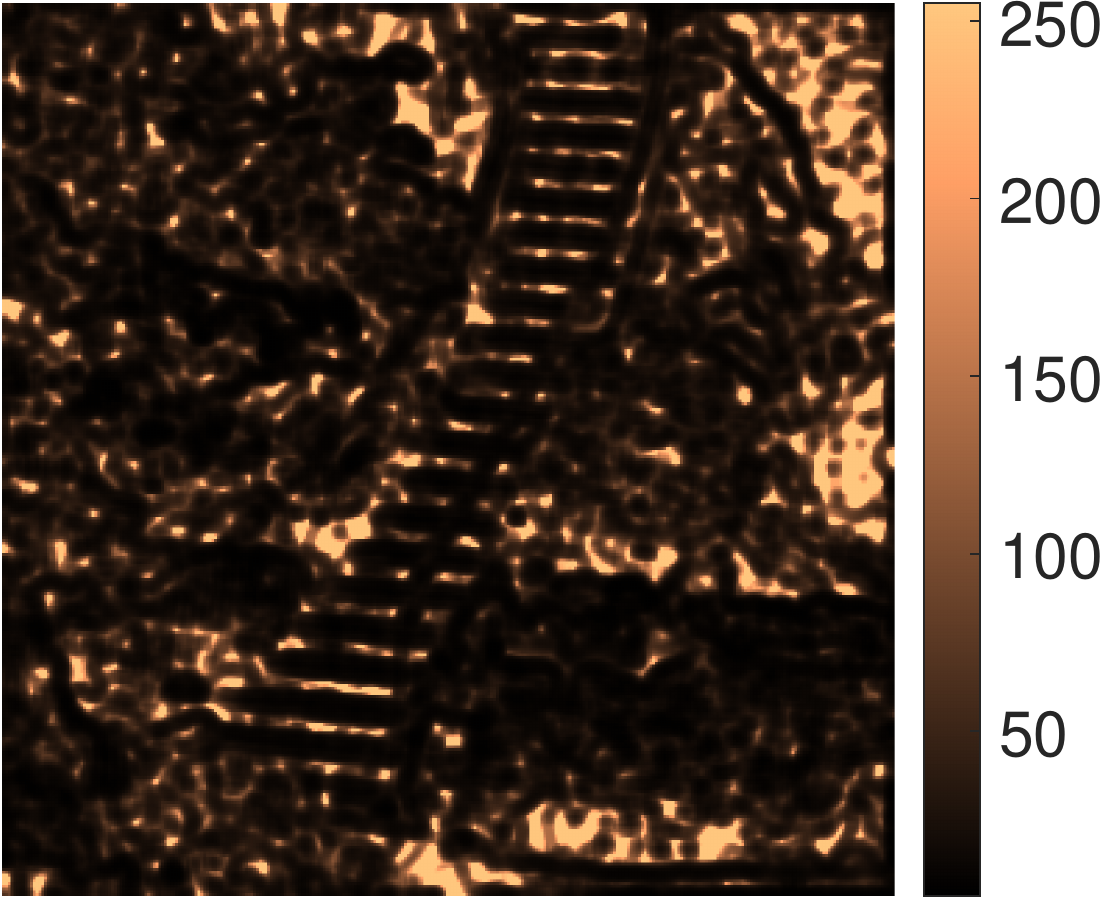}&&&\\ 
			$\bm{\alpha}$&&&\\
			\includegraphics[height = 2.5cm]{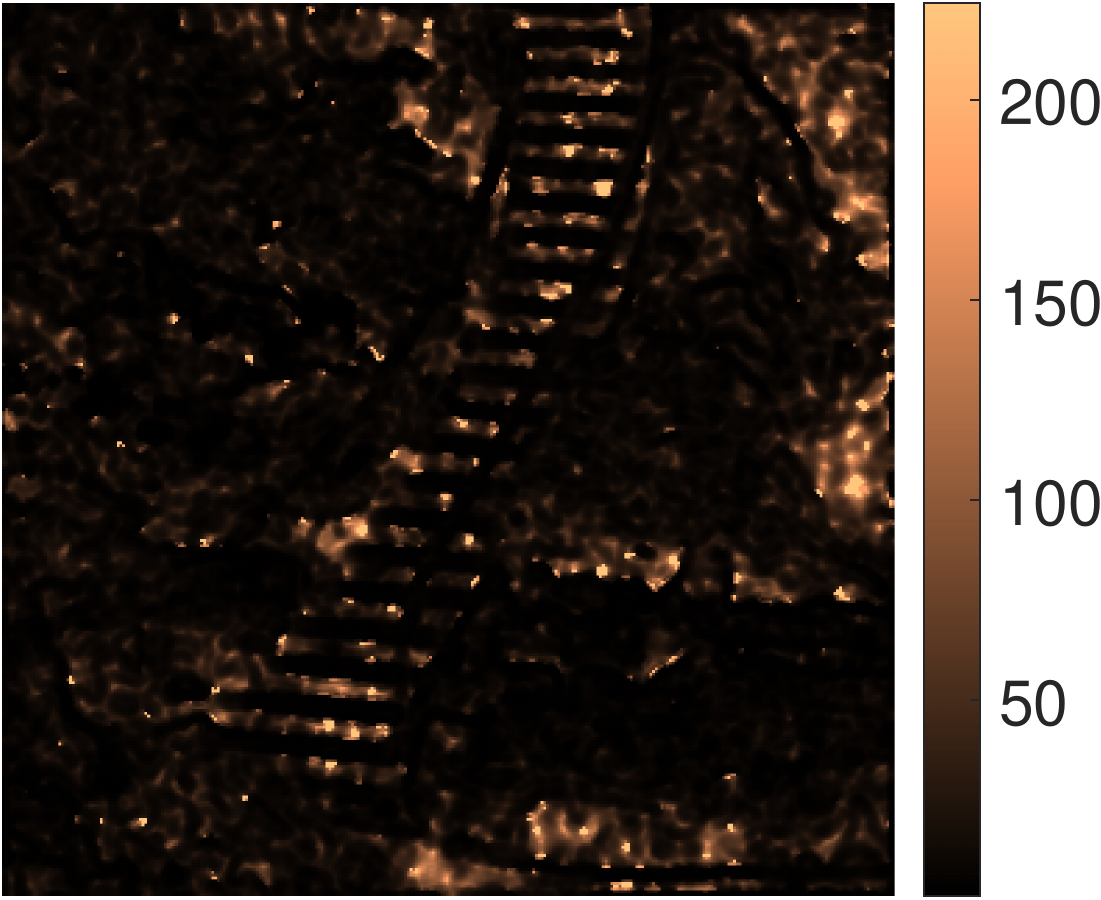}&
			\includegraphics[height = 2.5cm]{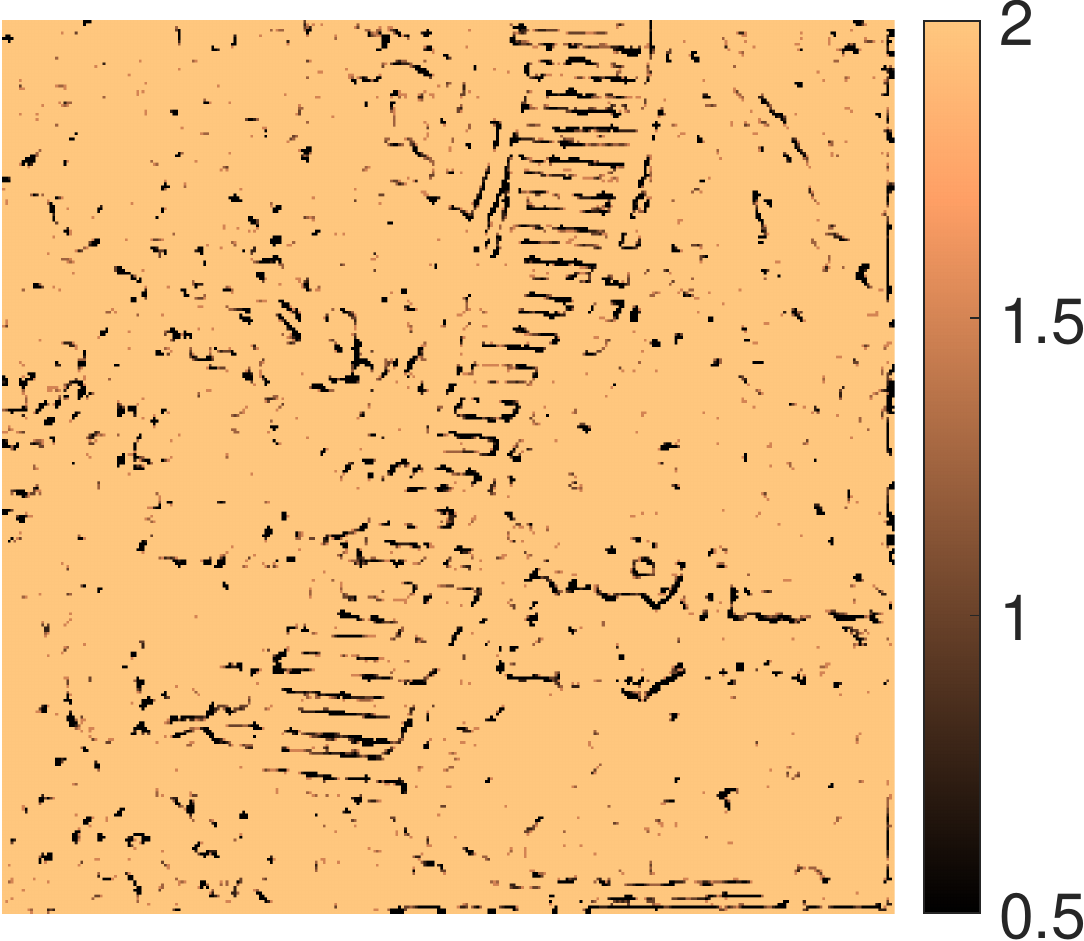}&&\\
			$\bm{\alpha}$&$\bm{p}$&&\\
			\includegraphics[height = 2.5cm]{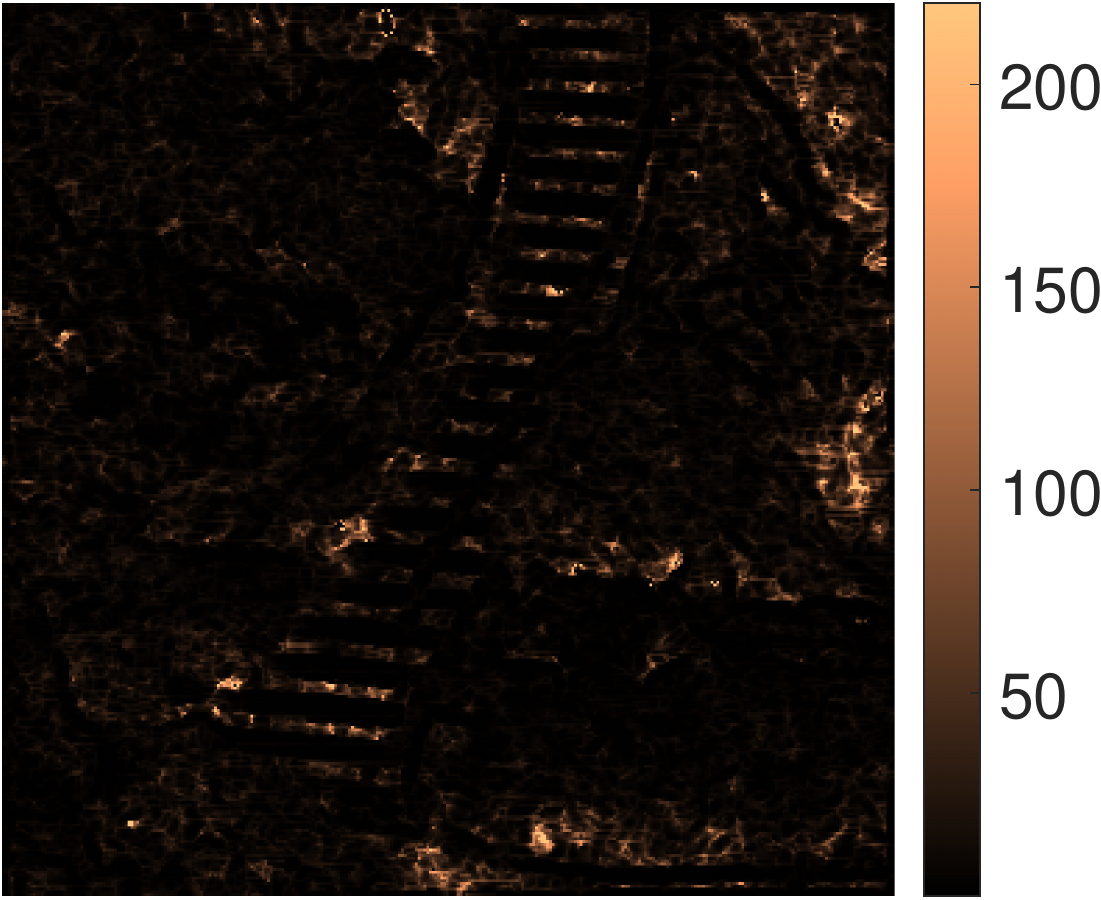}&
			\includegraphics[height = 2.5cm]{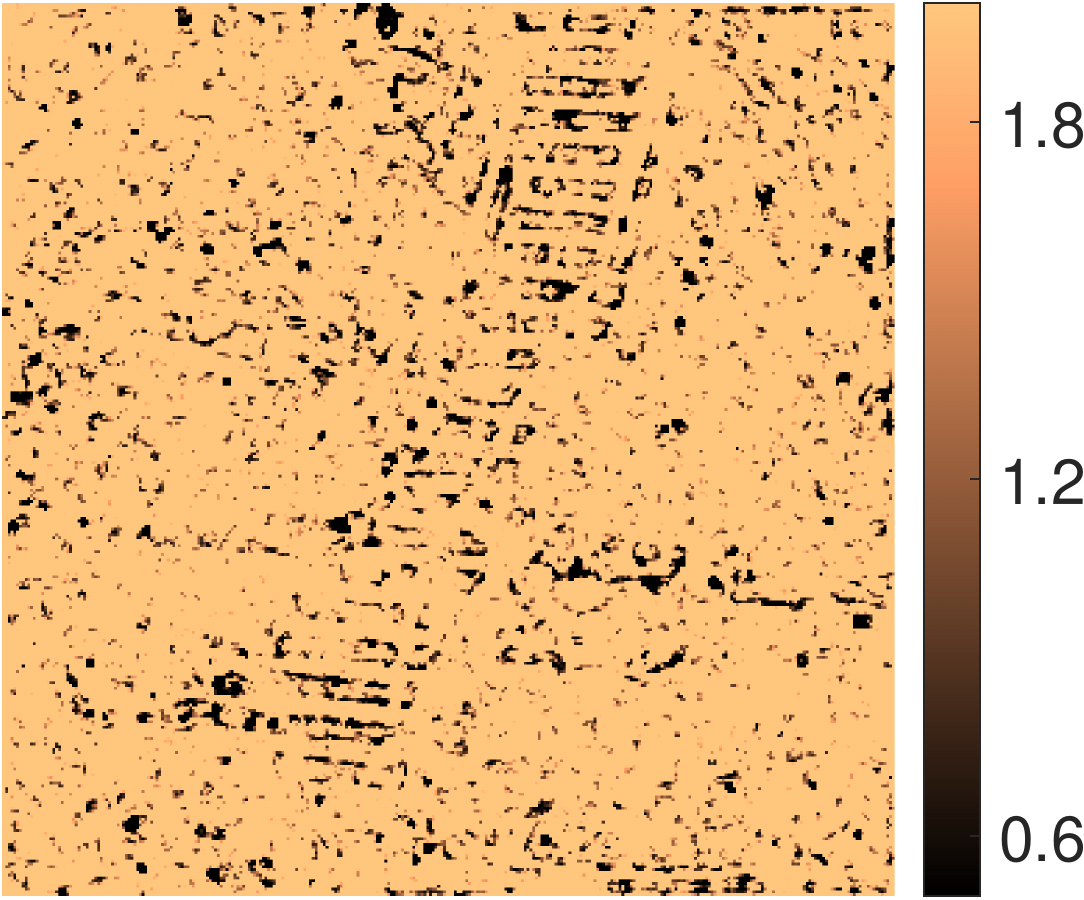}&\includegraphics[height = 2.5cm]{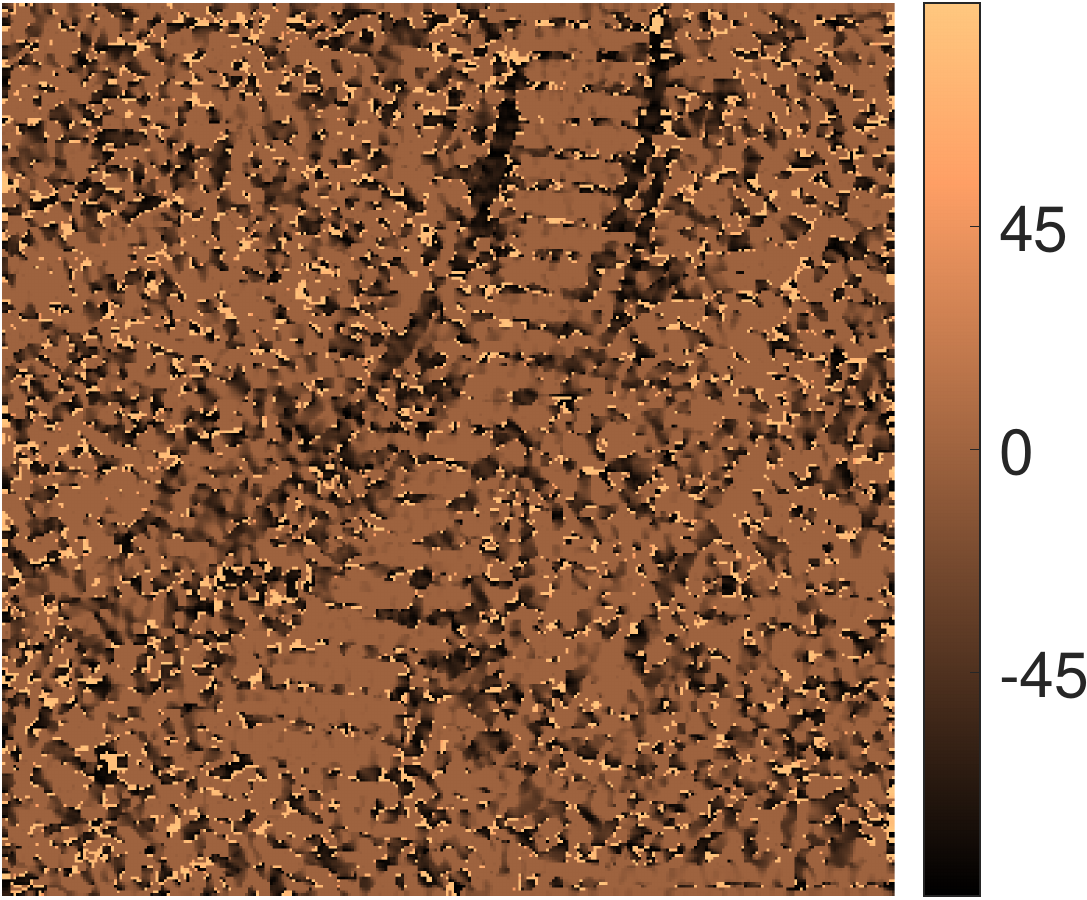}&
			\includegraphics[height = 2.5cm]{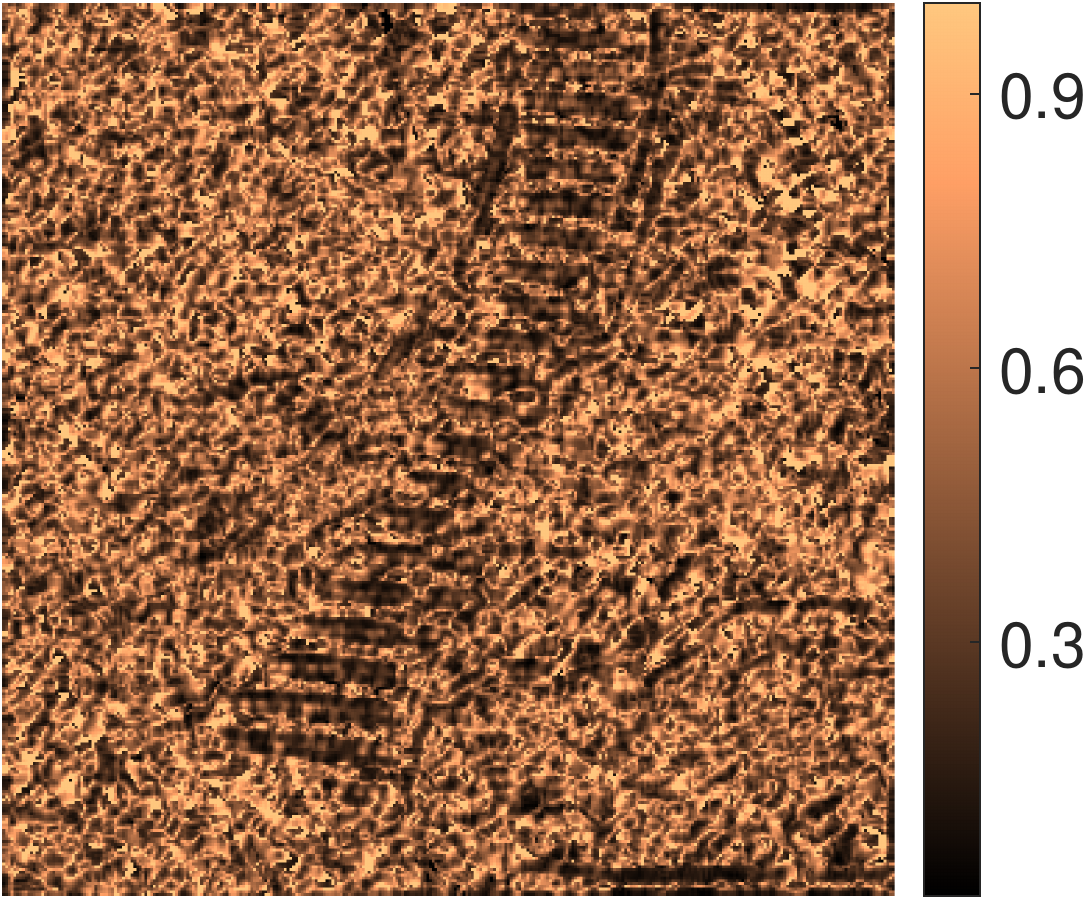}
			\\
			$\bm{\alpha}$&$\bm{p}$&$\bm{\theta}$&$\bm{a}$\\
	\end{tabular}}
	\caption{From top to bottom: for the test image \texttt{stairs}, output maps of the parameters for the $\WTV$ ($r=2$), $\WTV^{sv}_{\bm{p}}$ (r=1) and $\WDTV^{sv}_{\bm{p}}$ ($r=1$) regularisers.}
	\label{fig:maps_stairs}
\end{figure}

\section{User's guide to space-variance}

We conclude this work by addressing several issues pointed out so far, with the purpose of guaranteeing a more conscious use of the derived hypermodels.
\begin{enumerate}
	\item \emph{The best space-adaptive regulariser}. As one could expect, the question about which regulariser performs \emph{best} in absolute terms does not have a unique answer. The adoption of more and more general regularisation terms does not always pay back as the overall performance has to be evaluated with respect to the trade-off between quality of the restorations and computational effort. In this perspective, on the restoration of the \texttt{geometric} and \texttt{skyscraper} images, the WTV has returned remarkably good results while keeping the computational times low, in light of the closed form-expressions existing for the $\bm{\alpha}$-update and for the $\bm{g}$-subproblem in the ADMM-based scheme. However, for the \texttt{stairs} image, the typical shortage of a TV-type regularisation, even if weighted, has emerged.
	
	The selection of the regulariser to employ should thus be motivated by the application of interest and, ultimately, by the processed data.

	\item \emph{The optimal radius $r$}. In the previous tests, the radius $r$ involved in the estimate of the parameters has been selected so as to maximise the ISNR and the SSIM of the final restorations. One can notice that its choice somehow reflects the scale of the structures to preserve, and that it can be set differently for different space-variant regularisers. In this sense, a useful example is given by the test image \texttt{skyscraper} which presents textured objects in the foreground with a smooth and constant background. For the WTV and the WTV$_{\bm{p}}^{sv}$ regularisers, a large value of $r$ can easily catch the `dual' nature of the image. However, the texture on the foreground buildings is fine-scale so that to detect the local directionalities with the WDTV$_{\bm{p}}^{sv}$ regulariser, a smaller radius has to be selected. 
	
	\item \emph{The curse of non-convexity}. The convergence of the outlined numerical scheme aimed at solving a possibly non-convex problem interlaced with a parameter estimation step is a very delicate issue that has not found a theoretical response yet. However, when the ADMM penalty parameter $\beta_g$, $\beta_r$ are set in a suitable manner - typically, $\beta_g,\beta_r\approx 10^4,10^5$ - empirical convergence is observed. 
\end{enumerate}




\section{Challenges}
\label{sec:challenges}

We conclude this review by listing in the following some challenging future research directions which could enrich this work from both the theoretical and the applied point of view. Each of the following items has to be intended not as a straightforward extension of the framework presented here, but rather as an intersection with some related mathematical fields (analysis, optimisation, numerical and linear algebra, medical imaging...) favouring the development of new and unexplored research.

\begin{enumerate}
	
	\item The detailed analytical study  in an infinite-dimensional framework of the non-smooth, non-convex and space-variant regularisation models discussed in this work is expected to provide more insights on the structure of the expected solutions by means of duality tools, functional calculus and non-standard Lebesgue/Sobolev calculus in spaces with variable exponents.
	
	\item The development of a rigorous theoretical framework guaranteeing convergence to (at least) stationary points for the non-convex ADMM Algorithm \ref{alg:1} is highly non-trivial and practically made challenging due to the parameter estimation performed jointly along the iterations. Note that even for the IAS algorithm \cite{Calvetti_2015} where parameters and iterates are updated sequentially, a convergence proof in general non-convex scenarios is still missing, as only partial results in convex (quadratic) cases are available.

	\item The use of a similar space-variant modelling for more general regularisers defined, for instance, in terms of wavelet expansions \cite{Candes2011} and higher-order differential operators (see, e.g., \cite{Schatten2013}).
	
	\item Similarly as for the case of anisotropic diffusion, we expect that the use of suitable adaptive discretisation stencils \cite{ChambolleUpwind2011,Mirebeau,Condat2017,chambolle2021} built on the estimated local directional information and/or relying on the suitable definition of appropriate transfer operators and staggered grids \cite{ParMasSch18applied}  could improve upon the quality of the numerical reconstructions by describing anisotropy on the image grid in a more precise way.
	
	\item Following \cite{Calvetti_2020}, we believe that the use of a hierarchical modelling with \emph{informative} hyperpriors encoding, for instance, local smoothness/sparsity beliefs on the solution could refine the hyperparameter selection strategy and thus, overall, the quality of the reconstruction. 
	
	\item The comparison of the proposed ML-type parameter estimation procedure described in Section \ref{sec:parameter_estimation} with the recent approach based on empirical Bayes estimation proposed in  \cite{PereyraPartI,PereyraPartII} could lead to new hybrid hyperparameter selection strategies relying on the sole observation of the given corrupted image $\bm{b}$. These ideas are expected indeed to speed up the performance of Algorithm \ref{alg:1}.

	\item As an obvious field of applications due to the recent use of analogous models
	in medical imaging problems such as MRI, PET and CT applications  
	\cite{Ehrhardt_2014,Ehrhardt_2016,Ehrhardt2016,Tovey2019}, we expect that the use of a structural and adaptive modelling could significantly improve the quality of the reconstructions and favour, at the same time, the exploitation of structural information in multimodal image analysis.
	
	\item We wonder whether the flexible underlying statistical modelling proposed in this work could be used in a GAN framework \cite{Goodfellow2014} from two different perspectives: firstly, to compare qualitatively (and/or quantitatively, provided that a good quality measure is used \cite{Borji2019}) the highly parametric model-driven BGGD-type distribution of the target image with the target distribution estimated in the generative step; secondly, to provide an alternative way of estimating the hyperparameters, thus replacing the parameter estimation step \eqref{eq:sub_th} by a fully data-driven approach.
	
\end{enumerate}

\section{Conclusions} \label{sec:conc}

In this work, we described a journey across time and various fields of applied mathematics with the intent of reviewing the many features of the exemplar and  probably \emph{the} most popular image regularisation model over the last thirty years, the TV functional. After recalling its genesis, its main features and shortcomings in Section \ref{sec:intro} and having fixed some notations in Section \ref{sec:prel}, we described in Section \ref{sec:Bayes} how the rigidity of existing TV-type image regularisation models can be overcome within the setting of non-stationary Markov Random Fields whose capability of describing local image features (i.e. scale, shape and directionality)  endows the corresponding prior distributions with more flexibility and degrees of freedom. Analytically, we show in Section \ref{sec:map} that these non-stationary priors can be put in close correspondence to a large class of  space-variant image regularisation models which have been thoroughly studied over the last decades with the intent of improving upon well-known TV drawbacks. In their analytical form, the dependence of these models on local information (i.e. amount of regularisation, sharpness and anisotropy) is then showed in Section \ref{sec:geom} to correspond geometrically to change at each pixel the definition of the constrain set of dual functions and changing their alignment accordingly. Having provided an expression of the corresponding data models in Section \ref{sec:joint}, we then describe in Section \ref{sec:parameter_estimation} a Maximum-Likelihood type automatic parameter estimation strategy, motivated by the underlying Bayesian formulation resulting in the definition of appropriate variational Bayesian hypermodels. Finally, the joint statistical-analytical procedure is embedded into a general alternating minimisation scheme in Section \ref{sec:admm} and validated in Section \ref{sec:restoration} on some exemplar image restoration models. The flexibility of the proposed approach and the accuracy and robustness of the estimator considered for the automatic selection of hyperparameters show good adaptation to both geometrical and texture image information and pave the way to new challenging research directions as finally described in Section \ref{sec:challenges}.

\medskip

The incredible potential offered nowadays by the possibility of combining together different fields of applied mathematics with the intent of improving and making more data-adaptive the performance of TV, makes the use of such, often taught out-of-date, image regularisation model still interesting for the whole applied mathematics community. The swan song of TV is still far away in time. Due to both the profound understanding of this powerful, yet simple, image regularisation model carried out over years and the recent advances in large-scale data exploitation and numerical optimisation favouring the development of its many extensions, we expect that the descendent models stemmed from TV have still much to say and could at the same time enrich and be enriched by the increasingly popular interest towards data-driven approaches showed by the analytical, signal-processing, statistical and optimisation communities.

\section*{Acknowledgements}
The authors are warmly grateful to Laure Blanc-F\'eraud and Xavier Descombes for their precious suggestions and comments on the statistical derivation of our models and to Simone Parisotto for his advice on their geometrical interpretation.
Research of AL, MP, FS was supported by the “National Group for Scientific
Computation (GNCS-INDAM)” and by ex60 project by the University of Bologna
“Funds for selected research topics”. LC acknowledges the support received by the EU H2020 RISE NoMADS, GA 777826 and  the UCA JEDI IDEX grant DEP ``Attractivité du territoire".

\appendix

\section{Generalised Gaussian distributions}

\begin{definition}[GG cumulative distribution function]
	The cumulative distribution function (cdf) of a scalar random variable $X\sim \mathrm{GG}(\eta,\gamma,s)$ reads
	\begin{equation}
	F_X(x) \,\;{=}\;\, \frac{1}{2} \;{+}\; 
	\frac{\mathrm{sign}(x-\eta)}
	{2 \,\Gamma\left(1/s\right)} \;\,
	\underline{\Gamma}\left( 
	\frac{1}{s} \, , \, \gamma^s\,|x-\eta|^s
	\right) \, ,
	\label{eq:GN_cdf}
	\end{equation}
	with $\underline{\Gamma}$ the lower incomplete Gamma function defined in \eqref{eq:Ginc} and $\Gamma$ the Gamma function defined in \eqref{eq:Gamma}.
\end{definition}

\begin{lemma}
	\label{prop:Gamma_sum}
	If $\,X_i \sim \mathrm{Gamma}(\nu,z_i)$, $i = 1,\ldots,M$, are independent random variables, then it holds true that
	\begin{equation}
	Y \,\;{=}\;\, \sum_{i=1}^M X_i \,\;{\sim}\;\, 
	\mathrm{Gamma}\left(\nu,\sum_{i=1}^n z_i\right).
	\label{eq:Gamma_sum}
	\end{equation}
\end{lemma}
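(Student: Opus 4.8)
The plan is to prove the statement by means of moment generating functions (MGFs), exploiting the crucial fact that the scale parameter $\nu$ is common to all the summands. The whole argument rests on two elementary facts: the MGF of a Gamma variable has a closed form, and MGFs of independent variables multiply.

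First I would compute the MGF of a single summand. Using the pdf in \eqref{eq:Gamma_pdf}, for any $t < 1/\nu$ one has
\begin{equation}
M_{X_i}(t) = \mathbb{E}\!\left(e^{tX_i}\right) = \frac{1}{\nu^{z_i}\Gamma(z_i)} \int_0^{+\infty} x^{z_i-1} e^{-x(1/\nu - t)}\, dx = (1-\nu t)^{-z_i},
\end{equation}
where the last equality follows from the definition of the Gamma function \eqref{eq:Gamma} after the substitution $s = x(1/\nu - t)$, the condition $1/\nu - t > 0$ ensuring convergence of the integral.

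Next, since the $X_i$ are mutually independent, the MGF of $Y = \sum_{i=1}^M X_i$ factorises into a product, so that on the common domain $t < 1/\nu$,
\begin{equation}
M_Y(t) = \prod_{i=1}^M M_{X_i}(t) = \prod_{i=1}^M (1-\nu t)^{-z_i} = (1-\nu t)^{-\sum_{i=1}^M z_i}.
\end{equation}
The right-hand side is precisely the MGF of a $\mathrm{Gamma}(\nu, \sum_{i=1}^M z_i)$ variable. Since $M_Y$ is finite on the open interval $(-\infty, 1/\nu)$ containing the origin, the uniqueness theorem for moment generating functions lets me conclude that $Y \sim \mathrm{Gamma}(\nu, \sum_{i=1}^M z_i)$, which is the assertion.

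The argument is elementary and I do not anticipate a genuine obstacle; the only point deserving care is the appeal to the uniqueness theorem, which requires the MGF to be finite in a neighbourhood of $t=0$ — a condition met here. As an alternative route avoiding this measure-theoretic ingredient, I could establish the case $M=2$ directly by convolution, writing $f_Y(y) = \int_0^y f_{X_1}(x)\, f_{X_2}(y-x)\, dx$ and reducing the resulting integral, via the substitution $x = yt$, to a Beta integral $\int_0^1 t^{z_1-1}(1-t)^{z_2-1}\, dt = \Gamma(z_1)\Gamma(z_2)/\Gamma(z_1+z_2)$; the general statement would then follow by induction on $M$. The only mild subtlety in this second approach is recognising the Beta-function identity, after which the normalising constants combine exactly into those of $\mathrm{Gamma}(\nu, z_1+z_2)$.
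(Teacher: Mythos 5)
Your proof is correct. Note that the paper itself gives no proof of this lemma: it is stated in the Appendix as a classical fact (the paper only proves Lemma~\ref{lem:12} and then combines the two lemmas to obtain Proposition~\ref{prop:Lq_pdf_pap}), so there is no paper argument to compare against — your write-up supplies the standard textbook justification that the authors left implicit. The moment-generating-function computation is right: for $t<1/\nu$ the integral converges and yields $M_{X_i}(t)=(1-\nu t)^{-z_i}$, independence turns the MGF of the sum into the product $(1-\nu t)^{-\sum_i z_i}$, and since this is finite on the open interval $(-\infty,1/\nu)\ni 0$, the uniqueness theorem for MGFs applies and identifies the law of $Y$ as $\mathrm{Gamma}\bigl(\nu,\sum_{i=1}^M z_i\bigr)$. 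The crucial structural point — that the argument works only because all summands share the same scale $\nu$, so the factors $(1-\nu t)^{-z_i}$ combine into a single power — is correctly exploited. Your alternative convolution-plus-induction route is also sound: the Beta integral $\int_0^1 t^{z_1-1}(1-t)^{z_2-1}\,dt=\Gamma(z_1)\Gamma(z_2)/\Gamma(z_1+z_2)$ makes the normalising constants recombine exactly, and it has the minor advantage of avoiding the appeal to MGF uniqueness, at the cost of a slightly longer computation. (Incidentally, the upper limit $n$ in the displayed sum of the statement is a typo in the paper for $M$; your proof correctly treats all $M$ summands.)
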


\begin{lemma}
	\label{lem:12}
	If $\,X \sim \mathrm{GG}(0,\gamma,s)$, then it holds true that
	\begin{equation}
	Y \,\;{=}\;\, g(X) \,\;{=}\;\, | X |^s \,\;{\sim}\;\, 
	\mathrm{Gamma}(\nu,z), \quad \nu = \frac{1}{\gamma^s}, \, z = \frac{1}{s} \, .
	\label{eq:Yq_pdf}
	\end{equation}
\end{lemma}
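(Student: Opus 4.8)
The plan is to obtain the law of $Y = |X|^s$ by a direct change of variables, exploiting the symmetry of the $\mathrm{GG}(0,\gamma,s)$ density about the origin. First I would record that, by Definition~\ref{def:GGd} with $\eta = 0$, the pdf $\mathbb{P}(x|0,\gamma,s)$ in \eqref{eq:GGpdf} is an even function of $x$, depending on $x$ only through $|x|$. The map $g(x) = |x|^s$ is therefore two-to-one on $\R \setminus \{0\}$, sending the pair $\pm x$ to the same value $y = |x|^s > 0$.

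Next I would compute the cumulative distribution function of $Y$. For $y > 0$,
\[
F_Y(y) = \mathbb{P}(|X|^s \leq y) = \mathbb{P}(-y^{1/s} \leq X \leq y^{1/s}) = F_X(y^{1/s}) - F_X(-y^{1/s}),
\]
and differentiating with respect to $y$, using $\tfrac{d}{dy}\, y^{1/s} = \tfrac{1}{s}\, y^{1/s - 1}$ together with the symmetry $\mathbb{P}(-y^{1/s}|0,\gamma,s) = \mathbb{P}(y^{1/s}|0,\gamma,s)$, the two boundary contributions add up to give
\[
f_Y(y) = \frac{1}{s}\, y^{1/s - 1}\, \Big( 2\,\mathbb{P}(y^{1/s}|0,\gamma,s) \Big) = \frac{\gamma}{\Gamma(1/s)}\, y^{1/s - 1}\, \exp\!\left(-\gamma^s y\right),
\]
for $y > 0$, and $f_Y(y) = 0$ otherwise; note that the factor $2$ arising from the two preimages exactly cancels the $\tfrac{1}{2}$ in the GG normalising constant, and the $\tfrac{1}{s}$ from the Jacobian cancels the $s$ in the GG pdf. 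Equivalently, one could first identify $|X| \sim \mathrm{hGG}(\gamma,s)$ via Definition~\ref{def:hGGd} and \eqref{eq:hGGpdf}, and then apply the monotone substitution $y = w^s$ on $[0,\infty)$; both routes yield the same density.

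Finally I would match this density against the Gamma pdf in \eqref{eq:Gamma_pdf} with $\nu = 1/\gamma^s$ and $z = 1/s$. The only point requiring care is the normalising constant: with these parameters $\nu^{z} = (\gamma^{-s})^{1/s} = \gamma^{-1}$, so that $1/(\nu^z\,\Gamma(z)) = \gamma/\Gamma(1/s)$, while $y^{z-1} = y^{1/s-1}$ and $\exp(-y/\nu) = \exp(-\gamma^s y)$. These three factors coincide termwise with $f_Y$, establishing $Y \sim \mathrm{Gamma}(1/\gamma^s, 1/s)$. There is no genuine analytical obstacle here beyond bookkeeping; the main things to get right are the simplification $\nu^z = 1/\gamma$ and the cancellation of the factor of two, both of which follow directly from the specific parameter identification.
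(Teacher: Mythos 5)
Your proof is correct and follows essentially the same route as the paper: compute the cdf of $Y$ via the preimage $[-y^{1/s},\,y^{1/s}]$ of $[0,y]$ under $g$, differentiate in $y$, and match the resulting density to the Gamma pdf with $\nu = 1/\gamma^s$, $z = 1/s$. The only (minor) difference is that you differentiate $F_X(y^{1/s}) - F_X(-y^{1/s})$ directly using the evenness of the GG pdf, whereas the paper first rewrites $F_Y$ through the lower incomplete Gamma function and then applies the chain rule; your bookkeeping is in fact cleaner, since the paper's intermediate expressions carry a spurious factor $\tfrac{1}{2}$ (a typo, as the final density must integrate to one) that your derivation avoids.
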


\begin{proof}
	We have:
	\begin{eqnarray}
	F_Y(y) 
	&\;{=}\;& 
	P\left( Y \;{\in}\; ]-\infty,y] \right)
	\;{=}\; 
	P\left( Y \in [0,y] \right)
	\label{eq:FY1}\\
	&\;{=}\;& 
	P\left( X \in g^{-1}\left([0,y]\right) \right)
	\;{=}\;
	P\left( X \in \left[-y^{1/s},+y^{1/s}\right] \right) 
	\label{eq:FY2}\\
	&\;{=}\;&
	1 - 2 \, F_X\left( -y^{1/s} \right)
	\;{=}\;\,
	\frac{1}{2 \, \Gamma\left(1/s\right)} \,\, 
	\underline{\Gamma}\left( \frac{1}{s} \, , \, \gamma^s\,y\right)
	\label{eq:FY3}
	\end{eqnarray}
	where the first and second equality in (\ref{eq:FY1}) come from the definition of cdf and from noticing that $Y$ can not assume negative values, respectively, $g^{-1}\left([0,y]\right)$ in (\ref{eq:FY2}) denotes the preimage of interval $[0,y]$ under the function $g$ defined in \eqref{eq:Yq_pdf}, 
	(\ref{eq:FY3}) follows from the pdf of $X$ being an even function and, then, from replacing the expression of the GG cdf given in (\ref{eq:GN_cdf}) for $F_X$.
	
	The pdf $\,\mathbb{P}_Y$ can be obtained by differentiating the cdf $F_Y$ in (\ref{eq:FY3}). To this aim, first we rewrite $F_Y$ in the following equivalent composite form:
	\begin{equation}
	F_Y(y) \,\;{=}\;\, F_2\left(F_1(y)\right), \quad F_1(y) \;{=}\; \gamma^s\,y \, , \quad F_2(w) \;{=}\; 
	\frac{1}{2 \, \Gamma\left(1/s\right)} \, 
	\int_0^w t^{\frac{1}{s}-1} e^{-t} d t \, ,
	\end{equation}
	where we also replaced the explicit expression of the lower incomplete Gamma function $\underline{\Gamma}$ given in (\ref{eq:Ginc}). By applying the chain rule of differentiation, we have
	\begin{eqnarray}
	\mathbb{P}_Y(y) 
	&\,\;{=}\;\,& 
	\frac{d}{dy} F_Y
	\,\;{=}\;\,
	\frac{d}{dy} F_1(y) \;{\times}\; 
	\frac{d}{dw} F_2\left(\gamma^s\,y\right)
	\\
	&\,\;{=}\;\,& 
	\gamma^s \;{\times}\; 
	\frac{1}{2 \, \Gamma\left(1/s\right)} \, 
	\left(\gamma^s\,y\right)^{\frac{1}{s}-1} 
	\exp\left( - \gamma^s\,y\right) 
	\\
	&\,\;{=}\;\,&
	\frac{\gamma}{2  \, \Gamma\left(1/s\right)} \,\,\,
	y^{\frac{1}{s}-1} \,
	\exp\left( - \gamma^s\, y\right) \, .
	\label{eq:pY_fin}
	\end{eqnarray}
	A simple one-to-one reparameterisation of (\ref{eq:pY_fin}), 
	namely $s =  1 / z$, $\gamma = (1/\nu)^s$, together with the recall of definition (\ref{eq:Gamma_pdf}), leads to (\ref{eq:Yq_pdf}) and thus completes the proof. 
\end{proof}

\vspace{0.2cm}
\textbf{Proof of Proposition \ref{prop:Lq_pdf_pap}}. Statement (\ref{eq:Lq_pdf}) follows straightforwardly from Lemma \ref{prop:Gamma_sum} and Lemma \ref{lem:12}, whereas (\ref{eq:Lq_moms}) comes from (\ref{eq:Lq_pdf}) and the well-known expressions for the mean and variance of a Gamma-distributed random variable. \hfill $\square$

\bibliographystyle{acm}
\bibliography{references}

\end{document}